\renewcommand*{\@fnsymbol}[1]{\@arabic{#1}}
\newtheorem{theorem}{Theorem}[section]
\newtheorem{definition}[theorem]{Definition}
\newtheorem{corollary}[theorem]{Corollary}
\newtheorem{proposition}[theorem]{Proposition}
\newtheorem{lemma}[theorem]{Lemma}
\newtheorem{remark}[theorem]{Remark}
\newcommand{\pr}{\indent{\em Proof: \ }}
\newcommand{\EQ}{\begin{equation}}
\newcommand{\EN}{\end{equation}}
\newcommand{\B}{\cal{B}}
\newcommand{\F}{\mathbb{F}}
\newenvironment{demo}{\noindent {\pr}\ }{\qed}
\newcommand{\bv}{\mathbf{v}}
\newcommand{\bh}{{\bf h}}
\newcommand{\rr}{\bar{r}}
\newcommand{\br}{\mathbf{r}}
\newcommand{\bx}{\mathbf{x}}
\newcommand{\wt}{\operatorname{wt}}
\newcommand{\Aut}{\operatorname{Aut}}
\newcommand{\IA}{\operatorname{IA}}
\newcommand{\Rank}{\operatorname{Rank}}
\newcommand{\Gcd}{\operatorname{gcd}}
\newcommand{\SL}{\operatorname{\Gamma L}}
\newcommand{\GL}{\operatorname{GL}}
\title{On Completely Regular Codes\footnote{This work has been partially
supported by the Spanish MINECO grants TIN2016-77918-P (AEI/FEDER, UE) and
MTM2015-69138-REDT, and by the Catalan AGAUR grant 2014SGR-691 and
also by the Russian fund of fundamental researches (15-01-08051).}}
\author{J. Borges, J. Rif\`{a}\footnote{email:~josep.rifa@autonoma.edu, joaquim.borges@autonoma.edu} \\
Dept. of Information and Communications Engineering,\\
 Universitat Aut\`{o}noma de Barcelona,\\
\and V. A. Zinoviev\footnote{e-mail:\, zinov@iitp.ru}\\A.A. Kharkevich Inst. for Problems of Information
Transmission,\\ Russian Academy of Sciences}
\date{}
\begin{document}
\maketitle

\begin{abstract}
This work is a survey on completely regular codes. Known
properties, relations with other combinatorial structures and
constructions are stated. The existence problem is also discussed
and known results for some particular cases are established.
In particular, we present a few new results on completely regular codes with $\rho=2$ and on extended
completely regular codes.

\end{abstract}
\newpage
\tableofcontents
\newpage
\section{Introduction}

In 1973, completely regular codes in Hamming metric were
introduced by Delsarte \cite{Dels}. Such codes have combinatorial
properties generalizing those of perfect codes. The class of
completely regular codes includes perfect and extended
perfect codes \cite{SZZ}, but also uniformly packed codes
\cite{GvT,SZZ,vTi}, codes obtained by their extensions \cite{SZZ},
and completely transitive codes \cite{GP2,Giu1,Giu2,Sole}. Known
completely regular codes are, for example, Hamming, Golay,
Preparata, some BCH codes with $d=5$ and some Hadamard codes. The
combinatorial properties of completely regular codes allow to
establish different relations with other combinatorial structures
such as distance-regular graphs, association schemes and designs.
A comprehensive text about these relations is a monograph of
Brouwer, Cohen, and Neumaier \cite[Ch. 11]{BCN} complemented
in a survey of van Dam, Koolen, and Tanaka \cite{Dam2}. A table
of possible parameters of
completely regular codes of finite lengths and their intersection
arrays can be found in \cite{koo}.

It is known that completely regular codes exist for arbitrary
large covering radius (see, for example, the direct construction
of Sol\'{e} \cite{Sole}). However, there are no known nontrivial completely
regular codes with large error-correcting capability. Concretely,
there are no known completely regular codes with minimum distance
$d>8$ and more than two codewords. In 1973, it has been proven the
nonexistence of unknown nontrivial perfect codes over finite
fields independently by Tiet\"{a}v\"{a}inen \cite{Tiet} and  by
Leontiev and Zinoviev \cite{LZ}. The same result was obtained in
1975 for quasi-perfect uniformly packed codes by Goethals and van
Tilborg \cite{vTi,GvT} (infinite families were ruled out earlier in \cite{SZZ}).
For the particular case of binary linear
completely transitive codes, Borges, Rif\`{a} and Zinoviev also
proved the nonexistence for $d>8$ and more than two codewords in
2001 \cite{Trans2}. Therefore, a natural conjecture seems to be
the nonexistence of nontrivial completely regular codes for $d>8$.
In 1992, Neumaier \cite{Neum} conjectured that the only completely
regular code containing more than two codewords with $d\geq 8$ is
the extended binary Golay code. However, Borges, Rif\`{a} and
Zinoviev found a counter example of Neumaier's conjecture
\cite{Bor3}. More precisely, they proved than the even half of the
binary Golay code is also completely regular. However, the
existence of unknown nontrivial completely regular codes for $d \geq 8$ remains
an open question.

An interesting subclass of completely regular codes are the completely transitive codes, first introduced by Sol\'{e} \cite{Sole} and later extended over a Hamming graph by Giudici and Praeger \cite{Giu2}. More recently, Koolen, Lee, Martin and Tanaka studied and classified the class of arithmetic completely regular codes \cite{koo2}.

In the next section, we give the main definitions, equivalences and preliminary results. In \Cref{Lloyd},
we give some necessary conditions for the existence of completely regular codes.
In \Cref{CTC}, we study completely transitive codes, a particular case of completely
regular codes. In \Cref{ECR}, we consider extension of completely regular codes and
give some new results.
In \Cref{Cons}, we give different constructions of completely
regular codes.

\section{Preliminary results}\label{defs}

\subsection{Completely regular codes and related classes}

We consider codes over finite fields $\F_q=GF(q)$, $q$ being a prime power, and
the Hamming metric. For codes over rings, the Lee metric is often used. In many
cases, such codes can be viewed as binary codes under Gray maps, hence we can
consider them as binary codes and the distance is again the Hamming distance.
As usual, for a code $C\subset\F_q^n$, we denote by $n$, $d$, $e$ and $\rho$,
the length, the minimum distance, the packing radius (or error-correcting
capability), and the covering radius of $C$. If $C$ is linear, then $k$ denotes
its dimension. We shall use the standard notation $(n,M,d)_q$ to denote a $q$-ary
code of length $n$, size $M$ and minimum distance $d$. If the code is linear, then
we indicate the dimension of the code instead of the size, and the notation is $[n,k,d]_q$. If we want to specify also the covering radius of the code, then we write $(n,M,d;\rho)_q$ for a nonlinear code, or $[n,k,d;\rho]_q$ for a linear code. For the binary case ($q=2$), we usually omit the subscript $q$.
Unless stated otherwise, we assume that $C$ is a distance invariant code \cite{MacW}
containing the zero vector.

For a binary code $C$, $\Aut(C)$ denotes the group of automorphisms of $C$, i.e.
the set of coordinate permutations that fixes $C$ set-wise.

We call {\em trivial} a code $C$ with size $|C|\leq 2$ or $C=\F_q^n$.

Let $C\subset \F_q^n$ be a code. Given a vector $x\in \F_q^n$, we denote by $B_{x,i}$
the number of codewords at distance $i$ from $x$. The {\em outer distribution matrix}
of $C$ is a $q^n\times (n+1)$ matrix $B$, with entries
$$
B_{x,i}=\mid\{v\in C \mid d(x,v)=i\}\mid,
$$
hence the row $B_x$ is the weight distribution of the translate $C+x$. Denote by $b+1$
the number of distinct rows of $B$.

Define also the sets
$$
C(i)=\{x\in\F_q^n\mid d(x,C)=i\}.
$$
The sets $C,C(1),\ldots,C(\rho)$ are called {\em subconstituents} in \cite{Neum} and
{\em cells} by some other authors. Note that $C(0)=C$ and
$C_t\neq\emptyset$ if and only if $t\leq\rho$.

\begin{definition}[\protect{\cite{GvT}}]
A code $C$ with covering radius $\rho$ is $t$-{\em regular} ($0\leq t\leq \rho$) if for
all $i=0,\ldots,\rho$, $B_{x,i}$ depends only on $i$ and on the distance $d(x,C)$ of $x$ to $C$,
for all $x$ such that $d(x,C)\leq t$.
\end{definition}

In other words, $C$ is $t$-regular if $B_x$ depends only on $d(x,C)$ for $d(x,C)\leq t$.
By definition, if $C$ is $t$-regular, then it is $j$-regular, for all $j=0,\ldots,t$.
Intuitively, $C$ is $t$-regular if we ``see" the same amount of codewords at the same
distances from any vector at distance at most $t$ from the code. For example, a
$0$-regular code is exactly a distance invariant code.

There are several equivalent definitions of completely regular code (CR code, for short).

\begin{definition}[\protect{\cite{Dels}}]\label{Def1}
A code $C$ is {\em completely regular} if it is $\rho$-regular.
\end{definition}
Clearly, the following definitions are equivalent.
\begin{enumerate}[label=(\emph{\roman*})]
\item A code $C$ is CR if for all $x\in\F_q^n$, $B_{x,i}$
depends only on $i$ and $d(x,C)$.
\item A code $C$ is CR if for all $x\in\F_q^n$, $B_x$
depends only on $d(x,C)$.
\item A code $C$ is CR if the weight distribution of
any translate $C+x$ depends only on $d(x,C)$.
\item \label{def:2.2} A code $C$ is CR if $b=\rho$.
\end{enumerate}

But it is not so straightforward the equivalence with the following definition.

\begin{definition}[\protect{\cite{Neum}}]\label{Def2}
A code $C$ is {\em completely regular} if, for all $\ell\geq 0$, every vector $x\in C(\ell)$
has the same number $c_\ell$ of neighbors in $C(\ell-1)$ and the same number $b_\ell$ of
neighbors in $C(\ell+1)$, where we set $c_0=b_\rho=0$.
\end{definition}

It is clear that the sets $C,C(1),\ldots,C(\rho)$ give a partition, called
{\em distance partition} of the space $\F_q^n$. If the condition of \Cref{Def2}
is satisfied, then the partition is called {\em equitable}. Therefore, $C$ is CR if and only if the distance partition is equitable.

For the equivalence between \Cref{Def1} and \Cref{Def2}, see \cite[Th. 4.1]{Neum}
or \cite{Bro1}. For $\ell\geq 0$, let $a_\ell=n(q-1)-b_\ell-c_\ell$.
Thus, $a_\ell$ is the number of neighbors in $C(\ell)$ of any vector in $C(\ell)$. The parameters
$a_\ell$, $b_\ell$ and $c_\ell$ are called the {\em intersection numbers} and the sequence
$\IA=\{b_0,\ldots,b_{\rho-1};c_1,\ldots,c_\rho\}$ is called the {\em intersection array}  of $C$.

Consider examples of CR codes. Recall that a
code $C$ is {\em perfect} if $\rho=e$ and {\em quasi-perfect} if $\rho=e+1$. Nontrivial
perfect codes exist only for $e\leq 3$ \cite{Tiet,LZ}. The following perfect
codes are known in $\F_q^n$:

\begin{enumerate}
\item The trivial codes $C=\{x\}$~ ($x\in\F_q^n$) and $C=\F_q^n$.
\item The binary repetition codes of odd length with $d=n$.
\item The binary Golay code with $n=23$, $k=12$, $d=7$.
\item The ternary Golay code with $n=11$, $k=6$, $d=5$.
\item The perfect codes with size $M=q^{n-m}$,
length $n=(q^m-1)/(q-1)$ and distance $d=3$, where $q$ is a prime power.
\end{enumerate}

In the last case, for every $n=(q^m-1)/(q-1)$ there is a unique linear version
which is the Hamming code.

An interesting class of codes, closely related to CR
codes, is the class of uniformly packed (UP) codes. There are three mostly
used different concepts of UP codes. The term ``uniformly packed
code" was first used in \cite{SZZ}.

\begin{definition}[\protect{\cite{SZZ}}] \label{def:2.4}
A binary quasi-perfect code $C$ with minimum distance $d=2e+1$ is
{\em uniformly packed in the narrow sense} if there exist a
natural number $\mu$, such that $B_{x,e} + B_{x,e+1} = \mu$ for
any vector $x \in C(e) \cup C(e+1)$.
\end{definition}

We emphasize that all binary perfect codes fall into this category
for $\mu = (n+1)/(e+1)$. Uniformly packed in the narrow sense
codes include: $1$-shortened binary perfect codes, Pre\-pa\-ra\-ta
codes, binary $BCH$ codes with designed distance $5$ of length
$2^{2u+1}-1$, the Hadamard code of length $11$. In \cite{SZZ} it
was proved, in terms of \Cref{Def1} \ref{def:2.2}, that UP
codes in the narrow sense and codes obtained by their extensions
(which are not uniformly packed in this sense) are CR.

UP codes in the narrow sense are a subclass of
UP codes, introduced by Goethals and van Tilborg
\cite{GvT}.

\begin{definition}[\protect{\cite{GvT}}]\label{def:2.5}
A quasi-perfect $e$-error-correcting $q$-ary code $C$ is called
{\em uniformly packed} if there exist natural numbers $\lambda$ and $\mu$
such that for any vector $x$:
$$
B_{x,e+1} = \left\{
\begin{array}{cl}
  \lambda & \mbox{ if } d(x,C)=e \\
  \mu     & \mbox{ if } d(x,C)=e+1
\end{array}\right.
$$
\end{definition}

For the case $\mu = \lambda + 1$ any such binary code is
UP in the narrow sense.
Note that binary extended perfect codes fall into this category for
$\lambda=0$ and $\mu = (n+1)/(e+1)$. These
codes include also the ternary
Golay code, its extension, and the code obtained by shortenig.
Van Tilborg \cite{vTi} (see also \cite{Lind, SZZ})
showed that no other nontrivial codes of this kind exist for
$e>3$.

The following result is a generalization of the result in \cite{SZZ}.

\begin{proposition}[\protect{\cite{GvT}}]
A uniformly packed code is completely regular.
\end{proposition}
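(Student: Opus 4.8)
The plan is to prove the statement through Neumaier's characterization: by the equivalence of \Cref{Def1} and \Cref{Def2} quoted above, $C$ is completely regular if and only if its distance partition $C(0)=C, C(1),\ldots,C(\rho)$ is equitable. Since a uniformly packed code is quasi-perfect, we have $\rho=e+1$ and $d=2e+1$, so the balls of radius $e$ centred at distinct codewords are pairwise disjoint and every vector lies in at most one of them. I would therefore fix a cell $C(\ell)$ and a vector $x\in C(\ell)$, and compute the intersection numbers $c_\ell$ (neighbours in $C(\ell-1)$) and $b_\ell$ (neighbours in $C(\ell+1)$), neighbours being taken in the Hamming graph $H(n,q)$; the aim is to show these depend only on $\ell$.

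For $\ell\leq e-1$ the argument is purely geometric. Such an $x$ has a unique nearest codeword $c_0$ at distance $\ell$, and every neighbour $y$ of $x$ stays inside the ball of radius $e$ about $c_0$. Moving one of the $\ell$ disagreeing coordinates to the value of $c_0$ gives $d(y,C)=\ell-1$, moving an agreeing coordinate gives $d(y,C)=\ell+1$, and the remaining changes keep $d(y,C)=\ell$; this yields constant values $c_\ell=\ell$, $a_\ell=\ell(q-2)$ and $b_\ell=(n-\ell)(q-1)$, independent of $x$. Hence the partition is automatically equitable on the inner cells, and the whole content of the proposition lives at the two boundary cells $\ell=e$ and $\ell=e+1$, where a neighbour can jump from one ball to another; this is exactly where the hypothesis of \Cref{def:2.5} must be invoked.

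For $x\in C(e)$ with unique nearest codeword $c_0$, a neighbour $y$ obtained by moving away from $c_0$ has $d(y,c_0)=e+1$, and such a $y$ belongs to $C(e)$ precisely when some other codeword $c_1$ lies at distance $e$ from $y$. The key step is that disjointness of the radius-$e$ balls forces $d(x,c_1)=e+1$ and, since $d(c_0,c_1)\geq d=2e+1$, actually $d(c_0,c_1)=2e+1$ with the coordinates where $x$ differs from $c_0$ and those where $x$ differs from $c_1$ being disjoint; counting the incidences $(c_1,y)$ then shows the number of such boundary neighbours equals a fixed multiple of the number $B_{x,e+1}=\lambda$ of codewords at distance $e+1$ from $x$. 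An entirely parallel incidence count for $x\in C(e+1)$, where every neighbour in $C(e)$ arises by moving toward a codeword at distance $e+1$ and each target $y$ is counted once by disjointness, gives $c_{e+1}$ as a fixed multiple of $B_{x,e+1}=\mu$. The main obstacle is precisely this boundary count: establishing the disjoint-support structure that makes each far codeword contribute the same number of boundary neighbours (with the appropriate $(q-1)$-weighting in the $q$-ary case). Once it is in place, the uniform-packing parameters $\lambda$ and $\mu$ deliver constancy of all intersection numbers, the distance partition is equitable, and $C$ is completely regular.
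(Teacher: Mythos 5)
Your argument is correct in substance, but it follows a genuinely different route from the proof the paper points to: the survey gives no argument of its own (the proposition is quoted from Goethals--van Tilborg), and the proof recoverable from the paper's own toolkit is algebraic, not geometric. Namely, by \Cref{equivs}(ii) a uniformly packed code has external distance $s=e+1$; since $d\geq 2e+1=2s-1$ and $t:=d-s\geq s-1$, the two-part Delsarte theorem quoted in \Cref{props} gives first that $C$ is $t$-regular and then that $C$ is completely regular. Equivalently: $\Rank(B)=s+1$ by \Cref{params}, so each row $B_x$ is a fixed linear function of its entries $B_{x,0},\ldots,B_{x,s}$, and for a uniformly packed code these $e+2$ entries visibly depend only on $d(x,C)$ (ball disjointness for $i\leq e$, the parameters $\lambda,\mu$ for $i=e+1$). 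Your route instead verifies Neumaier's criterion (\Cref{Def2}) cell by cell; it is more elementary and has the merit of producing the intersection array explicitly --- $c_\ell=\ell$ for $\ell\leq e$, $b_\ell=(n-\ell)(q-1)$ for $\ell\leq e-1$, then $b_e=(n-e)(q-1)-(e+1)\lambda$, $c_{e+1}=(e+1)\mu$, $b_{e+1}=0$; your ``fixed multiple'' is exactly $e+1$, with no extra $(q-1)$-weighting needed in the $q$-ary case --- but it leans on the nontrivial equivalence of \Cref{Def1} and \Cref{Def2}, which the algebraic proof never uses.

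Two small repairs to your write-up. First, \Cref{def:2.5} only forces $d\geq 2e+1$, not $d=2e+1$: extended perfect codes are uniformly packed with $d=2e+2$ and $\lambda=0$. In that case your triangle-inequality step shows that no codeword can lie at distance $e+1$ from a vector of $C(e)$, so the boundary count degenerates to $0=(e+1)\lambda$ and nothing breaks; but as written, your assertion $d(c_0,c_1)=2e+1$ would contradict $d=2e+2$, so the case must be acknowledged. Second, the disjoint-support structure you single out as ``the main obstacle'' is not actually needed: the incidence count only requires that each codeword $c_1$ with $d(x,c_1)=e+1$ contributes exactly $e+1$ neighbours $y$ of $x$ with $d(y,c_1)=e$, that each such $y$ lies in $C(e)$ (it cannot be within distance $e$ of $c_0$, nor within distance $e-1$ of any codeword), and that distinct $c_1$ yield distinct $y$ --- all three facts follow from pairwise disjointness of the radius-$e$ balls alone.
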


Note that the extension of any UP code, which is not perfect,
may not be uniformly packed. It was one of the motivation
of the following codes.

\begin{definition}[\protect{\cite{BZZ}}]
A code $C$ is {\em uniformly packed in the wide sense} if there exist
rational numbers $\beta_0,\ldots,\beta_\rho$ such that for any $x\in\F_q^n$
$$
\sum_{i=0}^\rho \beta_i B_{x,i}=1.
$$
\end{definition}

The above concept is much more general than the ones in \Cref{def:2.4,def:2.5}. In other words, a code $C$ is UP
in the wide sense if the all-one column is a linear combination of the
first $\rho$ columns of the outer distribution matrix $B$.
Later we will see that any CR code is UP in the wide sense.
Now, give some easy examples of CR codes.

\begin{enumerate}
\item As we have already mentioned, any perfect code is a
CR code.
\item The set of all even weight vectors is a linear CR
code with $\rho=e+1=1$.
\item An extended perfect code is a quasi-perfect UP code,
and so CR \cite{SZZ}.
\item For any CR code $C$, the subconstituent $C(\rho)$ is
also CR, with reversed intersection array \cite{Neum}.
\end{enumerate}

\subsection{Designs and distance-regular graphs}

Two related combinatorial structures are of special interest: $t$-designs and distance-regular graphs.

\begin{definition}
A $t$-$(v,k,\lambda)$-{\em design} is an incidence structure $(S,\B)$,
where $S$ is a $v$-set of elements (called {\em points}) and $\B$ is
a collection of $k$-subsets of points (called {\em blocks})
such that every $t$-subset
of points is contained in exactly $\lambda>0$ blocks ($0< t\leq
k\leq v$).
\end{definition}

In terms of incident matrix a $t$-$(v,k,\lambda)$-design is a binary code $C$
of length $n=v$ with codewords of weight $w=k$ such that any binary vector
of length $n$ and weight $t$ is covered by exactly $\lambda$ codewords.
A $t$-design with $\lambda=1$ is called a Steiner system and also
denoted by $S(v,k,t)$.
The following properties are well known and can be found for example in \cite{Beth, Blake, Hugh}.

\begin{proposition}
Given a $t$-$(v,k,\lambda)$-design, every $i$-subset of points ($0\leq
i\leq t$) is contained in exactly $\lambda_i$ blocks, where
$$
\lambda_i=\lambda\frac{\binom{v-i}{t-i}}{\binom{k-i}{t-i}}.
$$
\end{proposition}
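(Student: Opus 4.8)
The plan is to prove this by a double-counting (flag-counting) argument that simultaneously establishes that the number of blocks through a given $i$-subset is independent of the chosen subset and computes its value. First I would fix an arbitrary $i$-subset $I$ of points and let $\lambda_i$ denote the number of blocks containing $I$; a priori this count could depend on $I$, and the aim is to show it does not and that it equals the stated expression. The base case $i=t$ is just the defining property of the design ($\lambda_t=\lambda$), which agrees with the formula, so the content is in passing from $t$-subsets down to smaller $i$-subsets.

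The key step is to count in two ways the set of flags
$$
\mathcal{F}=\{(T,B)\mid I\subseteq T\subseteq B,\ |T|=t,\ B\in\B\},
$$
that is, pairs consisting of a $t$-subset $T$ sandwiched between $I$ and a block $B$. Counting first by $T$: each $t$-subset $T$ with $I\subseteq T$ is obtained by adjoining $t-i$ of the $v-i$ points outside $I$, so there are $\binom{v-i}{t-i}$ such sets, and by the defining property of a $t$-$(v,k,\lambda)$-design each of them is contained in exactly $\lambda$ blocks. Hence $|\mathcal{F}|=\lambda\binom{v-i}{t-i}$.

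Counting instead by $B$: only blocks containing $I$ contribute, and there are $\lambda_i$ of them by definition; each such block has $k$ points, of which $k-i$ lie outside $I$, so the number of admissible $t$-subsets $T$ with $I\subseteq T\subseteq B$ is $\binom{k-i}{t-i}$. Hence $|\mathcal{F}|=\lambda_i\binom{k-i}{t-i}$. Equating the two expressions gives $\lambda_i\binom{k-i}{t-i}=\lambda\binom{v-i}{t-i}$, and since $0\le i\le t\le k$ forces $\binom{k-i}{t-i}\neq 0$, I may solve for $\lambda_i$ to obtain the asserted value. Because the resulting right-hand side does not involve $I$, the same argument shows the count is identical for every $i$-subset, which is the full claim.

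I do not expect a genuine obstacle: the argument is an elementary incidence count, and the only points needing care are purely bookkeeping, namely verifying that $\binom{v-i}{t-i}$ and $\binom{k-i}{t-i}$ correctly enumerate the two kinds of extensions and noting the nonvanishing of the denominator so that the division is legitimate. As an alternative I could argue by downward induction on $i$, starting from $\lambda_t=\lambda$ and using the simpler recurrence $(k-i)\lambda_i=(v-i)\lambda_{i+1}$, obtained by counting pairs $(p,B)$ with $p\notin I$ and $I\cup\{p\}\subseteq B$; iterating this relation telescopes to exactly the same binomial ratio.
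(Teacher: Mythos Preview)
Your argument is correct and is precisely the standard double-counting proof of this fact; there is no gap. Note, however, that the paper does not actually supply its own proof of this proposition: it is stated as well known with a reference to standard texts, so there is nothing substantive to compare against beyond observing that your flag-counting argument is the one found in those references.
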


\begin{corollary}
Given a $t$-$(v,k,\lambda)$-design ${\cal D}$: (i) ${\cal D}$ is an
$i$-design, for all $i\leq t$; (ii) $\lambda=\lambda_t$; (iii) the
number of blocks of ${\cal D}$ is: $b=\lambda_0$; (iv) each point
is contained in the same number of blocks, namely,
$r:=\lambda_1=bk/v$ ($r$ is called the {\em replication number}).
\end{corollary}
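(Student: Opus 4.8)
The plan is to obtain all four statements as immediate consequences of the preceding Proposition, which already guarantees that for every $i$ with $0\leq i\leq t$, each $i$-subset of points lies in exactly $\lambda_i=\lambda\binom{v-i}{t-i}/\binom{k-i}{t-i}$ blocks, a number that depends only on $i$ and on the fixed parameters $v,k,t,\lambda$. Since this count is independent of the chosen $i$-subset, part (i) is essentially a restatement: ${\cal D}$ satisfies the defining property of an $i$-$(v,k,\lambda_i)$-design, and $\lambda_i>0$ because all the binomial coefficients involved are positive and $\lambda>0$.

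For parts (ii) and (iii) I would simply substitute the boundary values $i=t$ and $i=0$ into the formula for $\lambda_i$. Setting $i=t$ gives $\binom{v-t}{0}/\binom{k-t}{0}=1$, hence $\lambda_t=\lambda$. Setting $i=0$, the quantity $\lambda_0$ counts the blocks through the empty set of points; since the empty set is contained in every block, $\lambda_0$ equals the total number of blocks, so $b=\lambda_0=\lambda\binom{v}{t}/\binom{k}{t}$.

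Part (iv) is the only one that needs slightly more than substitution. First, the constancy of $\lambda_1$ (already given by (i) with $i=1$) shows that every point lies in the same number $r:=\lambda_1$ of blocks. To get the closed form $r=bk/v$ I would either count the incident pairs (point, block) in two ways---summing over points yields $vr$, while summing over blocks yields $bk$ since every block has $k$ points, forcing $vr=bk$---or manipulate the binomial formula directly via $\binom{v}{t}=\tfrac{v}{t}\binom{v-1}{t-1}$ and $\binom{k}{t}=\tfrac{k}{t}\binom{k-1}{t-1}$, which reduce $\lambda_0$ to $\tfrac{v}{k}\lambda_1$ and hence give $\lambda_1=bk/v$. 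Neither route presents a genuine obstacle; the whole corollary is a formal unwinding of the Proposition, and the only point requiring any thought is the double-counting identity in (iv).
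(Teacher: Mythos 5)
Your proof is correct and follows exactly the route the paper intends: the paper states this corollary without proof (citing standard texts), presenting it as an immediate consequence of the preceding Proposition's formula $\lambda_i=\lambda\binom{v-i}{t-i}/\binom{k-i}{t-i}$, which is precisely how you derive parts (i)--(iii) by substitution and part (iv) by the standard double-counting identity $vr=bk$. Nothing is missing; your verification that $\lambda_t=\lambda$, $\lambda_0=b$, and $\lambda_0=\tfrac{v}{k}\lambda_1$ is the complete, standard argument.
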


The first nontrivial $6$-designs were found in 1983 by Magliveras and
Leavitt \cite{MagL}. In 1987, L. Teirlinck proved that there are nontrivial
$t$-designs for any natural number $t$ \cite{Teir}.

%

There is a natural $q$-ary generalization of such $t$-designs (see \cite{AssGM,Dels,GvT,Rif1}).
Let $E=\{0,1, \ldots, q-1\}$. A collection $\B$ of $b$
vectors $x_1, \ldots,x_b$ of length $v$ and weight $k$ over $E$ is
called a $q$-ary $t$-design and denoted $t$-$(v,k,\lambda)_q$, if for every
vector $y$ over $E$ of length $v$ and weight $t$ there are exactly $\lambda$
vectors $x_{i_1}, \ldots, x_{i_\lambda}$ from $\B$ such that
$d(y,x_{i_j}) = k-t$ for all $j=1,\ldots,\lambda$. If $\lambda=1$,
then we obtain a $q$-ary Steiner system, denoted $S(v,k,t)_q$.

For a code $C$ denote by $C_w$ the set of all codewords of $C$ of weight $w$.
Regularity of a code $C$ implies that the sets $C_w$ induce $t$-designs.

\begin{theorem}[\protect{\cite{GvT}}]\label{disseny}
Let $C$ be a $t$-regular code with minimum distance $d\geq 2t$, then the supports
of any nonempty set $C_w$ form the blocks of a $t$-design.
\end{theorem}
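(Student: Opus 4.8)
The plan is to show, for each weight $w$ with $C_w\neq\emptyset$, that the number $\lambda(T):=|\{c\in C_w : T\subseteq\Supp(c)\}|$ of weight-$w$ codewords whose support contains a fixed $t$-subset $T$ of coordinates is independent of $T$. Positivity of the common value $\lambda$ then follows by averaging, since $\sum_T\lambda(T)=\binom{w}{t}\,|C_w|>0$ forces the constant $\lambda$ to be positive.

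First I would translate containment into a distance condition. Working in the binary case, let $\chi_T$ denote the characteristic vector of $T$, of weight $t$. The identity $d(\chi_T,c)=\wt(\chi_T)+\wt(c)-2|T\cap\Supp(c)|$ shows that a codeword $c$ of weight $w'$ meeting $T$ in $s$ positions lies at distance $w'+t-2s$ from $\chi_T$; in particular a weight-$w$ codeword contains $T$ exactly when it lies at distance $w-t$ from $\chi_T$. Next I would pin down $d(\chi_T,C)$. Since every nonzero codeword has weight at least $d\ge 2t$, it lies at distance at least $\wt(c)-\wt(\chi_T)\ge 2t-t=t$ from $\chi_T$, while $\zero$ lies at distance exactly $t$; hence $d(\chi_T,C)=t\le t$, so $t$-regularity applies and the whole weight distribution $B_{\chi_T}$ is a constant $\beta_i:=B_{\chi_T,i}$ independent of the chosen $T$. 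Reading off the entry $i=w-t$, the number $\beta_{w-t}$ counts, besides the weight-$w$ codewords containing $T$ (the term $\lambda(T)$), also the codewords of weight $w'=w-2(t-s)<w$ meeting $T$ in $s<t$ positions, of which there are $N^{(s)}_{w'}(T):=|\{c\in C_{w'}:|T\cap\Supp(c)|=s\}|$.

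The key step, and the main obstacle, is disposing of these lower-weight correction terms. I would argue by strong induction on $w$. Assuming the design conclusion for all weights smaller than $w$, I get that for every $j\le t$ and $w'<w$ the count $|\{c\in C_{w'}:S\subseteq\Supp(c)\}|$ depends only on $|S|\le t$; inclusion--exclusion over the subsets of $T$ then converts these containment counts into the partial-intersection counts, showing that each $N^{(s)}_{w'}(T)$ with $w'<w$ is itself independent of $T$. Since $\beta_{w-t}$ is likewise independent of $T$, the identity $\beta_{w-t}=\lambda(T)+\sum_{s<t}N^{(s)}_{w-2(t-s)}(T)$ forces $\lambda(T)$ to be constant, completing the induction. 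The base case is the minimum weight $w=d$, where the correction sum is empty except for a single bookkeeping contribution of $\zero$ occurring precisely when $d=2t$; either way $\lambda(T)$ is constant.

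Finally, I would remark that the $q$-ary case follows the same scheme after averaging over the $(q-1)^t$ vectors $y$ supported on $T$. One has $(q-1)^t\beta_{w-t}=\sum_{y:\Supp(y)=T}B_{y,w-t}=\sum_{c\in C}|\{y:\Supp(y)=T,\ d(y,c)=w-t\}|$, and a direct count shows the inner multiplicity depends only on $\wt(c)$ and $|T\cap\Supp(c)|$, not on the nonzero values of $c$ on $T$; moreover it equals $1$ exactly for the weight-$w$ codewords containing $T$ and vanishes for weights exceeding $w$. Hence the same weight-isolating identity and the same induction go through, which is why I expect the real work to lie entirely in the inclusion--exclusion bookkeeping of the lower-weight terms rather than in any new idea for the $q$-ary setting.
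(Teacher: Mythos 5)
Your proof is correct, but there is nothing in the paper to compare it against: this survey states \Cref{disseny} with a citation to \cite{GvT} and supplies no proof of its own. Measured against that classical argument, your route is essentially the standard one: use $d\geq 2t$ and $\zero\in C$ to pin every weight-$t$ vector at distance exactly $t$ from $C$, invoke $t$-regularity to make the row $B_y$ of the outer distribution matrix independent of $y$, and then run an induction on $w$ in which the lower-weight contributions to $B_{y,w-t}$ are constant because a $t$-design determines all of its intersection numbers with a fixed $t$-set (the paper's Corollary 2.11 plus inclusion--exclusion). The one place where you add a genuinely useful device is the $q$-ary reduction: since the theorem asserts that the \emph{supports} form an ordinary $t$-design, your averaging over the $(q-1)^t$ vectors $y$ with $\Supp(y)=T$ is exactly the right move --- the identity $\sum_{y:\Supp(y)=T}B_{y,w-t}=\lambda(T)+\sum_{w'<w}\sum_s m(w',s)\,N^{(s)}_{w'}(T)$, with multiplicities $m(w',s)=\binom{s}{\sigma}(q-2)^{s-\sigma}(q-1)^{t-s}$, $\sigma=w'+2t-w-s$, depending only on $\wt(c)$ and $|T\cap\Supp(c)|$, converts the agreement counts that $B_{y,w-t}$ actually records into the containment counts the design property needs, so the classical (binary-style) bookkeeping can be reused verbatim instead of developing a separate $q$-ary design calculus. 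Two cosmetic points you may wish to make explicit: the counts are taken with multiplicity (distinct weight-$w$ codewords can share a support), and the induction hypothesis is applied only to nonempty $C_{w'}$, empty weight classes contributing zero; neither affects correctness.
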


Directly from definition of CR codes we have the following

\begin{theorem}\label{designs}
Let $C$ be a $q$-ary CR code of length $n$ with distance $d$.

\begin{itemize}
\item[(i)] If $d=2e+1$ then any nonempty set $C_w$ is an
$e$-$(n,w,\lambda_w)_q$-design.
\item[(ii)] If $d=2e+2$ then any nonempty
set $C_w$ is an $(e+1)$-$(n,w,\lambda_w)_q$-design.
\item[(iii)] If $C$
is a $q$-ary perfect code, then any nonempty set $C_w$ is an
$(e+1)$-$(n,w,\lambda_w)_q$-design and $C_d$ is a Steiner
system $S(n,2e+1,e+1)_q$.
\item[(iv)] If $C$ is an extended $q$-ary perfect
code, then any nonempty set $C_w$ is an
$(e+2)$-$(n,w,\lambda_w)_q$-design and $C_d$ is a Steiner
system $S(n,2e+2,e+2)_q$.
\end{itemize}
\end{theorem}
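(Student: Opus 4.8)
The plan is to reduce all four parts to the design criterion of \Cref{disseny}, using two facts: a completely regular code is $t$-regular for every $t\le\rho$ (it is $\rho$-regular by \Cref{Def1}, and $\rho$-regular implies $j$-regular for all $j\le\rho$), and low-weight vectors sit at a predictable distance from the code. So the first step is a short lemma: if $d\ge 2t$ and $\wt(y)=t$, then $d(y,C)=t$. Indeed $\zero\in C$ gives $d(y,C)\le t$, while any codeword $c\ne\zero$ with $d(y,c)\le t-1$ would force $d(\zero,c)\le \wt(y)+d(y,c)\le 2t-1<d$, contradicting the minimum distance. Consequently every weight-$t$ vector has the \emph{same} distance distribution $B_y$ by $t$-regularity. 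The bridge to the $q$-ary design count is the bookkeeping fact that, for $\wt(y)=t$ and $c\in C_w$, one has $d(y,c)\ge w-t$, with equality exactly when $\Supp(y)\subseteq\Supp(c)$ and $c$ agrees with $y$ on $\Supp(y)$ (at most $t$ of the $w$ nonzero positions of $c$ can meet $\Supp(y)$). Thus the weight-$w$ codewords at minimal distance $w-t$ from $y$ are precisely those ``covering'' $y$ in the sense of a $q$-ary $t$-design, and their number is the parameter $\lambda_w$.

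For (i) I take $t=e$: since $d=2e+1\ge 2e$ and $e\le\rho$, the code is $e$-regular and \Cref{disseny} gives that each nonempty $C_w$ is an $e$-$(n,w,\lambda_w)_q$-design. For (ii) I take $t=e+1$, first checking $\rho\ge e+1$: as $d\ge 2e+2$ the balls of radius $e$ are disjoint, so if we had $\rho=e$ the code would be perfect and hence satisfy $d=2e+1$, a contradiction; therefore $\rho\ge e+1$, the code is $(e+1)$-regular, $d=2e+2=2(e+1)$, and \Cref{disseny} yields the $(e+1)$-designs.

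Parts (iii) and (iv) are harder because the design strength must exceed $\lfloor d/2\rfloor$ by one, so the hypothesis $d\ge 2t$ of \Cref{disseny} fails at the relevant $t$ and the extra level must come from the extremal packing. For (iii) the condition $\rho=e$ forces $d(y,C)=e$ for every weight-$(e+1)$ vector $y$ (upper bound $\rho=e$, lower bound the same minimum-distance estimate), and the nearest codeword is moreover \emph{unique}. Two distinct weight-$(2e+1)$ codewords covering the same $y$ would be at mutual distance $\le 2e<d$, so each weight-$(e+1)$ vector is covered at most once; to see it is covered at least once, one checks that the unique nearest codeword $c^\ast$ covers $y$, for if $c^\ast$ disagreed with $y$ at a coordinate of $\Supp(y)$, correcting that coordinate would give a vector of weight $e$ or $e+1$ within distance $e-1$ of $c^\ast$, contradicting that all such vectors lie at distance $e$ from $C$. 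This gives the Steiner system $S(n,2e+1,e+1)_q$, and the $(e+1)$-designs on the remaining $C_w$ follow from the same rigidity (constant $B_y$ together with the unique-covering structure). Case (iv) is analogous, with $\rho=e+1$ pinning the weight-$(e+2)$ vectors and the parity extension accounting for the jump to $(e+2)$-designs and the Steiner system $S(n,2e+2,e+2)_q$.

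The main obstacle is exactly this gain of one design level in (iii) and (iv): because the black-box criterion of \Cref{disseny} is unavailable there, I must exploit the uniqueness of the nearest codeword forced by perfectness (respectively the controlled covering forced by the extension) and then promote the minimum-weight Steiner property to a genuine design on \emph{every} weight class $C_w$. A secondary point to keep honest is the passage from the support (binary) formulation of \Cref{disseny} to the $q$-ary design of \Cref{designs}: the covering bookkeeping above is what makes the two coincide, but it must be invoked explicitly rather than left implicit.
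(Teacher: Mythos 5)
Your reduction of (i) and (ii) to \Cref{disseny} is sound and is essentially the route the paper intends (the paper offers no explicit proof, asserting the theorem follows ``directly from definition''), and your distance-pinning lemma and the Steiner argument inside (iii) are correct: uniqueness because two minimum-weight codewords covering the same weight-$(e+1)$ vector would be at mutual distance at most $2e<d$, existence because the unique nearest codeword is forced to cover. The first genuine gap is the sentence ``the $(e+1)$-designs on the remaining $C_w$ follow from the same rigidity.'' This hides the entire difficulty of (iii): constancy of $B_y$ over weight-$(e+1)$ vectors does \emph{not} directly give constancy of the number of weight-$w$ codewords covering $y$, because $B_{y,\,w-e-1}$ counts codewords of \emph{all} weights lying at that distance, and since $d=2e+1<2(e+1)$ the criterion of \Cref{disseny} is unavailable exactly here. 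One needs an actual argument: either an induction on $w$ that peels off the contribution of the lower weight classes (controlled by the inductive hypothesis that they are already $(e+1)$-designs, via intersection-type counting), or Delsarte's external-distance theorem (a perfect code has $s=e$, and the weight classes of any code form $(d-s)$-designs). You name the obstacle but supply no argument for it.

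The second gap is (iv), where the supporting claim is false: $\rho=e+1$ does \emph{not} pin the weight-$(e+2)$ vectors at a common distance. For such a vector $y$, one has $d(y,C)=e$ precisely when $y$ is covered by a weight-$(2e+2)$ codeword, and $d(y,C)=e+1$ otherwise; so ``all weight-$(e+2)$ vectors lie at distance $e$'' is exactly the existence half of the Steiner claim you are trying to prove, and the sketch is circular. The standard repair is external to the rigidity argument: puncture $C^*$ at a coordinate of $\Supp(y)$; the punctured code has the same size, minimum distance at least $2e+1$, and meets the sphere-packing bound with equality, hence is perfect; apply (iii) to it and check that the preimage in $C^*$ of the covering codeword has weight $2e+2$ and covers $y$ (in the $q$-ary case one must also track value agreement at the deleted coordinate). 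Finally, a caveat on (i)--(ii): your closing claim that the ``covering bookkeeping'' makes the support formulation of \Cref{disseny} coincide with the $q$-ary designs of the statement is not right for $q>2$ --- a design on supports is strictly weaker than a $q$-ary design, since the latter additionally demands uniformity over the $(q-1)^t$ value patterns on each $t$-set. One must either quote the $q$-ary form of the Goethals--van Tilborg theorem or rerun its inductive proof with the covering count; the bookkeeping alone only identifies which codewords are to be counted, not that their number is constant.
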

\bigskip

\bigskip

Let $\Gamma$ be a finite connected simple graph (i.e., undirected, without loops and multiple
edges). Let $d(\gamma, \delta)$ be the distance between two vertices
$\gamma$ and $\delta$ (i.e., the number of edges in the minimal path between
$\gamma$ and $\delta$). The {\em diameter} $D$ of $\Gamma$ is its largest distance.
Two vertices $\gamma$ and $\delta$ from $\Gamma$ are
{\em neighbors} if $d(\gamma, \delta) = 1$. Denote
\begin{eqnarray*}
\Gamma_i(\gamma) ~=~\{\delta \in \Gamma:~d(\gamma, \delta) = i\}.
\end{eqnarray*}
An {\em automorphism} of a graph $\Gamma$ is a
permutation $\pi$ of the vertex set of $\Gamma$ such that, for all
$\gamma, \delta \in \Gamma$ we have $d(\gamma,\delta)=1$ if and
only if $d(\pi\gamma,\pi\delta)=1$.

\begin{definition}[\protect{\cite{BCN}}]\label{14:de:1.3}
A simple connected graph $\Gamma$ is called {\em distance-regular}
if it is regular of valency $k,$ and if for any two vertices
$\gamma, \delta \in \Gamma$ at distance $i$ apart, there are precisely
$c_i$ neighbors of $\delta$ in $\Gamma_{i-1}(\gamma)$ and $b_i$
neighbors of $\delta$ in $\Gamma_{i+1}(\gamma)$.
Furthermore, this graph is called {\em distance-transitive}, if
for any pair of vertices $\gamma, \delta$ at distance
$d(\gamma, \delta)$ there is an automorphism $\pi$ from
$\mbox{\rm Aut}(\Gamma)$ which moves this pair $(\gamma,\delta)$ to any other given
pair $\gamma', \delta'$  of vertices at the same distance
$d(\gamma, \delta) = d(\gamma', \delta')$.
\end{definition}

The sequence $\{b_0, b_1, \ldots, b_{D-1};
c_1, c_2, \ldots, c_D\},$ where $D$ is the diameter of $\Gamma,$
is called the {\em intersection array} of $\Gamma$. The numbers
$c_i, b_i,$ and $a_i,$ where $a_i=k- b_i - c_i,$ are called
{\em intersection numbers}. Clearly $b_0 = k,~~b_D = c_0 = 0,~~c_1 = 1.$

Let $C$ be a linear CR code with covering radius $\rho$ and
intersection array  $\{b_0, \ldots , b_{\rho-1}; c_1, \ldots c_{\rho}\}$.
Let $\{A\}$ be the set of cosets of $C$. Define the graph $\Gamma_C,$ which is
called the {\em coset graph of $C$}, taking all different cosets $A = C+ x$ as
vertices, with two vertices $\gamma = \gamma(A)$ and $\gamma' = \gamma(A')$
adjacent if and only if the cosets $A$ and $A'$ contain neighbor vectors,
i.e., there are $v \in A$ and $ v' \in A'$ such that $d( v,  v') = 1$.

\subsection{Parameters and properties of CR codes}\label{props}

For a code $C$, we denote by $s+1$ the number of nonzero terms in the dual
distance distribution of $C$, obtained by MacWilliams' transform. The parameter
$s$ was called {\em external distance} by Delsarte \cite{Dels}, and is equal
to the number of nonzero weights of $C^\perp$ if $C$ is linear. This is a
key parameter as we will see in several properties. Recall that $B$ is the
outer distribution matrix of $C$ and $b+1$ denotes the number of different rows of $B$.

\begin{theorem}\label{params}
The following statements hold:
\begin{itemize}
\item[(i)] {\cite{Dels}} $\Rank(B)= s+1$.
\item[(ii)] {\cite{Sole}} $b\geq s$.
\item[(iii)] {\cite{Dels}} $\rho\leq s$.
\end{itemize}
\end{theorem}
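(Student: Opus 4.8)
The plan is to reduce all three statements to a single rank computation, namely (i), and then obtain (ii) and (iii) as short corollaries via elementary rank inequalities. The setup I would use is the Bose--Mesner algebra $\mathcal{A}$ of the Hamming scheme $H(n,q)$ on the point set $\F_q^n$. Let $D_0,\dots,D_n$ be the distance matrices, $(D_i)_{x,y}=1$ iff $d(x,y)=i$, and let $E_0,\dots,E_n$ be the primitive idempotents (the orthogonal projections onto the common eigenspaces), so that $\{D_i\}$ and $\{E_j\}$ are two bases of $\mathcal{A}$ related by $E_j=q^{-n}\sum_i Q_j(i)D_i$, where the $Q_j(i)$ are the dual (Krawtchouk) eigenvalues. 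The key observation is that if $\chi_C$ denotes the $0/1$ indicator column vector of $C$ in $\R^{\F_q^n}$, then the $i$-th column of $B$ is exactly $D_i\chi_C$.

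For (i) I would compute the column rank of $B$. Since the map $M\mapsto M\chi_C$ is linear on $\mathcal{A}$ and $\{D_i\}$, $\{E_j\}$ are both bases, the column space $\operatorname{span}\{D_i\chi_C\}$ coincides with $\operatorname{span}\{E_j\chi_C\}$. The vectors $E_j\chi_C$ lie in pairwise orthogonal eigenspaces, so the nonzero ones are automatically linearly independent; hence $\Rank(B)$ equals the number of indices $j$ with $E_j\chi_C\neq 0$. It remains to identify this count with $s+1$. Here is the heart of the argument: because $E_j$ is a symmetric idempotent,
\[
\|E_j\chi_C\|^2=\chi_C^{\top}E_j\chi_C=\frac{1}{q^n}\sum_i Q_j(i)\,\chi_C^{\top}D_i\chi_C=\frac{|C|}{q^n}\sum_i Q_j(i)\,a_i=\frac{|C|^2}{q^n}\,a'_j,
\]
where $a_i$ is the inner distribution of $C$ and $a'_j=\tfrac{1}{|C|}\sum_i Q_j(i)a_i$ is its MacWilliams (dual) transform. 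Thus $E_j\chi_C=0$ if and only if $a'_j=0$, so the number of nonzero projections is precisely the number of nonzero terms $a'_j$, i.e.\ $s+1$, giving $\Rank(B)=s+1$.

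Granting (i), statement (ii) is immediate: the distinct rows of $B$ span the same row space as all of $B$, so the rank never exceeds the number of distinct rows, i.e.\ $s+1=\Rank(B)\le b+1$, whence $b\ge s$. For (iii) I would exhibit $\rho+1$ linearly independent rows. Since every subconstituent $C(0),\dots,C(\rho)$ is nonempty, pick $x_\ell$ with $d(x_\ell,C)=\ell$ for each $\ell=0,\dots,\rho$. Then $B_{x_\ell,i}=0$ for $i<\ell$ while $B_{x_\ell,\ell}>0$, so the $(\rho+1)\times(\rho+1)$ submatrix formed by these rows in columns $0,\dots,\rho$ is upper triangular with nonzero diagonal; hence these rows are independent and $\Rank(B)\ge \rho+1$. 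Combined with (i) this yields $\rho\le s$.

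The genuinely substantive step is the displayed identity in (i), which is where the association-scheme machinery enters: one must know that the $E_j$ form an orthogonal family of idempotents expressible through the dual eigenvalues $Q_j(i)$, and that $\chi_C^{\top}D_i\chi_C=|C|a_i$. Once that identity --- essentially Delsarte's duality, including the nonnegativity $a'_j\ge 0$ that falls out of $\|E_j\chi_C\|^2\ge 0$ --- is in place, the rest is bookkeeping. Parts (ii) and (iii) are then elementary linear algebra, the only non-formal input being the nonemptiness of all subconstituents up to the covering radius, which is built into the definition of $\rho$.
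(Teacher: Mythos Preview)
Your argument is correct. Note, however, that the paper is a survey and does not supply its own proof of this theorem: it simply records the three statements with attributions to Delsarte and Sol\'e. So there is no ``paper's proof'' to compare against in a line-by-line sense.

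That said, your approach is precisely the classical one from the cited sources. Part (i) is Delsarte's computation: writing the columns of $B$ as $D_i\chi_C$, changing basis to the primitive idempotents $E_j$, and identifying the nonzero projections $E_j\chi_C$ via $\|E_j\chi_C\|^2=|C|^2 q^{-n}a'_j$ with the nonzero entries of the MacWilliams transform of the inner distribution. Part (iii) is also Delsarte's: the triangular argument on rows $B_{x_\ell}$ with $d(x_\ell,C)=\ell$ gives $\rho+1$ independent rows, hence $\rho+1\le \Rank(B)=s+1$. Part (ii), credited to Sol\'e, is exactly the one-line observation you give: $\Rank(B)$ cannot exceed the number of distinct rows. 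Everything you wrote is sound, and the only substantive ingredient---the orthogonality and idempotency of the $E_j$ together with the change-of-basis $E_j=q^{-n}\sum_i Q_j(i)D_i$---is standard Bose--Mesner algebra for the Hamming scheme.
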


Hence, we have the inequalities $e\leq\rho\leq s \leq b$. Now, we have the following characterization.

\begin{theorem}\label{equivs}
$\mbox{ }$
\begin{itemize}
\item[(i)] {\cite{Dels}} A code $C$ is perfect if and only if $e=s$.
\item[(ii)] {\cite{GvT}} A code $C$ is UP  if and only if $s=e+1$.
\item[(iii)] {\cite{Sole}} If $C$ is CR, then $\rho=s$.
\item[(iv)] {\cite{BZ}} $C$ is UP in the wide sense if and only if $\rho=s$.
\end{itemize}
\end{theorem}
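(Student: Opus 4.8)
The plan is to run every equivalence off the chain $e\le\rho\le s\le b$ of \Cref{params}, reserving a single analytic ingredient for the one genuinely hard implication. First I would dispose of the routine parts. Statement (iii) is immediate: a completely regular code satisfies $b=\rho$ (\Cref{Def1}), and then $\rho\le s\le b=\rho$ forces $\rho=s$. The backward directions of (i) and (ii) use the same squeeze: $e=s$ gives $e\le\rho\le s=e$, hence $\rho=e$ and $C$ is perfect; while $s=e+1$ gives $\rho\in\{e,e+1\}$, and $\rho=e$ is impossible since it would make $C$ perfect and hence $s=e$ by (i), so $C$ is quasi-perfect. For the forward directions of (i) and (ii) I would invoke the regularity facts already recorded, namely that a perfect code is CR and that a uniformly packed code is CR (both stated above); in each case (iii) then gives $\rho=s$, and reading off $\rho=e$ (perfect) or $\rho=e+1$ (quasi-perfect) yields $s=e$ and $s=e+1$ respectively.

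The substance is concentrated in (iv), and the plan is to prove it by locating the all-ones vector $\mathbf 1$ (of length $q^n$) inside the column space of $B$. Write $B^{(j)}$ for the $j$-th column and $V_m=\langle B^{(0)},\dots,B^{(m)}\rangle$. Since $C(i)\ne\emptyset$ for every $0\le i\le\rho$, choosing $x_i\in C(i)$ makes $(B_{x_i,j})_{0\le i,j\le\rho}$ triangular with nonzero diagonal, so $B^{(0)},\dots,B^{(\rho)}$ are linearly independent and $\dim V_\rho=\rho+1$. As each row of $B$ sums to $|C|$, we have $\mathbf 1=\frac1{|C|}\sum_{j=0}^n B^{(j)}$, so $\mathbf 1$ lies in the full column space, of dimension $\Rank(B)=s+1$. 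If $\rho=s$ then $\dim V_\rho=s+1$ equals this rank, so $V_\rho$ is the whole column space and contains $\mathbf 1$; unwinding the definition, $\sum_{j=0}^\rho\beta_jB_{x,j}=1$, i.e.\ $C$ is UP in the wide sense. This settles the backward half of (iv) with only elementary linear algebra.

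The hard direction, and the main obstacle, is the converse $\mathbf 1\in V_\rho\Rightarrow\rho=s$: a dimension count does not suffice, since the proper subspace $V_\rho$ could a priori still contain $\mathbf 1$, so I must pin down exactly where $\mathbf 1$ sits. For this I would move into the Bose--Mesner algebra of the Hamming scheme $H(n,q)$. Writing $D_j$ for the distance-$j$ adjacency matrix and $\chi_C$ for the indicator of $C$, one has $B^{(j)}=D_j\chi_C$, and since $H(n,q)$ is metric each $D_j$ is a polynomial of degree $j$ in $D_1$; hence $V_m=\langle\chi_C,D_1\chi_C,\dots,D_1^m\chi_C\rangle$ is a Krylov space. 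Decomposing $\chi_C=\sum_{l=0}^s f_{k_l}$ into eigenvectors of $D_1$ for the $s+1$ distinct eigenvalues $\theta_l=K_1(k_l)$ attached to the nonzero dual weights $0=k_0<\dots<k_s$ (distinct because $K_1$ is linear), the top component is $f_{k_0}=\frac{|C|}{q^n}\mathbf 1$. Consequently $\mathbf 1\in V_m$ holds iff some polynomial $P$ of degree $\le m$ isolates this component, i.e.\ $P(\theta_0)=1$ and $P(\theta_l)=0$ for $l\ge1$; these are $s$ forced distinct roots, so $\deg P\ge s$ and therefore $m\ge s$. Taking $m=\rho$ gives $\rho\ge s$, hence $\rho=s$. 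This Lloyd-type degree estimate is the crux of the entire theorem; note it would also prove the forward half of (i) self-containedly, since perfectness gives $(\sum_{j=0}^eD_j)\chi_C=\mathbf 1$ and forces a degree-$e$ polynomial to vanish at the $s$ nonzero weights, whence $s\le e$.

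Finally I would upgrade (iv) to the narrow packing condition needed for the backward half of (ii). Assuming $s=e+1$, the backward half of (iv) gives $\sum_{j=0}^{e+1}\beta_jB_{x,j}=1$ with $\beta_{e+1}\ne0$ (otherwise $\mathbf 1\in V_e$, impossible since $e<s$). Because $C$ is quasi-perfect its balls of radius $e$ are disjoint, so for $d(x,C)=i\le e$ the only nonzero entry among $B_{x,0},\dots,B_{x,e}$ is $B_{x,i}=1$, while for $d(x,C)=e+1$ all of them vanish; substituting shows $B_{x,e+1}$ takes the single value $(1-\beta_e)/\beta_{e+1}$ on $C(e)$ and $1/\beta_{e+1}$ on $C(e+1)$. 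These are the two constants $\lambda,\mu$ of \Cref{def:2.5}, so $C$ is UP, completing (ii).
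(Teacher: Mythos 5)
Your proposal is correct, but there is nothing in the paper to compare it against line by line: \Cref{equivs} is stated in this survey purely as a compilation of known results, with citations to Delsarte, Goethals--van Tilborg, Sol\'e and Bassalygo--Zinoviev standing in place of proofs. What you have written is essentially a faithful reconstruction of the classical arguments from those sources: the squeeze arguments off $e\leq\rho\leq s\leq b$ and the triangularity of $(B_{x_i,j})$ are standard, the column-space argument for $\rho=s\Rightarrow$ UP in the wide sense is Bassalygo--Zinoviev's, and the Lloyd-type degree bound in the Bose--Mesner algebra (a polynomial of degree $\leq\rho$ must kill the $s$ nonzero eigencomponents of $\chi_C$ while not killing the all-ones component) is exactly Delsarte's mechanism; your closing derivation of the two constants $\lambda,\mu$ of \Cref{def:2.5} from the wide-sense packing equation is also how the literature links the two notions. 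Two cosmetic points you could tighten: the definition of UP in the wide sense asks for \emph{rational} $\beta_i$, and your linear-algebra solution is a priori real --- but since $B$ and the all-ones vector are integral, solvability over $\mathbb{R}$ implies solvability over $\Q$, so nothing is lost; and your identification of $s+1$ with the number of nonzero eigenprojections of $\chi_C$ is not an extra assumption but follows from $\Rank(B)=s+1$ (\Cref{params}(i)) together with the Krylov-space description of the column space, so the argument is self-consistent with the facts the paper records. In short: the proof is sound, and it supplies exactly the content the survey delegates to its references.
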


The converse of (iii) is not true. Delsarte gives the example of a
$[48,24,12]$ extended quadratic residue code with $\rho=s=8$
and $b=14$ \cite{Dels}.  However the same condition is necessary and
sufficient for UP codes in the wide sense. Therefore, we obtain

\begin{corollary}
If $C$ is CR, then $C$ is UP in the wide sense.
\end{corollary}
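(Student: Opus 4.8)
The plan is to obtain the result as an immediate consequence of \Cref{equivs}. First I would invoke part (iii): since $C$ is CR, we have $\rho=s$. Then I would apply part (iv), which asserts that $C$ is UP in the wide sense if and only if $\rho=s$; its forward direction, applied to the equality just established, yields that $C$ is UP in the wide sense. This simply chains the two characterizations through the common parameter equality $\rho=s$, and there is essentially no obstacle beyond citing the correct parts. The connective ``Therefore, we obtain'' preceding the statement signals that this is the intended one-line argument.

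If instead one wanted a self-contained argument that does not route through part (iv), I would work directly with the outer distribution matrix $B$. Since $C$ is CR, the row $B_x$ depends only on $\ell=d(x,C)$, so by the equivalent formulation $b=\rho$ the matrix $B$ has exactly $\rho+1$ distinct rows $R_0,\ldots,R_\rho$, indexed by $\ell\in\{0,\ldots,\rho\}$. The key structural observation is that $R_\ell$ has its first nonzero entry in column $\ell$: no codeword lies closer to $x$ than $d(x,C)=\ell$, so $(R_\ell)_i=0$ for $i<\ell$ while $(R_\ell)_\ell\geq 1$. Hence the $(\rho+1)\times(\rho+1)$ submatrix formed by the distinct rows and the columns $0,1,\ldots,\rho$ is lower triangular with nonzero diagonal, and therefore invertible.

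From this invertibility, the all-one vector of length $\rho+1$ lies in the column span of that submatrix, so there exist rational numbers $\beta_0,\ldots,\beta_\rho$ with $\sum_{i=0}^{\rho}\beta_i (R_\ell)_i=1$ for every $\ell$; rationality is automatic since $B$ has integer entries and the system is triangular. Because every row of $B$ equals some $R_\ell$, the same scalars satisfy $\sum_{i=0}^{\rho}\beta_i B_{x,i}=1$ for all $x\in\F_q^n$, which is precisely the defining condition of uniform packing in the wide sense. I expect the only genuinely conceptual step to be the triangularity observation, with the remaining solvability and extension to all rows being routine bookkeeping.
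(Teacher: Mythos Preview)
Your primary argument --- deducing $\rho=s$ from part (iii) of \Cref{equivs} and then invoking part (iv) --- is exactly how the paper obtains the corollary, as the phrase ``Therefore, we obtain'' indicates. The alternative direct argument via the lower-triangular submatrix of the outer distribution matrix is also correct, though the paper does not spell it out at this point.
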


Many UP in the wide sense codes which are not CR
are constructed in \cite{Rif2,Lift}.
The following properties are due to Delsarte \cite{Dels}.

\begin{theorem}[\protect{\cite{Dels}}]
$\mbox{ }$
\begin{itemize}
\item[(i)] If $t\geq d-s \geq 0$, then $C$ is $t$-regular.
\item[(ii)] If $C$ is $t$-regular, with $t\geq s-1$, and
$d\geq 2 s -1$, then $C$ is CR.
\end{itemize}
\end{theorem}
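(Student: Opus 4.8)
The plan is to work inside the Bose--Mesner algebra of the Hamming scheme $H(n,q)$ and to reduce every claim to a statement about the first $s+1$ entries of a row of $B$. Writing $\chi_C$ for the characteristic (column) vector of $C$ and $D_i$ for the distance-$i$ matrix of the scheme, one has $B_{x,i}=(D_i\chi_C)_x$. Decomposing $D_i=\sum_j K_i(j)E_j$ over the primitive idempotents $E_j$ and setting $J=\{j : E_j\chi_C\neq 0\}$, orthogonality of the $E_j$ gives $B_{x,i}=\sum_{j\in J}K_i(j)\,\gamma_j(x)$ with $\gamma_j(x)=(E_j\chi_C)_x$; since $|J|=s+1$ this recovers $\Rank(B)=s+1$. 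The step I would isolate as a lemma is this: the $(s+1)\times(s+1)$ matrix $[K_i(j)]_{0\le i\le s,\;j\in J}$ is invertible, its rows being the Krawtchouk polynomials $K_0,\dots,K_s$ of pairwise distinct degrees $0,\dots,s$ evaluated at the $s+1$ distinct points of $J$. Consequently the map sending a row $B_x$ to its truncation $(B_{x,0},\dots,B_{x,s})$ is a fixed isomorphism from the row space onto $\mathbb{R}^{s+1}$; in particular \emph{each row $B_x$ is determined, by one and the same linear rule, from its first $s+1$ coordinates.} Everything then comes down to controlling those coordinates.

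For part~(i), fix $x$ with $\ell:=d(x,C)\le t$ and let $c_0$ be a nearest codeword. Every other codeword $c$ satisfies $d(x,c)\ge d(c,c_0)-\ell\ge d-\ell$, so the support of $B_x$ is contained in $\{\ell\}\cup[d-\ell,n]$; in particular $B_{x,i}=0$ for $0\le i<\ell$ and for $\ell<i<d-\ell$. The hypothesis tying $t$ to $d-s$ is precisely what forces the gap $d-\ell$ to be large relative to $s$, so that among the first $s+1$ coordinates every entry but the single value $B_{x,\ell}$ is forced to vanish (up to the boundary case discussed below); that last unknown is then pinned down by the universal normalization $\sum_{i}B_{x,i}=|C|$. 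Hence $(B_{x,0},\dots,B_{x,s})$ depends only on $\ell$, and by the lemma so does $B_x$, which is $t$-regularity.

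For part~(ii), recall $\rho\le s$. If $\rho\le s-1$ then, since $t\ge s-1$, $t$-regularity already covers every vector of $\F_q^n$ and $C$ is CR (whence $\rho=s$, so in fact this case does not arise). So assume $\rho=s$; as $C$ is $(s-1)$-regular by hypothesis, only the top cell $\ell=s$ remains. For $x$ with $d(x,C)=s$ the first $s$ coordinates of $B_x$ vanish, so by the lemma $B_x$ is a scalar multiple of a single fixed vector $\bw$ in the row space (the preimage of $(0,\dots,0,1)$); the normalization $\sum_i B_{x,i}=|C|$ fixes the scalar, whence all cells at distance $s$ share the same row. Thus $C$ is $s$-regular, i.e. $\rho$-regular, i.e. CR. I expect the condition $d\ge 2s-1$ to enter here as a nondegeneracy guarantee: it ensures the top cell is genuinely occupied (so $\rho=s$) and that $\bw$ has nonzero coordinate sum, so the normalization really does determine the scalar.

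The main obstacle is the \emph{boundary cell} in each part: when $d(x,C)$ attains its extreme admissible value, the minimum-distance vanishing ranges leave not one but two undetermined entries among $B_{x,0},\dots,B_{x,s}$, and the single normalization equation no longer suffices. To close this case I expect to need the finer arithmetic of the scheme rather than pure dimension counting; concretely, the three-term recurrence $D_1D_i=(n-i+1)(q-1)D_{i-1}+i(q-2)D_i+(i+1)D_{i+1}$ applied to $\chi_C$, which relates $\sum_{z\sim x}B_{z,i}$ to $B_{x,i-1},B_{x,i},B_{x,i+1}$ and lets one propagate determinacy outward from the lower cells (a Lloyd-theorem--style step). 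Matching the resulting constants exactly to the stated thresholds $t$ versus $d-s$ and $d\ge 2s-1$ is where the real bookkeeping lies.
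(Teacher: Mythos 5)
The paper itself contains no proof of this theorem---it is quoted from Delsarte---so your proposal has to be judged against the classical argument, and in framework it coincides with it: the Bose--Mesner decomposition, the generalized-Vandermonde lemma that truncation to the first $s+1$ coordinates is injective on the span $V$ of the vectors $v_j=(K_0(j),\dots,K_n(j))$, $j\in J$, and the triangle-inequality support analysis are all exactly the right ingredients. The problems are that the two places you explicitly leave open are where the whole content of the theorem sits, that the tool you propose for closing them is the wrong one, and that part (i) cannot be proved as printed because, read literally, it is false. The paper's own example shows this: the $[48,24,12]$ extended quadratic residue code has $\rho=s=8$ and $b=14$, so taking $t=8\geq d-s=4$ the literal statement would make it $\rho$-regular, i.e.\ CR, i.e.\ $b=\rho$, a contradiction. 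The inequality is a misprint for $0\le t\le d-s$ (the claim is that $C$ is $(d-s)$-regular), and your own argument tacitly assumes the corrected version: you need the gap $d-\ell>s$, which follows from $\ell< d-s$ but certainly not from the printed hypothesis $t\ge d-s$; your assertion that ``the hypothesis tying $t$ to $d-s$ is precisely what forces the gap to be large'' is the step that fails.

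The boundary cells are then a genuine gap, but they do not require ``finer arithmetic of the scheme'': contrary to your claim, the single normalization equation does suffice, once you add one more application of your own Vandermonde lemma. Since $0\in J$ always (because $E_0\chi_C=(|C|/q^n)\mathbf{1}\neq 0$) and $K_0,\dots,K_{s-1}$ have degrees $0,\dots,s-1$, the $s\times s$ minor $[K_i(j)]$ with $0\le i\le s-1$, $j\in J\setminus\{0\}$, is also invertible; hence the unique $\bw\in V$ with truncation $(0,\dots,0,1)$ has nonzero $E_0$-coefficient $c_0$, and therefore $\sum_{i=0}^n \bw_i=q^nc_0\neq 0$, since $\sum_i K_i(j)$ equals $q^n$ for $j=0$ and $0$ otherwise. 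This unconditional nondegeneracy closes everything: in (i), at the boundary cell $\ell=d-s$ one has $\ell<s$ unless $d=2s$ (the pure scalar case), hence $2\ell<d$, so $B_{x,\ell}=1$ is forced and $B_{x,s}$ is the sole remaining unknown, which normalization now determines; in (ii) it fixes the scalar in $B_x=B_{x,s}\bw$. It also corrects your guesses about $d\ge 2s-1$: it is not what makes $\sum_i\bw_i\neq 0$ (that is automatic), while your other guess---that it forces $\rho=s$---is right but left unproved; the proof is that $d\ge 2s-1$ gives $e\ge s-1$, hence $\rho\ge s-1$, and $\rho=s-1$ would make $C$ perfect, whence $s=e=s-1$, absurd. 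The three-term recurrence you invoke is the tool for Neumaier's equitable-partition formulation and is a detour here; the Lloyd-style bookkeeping you anticipate never materializes in the correct proof.
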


We can strengthen these conditions if all weights of $C$ are even.

\begin{corollary}
Let $C$ be an even code. Then,
\begin{itemize}
\item[(i)] \cite{BZ} If $t\geq d-s + 1 \geq 0$, then $C$ is $t$-regular.
\item[(ii)] \cite{BCN} If $d\geq 2s -2$, then $C$ is CR.
\end{itemize}
\end{corollary}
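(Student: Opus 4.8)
The plan is to reduce to the two parts of the preceding theorem (Delsarte's) by exploiting the single extra constraint that evenness imposes on the outer distribution matrix $B$. The decisive observation is a parity phenomenon: if $C$ is a binary even code and $x\in\F_2^n$, then for every codeword $c$ one has $d(x,c)=\wt(x+c)\equiv\wt(x)\pmod 2$, so the whole row $B_x$ is supported on coordinates $i$ of a single parity, namely $i\equiv\wt(x)\pmod 2$. Equivalently, by MacWilliams the weight enumerator $\sum_i B_{x,i}z^i$ of each translate is, up to a power of $z$ fixing its parity, an even or an odd polynomial; since $\Rank(B)=s+1$ by \Cref{params}, the column space $V$ of $B$ has dimension $s+1$ and is invariant under the involution $z\mapsto -z$, so it splits as $V=V_+\oplus V_-$ into its even- and odd-supported parts. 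This splitting is the structural ingredient absent from the general theorem.

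For (i), I would revisit why the general bound $t\ge d-s$ yields $t$-regularity. For $x$ with $d(x,C)=\ell\le t$ the row $B_x$ lies in $V$ and, by the minimum-distance gap, vanishes for $i<\ell$ and for $\ell<i<d-\ell$; together with the normalization $\sum_i B_{x,i}=|C|$ these vanishing conditions force $B_x$ to depend only on $\ell$ once $\ell$ is large enough, and the threshold comes out as $d-s$. In the even case $B_x$ moreover lies in the single summand $V_\epsilon$ with $\epsilon\equiv\ell\pmod 2$, whose dimension is strictly smaller than $\dim V=s+1$, and every coordinate of the opposite parity is automatically zero. Counting the forced-zero coordinates of $B_x$ inside $V_\epsilon$ against $\dim V_\epsilon$ shows that one reaches the number of constraints needed to pin $B_x$ down exactly one step earlier, i.e. already when $t\ge d-s+1$. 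I would carry this out by computing $\dim V_\pm$ from the way the dual distances pair under $w\leftrightarrow n-w$ (using $\mathbf{1}\in C^\perp$, which holds because $C$ is even) and matching the count.

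For (ii), assume $d\ge 2s-2$. Then $d-s+1\ge s-1$, so part (i) already gives that $C$ is $(s-1)$-regular (indeed $t$-regular up to $t=d-s+1$). It remains to upgrade $(s-1)$-regularity to complete regularity, i.e. to $\rho$-regularity, recalling $\rho\le s$. Here I would run the even analogue of the CR criterion in the preceding theorem: being $(s-1)$-regular together with the parity splitting $V=V_+\oplus V_-$ determines the rows $B_x$ for the one remaining distance class as well, since that class lives in the opposite parity summand and is controlled by the same $(s+1)$-dimensional data. In effect the parity symmetry supplies the single distance class that the general argument can only reach under the stronger hypothesis $d\ge 2s-1$, so that hypothesis relaxes by one to $d\ge 2s-2$. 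The reversed-array complete regularity of the top subconstituent $C(\rho)$ noted earlier can be used to dispose of this last class cleanly.

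The main obstacle is making the ``one extra constraint'' argument quantitatively exact: one must check that the forced-zero positions inside the parity summand $V_\epsilon$ are in general position with respect to $V_\epsilon$, a non-degeneracy (Vandermonde/Krawtchouk-type) condition, so that the constraint count meets $\dim V_\epsilon$ precisely one step sooner, and neither more nor less. Correctly computing $\dim V_+$ and $\dim V_-$ from the pairing of dual distances, and disposing of the boundary cases according to the parity of $d$ and of $\ell$, is where the genuine work lies; the rest is bookkeeping around the preceding theorem.
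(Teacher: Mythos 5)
You should first be aware that the paper contains no proof of this corollary: both items are quoted with citations (\cite{BZ} for (i), \cite{BCN} for (ii)), just like the preceding Delsarte theorem, so your argument has to stand entirely on its own. It does not, because the step you explicitly postpone (``the genuine work'') is not bookkeeping but the whole mathematical content. Your reduction writes $B_{x,i}=\binom{n}{i}f_x(i)$ (binary case) with $f_x$ in the span $V$ of the Krawtchouk polynomials $K_k$ indexed by $\{0\}$ and the dual distances, $\dim V=s+1$, and wants the forced zeros of $B_x$ plus the normalization to pin $f_x$ down inside the parity summand $V_\epsilon$. For that you need the corresponding evaluation functionals to be linearly independent on $V_\epsilon$, and this cannot be waved through as a ``Vandermonde/Krawtchouk-type'' condition: the analogous independence on the full space $V$ is simply \emph{false} for even codes. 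Evenness forces $n$ to be a dual distance, so $\frac{1}{2}(K_0-K_n)$, which is the indicator function of the odd integers, lies in $V$ and vanishes at all $\lceil (n+1)/2\rceil$ even points --- far more than $s$ zeros. (This is exactly why Delsarte's thresholds cannot be improved by naive counting, and why the evenness must enter through the splitting rather than alongside it.) Moreover, your advertised conclusion ``exactly one step earlier'' does not follow from $\dim V_\epsilon<\dim V$: passing to $V_\epsilon$ also discards roughly half of the forced zeros, since those of the wrong parity become vacuous, so equations and unknowns both halve. If one actually does the count, writing $s+1=2p+\delta$ with $\delta$ the indicator that $n/2$ is a dual degree, so that $\dim V_+=p+\delta$ and $\dim V_-=p$, one finds that layers $\ell$ of one parity gain a step while those of the other parity do not, and the two thresholds merge into $d-s+1$ only because $\delta$ forces the parity of $s$. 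None of this delicate cancellation is carried out in your text.

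Part (ii) has an additional, structural flaw: you propose to settle the last subconstituent by invoking the fact that for a CR code $C$ the top cell $C(\rho)$ is CR with reversed intersection array. That fact (quoted in the paper from \cite{Neum}) presupposes that $C$ is completely regular, which is precisely what (ii) asks you to prove; using it here is circular. The last layer $\ell=s$ must instead be handled by the same constraint count as in (i) --- for $x\in C(s)$ the forced zeros at $0,\ldots,s-1$ of the correct parity together with the normalization match $\dim V_\epsilon$ exactly --- and therefore hinges, once again, on the independence statement you left unproved. In summary: the parity splitting of the rows and of $V$ is the right structural ingredient and your outline points in a workable direction, but as submitted it is a plan whose decisive steps are missing, together with one circular appeal.
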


We say that a binary code $C$ of length $n$ is {\em self-complementary}
(also called {\em antipodal}), if for any codeword $c \in C$, there is
a codeword $\bar{c}$, which is at distance $n$ from $c$, i.e.
$\bar{c} = c + (1, \ldots, 1)$. Evidently, if $C$ is a binary CR code
and contains the codeword of weight $n$, then $C$ is self-complementary.
If $(1, \ldots, 1) \not \in C$, then $C$ is non-self-complementary.

\begin{theorem}\label{non-self-com}
Let $C$ be a non-self-complementary CR code with covering radius $\rho$.
\begin{itemize}
\item[(i)] \cite{Bor3} The set $C(\rho)$ is a translate of $C$ by $(1, \ldots, 1)$.
\item[(ii)] \cite{comb1} The set $C \cup C(\rho)$ is a CR code.
\end{itemize}
\end{theorem}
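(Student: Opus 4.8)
The plan is to exploit the antipodal translation $\sigma\colon \F_2^n\to\F_2^n$, $\sigma(x)=x+(1,\dots,1)$, which is an isometry satisfying $d(\sigma x,v)=n-d(x,v)$ for every $v$ (because $\wt(\mathbf{1}+w)=n-\wt(w)$). First I would record that for a CR code the whole weight distribution of a coset $C+x$, and in particular the largest distance $M_\ell:=\max\{d(x,v):v\in C\}$, depends only on $\ell=d(x,C)$. Combined with the displayed identity this gives $d(\sigma x,C)=n-M_\ell$, so $\sigma$ sends each subconstituent $C(\ell)$ into a single subconstituent $C(\pi(\ell))$ for a well-defined map $\pi$. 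Since $\sigma$ is a bijection of $\F_2^n$ that sends the blocks of the distance partition into blocks, $\pi$ is a permutation of $\{0,\dots,\rho\}$, and $\sigma^2=\mathrm{id}$ forces $\pi^2=\mathrm{id}$, so $\sigma(C(\ell))=C(\pi(\ell))$ exactly.

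For part (i) everything reduces to identifying $\pi(0)$, and this is the main obstacle: a priori $\sigma(C)=C+(1,\dots,1)$ could be any block $C(\pi(0))$, and one must single out $\pi(0)=\rho$. The idea I would use is to pass to the quotient of the distance partition. Because every vector of $C(\ell)$ has a neighbour one step closer to $C$ (so $c_\ell\ge 1$ for $\ell\ge 1$) and, by connectivity of the hypercube, a neighbour one step farther whenever $\ell<\rho$ (so $b_\ell\ge 1$), the blocks $C(0),\dots,C(\rho)$, with adjacency ``contains a neighbouring pair'', form a path on $\rho+1$ vertices. As $\sigma$ is an isometry carrying blocks onto blocks, $\pi$ is a graph automorphism of this path; for $\rho\ge 1$ the only such automorphisms are the identity and the reversal $\ell\mapsto\rho-\ell$. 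Since $C$ is non-self-complementary we have $(1,\dots,1)\notin C$, hence $\pi(0)=d((1,\dots,1),C)\neq 0$ and $\pi$ cannot be the identity; therefore $\pi$ is the reversal and $\pi(0)=\rho$, i.e.\ $C(\rho)=\sigma(C)=C+(1,\dots,1)$. (Here $\rho\ge 1$ is automatic, as $\rho=0$ would force $C=\F_2^n\ni(1,\dots,1)$.)

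For part (ii) I would set $\tilde{C}=C\cup C(\rho)=C\cup(C+(1,\dots,1))$, a disjoint union that is fixed by $\sigma$. Using part (i) in the form $d(\sigma x,C)=\rho-d(x,C)$, one gets $d(x,\tilde{C})=\min\{d(x,C),\,\rho-d(x,C)\}$, so $\tilde{C}$ has covering radius $\lfloor\rho/2\rfloor$ and subconstituents $\tilde{C}(j)=C(j)\cup C(\rho-j)$. To show this partition equitable, fix $j$ and $x\in\tilde{C}(j)$. If $d(x,C)=j$, complete regularity of $C$ distributes the neighbours of $x$ as $c_j,a_j,b_j$ among $C(j-1),C(j),C(j+1)$, each of which lies inside one $\tilde{C}$-block determined by $j$; hence the neighbour distribution across the $\tilde{C}(i)$ is the same for all such $x$, and likewise, with $j$ replaced by $\rho-j$, for the points with $d(x,C)=\rho-j$. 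Finally $\sigma$ fixes $\tilde{C}$ and each block $\tilde{C}(i)$ setwise while interchanging the two types inside $\tilde{C}(j)$, and being an isometry it preserves the number of neighbours in each block; this equates the two distributions. Thus every $x\in\tilde{C}(j)$ has the same neighbour counts among the $\tilde{C}(i)$, the distance partition of $\tilde{C}$ is equitable, and $\tilde{C}$ is CR. The only point requiring a separate glance is the top block, which degenerates (when $\rho$ is even, $\tilde{C}(\rho/2)=C(\rho/2)$ carries a single type), but there the same argument applies trivially.
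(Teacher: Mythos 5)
The paper states this theorem without proof, citing \cite{Bor3} for (i) and \cite{comb1} for (ii), so there is no internal argument to compare yours against; judged on its own, your proof is correct and self-contained. For (i): the identity $d(\sigma x,v)=n-d(x,v)$ gives $d(\sigma x,C)=n-\max_{v\in C}d(x,v)$, and the fact that this maximum depends only on $d(x,C)$ is exactly the paper's equivalent formulation (iii) of complete regularity (the whole coset weight distribution depends only on $d(x,C)$), so $\sigma$ does induce a well-defined involution $\pi$ of the cells. The quotient of the distance partition is genuinely a path: consecutive cells always contain a neighbouring pair, since any $x$ with $d(x,C)=\ell\geq 1$ has a neighbour at distance $\ell-1$, while the triangle inequality forbids edges between non-consecutive cells; the only automorphisms of a path on $\rho+1\geq 2$ vertices are the identity and the reversal, and non-self-complementarity kills the identity because $\pi(0)=d((1,\ldots,1),C)\neq 0$. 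For (ii): the identification $\tilde{C}(j)=C(j)\cup C(\rho-j)$ is right, and the only delicate point --- that the two types of vectors inside $\tilde{C}(j)$ have identical neighbour counts across the new cells --- is correctly settled by your observation that $\sigma$ fixes every $\tilde{C}(i)$ setwise while swapping the types; this amounts to the reversal relations $c_j=b_{\rho-j}$ and $a_j=a_{\rho-j}$, after which equitability follows and you conclude via Neumaier's characterization (the paper's \Cref{Def2}), whose equivalence with Delsarte's definition the paper records. Two minor remarks: the aside ``by connectivity of the hypercube, $b_\ell\geq 1$'' is both unneeded and not really a proof as stated (the path structure already follows from the $c$-side observation); and $\pi(0)=d(\mathbf{1},C)$ tacitly uses the paper's standing assumption $\mathbf{0}\in C$, though even without it $\pi(0)=0$ would force $\sigma(C)=C$, i.e.\ self-complementarity, so the argument survives.
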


\subsection{Necessary conditions for CR codes}\label{Lloyd}

For a given CR code $C$ with covering radius $\rho$ and
intersection numbers $a_i,b_i,c_i$, a tridiagonal matrix $A$ which is called
the {\em intersection matrix} is defined as follows:
\[
A=\left[
\begin{array}{cccccc}
a_0    & b_0   &  0    & \ldots  &  0       & 0\\
c_1    & a_1   &  b_1  & \ldots  &  0       & 0 \\
 0     & c_2   &  a_2  & \ldots  &  0       & 0 \\
       &\ldots &\ldots & \ldots  &\ldots    & \\
 0     &  0    &  0    & \ldots  &a_{\rho-1}&b_{\rho-1}\\
 0     &  0    &  0    & \ldots  & c_\rho   & a_\rho\\
\end{array}
\right]\,.
\]
The following statement is called the Lloyd's theorem for
CR codes. Recall that the eigenvalues of the
Hamming cube $H_q(n)$ are the eigenvalues of the intersection
matrix of $H_q(n)$ which are equal to
$(q-1)n-q\,j$,\;$j=0,1,\ldots,n$.

\begin{theorem}[\protect{\cite{BCN,Neum}}]\label{Lloyd1}
Let $C$ be a CR code of length $n$ with intersection matrix $A$.
Then $A$ has $\rho$ integer eigenvalues, which are eigenvalues of $H_q(n)$.
\end{theorem}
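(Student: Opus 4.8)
The plan is to identify the intersection matrix $A$ as the quotient (collapsed) matrix of the adjacency matrix of the Hamming graph $H_q(n)$ relative to the distance partition of $C$, and then to invoke the general principle that the spectrum of such a quotient matrix is contained in the spectrum of the full adjacency matrix. First I would fix the setup. Let $M$ denote the adjacency matrix of $H_q(n)$, i.e.\ the $q^n\times q^n$ matrix with $M_{x,y}=1$ exactly when $d(x,y)=1$. By \Cref{Def2}, saying that $C$ is CR means precisely that the distance partition $\{C(0),C(1),\ldots,C(\rho)\}$ is equitable: every $x\in C(\ell)$ has exactly $c_\ell$ neighbours in $C(\ell-1)$, $a_\ell$ neighbours in $C(\ell)$, and $b_\ell$ neighbours in $C(\ell+1)$, these counts being independent of the chosen representative $x$. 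Consequently the $(\rho+1)\times(\rho+1)$ tridiagonal matrix $A$ is exactly the quotient matrix of $M$ for this partition, its $(\ell,m)$ entry being the common number of neighbours in $C(m)$ of a vertex of $C(\ell)$.

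Next I would prove the lifting statement. For an eigenvector $v=(v_0,\ldots,v_\rho)^{\top}$ of $A$ with $Av=\theta v$, define $\hat v\in\R^{q^n}$, constant on cells, by $\hat v_x=v_\ell$ whenever $x\in C(\ell)$. Equitability then gives, for any $x\in C(\ell)$,
\[
(M\hat v)_x=\sum_{y:\,d(x,y)=1}\hat v_y = c_\ell v_{\ell-1}+a_\ell v_\ell+b_\ell v_{\ell+1}=(Av)_\ell=\theta v_\ell=\theta\hat v_x ,
\]
so $M\hat v=\theta\hat v$. Since $v\neq 0$ and the cells $C(\ell)$ are nonempty for $\ell\leq\rho$, the lifted vector $\hat v$ is nonzero, and hence $\theta$ is an eigenvalue of $M$. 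Therefore every eigenvalue of $A$ is an eigenvalue of $H_q(n)$.

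To conclude, I would use the spectrum of $H_q(n)$ recalled just before the statement: it consists of the integers $(q-1)n-qj$ for $j=0,\ldots,n$. Hence every eigenvalue of $A$ is an integer of this form. Moreover, $A$ is tridiagonal with $b_i c_{i+1}>0$ for $0\leq i\leq\rho-1$, the numbers $b_i$ and $c_i$ being positive on the relevant ranges, so $A$ is similar to a real symmetric tridiagonal matrix with nonvanishing off-diagonal entries and therefore has $\rho+1$ distinct real eigenvalues. The largest is the valency $b_0=n(q-1)$, the trivial eigenvalue $(q-1)n-q\cdot0$ carried by the all-ones vector; the remaining $\rho$ eigenvalues are then distinct integer eigenvalues of $H_q(n)$ of the form $(q-1)n-qj$ with $1\leq j\leq n$, which is the assertion.

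The linear-algebra lift is routine, and the simplicity of the spectrum of a tridiagonal matrix with positive off-diagonal products is classical, so it secures the count of distinct integer eigenvalues. The one point demanding care is that the quotient matrix is genuinely equal to $A$: this is exactly where equitability (\Cref{Def2}), rather than mere regularity, is indispensable, since only then are the three neighbour counts well defined independently of the representative $x\in C(\ell)$, making the collapse of $M$ to $A$ legitimate.
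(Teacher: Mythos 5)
The paper itself states this theorem without proof, quoting it from the cited references; your argument is exactly the standard proof found there: the intersection matrix $A$ is the quotient matrix of the adjacency matrix of $H_q(n)$ with respect to the equitable distance partition, eigenvectors of $A$ lift to eigenvectors of the Hamming graph by making them constant on the cells, and the tridiagonal structure with positive off-diagonal products $b_ic_{i+1}$ forces $\rho+1$ distinct real eigenvalues, so the $\rho$ nontrivial ones are among the integers $(q-1)n-qj$. Your proof is correct; the only slip is cosmetic: the Perron eigenvalue is the common row sum $a_\ell+b_\ell+c_\ell=n(q-1)$, carried by the all-ones vector, and it equals $b_0$ only when $a_0=0$ (i.e.\ $d\geq 2$; CR codes with $d=1$ exist, cf.\ the classification for $\rho=1$), so you should attribute it to the row sum rather than to ``the valency $b_0$'' --- nothing else in the argument depends on this identification.
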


Since any CR code is UP in the wide sense, there
exists another variant of this theorem which in some cases might be more useful
and which is a natural generalization of the classical Lloyd theorem for
perfect codes (see \cite{MacW}). Denote by $K_i$ the cardinality of $C(i)$.
Let $K_i= \kappa_i\,|C|$. It is easy to see that
\[
\kappa_i = \beta_i (q-1)^i\binom{n}{i},
\]
where $\beta_0, \beta_1, \ldots, \beta_\rho$ are parameters of a UP code
(see Definition 2.7).

\begin {theorem}[\protect{\cite{BZZ}}]\label{theo:1.1}
Let $C$ be a UP code in the wide sense of length $n$ with parameters
$\beta_0,\beta_1,\ldots\,\beta_\rho$. Then the polynomial
in $\xi$ of degree $\rho$
\EQ
L_\rho(n,\xi)\, =\, \sum_{r=0}^\rho ~\beta_r P_r(n,\xi),
\EN
where $P_r(n,\xi)$ is the Krawtchouk polynomial,
\[
P_r(n,\xi)\,=\,\sum_{j=o}^r
(-1)^{r-j}(q-1)^j\binom{n-\xi}{j}\binom{\xi}{r-j}\,,
\]
and for any real number $a$
\[
\binom{a}{i}\,=\,\frac{1}{i!} \; a\,(a-1)\ldots(a-i+1)\,,
\]
has $\rho$ distinct integer-valued  roots between $0$ and $n$.
\end{theorem}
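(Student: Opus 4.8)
The plan is to realize the left-hand side of the uniform-packing identity as the action of the adjacency operators of the Hamming scheme on the characteristic vector of $C$, and then to read off the roots of $L_\rho$ from the eigenspace decomposition of that vector. Concretely, let $D_0,D_1,\ldots,D_n$ be the distance-$i$ adjacency matrices of the Hamming graph $H_q(n)$, indexed by $\F_q^n\times\F_q^n$, so that $(D_i)_{x,y}=1$ exactly when $d(x,y)=i$. If $\chi_C\in\{0,1\}^{q^n}$ is the characteristic vector of $C$, then the $i$-th column of the outer distribution matrix is $D_i\chi_C$, since $(D_i\chi_C)_x=|\{v\in C:d(x,v)=i\}|=B_{x,i}$. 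Hence the defining identity $\sum_{i=0}^\rho\beta_iB_{x,i}=1$ of a wide-sense uniformly packed code becomes the single vector equation $\sum_{i=0}^\rho\beta_iD_i\chi_C=\mathbf{1}$, where $\mathbf{1}$ is the all-ones vector.

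Next I would invoke the spectral theory of the Hamming scheme. The matrices $D_0,\ldots,D_n$ are simultaneously diagonalizable with common eigenspaces $V_0,V_1,\ldots,V_n$, the eigenvalue of $D_i$ on $V_j$ is exactly the Krawtchouk number $P_i(n,j)$, and $V_0$ is spanned by $\mathbf{1}$. Writing $E_j$ for the orthogonal projection onto $V_j$ and decomposing $\chi_C=\sum_{j=0}^nE_j\chi_C$, the vector equation above turns into
$$
\sum_{j=0}^n L_\rho(n,j)\,E_j\chi_C=\mathbf{1},
$$
because $D_iE_j\chi_C=P_i(n,j)E_j\chi_C$ and $\sum_i\beta_iP_i(n,j)=L_\rho(n,j)$. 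Projecting onto each eigenspace, the $V_0$-component gives $L_\rho(n,0)E_0\chi_C=\mathbf{1}$, which is consistent and forces $L_\rho(n,0)\neq 0$ since $E_0\chi_C=\tfrac{|C|}{q^n}\mathbf{1}\neq\mathbf{0}$; while for every $j\geq 1$ we obtain $L_\rho(n,j)E_j\chi_C=\mathbf{0}$. Thus $L_\rho(n,j)=0$ for each $j\geq 1$ with $E_j\chi_C\neq\mathbf{0}$.

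It then remains to count the indices $j\geq1$ with $E_j\chi_C\neq\mathbf{0}$. An index $j$ contributes a nonzero projection precisely when the $j$-th component $B'_j$ of the dual distance distribution (the MacWilliams transform) is nonzero, since $B'_j$ is a positive multiple of $\|E_j\chi_C\|^2$; by the definition of the external distance there are $s+1$ such indices, one of which is $j=0$ (as $E_0\chi_C\neq\mathbf{0}$ always). Because $C$ is wide-sense uniformly packed, \Cref{equivs}(iv) gives $\rho=s$, so there are precisely $\rho$ indices $j_1<\cdots<j_\rho$ in $\{1,\ldots,n\}$ with $E_{j_k}\chi_C\neq\mathbf{0}$, and each is a root of $L_\rho(n,\xi)$. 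These are $\rho$ distinct integers in $\{1,\ldots,n\}$, none equal to $0$ since $L_\rho(n,0)\neq0$.

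Finally I would pin down the degree. For $x\in C(\rho)$ the nearest codeword is at distance $\rho$, so $B_{x,i}=0$ for $i<\rho$ and $B_{x,\rho}>0$; hence $\beta_\rho\neq 0$, for otherwise the identity would read $0=1$ on $C(\rho)$. As $P_\rho(n,\xi)$ has degree $\rho$ while the remaining $P_r$ have strictly smaller degree, $L_\rho(n,\xi)$ has degree exactly $\rho$, so a polynomial of this degree has at most $\rho$ roots; the $\rho$ integers found above are therefore \emph{all} of its roots, and they are simple. The main obstacle is the counting step: one must be certain that the number of nonvanishing eigenspace components of $\chi_C$ is exactly $\rho$, which hinges on the equality $\rho=s$ for wide-sense uniformly packed codes together with the correspondence between nonzero dual weights and nonzero projections; everything else is routine spectral bookkeeping.
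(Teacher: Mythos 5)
The paper itself states this theorem without proof, quoting it from \cite{BZZ}, so there is no internal proof to compare against; judged on its own merits, your argument is correct and is the standard Delsarte-style spectral proof. Realizing the columns of the outer distribution matrix as $D_i\chi_C$, diagonalizing the Hamming-scheme adjacency algebra so that $D_i$ acts on $V_j$ by $P_i(n,j)$, and identifying the indices $j$ with $E_j\chi_C\neq\mathbf{0}$ with the nonzero terms of the dual distance distribution are all sound, as is your argument that $\beta_\rho\neq 0$ (evaluate the packing identity at a point of $C(\rho)$), which pins the degree of $L_\rho$ at exactly $\rho$ and makes the $\rho$ roots simple. One point of logical hygiene deserves attention: your counting step leans on Theorem~\ref{equivs}(iv), i.e.\ $\rho=s$ for wide-sense UP codes, but the forward direction of that equivalence is itself proved by exactly the computation you have just carried out, so quoting it here has a circular flavor (and historically could not have been the route of \cite{BZZ}, which predates that equivalence). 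You can make the proof self-contained at no cost: your spectral identity already shows that every $j\geq 1$ with $E_j\chi_C\neq\mathbf{0}$ is a root of $L_\rho(n,\xi)$; since there are $s$ such indices and $\deg L_\rho=\rho$, this yields $s\leq\rho$, and combining with Delsarte's inequality $\rho\leq s$ (Theorem~\ref{params}(iii)) forces $\rho=s$, whence $L_\rho$ has precisely $\rho$ distinct integer roots in $\{1,\ldots,n\}$. With that small rearrangement the argument stands on Theorem~\ref{params}(iii) alone and simultaneously reproves the implication of Theorem~\ref{equivs}(iv) as a byproduct.
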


The next theorem generalizes the classical sphere packing condition for perfect codes to UP codes
in the wide sense (hence, to any CR code).

\begin{theorem}[\protect{\cite{BZZ}}]\label{packing} Let $C$ be a uniformly packed code
in the wide sense of length $n$ with parameters
$\beta_0,\beta_1,\ldots\,\beta_\rho$. Then
\EQ
|C|~=~\frac{q^n}{\sum_{i=0}^\rho\;\beta_i(q-1)^i \binom{n}{i}}.
\EN
\end{theorem}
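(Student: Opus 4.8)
The plan is to integrate (i.e.\ sum) the defining identity of a uniformly packed code in the wide sense over the whole ambient space and then evaluate each side by a double-counting argument. By definition, for every $x\in\F_q^n$ we have $\sum_{i=0}^\rho \beta_i B_{x,i}=1$. Summing this relation over all $q^n$ vectors $x\in\F_q^n$ gives
\[
\sum_{x\in\F_q^n}\sum_{i=0}^\rho \beta_i B_{x,i}=\sum_{x\in\F_q^n}1=q^n,
\]
and, interchanging the two finite sums, the whole problem reduces to evaluating the inner sum $\sum_{x\in\F_q^n}B_{x,i}$ for each fixed $i$, after which $|C|$ will drop out of a single linear equation.

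The key step is this inner count. Recalling that $B_{x,i}=\mid\{v\in C\mid d(x,v)=i\}\mid$, I would swap the roles of $x$ and $v$ (a Fubini-type interchange over finite index sets) to obtain
\[
\sum_{x\in\F_q^n}B_{x,i}=\sum_{v\in C}\mid\{x\in\F_q^n\mid d(x,v)=i\}\mid.
\]
For each fixed codeword $v$, the number of vectors at Hamming distance exactly $i$ from $v$ equals $(q-1)^i\binom{n}{i}$: choose the $i$ coordinates in which $x$ differs from $v$, and in each such coordinate choose one of the $q-1$ admissible nonzero deviations. Crucially this quantity does not depend on $v$, so the inner sum collapses to $|C|\,(q-1)^i\binom{n}{i}$.

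Substituting back yields $\sum_{i=0}^\rho \beta_i\,|C|\,(q-1)^i\binom{n}{i}=q^n$, and since the coefficient of $|C|$ is a fixed (and, as the formula asserts, nonzero) rational number, solving for $|C|$ gives the claimed identity. There is no genuine obstacle here; the argument is a weighted sphere-packing count, and the only points requiring care are the interchange of the two summations and the elementary verification that the sphere size $(q-1)^i\binom{n}{i}$ is independent of the center. I would also note that this formula is the natural generalization of the classical sphere-packing equality for perfect codes, and that it is consistent with the relation $\kappa_i=\beta_i(q-1)^i\binom{n}{i}$ recorded just before \Cref{theo:1.1}, since $\sum_i K_i=q^n$ together with $K_i=\kappa_i|C|$ gives exactly the same equation.
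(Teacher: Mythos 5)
Your argument is correct and complete: summing the defining identity $\sum_{i=0}^\rho\beta_iB_{x,i}=1$ over all of $\F_q^n$, interchanging the two finite sums, and counting pairs $(x,v)$ with $d(x,v)=i$ codeword-first gives $|C|\sum_{i=0}^\rho\beta_i(q-1)^i\binom{n}{i}=q^n$, and since $|C|\geq 1$ this equation itself forces the sum to be nonzero, so solving for $|C|$ involves no circularity. The paper states \Cref{packing} without proof (it is quoted from \cite{BZZ}), but your weighted sphere-packing count is precisely the argument implicit in the paper's preceding remark that $K_i=\kappa_i|C|$ with $\kappa_i=\beta_i(q-1)^i\binom{n}{i}$, since summing $K_i$ over $i=0,\ldots,\rho$ recovers exactly the same equation.
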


Some more interesting properties of CR codes
(which are also necessary conditions) can be found in \cite{BCN,Neum}.

We consider two illustrative examples \cite{BZZ}. The Preparata
$(n = 2^{2m} - 1, M = 2^{n+1-4m},d =5)$ codes $P$,~$m = 2, 3,
\ldots,$ have the following packing parameters $\beta_i$ and the
roots $\xi_i$ of the polynomial $P_\rho(n,\xi)$:
\[
\beta_0 =\beta_1 = 1,\; \beta_2 = \beta_3 = \frac{3}{n},
\]
\[
\xi_1 = \frac{1}{2}(n + 1 - \sqrt{n+1}),\;\;
\xi_2 = \frac{(n + 1)}{2},\;\; \xi_3  =  \frac{1}{2}(n+1+\sqrt{n + 1}).
\]
The interesting fact is that these codes are not only CR in $H_2(n)$ but
also in the Hamming code which contains this code $P$ (see \cite{SZZ}).

The BCH $[n = 2^{2m+1} - 1, k = n-4m-2, d = 5]$ codes  $m = 2, 3 ,
\ldots$, have the following parameters $\beta_i$ and $\xi_i$:
\[
\beta_0 = \beta_1 = 1, \; \beta_2 = \beta_3 = \frac{6}{(n-1)},
\]
\[
\xi_ 1 = \frac{n + 1}{2}\,-\, \sqrt{\frac{n + 1}{2}}, \;\;\xi_2 \,=\, \frac{n+1}{2}, \;\;\xi_3 \, = \, \frac{n+1}{2} + \sqrt{\frac{n+1}{2}}.
\]

\subsection{Completely transitive codes}\label{CTC}

Completely transitive (CT) codes were first introduced by Sol\'{e} \cite{Sole} as a
subclass of binary linear CR codes. If $C$ is a binary linear code,
then consider the natural action of $\Aut(C)$ over $\F_2^n/C$: for any coset $C+x$
and any $\sigma\in \Aut(C)$, set $\sigma(C+x)=\sigma(C)+\sigma(x)=C+\sigma(x)$.

\begin{definition}[\protect{\cite{Sole}}]\label{CTSole}
A binary linear code $C$ is {\em completely transitive} if $\Aut(C)$ gives $\rho+1$ orbits over $\F_2^n/C$.
\end{definition}

Since two cosets in the same orbit have identical weight distributions, we obtain

\begin{proposition}[\protect{\cite{Sole}}]
If $C$ is CT, then $C$ is CR.
\end{proposition}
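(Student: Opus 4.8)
The plan is to combine the weight-preserving nature of coordinate permutations with the exact count of orbits demanded by \Cref{CTSole}, and then read off the conclusion from the equivalent formulation that a code is CR precisely when the weight distribution of $C+x$ depends only on $d(x,C)$.

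First I would record the basic equivariance. For $\sigma\in\Aut(C)$ and any coset $C+x$ we have $\sigma(C+x)=C+\sigma(x)$, and since $\sigma$ only permutes coordinates it preserves the Hamming weight of every vector. Consequently $C+x$ and $C+\sigma(x)$ have the same weight distribution, and in particular the same minimum weight. As the minimum weight of $C+x$ equals $d(x,C)$, this shows that all cosets in a single $\Aut(C)$-orbit on $\F_2^n/C$ share a common distance to $C$; equivalently, every orbit is contained in one of the classes $\mathcal{O}_i=\{C+x:\,d(x,C)=i\}$.

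Next comes the counting step. The classes $\mathcal{O}_0,\mathcal{O}_1,\dots,\mathcal{O}_\rho$ are nonempty for exactly $i=0,1,\dots,\rho$ (recall $C(i)\neq\emptyset$ iff $i\leq\rho$), so they form a partition of $\F_2^n/C$ into $\rho+1$ blocks. By the previous paragraph the orbit partition refines this one. The hypothesis that $C$ is CT gives exactly $\rho+1$ orbits. Since a refinement of a partition having the same number of blocks must coincide with it, each $\mathcal{O}_i$ is a single orbit.

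Finally, any two cosets at the same distance from $C$ therefore lie in one orbit and so have identical weight distributions; that is, the weight distribution of $C+x$ depends only on $d(x,C)$, which is exactly the CR condition. The proof is short, and the only point that needs care — the mild obstacle — is the refinement argument: one must verify that each orbit sits entirely inside a single distance class \emph{before} comparing cardinalities, for it is precisely the weight-preservation of automorphisms (not merely that they permute cosets) that makes the count $\rho+1$ line up with the $\rho+1$ distance classes.
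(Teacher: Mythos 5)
Your proof is correct and takes essentially the same route as the paper: the paper's one-line justification---that two cosets in the same $\Aut(C)$-orbit have identical weight distributions---is exactly your first step, and your refinement/counting argument (each orbit lies inside a single distance class, so $\rho+1$ orbits must coincide with the $\rho+1$ classes $\mathcal{O}_0,\ldots,\mathcal{O}_\rho$) is precisely the detail the paper leaves implicit before invoking the CR criterion that the weight distribution of $C+x$ depends only on $d(x,C)$.
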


The following fact is strengthening of Theorem \ref{non-self-com}.

\begin{proposition}[\protect{\cite{comb1}}]
If $C$ is a non-self-complementary CT code, then $C \cup C(\rho)$ is CT too.
\end{proposition}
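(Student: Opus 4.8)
Here is how I would prove the statement.

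The plan is to exploit the facts already available from \Cref{non-self-com}: that $C(\rho)=C+\mathbf{1}$, where $\mathbf{1}=(1,\dots,1)$, and that $C':=C\cup C(\rho)$ is CR. First I would record that $C'$ is \emph{linear}: since $C$ is linear and $\mathbf{1}\notin C$, we have $C'=\langle C,\mathbf{1}\rangle$, so it is meaningful to ask whether $C'$ is CT. Next, every coordinate permutation $\sigma\in\Aut(C)$ fixes $\mathbf{1}$, hence fixes $C+\mathbf{1}=C(\rho)$ and therefore fixes $C'$; thus $\Aut(C)\subseteq\Aut(C')$. Writing $\rho'$ for the covering radius of $C'$, the goal becomes to show that $\Aut(C')$ — and in fact already the subgroup $\Aut(C)$ — has exactly $\rho'+1$ orbits on $\F_2^n/C'$.

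The technical heart, and the step I expect to be the main obstacle, is to show that complementation $\phi:y\mapsto\bar y=y+\mathbf{1}$ \emph{reverses the distance partition}, i.e. $\phi(C(i))=C(\rho-i)$ for all $i$, equivalently $d(y,C)+d(\bar y,C)=\rho$. I would establish this in two moves. First, because $C+\bar y$ is the bitwise complement of the coset $C+y$, its weight distribution is the reversal $w\mapsto n-w$ of that of $C+y$; since $C$ is CR the latter depends only on $i=d(y,C)$, so $d(\bar y,C)=n-\max_{c\in C}d(y,c)$ depends only on $i$. Hence $\phi$ permutes the cells, $\phi(C(i))=C(\pi(i))$ for an involution $\pi$ of $\{0,\dots,\rho\}$ with $\pi(0)=\rho$ (the case $i=0$ being exactly \Cref{non-self-com}(i)). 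Second, $\phi$ is a translation, hence an automorphism of the Hamming graph $H_2(n)$, so $\pi$ is an automorphism of the graph whose vertices are the cells $C(0),\dots,C(\rho)$, with $C(i)$ joined to $C(i')$ when they contain adjacent vectors. By the tridiagonality of the intersection matrix (together with the standard positivity $b_i>0$ for $i<\rho$ and $c_i>0$ for $i>0$) this graph is the path $0-1-\cdots-\rho$, whose only non-identity automorphism is the reversal $i\mapsto\rho-i$; since $\pi(0)=\rho\neq 0$, we conclude $\pi(i)=\rho-i$.

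Granting the reversal, the remaining steps are bookkeeping. For $y\in C(i)$ one gets $d(y,C')=\min\bigl(d(y,C),\,d(y,C(\rho))\bigr)=\min(i,\rho-i)$, so $\rho'=\lfloor\rho/2\rfloor$ and the cells of $C'$ are $C'(j)=C(j)\cup C(\rho-j)$ for $0\le j\le\lfloor\rho/2\rfloor$. Each coset of $C'$ is a pair $\{C+y,\ C+\bar y\}$ of cosets of $C$, with $C+y\in C(j)$ and $C+\bar y\in C(\rho-j)$ by the reversal. Now I would use that $C$ is CT: each cell $C(j)$ is a single $\Aut(C)$-orbit of cosets. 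Given two $C'$-cosets in $C'(j)$, I choose $\sigma\in\Aut(C)$ carrying the $C(j)$-representative of the first to that of the second; since $\sigma$ fixes $\mathbf{1}$ it commutes with $\phi$, so it simultaneously carries the complementary $C(\rho-j)$-representatives onto one another, i.e. it maps the first $C'$-coset to the second. Thus $\Aut(C)$ is transitive on the $C'$-cosets in each $C'(j)$, giving exactly $\lfloor\rho/2\rfloor+1=\rho'+1$ orbits on $\F_2^n/C'$; since the orbits of the larger group $\Aut(C')$ also cannot cross the cells of the CR code $C'$, they coincide with the cells as well, and hence $C'$ is CT.
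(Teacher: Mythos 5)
Your proposal is correct, but there is nothing in the paper to compare it against: the proposition is stated as an imported result from \cite{comb1} and the survey gives no proof of it, so your argument has to stand on its own — and it does. The linearity of $C'=C\cup C(\rho)=\langle C,\mathbf{1}\rangle$, the inclusion $\Aut(C)\subseteq\Aut(C')$, the decomposition of each $C'$-coset into the pair $\{C+y,\,C+\bar{y}\}$, and the final two-sided orbit count (the subgroup $\Aut(C)$ already achieves $\rho'+1$ orbits, while no automorphism of $C'$ can merge cosets at different distances from $C'$) are all handled correctly, including the middle cell $j=\rho/2$ when $\rho$ is even. One remark on economy: the step you single out as the technical heart can be obtained more directly. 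Since $C(\rho)=C+\mathbf{1}$ by \Cref{non-self-com}(i), you have $d(\bar{y},C)=d(y,C+\mathbf{1})=d(y,C(\rho))$, and for the equitable distance partition of a CR code the identity $d(y,C(\rho))=\rho-d(y,C)$ follows in two lines: any edge of the Hamming graph changes $d(\cdot,C)$ by at most one, giving $d(y,C(\rho))\geq\rho-d(y,C)$, while positivity of the numbers $b_j$ for $j<\rho$ (which you already invoke) lets you walk from $y$ upward one cell per step, giving $d(y,C(\rho))\leq\rho-d(y,C)$. This replaces both the weight-distribution-reversal argument and the path-automorphism argument; those are correct as you state them, but they are doing more work than the statement needs.
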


There is a strong relation between CR codes and distance-regular graphs.

\begin{proposition}\label{lem:2.5}
Let $C$ be a linear CR code with covering radius $\rho$ and
IA = $\{b_0, \ldots , b_{\rho-1}; c_1, \ldots c_{\rho}\}$
and let $\Gamma_C$ be the coset graph of $C$. Then
\begin{enumerate}
\item[(i)]\cite{BCN}  $\Gamma_C$ is
distance-regular of diameter $D=\rho$ with the same IA.
\item[(ii)]\cite{ripu} If $C$ is CT, then $\Gamma_C$ is distance-transitive.
\end{enumerate}
\end{proposition}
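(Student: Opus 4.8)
The plan is to first establish the crucial dictionary between the graph metric on $\Gamma_C$ and the metric structure of the code, namely that
\[
d_{\Gamma_C}\bigl(\gamma(C+x),\,\gamma(C+y)\bigr) = d(x-y,C),
\]
the weight of the coset $C+(x-y)$. I would prove ``$\le$'' by taking a minimum-weight representative $w\in C+(x-y)$, writing $w=u_1+\cdots+u_m$ as a sum of $m=d(x-y,C)$ weight-one vectors, and reading off the walk $C+x,\,C+x+u_1,\ldots,C+x+w=C+y$ whose consecutive cosets are adjacent by definition. For ``$\ge$'', any walk of length $\ell$ yields, by summing the weight-one differences of consecutive cosets, a representative of $C+(x-y)$ of weight at most $\ell$, so $d(x-y,C)\le\ell$. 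Since the largest coset weight is exactly the covering radius, this identity immediately gives $D=\rho$. It also shows that the translations $T_z:\gamma(C+x)\mapsto\gamma(C+x+z)$ form a group of automorphisms acting regularly on the vertices, so $\Gamma_C$ is vertex-transitive.

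For part (i) I would use vertex-transitivity to fix the base vertex $\gamma_0=\gamma(C)$; then $\gamma(C+x)$ lies at distance $i$ from $\gamma_0$ iff $d(x,C)=i$, i.e. iff $x\in C(i)$, so every representative $x$ of such a coset belongs to $C(i)$. The neighbours of $\delta=\gamma(C+x)$ are the cosets $C+x+u$ with $u$ of weight one, and by the distance identity such a neighbour lies at distance $i-1$ (resp. $i+1$) from $\gamma_0$ exactly when $x+u\in C(i-1)$ (resp. $x+u\in C(i+1)$). By \Cref{Def2} the number of weight-one $u$ with $x+u\in C(i-1)$ is $c_i$, and with $x+u\in C(i+1)$ is $b_i$, independent of $x$. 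The main obstacle is passing from these Hamming-neighbour counts to counts of \emph{distinct} neighbouring cosets: I must check that $u\mapsto C+x+u$ is injective on weight-one vectors, which holds whenever $C$ has no nonzero word of weight at most two, i.e. $d\ge 3$; this is the delicate point (for small $d$ one verifies directly that the weight-two words of $C$ act with the same multiplicity on every coset, so the reduced valency is still uniform). Granting injectivity, the graph intersection numbers coincide with $c_i$ and $b_i$, and $\Gamma_C$ is distance-regular with the stated array, the valency being $b_0=n(q-1)$.

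For part (ii), each $\sigma\in\Aut(C)$ is a coordinate permutation fixing $C$, hence preserves Hamming distance and induces a graph automorphism $\hat\sigma:\gamma(C+x)\mapsto\gamma(C+\sigma x)$ that fixes $\gamma_0=\gamma(C)$. I would invoke the standard criterion that a vertex-transitive graph is distance-transitive once the stabiliser of one vertex is transitive on every sphere around it. The orbits of $\{\hat\sigma:\sigma\in\Aut(C)\}$ on the vertices are exactly the orbits of $\Aut(C)$ on $\F_2^n/C$, of which there are $\rho+1$ by \Cref{CTSole}. Since weight-preserving permutations fix coset weights, each orbit sits inside a single cell $C(i)$; as there are $\rho+1$ nonempty cells and $\rho+1$ orbits, each cell must consist of a single orbit. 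Hence the stabiliser of $\gamma_0$ is transitive on every sphere $(\Gamma_C)_i(\gamma_0)=\{\gamma(C+x):x\in C(i)\}$, and together with the vertex-transitivity supplied by the translations this yields distance-transitivity.
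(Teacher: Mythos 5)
First, a point of comparison: the paper does not actually prove this proposition --- it is quoted as a known result, with (i) attributed to \cite{BCN} and (ii) to \cite{ripu} --- so your proposal has to be measured against the standard arguments in those references, and it is essentially that standard argument: the identity between graph distance in $\Gamma_C$ and coset weight, translations giving vertex-transitivity, \Cref{Def2} converting the intersection numbers of $C$ into those of $\Gamma_C$, and, for (ii), the criterion that a vertex-transitive graph whose point stabiliser is transitive on every sphere about that point is distance-transitive, fed by the orbit count of \Cref{CTSole} together with the fact that all $\rho+1$ coset weights $0,1,\ldots,\rho$ occur. Part (ii) is complete and correct as written; note only that weight-preservation places each orbit inside a single sphere of the coset graph, i.e.\ a single set of cosets of fixed weight, rather than inside a cell $C(i)$ of $\F_2^n$, but this is clearly what you meant.

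The one genuine defect is your parenthetical patch for small $d$. For $d\leq 2$ the conclusion of (i) is simply false, so no argument about weight-two codewords ``acting with the same multiplicity'' can rescue it: take $C$ to be the even-weight code of length $n$, which the paper itself lists as a linear CR code with $\rho=1$ and $\IA=\{n;n\}$; its coset graph is $K_2$, which is distance-regular with intersection array $\{1;1\}\neq\{n;n\}$. The correct resolution, and the one implicit in \cite{BCN}, is that the proposition carries the hypothesis $d\geq 3$: that hypothesis is precisely the injectivity you isolate (distinct weight-one vectors lie in distinct nonzero cosets), it makes the valency equal to $b_0=n(q-1)$, and with it your counting argument closes the proof. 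So you should state $d\geq 3$ up front and delete the small-$d$ remark rather than try to repair that case. (A trivial slip elsewhere: to end the walk at $C+y$ you want a minimum-weight representative of $C+(y-x)$, not of $C+(x-y)$; the two cosets have the same weight, so nothing else changes, and over $\F_2$ there is no difference at all.)
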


Therefore, by \Cref{lem:2.5} any CR or CT code induces a
distance regular or distance transitive graph, respectively. Throughout this paper we will not deal with the associated distance regular graphs or distance transitive graphs to CR or CT codes, respectively. Those graphs, with the same intersection array than the respective codes can be see in corresponding references or in \cite{BCN,Dam2,koo}.

\bigskip

For a given permutation group $G$ of degree $n$ (acting on an $n$-set), we say
that $G$ is $t$-transitive (resp. $t$-homogeneous) if it sends any $t$-tuple (resp.
$t$-set) to any $t$-tuple (resp. $t$-set). $G$ is transitive if it is $1$-transitive.
A result of Livingston and Wagner \cite{LW} states that if $G$ is $i$-homogeneous,
with $i\leq n/2$, then $G$ is also $j$-homogeneous, for $j\leq i$. This fact implies

\begin{proposition}[\protect{\cite{Sole}}]\label{CTrho}
Let $C$ be a binary linear code of length $n$ and covering radius $\rho\leq n/2$.
If $\Aut(C)$ is $\rho$-homogeneous then $C$ is CT.
\end{proposition}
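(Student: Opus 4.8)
The plan is to show that the natural action of $\Aut(C)$ on the cosets $\F_2^n/C$ has exactly $\rho+1$ orbits, which is precisely \Cref{CTSole}. First I would record that this action preserves the distance to the code: every vector of a fixed coset $C+x$ has the same distance $d(x,C)$ to $C$ (the coset-leader weight), and if $\sigma\in\Aut(C)$ then $\sigma(C+x)=C+\sigma(x)$ with $d(\sigma(x),C)=d(x,C)$, since $\sigma$ fixes $C$ set-wise and preserves Hamming distance. Hence every orbit is contained in a single distance class $\{C+x : d(x,C)=i\}$. Because $C(i)\neq\emptyset$ exactly for $0\le i\le\rho$, each of these $\rho+1$ distance classes is nonempty, so the number of orbits is at least $\rho+1$. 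It therefore suffices to prove that $\Aut(C)$ acts transitively on the cosets of each fixed distance $i$.

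For the transitivity, fix $i$ with $0\le i\le\rho$ and take two cosets $C+x$ and $C+y$ at distance $i$. I would choose coset leaders $u\in C+x$ and $v\in C+y$ of minimum weight, so that $\wt(u)=\wt(v)=i$; then $\Supp(u)$ and $\Supp(v)$ are $i$-subsets of the coordinate set. Now I invoke the Livingston--Wagner result quoted just above: since $\Aut(C)$ is $\rho$-homogeneous and $\rho\le n/2$, it is $j$-homogeneous for every $j\le\rho$, in particular $i$-homogeneous. Thus there is $\sigma\in\Aut(C)$ with $\sigma(\Supp(u))=\Supp(v)$ as sets.

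The decisive point---and the only place the hypothesis that $C$ is binary is used---is that over $\F_2$ a vector is the indicator function of its support, so $\sigma(\Supp(u))=\Supp(v)$ forces $\sigma(u)=v$. Consequently $\sigma(C+x)=\sigma(C+u)=C+\sigma(u)=C+v=C+y$, which gives transitivity of $\Aut(C)$ on the distance-$i$ cosets. Combining this with the lower bound from the first step, $\Aut(C)$ has exactly $\rho+1$ orbits on $\F_2^n/C$, i.e. $C$ is completely transitive. The argument is short; the step that needs the most care is matching the reduction to leaders of weight exactly $i$ with the passage from ``a permutation matching the supports'' to ``a permutation matching the vectors''---which is exactly where the binary setting and the bound $i\le\rho\le n/2$ (needed to apply Livingston--Wagner, and hence to deduce $i$-homogeneity) come together.
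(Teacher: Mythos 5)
Your proof is correct and follows the same route the paper intends: the paper presents this proposition precisely as a consequence of the Livingston--Wagner result, and your argument fills in the standard details of that implication (homogeneity at each level $j\le\rho$ moves the support of one minimum-weight coset leader onto another, and over $\F_2$ matching supports forces matching vectors, giving transitivity on each distance class of cosets). Your identification of where the binary hypothesis and the bound $\rho\le n/2$ enter is exactly right.
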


Using this property we obtain that the following codes are CT:

\begin{itemize}
\item[(i)] All perfect binary linear codes (repetition codes, Hamming codes and the binary Golay code).
\item[(ii)] All extended binary linear perfect codes.
\end{itemize}

\Cref{CTrho} gives us a sufficient (but not necessary) condition
for completely transitivity. It can be seen from the binary $[9,5,3]$
code $C$ which is dual to the code obtained by
the Kronecker product of
two $[3,2,2]$ parity check codes. Code $C$ is UP \cite{Sole,SZZ}
with $\rho=s=2$ and $e=1$. Moreover, $C$ is CT,
however $\Aut(C)$ is transitive but not $2$-homogeneous.

A necessary condition is the following one.

\begin{proposition}[\protect{\cite{Sole}}]\label{CTHomogeneous}
If $C$ is CT, then $\Aut(C)$ is $e$-ho\-mo\-ge\-ne\-ous.
\end{proposition}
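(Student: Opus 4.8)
The plan is to exploit the compatibility between the action of $\Aut(C)$ on the coset space $\F_2^n/C$ and its action on coordinate positions, identifying at distance level $e$ the relevant cosets with the $e$-subsets of coordinates. Writing $e$ for the packing radius, recall that $d\geq 2e+1$, so $2e<d$. First I would show that every binary vector $x$ of weight $i\leq e$ is the unique minimum-weight vector of its coset $C+x$: if $u$ and $v$ were distinct weight-$i$ representatives of one coset, then $u+v\in C$ would be a nonzero codeword of weight at most $2e<d$, a contradiction. Hence $d(x,C)=\wt(x)$ for such $x$, and $x\mapsto C+x$ is a bijection from the weight-$e$ vectors onto the cosets lying at distance $e$ from $C$.

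Next I would note that any $\sigma\in\Aut(C)$ is a coordinate permutation fixing $C$ set-wise, so it preserves Hamming weight and satisfies $\sigma(C+x)=C+\sigma(x)$. Thus $\sigma$ permutes the weight-$e$ vectors exactly as it permutes their supports, the $e$-subsets of $\{1,\dots,n\}$; under the bijection above this permutation agrees with the induced action of $\Aut(C)$ on the cosets at distance $e$.

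Finally I would invoke complete transitivity. By \Cref{CTSole}, $\Aut(C)$ has exactly $\rho+1$ orbits on $\F_2^n/C$. Because weights, and hence distances to $C$, are $\Aut(C)$-invariant, every orbit is contained in one of the $\rho+1$ nonempty distance classes $C(0),\dots,C(\rho)$; since each of these classes contains at least one orbit and the orbits are disjoint, a pigeonhole count forces each distance class to be a single orbit. In particular the cosets at distance $e$ form one orbit, so $\Aut(C)$ is transitive on weight-$e$ vectors, i.e. on $e$-subsets of coordinates, which is precisely $e$-homogeneity.

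The only delicate point is the identification of each distance class $C(i)$ with a single orbit; this is where the defining count ``$\rho+1$ orbits'' is essential, together with the observation that orbits cannot cross distance classes. The remaining ingredients — the uniqueness of low-weight coset leaders and the natural identification of weight-$e$ vectors with $e$-subsets via their supports — are routine.
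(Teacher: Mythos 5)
Your proof is correct. The survey states this proposition as a citation to Sol\'{e} without reproducing an argument, and your proof---uniqueness of coset leaders of weight at most $e$ (via $2e<d$), the $\Aut(C)$-equivariant bijection between weight-$e$ vectors (equivalently, $e$-subsets) and cosets of minimum weight $e$, and the pigeonhole step matching the $\rho+1$ orbits of $\Aut(C)$ on $\F_2^n/C$ with the $\rho+1$ nonempty distance classes of cosets---is exactly the standard argument from the cited source, with no gaps.
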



As an example of CR code which is not CT, consider a binary primitive
cyclic $[2^m-1,2^m-2m-1,5;3]$ code $C$ with $m>4$ and $m$
odd. Such code is a double-error-correcting BCH code which is CR \cite{BZ,SZZ}. The intersection array is
$\{n,n-1,(n+3)/2;1,2,(n-1)/2\}$, where $n=2^m-1$ (these are the codes for
$\ell=3$ in the family of Subsection \ref{ABFunctions}).
As can be seen
in \cite{Ber0}, $\Aut(C)$ is the semilinear group $\SL(1,2^m)$ of $\F_{2^m}$ over
$\F_{2^m}$ (remind that the semilinear group denoted $\SL(t,q)$ consists of all invertible semilinear transformations of $\F_q^t$ over $\F_q$).
If $q=p^r$, then it is well known that $|\SL(t,q)|=r|\GL(t,q)|$, where
$\GL(t,q)$ denotes the general linear group (see, for example, \cite[p. 163]{Beth}).
Hence, the order of $\Aut(C)$ is $|\SL(1,2^m)|=m|\GL(1,2^m)=m(2^m-1)$.
Since $C$
has packing radius $e=2$, we know that $C$ has exactly $(2^m-1)(2^{m-1}-1)$
cosets of minimum weight $2$. Therefore, as the number of such cosets is
greater than $|\Aut(C)|$, it is not possible that they are in the same
orbit by $\Aut(C)$. We conclude that $C$ is not CT.

Similarly to perfect and quasi-perfect UP codes, the nonexistence
of CT codes for $e>3$ was also established.
In 2000, Borges and Rif\`{a} \cite{Trans1} proved:

\begin{theorem}[\protect{\cite{Trans1}}]
If $C$ is a nontrivial CT code, then $e\leq 4$.
\end{theorem}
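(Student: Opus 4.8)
The plan is to exploit the large automorphism group forced by complete transitivity, reduce to a short list of highly transitive groups, and then check that none of them fixes a nontrivial binary linear code with large $e$. The entry point is \Cref{CTHomogeneous}: if $C$ is CT then $\Aut(C)$ is $e$-homogeneous. Since $C$ is nontrivial we have $d\le n$, and from $d\ge 2e+1$ we get $e<n/2$, so the homogeneity hypotheses used below are genuinely satisfied. Arguing by contradiction, I would assume $e\ge 5$ and aim to prove that $C$ must then be trivial.

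First I would pass from homogeneity to transitivity. By Livingston–Wagner \cite{LW}, an $e$-homogeneous group of degree $n$ with $e\le n/2$ is $e$-transitive as soon as $e\ge 5$; hence $\Aut(C)$ is $e$-transitive. Now I would invoke the classification of finite multiply transitive permutation groups: the only $5$-transitive groups are $S_n$, $A_n$, $M_{12}$ (degree $12$) and $M_{24}$ (degree $24$), while for $t\ge 6$ only $S_n$ and $A_n$ remain. Thus $\Aut(C)\in\{S_n,A_n,M_{12},M_{24}\}$, with the two Mathieu groups possible only at the boundary value $e=5$.

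The remaining work is to rule out nontrivial invariant codes with $e\ge 5$ for each surviving group. If $A_n\le\Aut(C)$ and $C$ has a codeword of weight $w$ with $2\le w\le n-2$, then transitivity of $A_n$ on $w$-subsets places every weight-$w$ vector in $C$, and these already span the even-weight code or all of $\F_2^n$, forcing $d\le 2$; the only alternative is that all nonzero codewords have weight in $\{1,n-1,n\}$, giving $C=\langle\mathbf{1}\rangle$ or $C=\F_2^n$, which are trivial. For $M_{12}$ with $e=5$ one needs $d\ge 11$ in length $12$: any two distinct nonzero codewords would have supports meeting in at least $10$ of the $12$ coordinates, so their sum would have weight at most $2<11$, forcing $\dim C\le 1$ and hence $C$ trivial. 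For $M_{24}$ with $e=5$ one again needs $d\ge 11$, and here I would use that the $\F_2[M_{24}]$-submodule lattice of $\F_2^{24}$ is the chain $\mathbf{0}\subset\langle\mathbf{1}\rangle\subset\mathcal{G}_{24}\subset E_{24}\subset\F_2^{24}$, where $\mathcal{G}_{24}$ is the extended binary Golay code ($d=8$) and $E_{24}$ the even-weight code ($d=2$); the only member with $d\ge 11$ is the repetition code $\langle\mathbf{1}\rangle$, which is trivial. In every case $C$ is trivial, contradicting $e\ge 5$, so $e\le 4$.

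The group-theoretic reduction is cited rather than proved, so the real content — and the main obstacle — lies in the case analysis for the surviving groups, especially identifying all $M_{24}$-invariant binary codes via the Golay module structure. I would also be careful to treat the boundary $e=5$ (where the Mathieu groups appear) separately from the range $e\ge 6$, which is handled by $S_n$ and $A_n$ alone, and to confirm at the outset that it is complete transitivity, rather than mere complete regularity, that forces $\Aut(C)$ to be this transitive.
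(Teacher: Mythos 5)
Your proof is correct and follows essentially the approach the paper attributes to \cite{Trans1}: complete transitivity forces $\Aut(C)$ to be $e$-homogeneous, Livingston--Wagner upgrades this to $e$-transitivity once $e\geq 5$ (legitimate, since $2e+1\leq d\leq n$ gives $e<n/2$), the classification of multiply transitive groups leaves only $S_n$, $A_n$, $M_{12}$ and $M_{24}$, and each surviving group is then shown to admit no nontrivial invariant code with $d\geq 11$. The only divergence is in the elimination step --- the paper describes the original argument as using ``bounds on the size of a code'', while you use a spanning argument for $A_n$, a Plotkin-style overlap bound for $M_{12}$, and the known chain $\mathbf{0}\subset\langle\mathbf{1}\rangle\subset\mathcal{G}_{24}\subset E_{24}\subset\F_2^{24}$ of $M_{24}$-invariant subspaces for $M_{24}$ --- all of which are sound substitutes.
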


The proof was based on the nonexistence of highly transitive groups
and some bounds on the size of a code. Using the Griesmer bound and
the nonexistence of certain designs, the result was improved in 2001
by Borges, Rif\`{a} and Zinoviev \cite{Trans2}:

\begin{theorem}[\protect{\cite{Trans2}}]
If $C$ is a nontrivial CT code, then $e\leq 3$.
\end{theorem}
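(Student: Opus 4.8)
The plan is to build on the preceding theorem, which already gives $e\leq 4$ for any nontrivial CT code; hence it suffices to rule out $e=4$. Suppose, for contradiction, that $C$ is a nontrivial CT code with $e=4$. Then $C$ is a binary linear CR code with minimum distance $d\in\{9,10\}$, and since nontrivial perfect codes exist only for $e\leq 3$, the code $C$ cannot be perfect; thus $\rho\geq e+1=5$, and because $C$ is CR we have $\rho=s$ by \Cref{equivs}(iii).

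First I would extract the design structure forced by complete transitivity. Since $C$ is CT, \Cref{CTHomogeneous} gives that $\Aut(C)$ is $e$-homogeneous, i.e. $4$-homogeneous, so it acts transitively on the $4$-subsets of the $n$ coordinates. On the other hand, \Cref{designs} applied to the CR code $C$ shows that for $d=9$ every nonempty $C_w$ yields a $4$-$(n,w,\lambda_w)$ design, while for $d=10$ it yields a $5$-$(n,w,\lambda_w)$ design. Specialising to the minimum-weight codewords, the supports of $C_d$ form a $4$-$(n,9,\lambda)$ design (resp. a $5$-$(n,10,\lambda)$ design), and the $4$-homogeneity of $\Aut(C)$ makes this design highly symmetric, pinning down the derived block-intersection numbers $\lambda_i$ very tightly.

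Next I would bring in the Griesmer bound. Writing $g(k,d)=\sum_{i=0}^{k-1}\lceil d/2^{i}\rceil$, any $[n,k,d]$ code satisfies $n\geq g(k,d)$; with $d\in\{9,10\}$ this bounds the redundancy $n-k$ from below and, together with the sphere-packing-type identity of \Cref{packing} and the Lloyd condition of \Cref{Lloyd1} (the intersection matrix must have $\rho$ integer eigenvalues among the $n-2j$), restricts $(n,k,\rho)$ to a finite list of candidates. For each candidate I would compute the forced design parameters $\lambda_i=\lambda\,\frac{\binom{n-i}{t-i}}{\binom{w-i}{t-i}}$ and test them against integrality and against the symmetry imposed by the $4$-homogeneous action; the claim is that no surviving parameter set admits an actual design, which is the contradiction.

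The hard part will be this final design-nonexistence step. The difficulty is that the covering radius $\rho$ need not be small, so one cannot rely on the minimum-weight design alone: controlling the entire intersection array through \Cref{Lloyd1} is essential to cut the candidate list down to a finite and manageable set, and even then each candidate must be eliminated either by an integrality obstruction on some $\lambda_i$ or by invoking known nonexistence results for the relevant $4$- and $5$-designs. Organising this case analysis so that the $4$-homogeneity and the Griesmer constraint interact to exclude every case is where the real work lies.
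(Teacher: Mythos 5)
Your toolkit matches what the survey says about the cited proof: the theorem is attributed to \cite{Trans2} and its method is described as ``the Griesmer bound and the nonexistence of certain designs,'' which is exactly the machinery you list. But what you have written is a program, not a proof, and both of its load-bearing steps are asserted rather than carried out. First, the claim that the Griesmer bound together with \Cref{packing} and \Cref{Lloyd1} restricts $(n,k,\rho)$ to ``a finite list of candidates'' is unjustified. These are constraints that must be checked for each length $n$ separately, and nothing you say shows that only finitely many lengths survive; indeed, since you only know $\rho\geq 5$ and have no upper bound on $\rho$, you cannot even write down the intersection matrix $A$ (its entries $b_i,c_i$ are unknown) whose integer eigenvalues you propose to test, nor the degree-$\rho$ polynomial of \Cref{theo:1.1}. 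A genuine proof must either explain how the candidate set becomes finite or rule out every length by a uniform argument.

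Second, the step that would actually produce the contradiction --- ``no surviving parameter set admits an actual design'' --- is precisely the content of the theorem and is left untouched: you name no specific design-nonexistence result and exhibit no integrality obstruction for any concrete parameter set. Moreover, the way you use complete transitivity is inert: the $4$-design (resp.\ $5$-design) structure on $C_d$ already follows from complete regularity alone via \Cref{designs}, so the $4$-homogeneity of $\Aut(C)$ from \Cref{CTHomogeneous} contributes nothing in your argument beyond the vague remark that the design is ``highly symmetric.'' The real leverage of homogeneity in this line of work (already in the $e\leq 4$ result you start from) comes from the classification of $4$-homogeneous, hence essentially $4$-transitive, permutation groups, which drastically limits what $\Aut(C)$ and therefore $n$ can be; you never invoke this, and without it (or some substitute) the case analysis you defer to has no mechanism for terminating.
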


\bigskip

Clearly, \Cref{CTSole} can be extended to nonbinary linear codes. Giudici and Praeger \cite{Giu1,Giu2} studied this more general case. They called {\em coset-completely transitive} these codes (including Sol\'{e}'s binary case). In a previous preprint, Godsil and Praeger generalized the concept of complete transitivity to the nonlinear case. A newer version of this preprint is \cite{GodP}.

\begin{definition}[\protect{\cite{GodP}}]\label{GenCT}
A code $C\subset\F_q^n$ is $G$-{\em completely transitive}, or simply, {\em completely transitive} if there exists a subgroup $G$ of $\Aut(\F_q^n)$ such that each subconstituent (cell) $C(i)$ of the distance partition is a $G$-orbit.
\end{definition}

\begin{proposition}[\protect{\cite{Giu2}}]
If $C$ is completely transitive then it is CR.
\end{proposition}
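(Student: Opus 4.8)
The plan is to verify complete regularity through the characterization labelled (ii) immediately after \Cref{Def1}, namely that the row $B_x$ of the outer distribution matrix depends only on $d(x,C)$. The whole argument rests on two facts that elements of $G$ enjoy by virtue of \Cref{GenCT}: they are genuine isometries of the Hamming space, and they stabilise $C$ as a set.

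First I would record these two consequences. Since $C=C(0)$ is a $G$-orbit, every $g\in G$ satisfies $gC=C$, so $G$ stabilises the code setwise. Since $G\le\Aut(\F_q^n)$, each $g$ preserves Hamming distance. Combining the two, for any $x$ and any $g\in G$ we get $d(gx,C)=d(gx,gC)=d(x,C)$, so $G$ permutes the cells $C(\ell)$, consistently with the hypothesis that each cell is a single orbit.

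The core step is to compare the weight distributions of two translates sitting in the same cell. Let $x,y\in C(\ell)$; by complete transitivity there is $g\in G$ with $gx=y$. Using that $g$ is a distance-preserving bijection with $gC=C$, substitute $v=gw$ to obtain, for each $i$,
\[
B_{y,i}=|\{v\in C: d(gx,v)=i\}|=|\{w\in C: d(x,w)=i\}|=B_{x,i},
\]
where $v=gw$ ranges over $C$ exactly as $w$ does (because $gC=C$) and $d(gx,gw)=d(x,w)$ by the isometry property. Hence $B_x=B_y$ whenever $x,y$ lie in the same cell, so $B_x$ depends only on $\ell=d(x,C)$, which is precisely form (ii) of the definition of a CR code.

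There is no serious obstacle here; the one point that must not be glossed over is the setwise stabilisation $gC=C$, which is what licenses the middle equality in the display. Without it, transitivity of $G$ on a cell would still match vectors lying at equal distance from $C$, but it would not guarantee that the codewords realising those distances are permuted among themselves within $C$, and the count of codewords at distance $i$ could then change. It is exactly the hypothesis that $C(0)$ is an orbit that supplies this stabilisation, turning the transitivity on \emph{every} cell into the equitability required by \Cref{Def1}.
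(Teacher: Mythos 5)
Your proof is correct and is essentially the intended argument: the paper states this proposition with a citation to Giudici--Praeger, but its own one-line justification of the analogous statement for Sol\'e's coset version (``two cosets in the same orbit have identical weight distributions'') is exactly your argument, transported from cosets to cells. The two facts you isolate --- that elements of $G\le\Aut(\F_q^n)$ are isometries, and that $C=C(0)$ being a $G$-orbit forces $gC=C$ --- are precisely what make the row $B_x$ constant on each cell $C(\ell)$, which is form (ii) of \Cref{Def1}, so nothing is missing.
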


Note that, in general, a coset-completely transitive code is not $\Aut(C)$-completely transitive in the sense of \Cref{GenCT}. This is because $\Aut(C)$ is often not even transitive on $C$, for example, when $C$ has codewords of different weights.

For a linear code $C\subset\F_q^n$, define $N_C$ as the set of all translations of $\F_q^n$
by vectors in $C$, i.e., $N_C=\{\tau_x\mid x\in C\}$, where $\tau_x(v)=v+x$, for every $v\in\F_q^n$. Clearly, $N_C$ is a subgroup, since $C$ is linear.
Define now the semidirect product $G=N_C \rtimes \Aut(C)$. $G$ fixes $C$ set-wise and the $N_C$-orbits are the cosets of $C$, in particular, $C$ is a $G$-orbit \cite{Giu2}. From all these observations, the following result is obtained:

\begin{theorem}[\protect{\cite{Giu2}}]
Let $C\subset\F_q^n$ be a linear code. Then $C$ is coset-completely transitive if and only if $C$ is $\left(N_C\rtimes \Aut(C)\right)$-completely transitive.
\end{theorem}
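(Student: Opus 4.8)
The plan is to compare the two notions of complete transitivity by counting a single family of orbits: those of $G=N_C\rtimes\Aut(C)$ acting on $\F_q^n$, which I will match to the $\Aut(C)$-orbits on the coset space $\F_q^n/C$. First I would make the $G$-action explicit. Using the conjugation relation $\sigma\tau_z\sigma^{-1}=\tau_{\sigma(z)}$, which holds because $\sigma\in\Aut(C)$ fixes $C$ set-wise (so $\sigma(z)\in C$), every element of $G$ acts on a vector $v\in\F_q^n$ as $v\mapsto\sigma(v)+z$ for some $\sigma\in\Aut(C)$ and $z\in C$. As recalled in the excerpt, the $N_C$-orbits are exactly the cosets of $C$, and $C=C(0)$ is itself a single $N_C$-orbit.

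The central step is the orbit bijection. For a $G$-orbit $O$ and any $x\in O$, one computes
$O=\{\sigma(x)+z : \sigma\in\Aut(C),\ z\in C\}=\bigcup_{\sigma\in\Aut(C)}\sigma(x+C)$, using linearity of $C$ and $\sigma(C)=C$. Thus $O$ is precisely the union of the cosets forming the $\Aut(C)$-orbit of $x+C$ in $\F_q^n/C$. Equivalently, $x$ and $y$ lie in the same $G$-orbit if and only if $y+C=\sigma(x+C)$ for some $\sigma\in\Aut(C)$, i.e. if and only if the cosets $x+C$ and $y+C$ lie in the same $\Aut(C)$-orbit. Hence the number of $G$-orbits on $\F_q^n$ equals the number $r$ of $\Aut(C)$-orbits on $\F_q^n/C$.

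Next I would observe that $G$ preserves the distance partition $C(0),C(1),\ldots,C(\rho)$. Indeed, a translation $\tau_z$ with $z\in C$ moves a vector within its own coset, hence fixes $d(\cdot,C)$; and a permutation automorphism $\sigma$ preserves Hamming distance and fixes $C$, so $d(\sigma(x),C)=d(x,C)$. Therefore every $G$-orbit is contained in a single cell $C(i)$. Since there are exactly $\rho+1$ cells and all of them are nonempty (as $C(t)\neq\emptyset$ if and only if $t\leq\rho$), we always have $r\geq\rho+1$, with equality precisely when each cell $C(i)$ is a single $G$-orbit.

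To conclude, I combine the two observations. By \Cref{CTSole} (in its extension to the nonbinary case), $C$ is coset-completely transitive exactly when $r=\rho+1$; by \Cref{GenCT} applied with the specific group $G=N_C\rtimes\Aut(C)$, $C$ is $G$-completely transitive exactly when each $C(i)$ is a single $G$-orbit, which by the previous paragraph is again equivalent to $r=\rho+1$. The two conditions therefore coincide. I expect the only delicate point to be the careful verification of the orbit bijection, namely that the semidirect-product action together with linearity of $C$ turns ``same $G$-orbit'' into ``cosets in the same $\Aut(C)$-orbit''; once this identity of orbit counts is in hand, the equivalence of the two complete-transitivity notions follows from the elementary counting of cells.
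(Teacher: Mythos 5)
Your proof is correct and is essentially the argument the paper outlines: it rests on the same observations the paper records just before the theorem (that $G=N_C\rtimes\Aut(C)$ fixes $C$ set-wise, preserves the distance partition, and that the $N_C$-orbits are exactly the cosets of $C$), completed by the natural correspondence between $G$-orbits on $\F_q^n$ and $\Aut(C)$-orbits on $\F_q^n/C$. The survey gives no further details, citing Giudici and Praeger for the result, and your orbit-matching plus cell-counting argument is a sound completion of precisely that outline.
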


However, for $q\leq 3$, the concepts are equivalent:

\begin{theorem}[\protect{\cite{Giu2}}]
Let $C\subset\F_q^n$ be a linear code, where $q\leq 3$. Then $C$ is
coset-completely transitive if and only if $C$ is completely transitive.
\end{theorem}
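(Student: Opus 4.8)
The plan is to establish the two implications separately, noting that only the reverse one needs the hypothesis $q\le 3$. Suppose first that $C$ is coset-completely transitive. By the preceding theorem, which characterizes coset-complete transitivity as $(N_C\rtimes\Aut(C))$-complete transitivity and holds for every prime power $q$, the code $C$ is $(N_C\rtimes\Aut(C))$-completely transitive. Here $\Aut(C)$ is the group of $\F_q$-linear (monomial) Hamming isometries fixing $C$, so $\Aut(C)$ and the translation group $N_C$ are both subgroups of $\Aut(\F_q^n)$, and hence so is $N_C\rtimes\Aut(C)$. Taking $G=N_C\rtimes\Aut(C)$ in \Cref{GenCT} shows immediately that $C$ is completely transitive; no restriction on $q$ is used.

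The content of the theorem is the reverse implication, and this is where $q\le 3$ enters. The key structural fact I would record first is that $\Aut(\F_q^n)=S_q\wr S_n$, whose base group permutes the $q$ symbols independently in each coordinate. The affine group $\operatorname{AGL}(1,q)=\{x\mapsto ax+b : a\in\F_q^*,\ b\in\F_q\}$ is always a subgroup of $S_q$, and since $\lvert\operatorname{AGL}(1,q)\rvert=q(q-1)$ while $\lvert S_q\rvert=q!$, the two coincide exactly when $(q-2)!=1$, that is, when $q\le 3$. For a prime power this means $q\in\{2,3\}$; moreover $\F_2$ and $\F_3$ are prime fields, so no semilinear maps arise. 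Consequently, for $q\le 3$ every isometry $g\in\Aut(\F_q^n)$ is affine over $\F_q$, and I write $g(v)=\phi_g(v)+t_g$ with monomial linear part $\phi_g$ and translation part $t_g\in\F_q^n$.

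Now assume $C$ is completely transitive, witnessed by a subgroup $G\le\Aut(\F_q^n)$ for which each cell $C(i)$ is a $G$-orbit. The orbit $C(0)=C$ is $G$-invariant, so every $g\in G$ fixes $C$ setwise. Evaluating at $\zero\in C$ and using $\phi_g(\zero)=\zero$ gives $t_g=g(\zero)\in C$; then $C=g(C)=\phi_g(C)+t_g$ forces $\phi_g(C)=C-t_g=C$, since $t_g\in C$ and $C$ is linear. Thus $\phi_g\in\Aut(C)$ and $t_g\in C$, whence $g\in N_C\rtimes\Aut(C)$, so $G\le\tilde G:=N_C\rtimes\Aut(C)$. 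As $\tilde G$ consists of isometries fixing $C$ setwise, it preserves the map $v\mapsto d(v,C)$ and hence each cell $C(i)$ setwise. Since $C(i)$ is a single $G$-orbit and $G\le\tilde G$, the $\tilde G$-orbit of any point of $C(i)$ both fills $C(i)$ and stays inside it, so $C(i)$ is a single $\tilde G$-orbit. Therefore $C$ is $(N_C\rtimes\Aut(C))$-completely transitive, and the preceding theorem gives that $C$ is coset-completely transitive.

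I expect the main obstacle to be isolating the structural fact that every Hamming isometry is affine precisely when $q\le 3$ — that is, recognizing the order identity $\lvert S_q\rvert=\lvert\operatorname{AGL}(1,q)\rvert$ as the exact mechanism behind the hypothesis — and then handling the affine decomposition $g(v)=\phi_g(v)+t_g$ carefully enough to conclude that $\phi_g$ stabilizes $C$ rather than merely carrying it to a coset. Once $G\le N_C\rtimes\Aut(C)$ is in place, the passage from $G$-orbits to $\tilde G$-orbits and the final appeal to the preceding theorem are routine.
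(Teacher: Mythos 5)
Your proof is correct, but there is no in-paper argument to compare it against: the paper states this theorem without proof, quoting it from Giudici and Praeger \cite{Giu2}. Judged within the paper's framework, your argument is complete. The forward implication indeed needs only the preceding theorem (coset-complete transitivity is equivalent to $(N_C\rtimes\Aut(C))$-complete transitivity, for every $q$) together with $N_C\rtimes\Aut(C)\leq\Aut(\F_q^n)$, and you are right that $q\leq 3$ plays no role there. For the converse you isolate exactly the fact that makes $q\leq 3$ special: $\Aut(\F_q^n)=S_q\wr S_n$, and $S_q=\operatorname{AGL}(1,q)$ precisely when $q!=q(q-1)$, i.e. $q\leq 3$, with no semilinear maps since $\F_2,\F_3$ are prime fields. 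The affine decomposition $g(v)=\phi_g(v)+t_g$ then gives $t_g=g(\zero)\in C$ and $\phi_g(C)=C$ for every $g$ in a witnessing group $G$, hence $G\leq N_C\rtimes\Aut(C)$; since isometries fixing $C$ setwise preserve each cell $C(i)$, each cell is a single $(N_C\rtimes\Aut(C))$-orbit, and the preceding theorem closes the loop. This reconstructs the mechanism behind the cited result, and it also explains the paper's subsequent counterexample: for $q\geq 4$ the group $S_q$ contains non-affine permutations of $\F_q$, which is exactly the room exploited by the repetition code over $\F_q$, $q\geq 7$, $q\neq 8$, that is completely transitive but not coset-completely transitive.

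One point worth making explicit in your write-up: the paper defines $\Aut(C)$ only for binary codes (as coordinate permutations fixing $C$), so for $q=3$ you must fix an interpretation; your choice --- the monomial, $\F_q$-linear isometries fixing $C$ setwise --- is the right one, and indeed the only one under which the semidirect product $N_C\rtimes\Aut(C)$ is a well-defined subgroup of $\Aut(\F_q^n)$ (monomial maps normalize the translation group $N_C$, whereas arbitrary elements of the setwise stabilizer of $C$ need not). This is a definitional clarification, not a gap in the argument.
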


Let $q\geq 7$ be a prime power, $q\neq 8$, and let $C$ be the repetition
code in $\F_q^3$. Taking $G=S_q\rtimes S_3$, it can be verified that $C$ is
$G$-completely transitive, however $C$ is not coset-completely transitive \cite{Giu2}.

Completely transitivity is a quite special property, which has no relation
to the optimality of codes. For example, the best after perfect, Preparata codes
(which have maximal possible packing number \cite{SZZ}) are not completely
transitive (except when they are the Nordstrom-Robinson code).

\begin{theorem}[\protect{\cite{GP2}}]
Let $C$ and $C^*$ be the Preparata code of length $n$ and its extension, respectively.
These codes are completely transitive if and only if $n=15$.
\end{theorem}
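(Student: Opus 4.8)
The plan is to prove both implications through a single structural reduction: a Preparata code, or its extension, can be completely transitive only if its permutation automorphism group is $2$-homogeneous on the coordinate set, and this $2$-homogeneity holds exactly when $n=15$. To set up the reduction, suppose $C$ is completely transitive in the sense of \Cref{GenCT}, witnessed by a group $G$ of Hamming isometries of $\F_2^n$ so that each cell $C(i)$ is a $G$-orbit. Since every isometry fixing $\mathbf{0}$ is a coordinate permutation, the stabiliser $G_0=\mathrm{Stab}_G(\mathbf{0})$ lies in $\Aut(C)$. Because the Preparata code has $e=2$ and $d\ge 5$, every vector of weight at most $2$ lies within the packing radius and hence has a unique nearest codeword; the ``nearest codeword'' map $\phi\colon C(2)\to C$ is therefore well defined, $G$-equivariant, and surjective onto the transitive $G$-set $C$, with fibre $\phi^{-1}(\mathbf{0})$ equal to the set of weight-$2$ vectors, which I identify with the $2$-subsets of coordinates. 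Transitivity of $G$ on $C(2)$ then forces $G_0$ to be transitive on this fibre, so $G_0$, and a fortiori $\Aut(C)$, is $2$-homogeneous. This is the natural nonlinear analogue of \Cref{CTHomogeneous} specialised to $e=2$, and the same argument applies verbatim to the extended code $C^*$ (for which $d=6$, $e=2$).

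For the ``if'' direction, I would observe that for $n=15$ the extended code $C^*$ is the Nordstrom--Robinson code of length $16$, which is well known to be completely transitive: the relevant isometry group, built from the kernel translations together with the coordinate automorphisms, is highly homogeneous (in fact $2$-homogeneous) on the $16$ coordinates, and one checks directly that each of the few cells $C^*(0),\dots,C^*(\rho)$ is a single orbit, just as in the proof of \Cref{CTrho}. I would record this (citing the known symmetry of the Nordstrom--Robinson code) and then deduce complete transitivity of the length-$15$ Preparata code $C$ by puncturing one coordinate, the cell structure being inherited.

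For the ``only if'' direction I would invoke the explicit determination of the automorphism groups of the (extended) Preparata codes for $m\ge 3$ (due to Kantor), together with their $\Z_4$-linear description. The point is that for $m\ge 3$ the group $\Aut(C)$, respectively $\Aut(C^*)$, preserves a nontrivial structure on the coordinates---the pairing/spread coming from the Galois-ring and symplectic description of the code---so it is contained in a proper, non-$2$-transitive subgroup of the affine semilinear group and is therefore not $2$-homogeneous. Combined with the reduction above, this rules out complete transitivity for every $m\ge 3$, that is, for every $n>15$.

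The main obstacle is precisely this last step: one must know the automorphism groups of the Preparata codes well enough to certify the failure of $2$-homogeneity for all $m\ge 3$, and simultaneously the exceptional abundance of symmetry at $m=2$. A crude order comparison will not suffice, since $|\Aut(C)|$ comfortably exceeds $\binom{n}{2}$; the contradiction must come from the \emph{structure} of the group---the invariant pairing that disappears only when $n=15$---which is where the classification of $2$-homogeneous permutation groups enters.
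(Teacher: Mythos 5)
The paper itself contains no proof of this theorem: it is quoted with a citation to \cite{GP2}, so the benchmark is the Giudici--Praeger argument, whose skeleton your plan essentially reconstructs (reduction to $2$-homogeneity of the automorphism group, Kantor's determination of $\Aut$ of the Preparata codes for the ``only if'' part, direct verification for the Nordstrom--Robinson code). Your first step is correct and well executed: for a code containing $\zero$ with $d\geq 5$, $e=2$ and $\rho\geq 2$, complete transitivity in the sense of \Cref{GenCT} forces the stabiliser of $\zero$ in the witnessing group --- a group of coordinate permutations contained in $\Aut(C)$ --- to be transitive on the weight-two vectors, hence $2$-homogeneous; the nearest-codeword map on $C(2)$ is indeed well defined and equivariant, and the same works for $C^*$. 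This is the right nonlinear analogue of \Cref{CTHomogeneous}.

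The genuine gaps are in the other two steps, and the one in the ``if'' direction is more than a missing detail. First, the witnessing group you name for the Nordstrom--Robinson code $C^*$ --- kernel translations together with coordinate automorphisms --- cannot work: coordinate automorphisms fix $\zero$, so the orbit of $\zero$ under that group is the kernel $K(C^*)$, a \emph{linear} subcode of the nonlinear code $C^*$, hence a proper subset; already the cell $C^*(0)=C^*$ is not a single orbit, and one needs the full propelinear ($\Z_4$-linear) action, with maps $x\mapsto\pi(x)+v$ where $v\notin K(C^*)$ and $\pi\notin\Aut(C^*)$. Second, ``just as in the proof of \Cref{CTrho}'' cannot be made to work: that proposition is coset-based (linear) and in any version requires $\rho$-homogeneity, here with $\rho=4$. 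But $\Aut(C^*)$ preserves the set consisting of $\zero$, the all-one vector and the $30$ weight-$8$ codewords, which is exactly the Reed--Muller code $\mathrm{RM}(1,4)\subset C^*$; hence $\Aut(C^*)\leq \Aut(\mathrm{RM}(1,4))=\mathrm{AGL}(4,2)$, of order $322560=2^{10}\cdot 3^2\cdot 5\cdot 7$, while $\binom{16}{4}=1820$ is divisible by $13$. So $\Aut(C^*)$ (in fact $2^4{:}A_7$, of order $40320$) is not even transitive on $4$-subsets, and no homogeneity argument reaches the outer cell $C^*(4)$. Transitivity there holds for a code-specific structural reason: the vectors of $C^*(4)$ nearest to $\zero$ are the $140$ two-flats of the affine geometry $AG(4,2)$ on the coordinates (the weight-$4$ vectors whose support lies in no weight-$6$ codeword), a single $\Aut(C^*)$-orbit. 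That verification is precisely the content hidden in ``one checks directly''. Finally, your ``only if'' step is the right plan and is the same external input the cited proof uses, but, as you concede, the decisive check that Kantor's groups fail to be $2$-homogeneous for every $n>15$ is left open. As written, the reduction is the only complete piece; neither implication of the theorem is fully proved.
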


\section{Extension of CR codes}\label{ECR}

Given a binary code $C$, we define the extended code $C^*$ by adding a parity
(or antiparity) check bit to each codeword.
One of the interesting open questions of CR codes is the following one:
given a CR $(n,N,d)$ code $C$ with odd distance $d=2e+1$ under which conditions
its extension, i.e. the code $C^*$ is again a CR code.
Here we restrict our attention to binary codes,
although it seems that many results can be extended to the
nonbinary case.

In \cite{Bro1} it is proven that puncturing an even CR
code gives also a CR code (answering a question
posed in \cite{BCN}). Therefore, if $C^*$ is CR,
so is $C$. However, the converse is not true, in general.
Bassalygo and Zinoviev \cite{BZ} gave an example of a CR
code $C$ such that $C^*$ is not CR: the
double punctured binary Golay code is CR but its
extension is not. Moreover, in this case, the extended code is not
UP in the wide sense. Therefore the extension of a
UP code in the wide sense could be non-UP.
A necessary and sufficient condition is given in
\cite{BZ}.

\begin{theorem}[\protect{\cite{BZ}}]
A binary UP in the wide sense code $C$ of
length $n$ with packing parameters $\beta_0,\ldots,\beta_\rho$
remains to be UP under extension, if and only if the following
system of equations holds:
\[
\beta_{\rho-2i}=\beta_{\rho-2i-1}\;\;\mbox{for}\;\; i,\;\, 0\leq
i\leq[(\rho-1)/2].
\]
Furthermore, the packing parameters
$\gamma_0,\ldots,\gamma_\rho,\gamma_{\rho+1}$ of the extended code
$C^*$ are defined by the following formulas:
\[
\gamma_{\rho-2i}=\beta_{\rho-2i},\;\;\forall\; i=0,1,\ldots,\lfloor\rho/2\rfloor,
\]
\[
\gamma_{\rho-2i+1}=\frac{1}{n+1}\big((\rho+1-2i)\beta_{\rho-2i}+(n-\rho+2i)\,
\beta_{\rho-2i+2}\big),\;\forall\; i=0,\ldots,\big\lfloor\frac{\rho+1}{2}\big\rfloor.
\]
\end{theorem}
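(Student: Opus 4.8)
The plan is to express the outer distribution of $C^*$ through that of $C$ and then impose uniform packing one parity class at a time. I write a vector of $\F_2^{n+1}$ as $y=(x,a)$ with $x\in\F_2^n$ and $a\in\F_2$, using the even-parity extension, so that every codeword $c^*=(c,\wt(c)\bmod 2)$ of $C^*$ is even. Then $d(y,c^*)\equiv\wt(y)\pmod 2$ for every codeword, hence all distances from a fixed $y$ to $C^*$ share the parity $\epsilon:=\wt(y)\bmod 2$. Splitting a codeword $c$ with $d(x,c)=i$ according to whether the appended bits of $y$ and $c^*$ agree shows $d(y,c^*)=i$ if $i\equiv\epsilon$ and $d(y,c^*)=i+1$ otherwise, so
\[ B^*_{y,j}=B_{x,j}+B_{x,j-1}\ \ (j\equiv\epsilon),\qquad B^*_{y,j}=0\ \ (j\not\equiv\epsilon), \]
with $B_{x,-1}=0$. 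Substituting into $\sum_{j=0}^{\rho+1}\gamma_jB^*_{y,j}=1$ and reindexing, the packing condition for $C^*$ splits into two requirements over all $x$: the even-weight translates ($\epsilon=0$) involve only the even-indexed $\gamma_j$, the odd-weight translates ($\epsilon=1$) only the odd-indexed ones.

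I would prepare two facts. First, for covering radius $\rho$ the columns $B_0,\dots,B_\rho$ of the outer distribution matrix are linearly independent (test a putative dependence on vectors at distances $\rho,\rho-1,\dots$ from $C$); since $C$ is UP in the wide sense, $\rho=s$ and $\Rank(B)=\rho+1$ by \Cref{equivs} and \Cref{params}, so these columns form a basis of the column space and the $\beta_i$ with $\sum_i\beta_iB_{x,i}=1$ are unique. Second, summing $\sum_i\beta_iB_{x,i}=1$ over the $n$ neighbours $x+\mathbf{e}_k$, and using that flipping a coordinate changes $d(x,c)$ by $\pm1$, yields the neighbour-averaging identity
\[ \sum_{j}\big(j\,\beta_{j-1}+(n-j)\,\beta_{j+1}\big)B_{x,j}=n\qquad(\forall\,x), \]
which reaches the column $B_{x,\rho+1}$ (coefficient $(\rho+1)\beta_\rho$) that the original relation misses.

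For necessity, note that $B_{x,\rho+1}$ enters the $C^*$-relation only for the parity $\epsilon\equiv\rho+1$, so the relation of the complementary parity $\epsilon\equiv\rho$ lives entirely in the independent columns $B_0,\dots,B_\rho$. By uniqueness, that ``clean'' relation forces its coefficient vector to equal $(\beta_0,\dots,\beta_\rho)$. Each $\gamma_m$ with $m\equiv\rho$ then arises in two ways, once as the unshifted coefficient (giving $\gamma_m=\beta_m$) and once as the shifted coefficient of index $m-1$ (giving $\gamma_m=\beta_{m-1}$); equating them yields exactly $\beta_{\rho-2i}=\beta_{\rho-2i-1}$ for $0\le i\le[(\rho-1)/2]$.

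For sufficiency and the closed forms, I assume the pairing condition and set $\gamma_m=\beta_m$ for $m\equiv\rho$ and $\gamma_m=\frac{1}{n+1}\big(m\,\beta_{m-1}+(n+1-m)\,\beta_{m+1}\big)$ for $m\not\equiv\rho$, which are the stated formulas for $\gamma_{\rho-2i}$ and $\gamma_{\rho-2i+1}$. The clean relation then collapses to $\sum_i\beta_iB_{x,i}=1$ once the differences $\beta_m-\beta_{m-1}=0$ are cancelled. For the other parity I would rewrite, using the pairing condition, the coefficient of each $B_{x,i}$ as $\frac{1}{n+1}\big(i\,\beta_{i-1}+(n-i)\,\beta_{i+1}+\beta_i\big)$; summing over $i$ and invoking the neighbour-averaging identity together with $\sum_i\beta_iB_{x,i}=1$ gives $\frac{1}{n+1}(n+1)=1$. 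The antiparity extension is identical up to a global parity shift. The hard part is this second, index-shifted relation: unlike the clean one it genuinely uses the extra column $B_{x,\rho+1}$, and it is precisely the neighbour-averaging identity—equivalently the factor $1/(n+1)$ in the odd-position $\gamma$'s—that makes it evaluate to $1$.
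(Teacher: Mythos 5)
Your proposal is correct, but note that the paper itself offers no proof of this statement: it is quoted verbatim from the cited source [BZ], so the only possible comparison is with that reference, not with in-paper text. Judged on its own, your argument is sound and complete in all essentials. The parity decomposition $B^*_{y,j}=B_{x,j}+B_{x,j-1}$ for $j\equiv\wt(y)\pmod 2$ (and $B^*_{y,j}=0$ otherwise) is exactly right, and since both parities of $y=(x,a)$ occur for every $x$, the packing condition for $C^*$ does split into the two relations you describe. Your two auxiliary facts carry the whole proof: the triangular-dependence argument (testing on vectors at distance $\rho,\rho-1,\dots$ from $C$) does establish linear independence of the columns $B_0,\dots,B_\rho$, hence uniqueness of the $\beta_i$, which makes the ``clean'' relation of parity $\epsilon\equiv\rho$ force $\gamma_m=\beta_m$ and $\gamma_m=\beta_{m-1}$ simultaneously for $m\equiv\rho$, $1\le m\le\rho$ — this is precisely the stated system and also the first family of formulas for the $\gamma$'s; and the neighbour-averaging identity $\sum_j\bigl(j\beta_{j-1}+(n-j)\beta_{j+1}\bigr)B_{x,j}=n$, whose derivation via $\sum_k B_{x+e_k,i}=(i+1)B_{x,i+1}+(n-i+1)B_{x,i-1}$ is correct, is exactly what is needed to control the extra column $B_{x,\rho+1}$ (coefficient $(\rho+1)\beta_\rho$) in the other parity, yielding $\frac{1}{n+1}(n+1)=1$. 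Two small points you should make explicit in a polished write-up: the index conventions $\beta_{-1}=\beta_{\rho+1}=\beta_{\rho+2}=0$, which are silently used at both ends when you rewrite every coefficient as $\frac{1}{n+1}\bigl(i\beta_{i-1}+(n-i)\beta_{i+1}+\beta_i\bigr)$; and the fact that the covering radius of $C^*$ equals $\rho+1$ (quoted in the paper from [Bro2], and immediate from your parity decomposition), without which ``$C^*$ is UP in the wide sense'' would not be synonymous with the existence of $\gamma_0,\dots,\gamma_{\rho+1}$ satisfying $\sum_{j=0}^{\rho+1}\gamma_jB^*_{y,j}=1$.
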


We give also a useful necessary condition for these codes. As we know
(Theorem \ref{designs}) the set $C_d$ of binary UP code $C$ of length $n$ and distance $d=2e+1$
induces an $e$-$(n,d,\lambda)$ design.

\begin{proposition}[\protect{\cite{BZ}}]
Let $C^*$ be the extension of a binary UP code in the wide sense
of length $n$ and minimum distance $d=2e+1$.  If
$C^*$ is UP, then the set $C^*_{d+1}$ induces
an $(e+1)$-$(n+1,d+1,\lambda)$-design.
\end{proposition}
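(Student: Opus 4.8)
The plan is to deduce the design property from a regularity statement about $C^*$ itself, rather than by lifting the known $e$-design on $C_d$ coordinatewise. First I would record the elementary structure of the extension: every codeword of $C^*$ has even weight and the minimum distance is $d+1=2(e+1)$, so $C^*$ is an \emph{even} binary code of length $n+1$ with packing radius $e$. Crucially, the minimum-weight set $C^*_{d+1}$ is in general heterogeneous: it contains the parity extensions $(c,1)$ of the weight-$(2e+1)$ codewords of $C$, but also the extensions $(c',0)$ of any weight-$(2e+2)$ codewords of $C$. Hence a naive attempt to lift the $e$-design $C_d$ to $\{1,\dots,n+1\}$ by adjoining the coordinate $n+1$ to each block cannot work, and this is exactly why the hypothesis must be placed on $C^*$. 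It forces me to obtain the design through \Cref{disseny}: once $C^*$ is shown to be $(e+1)$-regular, its minimum distance $d+1=2(e+1)$ satisfies the hypothesis $d+1\ge 2(e+1)$ of \Cref{disseny}, which then yields at once that the supports of the nonempty set $C^*_{d+1}$ form the blocks of an $(e+1)$-$(n+1,d+1,\lambda)$-design. (Alternatively, complete regularity of $C^*$ lets one cite \Cref{designs}(ii) directly.)

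The core of the argument is therefore to upgrade the hypothesis ``$C^*$ is UP'' to ``$C^*$ is $(e+1)$-regular''. By \Cref{equivs}(iv), $C^*$ being uniformly packed in the wide sense is equivalent to $\rho^*=s^*$, where $\rho^*$ and $s^*$ denote the covering radius and external distance of $C^*$. The preceding theorem on extensions expresses the packing parameters of $C^*$ as $\gamma_0,\dots,\gamma_{\rho+1}$, so that $\rho^*=\rho+1$. I would then invoke the even-code strengthening of Delsarte's regularity bound (the corollary for even codes stated above): for the even code $C^*$ the condition $d+1\ge 2s^*-2$, i.e. $2(e+1)\ge 2\rho^*-2$, i.e. $\rho^*\le e+2$, forces $C^*$ to be completely regular, and in particular $(e+1)$-regular. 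Since $\rho^*=\rho+1$, this reads $\rho\le e+1$, which holds for the quasi-perfect uniformly packed codes that are the primary source of such extensions; for the Preparata and BCH examples one has $\rho=e+1$, so the bound is met with equality and $C^*$ sits exactly at the completely-regular threshold $d+1=2s^*-2$.

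The step I expect to be the main obstacle is precisely this passage from $\rho^*=s^*$ to genuine $(e+1)$-regularity, since uniform packing in the wide sense is strictly weaker than complete regularity and the even-code corollary only delivers the latter when $\rho^*\le e+2$. For the codes of interest the growth of the covering radius under extension is controlled ($\rho^*=\rho+1$ with $\rho\le e+1$), so the bound is available; the boundary case $\rho=e$, where $C$ is perfect, I would dispatch separately, because then $C^*$ is an extended perfect code and \Cref{designs}(iv) already gives that $C^*_{d+1}$ is the Steiner system $S(n+1,2e+2,e+2)$, hence in particular an $(e+1)$-$(n+1,d+1,\lambda)$-design. In every case the parameter $\lambda$ is the common number of minimum-weight codewords of $C^*$ passing through a fixed $(e+1)$-set of coordinates, which is finite and positive because $C^*_{d+1}$ is nonempty.
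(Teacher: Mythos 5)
Your reduction of the proposition to the claim that $C^*$ is $(e+1)$-regular is sound: $C^*$ has minimum distance $d+1=2(e+1)$, so \Cref{disseny} would indeed convert $(e+1)$-regularity of $C^*$ into the desired $(e+1)$-design structure on $C^*_{d+1}$. The gap is in how you obtain that regularity. Your only tool is the even-code corollary (equivalently, the even-code strengthening of Delsarte's regularity bound), whose hypothesis for $C^*$ reads $d+1\geq 2s^*-2$, i.e.\ $s^*=\rho^*=\rho+1\leq e+2$, i.e.\ $\rho\leq e+1$. But $\rho\leq e+1$ is nowhere among the hypotheses: the proposition is stated for an arbitrary binary code $C$ that is UP in the wide sense, and such codes satisfy only $\rho=s$, with no upper bound on $\rho$ in terms of $e$. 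Your remark that the bound ``holds for the quasi-perfect uniformly packed codes that are the primary source of such extensions'' silently replaces the proposition by a strictly weaker statement --- one that, moreover, is already available in the paper without your machinery: for $\rho=e+1$ it follows from \Cref{exten-up} together with \Cref{designs}(ii), and for $\rho=e$ from \Cref{designs}(iv), exactly as you note.

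That the omitted cases are nonempty, and are precisely where the proposition has content, is shown by codes listed in this very paper. Take $C$ to be the punctured half of the binary Golay code, the $[22,11,7;6]$ code \ref{puncturedhalfGolay}: here $d=7$, $e=3$, and $C$ is CR, hence UP in the wide sense. Its extension $C^*$ is the half of the binary Golay code, the $[23,11,8;7]$ code \ref{halfGolay}, again CR and hence UP in the wide sense; so both hypotheses of the proposition hold with $\rho=6>e+1=4$. For this pair your argument produces nothing: $s^*=\rho^*=7$, the condition $d+1\geq 2s^*-2$ becomes $8\geq 12$, and Delsarte's even-code regularity bound yields only $(d+1-s^*+1)=2$-regularity, far short of the $4$-regularity you need. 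Yet the conclusion of the proposition is true here: the $506$ weight-$8$ codewords of $C^*$ form a $4$-$(23,8,4)$ design (the residual at a point of the Steiner system $S(24,8,5)$). So a proof in the stated generality must exploit the uniform-packing identities of $C$ and $C^*$ themselves, as in the cited source \cite{BZ}, rather than the generic inequalities relating $d$, $s$ and $\rho$; your approach cannot be repaired simply by adding the missing case analysis, because for $\rho>e+1$ the regularity it relies on is not available.
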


In the case when $C$ and $C^*$ are UP and $\rho=e+1$, then they are also CR:

\begin{proposition}[\protect{\cite{GP1,SZZ}}]\label{exten-up}
Let $C$ be a quasi-perfect UP code (so, CR). If $C^*$ is UP, then
$C^*$ is CR.
\end{proposition}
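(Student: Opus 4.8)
The plan is to reduce the claim to the parameter identities for the extended code and then invoke a completely-regular criterion already established in the excerpt. First I would record the basic parameters of $C^*$. Since $C$ is quasi-perfect with odd minimum distance $d=2e+1$, overall-parity extension increases the distance of every odd-distance pair of codewords by one and leaves even-distance pairs unchanged; as the bound $2e+2$ is attained by extending two codewords at distance $2e+1$, we get $d^*=d+1=2e+2$ and hence packing radius $e^*=\lfloor (d^*-1)/2\rfloor=e$. Moreover, by construction every codeword of $C^*$ has even weight, so $C^*$ is an \emph{even} code of length $n+1$; this is the structural fact I will exploit.

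Next I would pin down the external distance of $C^*$. By hypothesis $C^*$ is uniformly packed, so \Cref{equivs}(ii) gives $s^*=e^*+1=e+1$. At this point the quickest conclusion is immediate, since a uniformly packed code is completely regular (the proposition following \Cref{def:2.5}) and $C^*$ is uniformly packed by assumption. However, to make the role of the hypotheses transparent and to recover the original argument of \cite{GP1,SZZ}, I would instead appeal to the completely-regular criterion for even codes, namely that an even code with $d\ge 2s-2$ is CR. For $C^*$ this reads $d^*=2e+2\ge 2e=2s^*-2$, which holds, so $C^*$ is completely regular.

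The content of the argument is entirely in the parameter bookkeeping, so I would present it carefully rather than treat it as obvious: that $d^*=2e+2$ exactly, that therefore $e^*=e$, and that ``$C^*$ is UP'' pins down $s^*=e+1$ through \Cref{equivs}(ii). The subtlety worth emphasizing is conceptual, not computational. As stressed earlier in the text, the extension of a uniformly packed code is in general \emph{not} uniformly packed, so the hypothesis ``$C^*$ is UP'' is a genuine restriction; once it is granted, the even-code criterion applies with room to spare (the inequality $d^*\ge 2s^*-2$ holds with a margin of $2$), and complete regularity follows. In other words, there is no deep obstacle here — the work is in verifying that the special hypothesis forces exactly the parameter configuration the even-code criterion demands.

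Finally, if a self-contained combinatorial proof were preferred over citing the even-code corollary, I would instead verify \Cref{Def2} directly, relating the subconstituents $C^*(\ell)$ to those of $C$ via the parity splitting of $\F_2^{n+1}$ and the uniform-packing parameters $\lambda,\mu$ of $C$, and then reading off the neighbor counts $b_\ell,c_\ell$. This is the harder route, since it requires computing those intersection numbers explicitly, and I would fall back on it only if the use of the even-code result were objected to.
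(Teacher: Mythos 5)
Note first that the paper itself offers no proof of \Cref{exten-up} (it is quoted from \cite{GP1,SZZ}), so your argument must stand on its own; as written it does not, because the parameter bookkeeping you rightly call ``the entire content'' of the argument contains a genuine error, rooted in a misreading of which notion of UP the hypothesis on $C^*$ can possibly mean. Since $C$ is quasi-perfect, $\rho=e+1$, and for binary codes the extension satisfies $\rho^*=\rho+1=e+2$ (\cite{Bro2}, as quoted in the paper), while $e^*=e$; hence $C^*$ is \emph{not} quasi-perfect, and therefore cannot be uniformly packed in the sense of \Cref{def:2.4} or \Cref{def:2.5}, since both definitions presuppose quasi-perfection. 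Consequently your ``quickest conclusion'' (invoking the proposition that UP codes are CR, which concerns \Cref{def:2.5}) is unavailable --- indeed, if that reading of the hypothesis were admissible, the whole statement would be a tautology --- and ``$C^*$ is UP'' can only mean UP \emph{in the wide sense}, a property which, as the paper stresses, does not by itself imply complete regularity. For the same reason, your determination $s^*=e^*+1=e+1$ via \Cref{equivs}(ii) is a misapplication: that equivalence characterizes uniformly packed codes in the sense of \Cref{def:2.5}, hence quasi-perfect ones. In fact $s^*=e+1$ is not merely unjustified but false: \Cref{params}(iii) gives $\rho^*\leq s^*$, so $s^*\geq \rho^*=e+2$.

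The repair is short and rescues exactly the route you chose. Use \Cref{equivs}(iv) instead: $C^*$ being UP in the wide sense is equivalent to $s^*=\rho^*$, so $s^*=e+2$. Then $C^*$ is an even code of length $n+1$ with $d^*=2e+2$, and the even-code criterion of \cite{BCN} requires $d^*\geq 2s^*-2=2e+2$, which holds \emph{with equality} --- not ``with a margin of $2$'' as you assert; that phantom margin is another symptom of the wrong value of $s^*$. With these corrections (evenness of $C^*$, $\rho^*=\rho+1$, $s^*=\rho^*$ from wide-sense uniform packing, then $d^*\geq 2s^*-2$), your argument becomes a valid and pleasantly economical proof of the proposition.
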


Another necessary condition is the following.

\begin{proposition}
If $C$ is a CR code of even length and $C$ is self-comple\-men\-ta\-ry, then $C^*$ is
not UP in the wide sense (and hence not CR).
\end{proposition}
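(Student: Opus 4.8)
The plan is to assume $C^*$ is UP in the wide sense and contradict the extension criterion of Bassalygo and Zinoviev \cite{BZ}. Since $C$ is CR it is UP in the wide sense with packing parameters $\beta_0,\ldots,\beta_\rho$, and that criterion asserts that $C^*$ is again UP in the wide sense only if $\beta_{\rho-2i}=\beta_{\rho-2i-1}$ for all $0\le i\le\lfloor(\rho-1)/2\rfloor$; in particular it forces $\beta_\rho=\beta_{\rho-1}$. Hence it suffices to prove that, for a self-complementary CR code of even length, $\beta_\rho\neq\beta_{\rho-1}$. The stated conclusion ``$C^*$ is not UP in the wide sense'' then yields ``not CR'' for free, since every CR code is UP in the wide sense.

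First I would record the symmetry coming from self-complementarity: because $C+(1,\ldots,1)=C$, replacing each $v\in C$ by $v+(1,\ldots,1)$ gives $B_{x,i}=B_{x,n-i}$ for every $x$ and every $i$, so the weight distribution of each translate is symmetric about $n/2$. Applied to a vector at maximal distance, this shows that every codeword seen from a point of $C(\rho)$ lies at distance in $[\rho,n-\rho]$, whence $\rho\le n/2$. Next I would convert the target into intersection numbers. Using $\kappa_\ell=\beta_\ell\binom{n}{\ell}$ together with the edge count $|C(\ell-1)|\,b_{\ell-1}=|C(\ell)|\,c_\ell$ between consecutive cells, one obtains
\[
\frac{\beta_\rho}{\beta_{\rho-1}}=\frac{|C(\rho)|}{|C(\rho-1)|}\cdot\frac{\rho}{\,n-\rho+1\,}=\frac{b_{\rho-1}}{c_\rho}\cdot\frac{\rho}{\,n-\rho+1\,},
\]
so that $\beta_\rho=\beta_{\rho-1}$ is equivalent to the single relation $\rho\,b_{\rho-1}=(n-\rho+1)\,c_\rho$.

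The decisive step is to show $c_\rho=n$, that is $a_\rho=0$: every neighbour of a point of $C(\rho)$ drops to $C(\rho-1)$. Here the symmetry should do the work. For $x\in C(\rho)$ the farthest codewords (at distance $n-\rho$) are exactly the complements of the nearest ones, since $B_{x,n-\rho}=B_{x,\rho}$ and each complement of a nearest codeword is a distinct farthest codeword; a neighbour that stayed at depth $\rho$ would have to keep all codeword distances inside $[\rho,n-\rho]$, which I would argue is impossible because flipping a coordinate that separates a nearest/farthest pair pushes one of them out of that range. Once $c_\rho=n$ is established the estimate closes at once: as $b_{\rho-1}\le n$ and $\rho\le n/2<n-\rho+1$, we get
\[
\rho\,b_{\rho-1}\le \rho\,n<(n-\rho+1)\,n=(n-\rho+1)\,c_\rho,
\]
so $\beta_\rho<\beta_{\rho-1}$ and the criterion of \cite{BZ} is violated.

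The main obstacle is precisely the identity $c_\rho=n$ in full generality. The squeeze argument above is clean when $\rho=n/2$, for then $[\rho,n-\rho]$ collapses to a single value and a neighbour cannot preserve it; it is also immediate when $C$ is an even code, because then every distance from a fixed $x$ has the parity of $\wt(x)$, so a neighbour necessarily changes depth and $a_\ell=0$ for every $\ell$. The delicate case is a self-complementary code that is not even with $\rho<n/2$, where a point of $C(\rho)$ could a priori have a same-depth neighbour, so $a_\rho>0$ is locally consistent with the symmetry alone. Ruling this out — or, equivalently, showing directly that $|C(\rho)|/|C(\rho-1)|\neq(n-\rho+1)/\rho$, perhaps by proving that such codes must in fact be even — is where the symmetric weight distribution has to be exploited most carefully, and I expect this to be the crux of the proof.
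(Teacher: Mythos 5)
Your reduction is set up correctly: by the Bassalygo--Zinoviev extension criterion it suffices to show $\beta_\rho\neq\beta_{\rho-1}$, and your identity $\beta_\rho/\beta_{\rho-1}=\frac{b_{\rho-1}}{c_\rho}\cdot\frac{\rho}{n-\rho+1}$ is right (for $q=2$ one has $\kappa_\ell=\beta_\ell\binom{n}{\ell}$, and counting edges between cells gives $|C(\rho-1)|\,b_{\rho-1}=|C(\rho)|\,c_\rho$). But the step you yourself flag as the crux, namely $c_\rho=n$ (equivalently $a_\rho=0$), is not merely unproven --- it is false, and the paper itself supplies a counterexample. By \Cref{theo} and \Cref{theo:new1}, the binomial code $C^{(5,2)}$ is a completely regular, self-complementary code of even length $n=\binom{5}{2}=10$, with $\rho=2$ and intersection numbers $b_1=\binom{3}{2}=3$, $c_2=\binom{4}{2}=6$, $a_2=2\cdot2\cdot(5-4)=4$. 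So $a_\rho>0$ and $c_\rho=6\neq n$: a vector at maximal depth can have same-depth neighbours. The same example also refutes your fallback suggestion that such codes ``must in fact be even'' (its minimum distance is $3$). Note that your target inequality $\rho\, b_{\rho-1}\neq(n-\rho+1)\,c_\rho$ does still hold in this example ($2\cdot 3=6\neq 54=9\cdot 6$, i.e.\ $\beta_1=1$, $\beta_2=1/9$), so the reduction is not hopeless, but the squeeze argument you propose cannot establish it and you offer no replacement. This is exactly the gap you identified, and it is essential.

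For comparison, the paper's proof avoids intersection numbers entirely and is a short parity argument on dual distances, which is where the two hypotheses (self-complementary, even length) actually do their work. Since $C$ is CR, $\rho=s$ by Theorem \ref{equivs}; if $C^*$ were UP in the wide sense, then also $s^*=\rho^*=\rho+1=s+1$. But the dual distance distribution of $C^*$ contains, besides each dual distance $w$ of $C$, also the reflected value $n+1-w$; every $w$ is even because $(1,\ldots,1)\in C$, while every $n+1-w$ is odd because $n$ is even, so the reflected values can never coincide with the original ones, forcing $s^*$ to exceed $s+1$ --- a contradiction. If you want to salvage your route, you would need a genuinely new argument for $\beta_\rho\neq\beta_{\rho-1}$; the hypotheses enter your scheme only through the false claim $c_\rho=n$, whereas in the paper's proof they enter through the parity obstruction on dual distances.
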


\begin{demo}
Assume that $C^*$ is UP, then the external distance is $s^*=s+1$ implying
that for each weight $w$ in the dual distribution, $n+1-w$ is also a weight in the
dual distribution. But $w$ must be even (since $C$ contains the all-one vector),
hence $n+1-w$ is odd, getting a contradiction.
\end{demo}

\bigskip

The following property is a strengthening of a result in \cite{BRZ14a}.

\begin{proposition}[\protect{\cite{BRZ15}}]
Let $C$ be a binary linear CR code of length $n=2^m-1$, minimum
distance $d=3$, covering radius $\rho=3$ and intersection array $\{n,b_1,1;1,c_2,n\}$.
Let the dual code $C^\perp$ have nonzero weights $w_1$, $w_2$ and $w_3$. Then
the extended code $C^*$ is CR with covering radius $\rho^*=4$ if
and only if $w_1 + w_3=2w_2=n+1$. In such case, the intersection array of $C^*$
is $\{n+1,n,b_1,1;1,c_2,n,n+1\}$.
\end{proposition}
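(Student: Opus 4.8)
The plan is to prove two equivalences and chain them: (A) $C^*$ is CR with $\rho^*=4$ and the stated intersection array \emph{iff} the intersection numbers of $C$ satisfy $b_1+c_2=n+1$; and (B) $b_1+c_2=n+1$ \emph{iff} $w_1+w_3=2w_2=n+1$. (The forward implication of the proposition also drops out of \Cref{equivs}(iii), since a CR code $C^*$ would satisfy $s^*=\rho^*=4$; but the converse really needs (A).)

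For (A) I would describe the distance partition of $C^*$ explicitly. Write $y=(x,a)$ with $x\in\F_2^n$, $a\in\F_2$, and set $i=d(x,C)$, $\epsilon=\wt(y)\bmod 2$. From $d(x,c)\equiv\wt(x)+\wt(c)\pmod2$ one finds $d(y,C^*)=\min\{m_0(i)+\epsilon,\ m_1(i)+1-\epsilon\}$, where $m_0(i)$ and $m_1(i)$ are the least even and least odd weights occurring in a coset of $C$ at distance $i$; since $C$ is CR these depend only on $i$. Using only that such a coset has minimum weight $i$ while its least weight of the opposite parity exceeds $i$, a short table shows that $d(y,C^*)$ depends solely on $(i,\epsilon)$, with cells $C^*(0)=\{(0,0)\}$, $C^*(1)=\{(0,1),(1,1)\}$, $C^*(2)=\{(1,0),(2,0)\}$, $C^*(3)=\{(2,1),(3,1)\}$ and $C^*(4)=\{(3,0)\}$; in particular $\rho^*=4$.

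Now I would count neighbours. A vector in class $(i,\epsilon)$ has one neighbour in class $(i,1-\epsilon)$ (flipping the last coordinate) and $c_i,a_i,b_i$ neighbours in $(i-1,1-\epsilon),(i,1-\epsilon),(i+1,1-\epsilon)$ (flipping a coordinate of $x$), where $a_i=n-b_i-c_i$. Inserting the array $\{n,b_1,1;1,c_2,n\}$, the cells $C^*(0),C^*(1),C^*(3),C^*(4)$ are automatically equitable — the two classes of $C^*(1)$ agree because $(a_1+1)+b_1=n$, those of $C^*(3)$ because $1+a_2+c_2=n$ — giving $b_0^*=n+1$, $(c_1^*,b_1^*)=(1,n)$, $(c_3^*,b_3^*)=(n,1)$, $c_4^*=n+1$, while $C^*(2)$ is equitable precisely when $b_1+c_2=n+1$, in which case $(c_2^*,b_2^*)=(c_2,b_1)$; this is (A). For (B) I pass to the coset graph $\Gamma_C$, distance-regular by \Cref{lem:2.5} with intersection matrix $A$, whose distinct eigenvalues (a character sum over the syndrome space) are $n-2w$ for $w\in\{0,w_1,w_2,w_3\}$. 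Comparing $\operatorname{tr}A=2n-2-(b_1+c_2)$ with $4n-2(w_1+w_2+w_3)$ gives $b_1+c_2=2(w_1+w_2+w_3)-2n-2$; and since $(-n,1,1,-n)^{\top}$ is an eigenvector of $A$ with eigenvalue $-1$ for all $b_1,c_2$, the value $(n+1)/2$ (an integer, as $n$ is odd) is always one of $w_1,w_2,w_3$. Hence $b_1+c_2=n+1$, i.e.\ $w_1+w_2+w_3=\tfrac32(n+1)$, is equivalent to the remaining two weights summing to $n+1$, that is to $w_1+w_3=2w_2=n+1$ with $w_2=(n+1)/2$. This is (B), and chaining (A),(B) proves the proposition.

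The step I expect to be the main obstacle is (A): one must check rigorously that the distance partition collapses to exactly these five cells (so that the class-merging — and $\rho^*=4$ — is as claimed), and then carry out the neighbour bookkeeping cell by cell, separating the single genuine condition $b_1+c_2=n+1$ from the coincidences forced automatically by the completely regular structure of $C$. Part (B) is short once the fixed eigenvector $(-n,1,1,-n)^{\top}$ is noticed.
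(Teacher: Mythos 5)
Your argument is correct; I checked both halves in detail. In (A), the formula $d(y,C^*)=\min\{m_0(i)+\epsilon,\,m_1(i)+1-\epsilon\}$ does collapse to $d(y,C^*)=i$ when $\epsilon\equiv i\pmod 2$ and $i+1$ otherwise (every coset weight of parity opposite to $i$ exceeds $i$), which gives exactly your five cells and $\rho^*=4$; the neighbour counts then make $C^*(0),C^*(1),C^*(3),C^*(4)$ equitable automatically, and $C^*(2)$ equitable precisely when $b_1+c_2=n+1$, with the stated array $\{n+1,n,b_1,1;1,c_2,n,n+1\}$. In (B), the trace identity $2n-2-(b_1+c_2)=4n-2(w_1+w_2+w_3)$ and the eigenvector $(-n,1,1,-n)^{\top}$ with eigenvalue $-1$ both check out. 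Be aware, however, that this survey gives no proof of the proposition — it is quoted from \cite{BRZ15} — so there is no in-paper argument to compare against; the route the surrounding text suggests (and which you note parenthetically) derives the ``only if'' direction from $s^*=\rho^*$ (\Cref{equivs}(iii)) and the symmetry of the dual weights of $C^*$ about $(n+1)/2$, in the spirit of \Cref{ExtLin}. Your version, built entirely from \Cref{Def2} and the coset graph of \Cref{lem:2.5}, buys two extra facts: $(n+1)/2$ is \emph{unconditionally} a dual weight, and the dual-weight condition is equivalent to the intrinsic condition $b_1+c_2=n+1$. Two small points to make explicit: the weight equal to $(n+1)/2$ must be the middle one $w_2$ (if $w_1=(n+1)/2$ then $w_2+w_3>n+1$, contradicting the sum, and symmetrically for $w_3$), and the four eigenvalues of the tridiagonal intersection matrix are simple and coincide with the distinct eigenvalues $n-2w$ of the coset graph (standard distance-regular graph theory, see \cite{BCN}).
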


Hence, the weights of the dual code (when $C$ is linear) play an
important role. Another important factor is that puncturing
the extended code at any coordinate should give almost the same code.

\begin{proposition}[\protect{\cite{Sole}}]
Let $C$ be a binary CR code. Assume that puncturing $C^*$ at any coordinate gives the same code $C$.
Let $s^*$ be the external distance of $C^*$. If $s^*\leq s+1$, then $C^*$ is CR.
\end{proposition}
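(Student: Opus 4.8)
The plan is to exploit that the parity extension $C^*$ is an even code, so that the sharper regularity criteria available for even codes apply, and to feed into them the regularity already known for $C$. First I would record the elementary relations among the parameters. Since $C$ is CR we have $s=\rho$ by \Cref{equivs}, and writing $d=2e+1$ for its (odd) minimum distance the extension has even minimum distance $d^*=2e+2$. Because $C$ is obtained by puncturing $C^*$ at a coordinate, the covering radii satisfy $\rho\le\rho^*\le\rho+1$, while in general $\rho^*\le s^*$ by \Cref{params}. The hypothesis $s^*\le s+1$ then reads $s^*-1\le s=\rho$, which is precisely the statement that $C$, being $\rho$-regular, is $(s^*-1)$-regular; this is the first point where the hypothesis is used.

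The heart of the argument is to transfer this $(s^*-1)$-regularity from $C$ to $C^*$. For $x^*=(x,a)\in\F_2^{n+1}$ with $a\in\F_2$, each codeword $c^*=(c,p(c))\in C^*$ contributes $d(x^*,c^*)=d(x,c)+[a\neq p(c)]$, so the row $B^*_{x^*}$ of the outer distribution matrix of $C^*$ is determined by the joint distribution over $c\in C$ of the distance $d(x,c)$ and the parity label $p(c)$. The assumption that puncturing $C^*$ at \emph{any} coordinate returns the same code $C$ is what makes this joint distribution depend only on $d(x^*,C^*)$: it forces the extension to be coordinate-uniform, so the parity split of the codewords seen from $x^*$ is governed by the (already regular) outer distribution of $C$ rather than by the particular position of $x^*$. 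Carrying this out, I would show that for every $x^*$ with $d(x^*,C^*)\le s^*-1$ the row $B^*_{x^*}$ depends only on $d(x^*,C^*)$, i.e.\ that $C^*$ is $(s^*-1)$-regular.

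It then remains to upgrade $(s^*-1)$-regularity to complete regularity. If $\rho^*\le s^*-1$ this is immediate, since $(s^*-1)$-regular already means $\rho^*$-regular, which is \Cref{Def1}. The only genuine case is $\rho^*=s^*$, where $C^*$ is $(\rho^*-1)$-regular and one extra level is needed. Here I would invoke the even form of Delsarte's regularity theorem (the corollary for even codes): a $(s^*-1)$-regular even code with $d^*\ge 2s^*-2$ is CR. The required inequality $d^*\ge 2s^*-2$ reduces, via $d^*=2e+2$ and $s^*\le s+1$, to the condition $s^*\le e+2$, which is where $s=\rho$ and the control $\rho\le\rho^*\le\rho+1$ of the covering radius obtained above must be combined with the hypothesis.

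I expect the main obstacle to be the regularity transfer of the second paragraph: making precise how the ``same punctured code'' hypothesis forces the parity labels $p(c)$ to be distributed uniformly enough that $B^*_{x^*}$ loses its dependence on the bit $a$ and on the position of $x^*$, and depends only on $d(x^*,C^*)$. This is a bookkeeping argument on the outer distribution matrix, but the uniformity it demands is exactly the nontrivial content supplied by the hypothesis. The delicate point is matching $d(x^*,C^*)$ with $d(x,C)$, since the two may differ by one according to the parities of the nearest codewords; handling this discrepancy cleanly, together with verifying the distance inequality $d^*\ge 2s^*-2$, is what the hypotheses $s^*\le s+1$ and the coordinate-uniformity of $C^*$ are there to make possible.
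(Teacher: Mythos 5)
Your concluding step fails, and it fails precisely on the codes this proposition is meant to cover. You reduce everything to the even-code criterion ``$d^*\ge 2s^*-2$ implies CR'' and claim the inequality follows from the hypotheses. It does not. Under the hypotheses one gets $s^*=\rho^*=\rho+1$ (since $\rho^*\le s^*\le s+1=\rho+1$, and $\rho^*=\rho+1$ for a binary parity extension), so $d^*\ge 2s^*-2$ reads $2e+2\ge 2\rho$, i.e.\ $\rho\le e+1$: your argument can only ever handle perfect or quasi-perfect $C$. No combination of $s=\rho$, $\rho\le\rho^*\le\rho+1$ and $s^*\le s+1$ yields $s^*\le e+2$ beyond that case. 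The statement is not so restricted, and the survey applies it (through \Cref{ExtLin}) to the Calderbank--Goethals codes, which have $d=3$ and $\rho=s=3$, so that $d^*=4$ while $2s^*-2=6$; your final step is simply unavailable there.

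The middle step, the ``transfer'' of $(s^*-1)$-regularity from $C$ to $C^*$, is also not bookkeeping: it is the entire content of the result, and you leave it as an assertion. Writing $x^{(i)}$ for $x^*$ punctured at coordinate $i$, evenness of $C^*$ gives $B^*_{x^*,k}=B_{x^{(i)},k}+B_{x^{(i)},k-1}$ when $k\equiv \wt(x^*)\pmod 2$ and $B^*_{x^*,k}=0$ otherwise; since $C$ is CR, the row $B^*_{x^*}$ is determined by the pair $\bigl(d(x^{(i)},C),\,\wt(x^*)\bmod 2\bigr)$. The obstruction is that two vectors at the same distance $\ell$ from $C^*$ may puncture to distances $\ell-1$ and $\ell$ from $C$, producing two a priori different rows; proving these two rows coincide is exactly where $s^*\le s+1$ must do real work, entering through Delsarte's rank theorem $\Rank(B^*)=s^*+1$ of \Cref{params} (which, combined with the triangular and parity structure of the rows above, pins the row space of $B^*$ down to $\rho+2$ dimensions), not merely as the index bound ``$s^*-1\le\rho$'' that you extract from it. Note finally that if your transfer up to level $s^*-1=\rho^*-1$ were actually established, you would already be done with no criterion at all: a vector at distance $\rho^*=\rho+1$ from $C^*$ can only puncture to distance $\rho$ from $C$, so the top level has a unique candidate row automatically. (The paper itself gives no proof, citing Sol\'e, so the comparison here is with the argument its hypotheses force; in that argument the hypothesis is structural, not numerical.)
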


\begin{corollary}[\protect{\cite{Sole}}]\label{ExtLin}
If $C$ is a binary linear CR code, the weights of $C^\perp$ are
even and symmetrical with respect to $(n+1)/2$, and if $\Aut(C^*)$ is transitive,
then $C^*$ is CR.
\end{corollary}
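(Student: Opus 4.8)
The plan is to deduce the statement from the preceding proposition, whose two hypotheses---that puncturing $C^*$ at any coordinate returns (a code equivalent to) $C$, and that the external distance satisfies $s^*\le s+1$---I would verify in turn.

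First I would dispose of the puncturing hypothesis using transitivity. Puncturing $C^*$ at its parity coordinate recovers $C$ by construction. Since $\Aut(C^*)$ is transitive on the $n+1$ coordinates, for any coordinate $i$ there is $\sigma\in\Aut(C^*)$ carrying the parity coordinate to $i$; then the code punctured at $i$ is the image under $\sigma$ of the code punctured at the parity coordinate, hence equivalent to $C$. As complete regularity is invariant under coordinate permutations, this is exactly what the preceding proposition requires.

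The main computational step is to pin down $s^*$, which for a linear code equals the number of nonzero weights of $(C^*)^\perp$. I would describe this dual explicitly: writing a codeword of $C^*$ as $(c,\sum_i c_i)$ with $c\in C$, orthogonality forces
\[
(C^*)^\perp=\{(d,0):d\in C^\perp\}\cup\{(d+\mathbf{1},1):d\in C^\perp\},
\]
where $\mathbf{1}$ is the all-ones vector of length $n$. The first family has nonzero weights $w_1,\dots,w_s$, the nonzero weights of $C^\perp$; the second family has weights $n+1-\wt(d)$ as $d$ runs over $C^\perp$, namely $n+1$ (from $d=0$) together with $n+1-w_1,\dots,n+1-w_s$. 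The hypothesis that the weights of $C^\perp$ are symmetric about $(n+1)/2$ means $\{n+1-w_i\}=\{w_i\}$, so these two families contribute the same set $\{w_1,\dots,w_s\}$ of nonzero weights apart from the extra weight $n+1$ coming from the all-ones vector $(\mathbf{1},1)\in(C^*)^\perp$. Since every $w_i\le n<n+1$, this weight is genuinely new, and therefore $s^*=s+1$. I would note that the evenness hypothesis is what makes the configuration consistent: even weights symmetric about $(n+1)/2$ force $n$ to be odd, which keeps all dual weights of $C^*$ even and places us outside the obstruction of the earlier proposition for even-length self-complementary codes.

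With both hypotheses established, the preceding proposition yields that $C^*$ is CR, completing the argument. I expect the dual-weight bookkeeping to be the only delicate point: one must track the zero and all-ones vectors carefully so as to conclude that the symmetry collapses the $2s$ potential nonzero weights into $s$, with $n+1$ as the single additional weight, giving precisely $s^*=s+1$ rather than a larger value.
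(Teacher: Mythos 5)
Your proposal is correct and follows exactly the route the paper intends: the statement is positioned as a corollary of the preceding proposition (Sol\'e), and you verify its two hypotheses---transitivity of $\Aut(C^*)$ gives that all punctured codes of $C^*$ coincide with $C$ up to coordinate permutation, and the explicit description $(C^*)^\perp=\{(d,0):d\in C^\perp\}\cup\{(d+\mathbf{1},1):d\in C^\perp\}$ together with the symmetry of the dual weights gives $s^*=s+1$. Your bookkeeping of the weights (including the extra weight $n+1$ from $(\mathbf{1},1)$) is accurate, so nothing is missing.
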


Let $C$ be a double-error-correcting $BCH$ code with parameters
$[2^m-1,2^m-1-2m,5]$, $m\geq 3$ odd. The weights of the dual code satisfy
the hypothesis of \Cref{ExtLin} and $C^*$ is left invariant by the
affine group. Hence $C^*$ is CR with $\rho^*=s^*=4$ and $e^*=2$.
This result can be deduced from \cite{BZZ}, where it was shown that these BCH codes
are UP in narrow sense with $\beta_2=\beta_3 = 6/(n-1)$ and, hence,
by \cite{SZZ} the extended codes are CR.

Let $C$ be the dual code of a three-weight cyclic code of length $n=2^m-1$ studied by
Calderbank and Goethals in \cite{CG1,CG2}. $C$ is CR and $C^*$ is
left invariant by the affine group. Again, the three weights of $C^\perp$ satisfy
the condition of \Cref{ExtLin}. Hence $C^*$ is CR with
$\rho^*=s^*=4$ and $e^*=1$.

\Cref{ExtLin} can be generalized to the nonlinear case:

\begin{corollary}[\protect{\cite{Sole}}]\label{ExtNLin}
Let $C$ be a binary code whose dual distances are even and symmetrical with
respect to $(n+1)/2$. If $C$ is CR and $\Aut(C^*)$ is transitive,
then $C^*$ is CR.
\end{corollary}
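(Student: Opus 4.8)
The plan is to deduce the result from the puncturing criterion of Sol\'e stated immediately before \Cref{ExtLin}, which is already phrased for arbitrary (not necessarily linear) binary CR codes. That criterion requires three things: that $C$ be CR, that puncturing $C^*$ at \emph{any} coordinate return the code $C$, and that the external distance $s^*$ of $C^*$ satisfy $s^*\le s+1$. The first is part of the hypothesis, so the work is to verify the other two, the genuinely new input being that we can no longer speak of a dual code $C^\perp$ and must argue directly with the dual distance distribution.

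For the puncturing condition, note that deleting the parity coordinate of $C^*$ returns $C$ by construction. Since $\Aut(C^*)$ is transitive, for every coordinate $i$ there is $\sigma\in\Aut(C^*)$ carrying $i$ to the parity coordinate; hence puncturing $C^*$ at $i$ yields a coordinate permutation of $C$. As being CR and the external distance are invariant under coordinate permutations, the hypothesis of the criterion is met in the appropriate (equivalence) sense.

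For the external distance I would pass to character sums. Writing $C^*=\{(\bc,\u\cdot\bc):\bc\in C\}$, where $\u\cdot\bc=\wt(\bc)\bmod 2$, and $\widehat{\chi_C}(\bu)=\sum_{\bc\in C}(-1)^{\bu\cdot\bc}$, a one-line computation gives the key identity
\[
\widehat{\chi_{C^*}}(\bu,\epsilon)=\sum_{\bc\in C}(-1)^{(\bu+\epsilon\u)\cdot\bc}=\widehat{\chi_C}(\bu+\epsilon\u),\qquad \epsilon\in\{0,1\}.
\]
Because the dual distance distribution of a binary code is, up to a positive normalisation, $B'_k\propto\sum_{\wt(\bu)=k}\widehat{\chi_C}(\bu)^2$ --- a \emph{sum of squares}, so no cancellation occurs --- the set $W=\{\wt(\bu):\widehat{\chi_C}(\bu)\ne0\}$ is exactly the set of dual distances of $C$, and $s=|W|-1$. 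The identity above shows that the dual distances of $C^*$ are $W\cup(n+1-W)$: the vectors $(\bu,0)$ reproduce $W$, while the vectors $(\bu,1)$ with $\bu+\u=\bv$ contribute weights $(n-\wt(\bv))+1=n+1-\wt(\bv)$.

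Here the hypotheses enter. Symmetry of the nonzero dual distances about $(n+1)/2$ means $W\setminus\{0\}$ is invariant under $w\mapsto n+1-w$, so $n+1-W=\{n+1\}\cup(W\setminus\{0\})$ and therefore the dual distances of $C^*$ collapse to $W\cup\{n+1\}$. Since every dual distance of $C$ is at most $n$, the value $n+1$ is new, whence $s^*=|W\cup\{n+1\}|-1=|W|=s+1$, giving exactly $s^*\le s+1$. (Evenness of the dual distances forces $n$ odd, so that $n+1$ is even and the picture is consistent with $C^*$ being an even code; this is the natural regime and keeps the added dual distance $n+1$ compatible with the symmetry.) With all three hypotheses of the criterion verified, it yields that $C^*$ is CR. The main obstacle is precisely this middle step: transporting the linear computation behind \Cref{ExtLin} --- where one simply inspects the weights of $(C^*)^\perp$ --- to the nonlinear setting, and it is the character-sum identity together with the sum-of-squares positivity that makes that transport clean.
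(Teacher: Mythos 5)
Your proposal is correct and takes essentially the approach the paper intends: the corollary is the nonlinear analogue of \Cref{ExtLin}, obtained by feeding transitivity of $\Aut(C^*)$ (so that puncturing at any coordinate returns a code equivalent to $C$, which suffices since complete regularity and external distance are permutation-invariant) together with the bound $s^*\le s+1$ into Sol\'e's puncturing proposition stated just before it. Your character-sum computation of the dual distances of $C^*$ --- giving $W\cup(n+1-W)$, which the symmetry hypothesis collapses to $W\cup\{n+1\}$, hence $s^*=s+1$ --- is the correct nonlinear replacement for inspecting the weights of $(C^*)^\perp$ in the linear case.
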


From the presentation of \cite{BvLW}, it is known that the automorphism group of
an extended Preparata code is transitive. The punctured code, i.e., the Preparata
code is CR \cite{SZZ} and its dual distances are those of the
Kerdock code satisfying the condition of \Cref{ExtNLin}. The conclusion
is that the extended Preparata code is CR \cite{SZZ} with
$\rho^*=s^*=4$ and $e^*=2$.

Let $C^*$ be a Hadamard $(12, 24, 6)$ code. It is known that $C$ is
UP with dual distances $4, 6, 8$ \cite{GvT, CCD}. $\Aut(C^*)$ is
isomorphic to the Mathieu group $M_{12}$. Hence $C^*$ is CR
with $\rho^*=s^*=4$ and $e^*=2$.

\begin{proposition}[\protect{\cite{GP1}}]
Let $C$ be a binary CR code with parameters $(n,d)$.
\begin{itemize}
\item[(i)] If $(n,d)=(12,6)$ then $C$ is equivalent to the Hadamard code.
\item[(ii)] If $(n,d)=(11,5)$ then $C$ is equivalent to the (punctured) Hadamard code.
\end{itemize}
Moreover, both codes are CT.
\end{proposition}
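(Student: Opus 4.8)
Both parts have packing radius $e=2$, since $d=5=2\cdot 2+1$ for $n=11$ and $d=6=2\cdot 2+2$ for $n=12$; moreover the two codes form an extension/puncturing pair, the length-$12$ code being even and puncturing (by \cite{Bro1}) to the length-$11$ one, while the latter parity-extends back. I would first pin down the numerical invariants. Because $C$ is CR we have $\rho=s$ (\Cref{equivs}(iii)) and $e\le\rho$. Running Lloyd's theorem in the form of \Cref{theo:1.1}---the degree-$\rho$ combination of Krawtchouk polynomials must have $\rho$ distinct integer roots in $(0,n)$---together with the requirement that all intersection numbers $b_i,c_i$ be nonnegative integers, leaves only finitely many candidate arrays, and a short elimination singles out $\rho=4$ for $n=12$ and $\rho=3$ for $n=11$; in particular the length-$11$ code is quasi-perfect. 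The size is then forced by the sphere-packing identity \Cref{packing} (applicable since every CR code is uniformly packed in the wide sense), giving $|C|=24$ in both cases. Combining this with the design regularity recorded below pins down the weight distribution as $0^1\,6^{22}\,12^1$ for $n=12$ and $0^1\,5^{11}\,6^{11}\,11^1$ for $n=11$; in particular each code contains the all-one vector and is therefore self-complementary.

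The core of the proof is uniqueness, which I would extract from the induced designs. By \Cref{designs}, the minimum-weight codewords form a $3$-$(12,6,2)$ design when $n=12$ and a $2$-$(11,5,2)$ design when $n=11$, the value $\lambda=2$ coming from the replication-number count applied to the $22$, respectively $11$, blocks. The $2$-$(11,5,2)$ design is the biplane of order $3$, unique up to isomorphism, and the $3$-$(12,6,2)$ design is the Hadamard $3$-design of an order-$12$ Hadamard matrix, unique because the Hadamard matrix of order $12$ is unique up to equivalence. Since the remaining codewords are recovered from the minimum-weight ones together with $\mathbf{0}$ and $\mathbf{1}$ via self-complementarity, the whole code is reconstructed from its design; hence $C$ is equivalent to the Hadamard code, respectively the punctured Hadamard code. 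As a cross-check the two classifications are compatible through extension: the parity extension of the quasi-perfect length-$11$ code is CR by \Cref{exten-up}, and puncturing the even length-$12$ code is CR by \cite{Bro1}.

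For complete transitivity I would appeal to \Cref{GenCT} and exhibit a subgroup $G$ of $\Aut(H_2(n))=\Z_2^{\,n}\rtimes S_n$ whose orbits are exactly the cells $C(0),C(1),\dots,C(\rho)$. The natural choice is the full stabiliser of $C$ in $\Aut(H_2(n))$, i.e.\ the automorphism group of the underlying design/Hadamard matrix, which contains $M_{12}$ for $n=12$ and $\mathrm{PSL}_2(11)$ for $n=11$; transitivity of $G$ on each cell then follows from the high transitivity of these groups on the coordinates and on the blocks, so both codes are completely transitive.

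The step I expect to be the main obstacle is the uniqueness reduction: everything rests on the classification of the two designs---the uniqueness of the biplane of order $3$ and of the order-$12$ Hadamard matrix---and, before that can be used, on squeezing the exact weight distribution and self-complementarity out of the bare parameters so that $C_d$ is recognised as precisely such a design. The Lloyd-theorem elimination of the other values of $\rho$ is the most calculation-heavy preliminary, but the genuine content is this reduction of a coding-theoretic classification to two known uniqueness theorems for combinatorial designs.
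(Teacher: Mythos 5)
The paper itself offers no proof of this proposition: it is quoted verbatim from \cite{GP1}, so your attempt can only be measured against the strategy of that cited work. Your skeleton --- pin down $\rho$, $|C|$ and the weight distribution; recognise $C_d$ as a $2$-$(11,5,2)$ design (the biplane of order $3$) resp.\ a $3$-$(12,6,2)$ design (the Hadamard $3$-design of the unique Hadamard matrix of order $12$); reconstruct the code from the design via self-complementarity --- is indeed the natural design-theoretic route, and your $\lambda$-counts and design identifications are correct (e.g.\ $22\binom{6}{3}=2\binom{12}{3}$ and $11\binom{5}{2}=2\binom{11}{2}$). But two steps are asserted rather than proved, and they are precisely where the work lies. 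First, the parameter determination is logically circular as written: \Cref{packing} computes $|C|$ from the packing parameters $\beta_i$, which are equivalent data to the intersection array, which you obtain only from a ``short elimination'' via \Cref{theo:1.1} over all conceivable $\rho$. You have neither bounded $\rho$ a priori (beyond $\rho\geq 3$, which follows from non-perfectness) nor exhibited the elimination; since the conclusion of the theorem is exactly that only the Hadamard arrays survive, this step \emph{is} the theorem and cannot be waved through. (A cleaner entry point exists: Plotkin gives $|C|\leq 4d=24$ when $2d=n$, and the $n=11$ case reduces to $n=12$ by extension; the lower bound $|C|=24$, the exclusion of weights $7,\ldots,11$, and self-complementarity then follow from the design divisibility conditions of \Cref{designs} combined with pairwise-distance and distance-invariance arguments --- all of which you assert under ``design regularity'' but do not carry out.)

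Second, and more seriously, the complete transitivity claim is not established by your argument. \Cref{GenCT} requires each cell $C(0),\ldots,C(\rho)$ to be a \emph{single} orbit of a group $G\leq \Aut(\F_2^n)$, and ``high transitivity of these groups on the coordinates and on the blocks'' does not yield this: $\mathrm{PSL}_2(11)$ acting on the $11$ points of the biplane is only $2$-transitive, and for $n=11$ you must show transitivity on cells of sizes $264$, $1320$ and $440$ (computed from the intersection array $\{11,10,3;1,2,9\}$); transitivity on $C$ itself is not even automatic for a nonlinear code. The analogous verification for $n=12$ with $\rho=4$ is likewise a genuine computation with the $M_{12}$-action, not a corollary of transitivity. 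Note also that the sufficient criteria in the paper (\Cref{CTrho} and its relatives) are stated for linear codes, while both Hadamard codes here are nonlinear ($24$ is not a power of $2$), so no shortcut from the survey applies. This orbit-by-orbit verification is a substantial part of the cited classification, and your sketch leaves it entirely open.
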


The results of \cite{Sole} were slightly improved in \cite{Bro2}.

\begin{proposition}[\protect{\cite{Bro2}}]
Let $C$ be a binary CR code.
If the outer distribution matrices of all codes obtained from $C^*$ by deleting
one coordinate position have the same set of rows (and, in particular, if $C^*$
admits a group transitive on the set of coordinate positions), then
$C^*$ is CR.
\end{proposition}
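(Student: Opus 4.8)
The plan is to work entirely through outer distribution matrices, using the characterization that a code is CR if and only if the number of distinct rows of its outer distribution matrix equals $\rho+1$ (equivalently $b=\rho$), and to connect the outer distribution of $C^*$ to those of its punctures by one combinatorial identity. First I would dispose of the ``in particular'' clause: if a subgroup of $\Aut(C^*)$ acts transitively on the $n+1$ coordinates, then for any two positions $i,i'$ an automorphism carrying $i$ to $i'$ induces an isometry between the punctured codes, so all of them share the same outer distribution up to a permutation of rows, and the hypothesis of the general statement holds. Set $N=n+1$, write $C^*_{(i)}$ for $C^*$ punctured at position $i$, let $B^{(i)}$ be its outer distribution matrix, and write $\hat{x}^{(i)}$ for $x$ with coordinate $i$ deleted.

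Since $C$ is CR and puncturing $C^*$ at the parity coordinate returns $C$, the common set of rows shared by all the $C^*_{(i)}$ is exactly the set $\{R_0,\dots,R_\rho\}$ of distinct rows of the outer distribution matrix of $C$. The key point is that $R_\ell$ has its first nonzero entry in position $\ell$, because the minimum weight in a coset at distance $\ell$ is $\ell$; thus the rows $R_0,\dots,R_\rho$ are distinguished by the position of their first nonzero entry. It follows that each $C^*_{(i)}$ is itself CR and that $B^{(i)}_{y}=R_{d(y,\,C^*_{(i)})}$ for every $y$, so the row is governed by the distance to the punctured code.

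The engine of the proof is the identity, valid for $0\le j\le N-1$,
\[
\sum_{i=1}^{N} B^{(i)}_{\hat{x}^{(i)},\,j} \;=\; (N-j)\,B^*_{x,j} \;+\; (j+1)\,B^*_{x,j+1},
\]
which I would obtain by classifying each $w\in C^*+x$ according to its weight $\wt(w)$ and its $i$-th entry $w_i$ and summing over $i$ (noting that $C^*$ is even, so puncturing is injective on codewords). Next I would observe that for $\ell=d(x,C^*)$ one has $d(\hat{x}^{(i)},C^*_{(i)})\in\{\ell-1,\ell\}$: a nearest codeword survives puncturing with distance $\ell$ or $\ell-1$, and nothing can drop below $\ell-1$. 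Writing $p$ for the number of coordinates at which this distance is $\ell-1$, the left-hand side becomes $p\,R_{\ell-1}+(N-p)\,R_{\ell}$. Hence the recurrence above, read as $(j+1)B^*_{x,j+1}=p\,R_{\ell-1}(j)+(N-p)\,R_\ell(j)-(N-j)B^*_{x,j}$ started from $B^*_{x,0}\in\{0,1\}$, shows that the whole row $B^*_x$ is determined by $\ell$ together with the single integer $p$.

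The main obstacle — and the only place where the extension structure is essential — is to show that $p$ itself depends only on $\ell$. Here I would use that $C^*$ is even, so every coset $C^*+x$ has all weights of one parity and $B^*_{x,j}=0$ whenever $j\not\equiv\ell\pmod 2$. Feeding this vanishing into the identity at the two consecutive indices $\ell-1$ and $\ell$ makes one of $B^*_{x,\ell-1},B^*_{x,\ell}$ drop out, and since $R_\ell(\ell-1)=0$, $R_{\ell-1}(\ell-1)>0$, $R_\ell(\ell)>0$, eliminating $B^*_{x,\ell}$ between the two resulting equations gives a single \emph{linear} relation in $p$ whose coefficient is nonzero (the right-hand side $\ell N R_\ell(\ell)$ is positive, so a degenerate coefficient would leave no solution). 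Thus $p$ is forced by $\ell$ alone, whence $B^*_x$ depends only on $d(x,C^*)$, the outer distribution matrix of $C^*$ has exactly $\rho^*+1$ distinct rows, and $C^*$ is CR. I would close by checking the two boundary cases separately: $\ell=0$, where $B^*_x$ is simply the weight distribution of $C^*$, and $\ell=\rho^*=\rho+1$, where $p=N$ is forced because every punctured distance is at most $\rho$.
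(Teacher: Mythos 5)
Your proposal addresses a statement that the survey itself does not prove: the proposition is quoted from Brouwer's note \cite{Bro2} with no argument given, so there is no in-paper proof to compare against. Judged on its own merits, your proof is correct and complete. The pillars all check out: puncturing $C^*$ at the parity position returns $C$, so by hypothesis the common row set of all the punctured matrices is $\{R_0,\ldots,R_\rho\}$, and since $R_m$ has its first nonzero entry in position $m$, each punctured code is CR with $B^{(i)}_{y}=R_{d(y,C^*_{(i)})}$; the counting identity $\sum_{i}B^{(i)}_{\hat{x}^{(i)},j}=(N-j)B^*_{x,j}+(j+1)B^*_{x,j+1}$ is the standard double count of pairs (coordinate, codeword), using that puncturing is injective on the even code $C^*$; the coset-parity vanishing $B^*_{x,j}=0$ for $j\not\equiv \ell \pmod 2$ is valid; and evaluating at $j=\ell-1$ and $j=\ell$ does yield $p\,R_{\ell-1}(\ell-1)=\ell\,B^*_{x,\ell}$ and $p\,R_{\ell-1}(\ell)+(N-p)R_\ell(\ell)=(N-\ell)B^*_{x,\ell}$, whence $p\bigl[(N-\ell)R_{\ell-1}(\ell-1)-\ell R_{\ell-1}(\ell)+\ell R_\ell(\ell)\bigr]=\ell N R_\ell(\ell)>0$; since this equation is satisfied by the actual $p$ of any $x$ at distance $\ell$, the bracket is nonzero and $p$ is a function of $\ell$ alone. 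Your boundary cases $\ell=0$ ($p=0$ forced) and $\ell=\rho+1$ ($p=N$ forced, as punctured distances cannot exceed $\rho$) are also handled correctly.

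That said, your route is more roundabout than necessary, and the detour is precisely the global sum and the recovery of the unknown multiplicity $p$. The same two ingredients (puncture counting plus coset parity) give a pointwise identity: for any single coordinate $i$ and any $j\equiv\ell\pmod 2$, splitting the codewords at distance $j$ from $x$ according to whether they agree with $x$ in position $i$ gives $B^*_{x,j}=B^{(i)}_{\hat{x}^{(i)},j-1}+B^{(i)}_{\hat{x}^{(i)},j}$, the cross terms dying by parity, while $B^*_{x,j}=0$ for $j\not\equiv\ell$. Moreover, for $\ell\geq 1$ there is always a coordinate realizing punctured distance $\ell-1$: delete a position where $x$ disagrees with a nearest codeword. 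Choosing such an $i$ gives $B^*_{x,j}=R_{\ell-1}(j-1)+R_{\ell-1}(j)$ for $j\equiv\ell\pmod 2$ and $0$ otherwise, which depends on $\ell$ alone; no linear equation, nondegeneracy argument, or separate treatment of $\ell=\rho+1$ is needed (and $\ell=0$ reads off from $R_0$ the same way). So your proof is sound, but the elimination step you identify as ``the main obstacle'' can be bypassed entirely.
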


\begin{corollary}[\protect{\cite{Bro2}}]\label{EstBro}
$C^*$ is CR if and only if all codes obtained from it by deleting
one coordinate position are CR with the same outer distribution.
\end{corollary}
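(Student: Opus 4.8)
The statement is an ``if and only if'', so the plan is to treat the two directions separately, the forward implication being the substantial one. For the ``if'' direction, I would argue directly from the preceding Proposition of \cite{Bro2}: assuming every code obtained from $C^*$ by deleting one coordinate is CR with the same outer distribution, these punctured codes in particular share the same \emph{set} of rows in their outer distribution matrices, which is exactly the hypothesis needed to conclude that $C^*$ is CR. This direction is essentially immediate, and in fact does not even require the punctured codes to be CR, only that their outer distributions agree.

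For the ``only if'' direction, suppose $C^*$ is CR. Since $C^*$ is an extended binary code, all of its codewords have even weight; invoking the result of \cite{Bro1} that puncturing an even CR code again yields a CR code, each code $C^{(j)}$ obtained by deleting coordinate $j$ is CR. What remains is to show that all the $C^{(j)}$ carry one common outer distribution. I would set this up combinatorially. Fix $j$, and for $y\in\F_2^n$ let $y_0$ and $y_1$ denote the two vectors of $\F_2^{n+1}$ obtained by inserting $0$, respectively $1$, in position $j$. Because $C^*$ has minimum distance at least $2$, puncturing is injective, so each $c\in C^*$ corresponds to a unique $c'\in C^{(j)}$ with $d(y,c')=\min\{d(y_0,c),d(y_1,c)\}$, and these two distances differ by exactly one. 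Sorting codewords by this minimum, I would derive the recurrence
\[
B^{(j)}_{y,i}+B^{(j)}_{y,i-1}=B^*_{y_0,i}+B^*_{y_1,i},\qquad i\ge 0,
\]
with $B^{(j)}_{y,-1}=0$, where $B^*$ and $B^{(j)}$ are the outer distribution matrices of $C^*$ and $C^{(j)}$. Thus the row $B^{(j)}_y$ is determined, through a rule that does not depend on $j$, by the sum of the two rows $B^*_{y_0}$ and $B^*_{y_1}$ of $C^*$.

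The last step is to pin down which rows of $B^*$ occur. Since $C^*$ is even, one has $d(x,C^*)\equiv\wt(x)\pmod 2$, so the neighbours $y_0,y_1$, differing in a single coordinate, lie at distances $\ell$ and $\ell+1$ from $C^*$, where $\ell=d(y,C^{(j)})=\min\{d(y_0,C^*),d(y_1,C^*)\}$. As $C^*$ is CR, the rows $B^*_{y_0}$ and $B^*_{y_1}$ depend only on these two distances; hence $B^{(j)}_y$ depends only on $\ell$, via a formula independent of $j$. Consequently the distinct rows of each $C^{(j)}$, indexed by distance to the code, coincide for all $j$; combined with each $C^{(j)}$ being CR, they share the same intersection array and therefore the same outer distribution, which completes the proof.

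I expect the main obstacle to be exactly this $j$-independence. The recurrence transferring the outer distribution from $C^*$ to $C^{(j)}$ is routine bookkeeping, but the crucial point is that the \emph{parity} of $C^*$ forces $\{d(y_0,C^*),d(y_1,C^*)\}=\{\ell,\ell+1\}$, so that the data transferred under puncturing depends only on the distance $\ell$ and never on the choice of coordinate $j$. Controlling simultaneously the distance distribution and the row values under puncturing, and seeing that both are governed by the intersection numbers of $C^*$ alone, is where the real content of the argument lies.
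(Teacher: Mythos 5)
Your proof is correct, but note that the survey itself contains no proof of this corollary: it is quoted from \cite{Bro2}, placed so that the ``if'' direction follows from the immediately preceding proposition, while the ``only if'' direction is left to \cite{Bro1,Bro2} (the paper only quotes, at the start of the section, that puncturing an even CR code gives a CR code). Your ``if'' direction coincides with that implicit derivation, with one caveat: the proposition as stated in the survey also assumes that $C$ itself is CR, so your parenthetical claim that complete regularity of the punctured codes is dispensable is not supported by the survey's version of the proposition; in the corollary it is harmless only because $C$ is itself one of the punctured codes, hence CR by hypothesis. Your ``only if'' direction supplies exactly the content the survey delegates to \cite{Bro2}, and it is sound: evenness of $C^*$ gives $d(C^*)\geq 2$, so puncturing is injective and the recurrence $B^{(j)}_{y,i}+B^{(j)}_{y,i-1}=B^*_{y_0,i}+B^*_{y_1,i}$ holds; the parity relation $d(x,C^*)\equiv \wt(x)\pmod 2$ forces $\{d(y_0,C^*),d(y_1,C^*)\}=\{\ell,\ell+1\}$ with $\ell=d(y,C^{(j)})$; and complete regularity of $C^*$ then makes $B^*_{y_0}+B^*_{y_1}$, hence the whole row $B^{(j)}_y$, a $j$-independent function of $\ell$ alone. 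Two further observations. First, this argument re-proves the \cite{Bro1} puncturing theorem along the way (the $j$-independent formula already shows each $C^{(j)}$ is CR), so your opening appeal to \cite{Bro1} is redundant rather than load-bearing. Second, your conclusion establishes ``same outer distribution'' in the only sense in which the corollary can be true, namely equality of the sets of rows (equivalently, of the distance-indexed rows), exactly as in the preceding proposition; entrywise equality of the matrices $B^{(j)}$ would be false in general, since distinct punctures of $C^*$ are generally distinct codes (already for the extended Hamming code of length $8$).
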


It is interesting to find necessary and sufficient conditions on $C$ for
$C^*$ to be CR. From the last corollary we can derive some
necessary conditions on the punctured codes of $C^*$, in particular on $C$.

For any binary vector $v=(v_1,\ldots,v_n)$ and each $i=1,\ldots,n$, define
$\tau_i(v)=(v_1,\ldots,v_{i-1},p(v),v_{i+1},\ldots,v_n)$, where $p(v)$
denotes the parity of $v$, i.e., $p(v)=\sum_{i=1}^n v_i$ (modulo 2). Define
the codes $C_{[i]}=\{\tau_i(x)\mid x\in C\}$.

\begin{lemma}
$C^*$ is a CR code if and only if $C$ and $C_{[i]}$ are
CR, for $i=1,\ldots,n$.
\end{lemma}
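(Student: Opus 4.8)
The plan is to deduce the statement from \Cref{EstBro}, which asserts that $C^*$ is CR if and only if every code obtained from $C^*$ by deleting one coordinate is CR and all of these deletions share a common outer distribution. So the first task is to identify those $n+1$ deletions. Write $D_j$ for the code obtained from $C^*$ by deleting coordinate $j$. Deleting the appended parity coordinate returns $C$ by definition. Deleting a coordinate $i\in\{1,\ldots,n\}$ is more interesting: since every codeword of $C^*$ has the form $(x,p(x))$ with $x\in C$, no two codewords can agree in all positions except the $i$-th (differing only in position $i$ would force different parities, hence a difference in the last position as well), so puncturing at $i$ is injective on $C^*$. Moving the surviving parity coordinate back into slot $i$ by a coordinate permutation turns the punctured word $(x_1,\ldots,x_{i-1},x_{i+1},\ldots,x_n,p(x))$ into $(x_1,\ldots,x_{i-1},p(x),x_{i+1},\ldots,x_n)=\tau_i(x)$. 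Thus the deletion $D_i$ equals $C_{[i]}$ up to a permutation of coordinates. Since a coordinate permutation is an isometry, it preserves both complete regularity and the outer distribution, so $D_i$ is CR (with a prescribed outer distribution) exactly when $C_{[i]}$ is.

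With this dictionary in hand the forward implication is immediate: if $C^*$ is CR, then by \Cref{EstBro} every one of its single-coordinate deletions is CR, whence $C$ and each $C_{[i]}$ are CR.

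For the converse I would assume that $C$ and all the $C_{[i]}$ are CR and feed this into \Cref{EstBro}. These hypotheses supply the first half of what the corollary requires; the remaining, genuinely delicate, point is that all $n+1$ deletions must carry the \emph{same} outer distribution. This cannot be read off from complete regularity of the individual deletions alone, because the maps $\tau_i$ are linear but not isometries, so a priori $C_{[i]}$ may have a quite different weight distribution from $C$. The route I would take is to exploit that $C^*$ is an even code: for $y^*=(y,b)$ every codeword lies at distance congruent to $\wt(y^*)\pmod 2$, and the puncturing bijection $c^*\mapsto\operatorname{punct}_j(c^*)$ yields, for each $y\in\F_2^n$ with lifts $y^{(0)},y^{(1)}$ obtained by inserting $0$ and $1$ into position $j$, the relation
\[
B^{D_j}_{y,i}+B^{D_j}_{y,i-1}=B^{C^*}_{y^{(0)},i}+B^{C^*}_{y^{(1)},i}.
\]
Reading the CR structure of each deletion $D_j$ through this identity produces a candidate outer distribution for $C^*$, and the remaining task is to show that the candidates obtained from different deletion directions $j$ agree.

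The main obstacle is precisely this consistency check: proving that complete regularity of every single-coordinate deletion forces the outer distributions of the deletions to coincide, equivalently that the distance partition of $C^*$ is equitable. I expect the parity constraint imposed by the extension coordinate, which pins down the achievable distances and links $B^{C^*}_{y^{(0)}}$ with $B^{C^*}_{y^{(1)}}$, to be exactly the ingredient that makes this bookkeeping close; once it does, the rest is the routine translation through \Cref{EstBro} recorded above.
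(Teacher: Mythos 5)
Your dictionary between the single-coordinate deletions of $C^*$ and the codes $C$, $C_{[i]}$, together with your forward implication, is exactly the paper's proof: the paper identifies $C=(C^*)^{(n+1)}$ and $C_{[i]}$ with the puncturings of $C^*$ (up to a transposition of coordinates) and then concludes, in one line, that the result follows by \Cref{EstBro}. Where you part ways with the paper is the converse. You correctly read \Cref{EstBro} as requiring \emph{two} hypotheses --- that every deletion of $C^*$ be CR \emph{and} that all deletions share one outer distribution --- and you refuse to assume that the second follows from the first; the paper's proof silently does assume it. So your proposal does have a gap (the ``consistency check'' is announced but never carried out), but it is a gap you share with, and have diagnosed more carefully than, the paper itself.

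Moreover, the check you hoped the parity structure would close cannot be completed in general, because the lemma as literally stated is false. Take $C\subset\F_2^n$, $n\geq 4$, to be the even-weight code; it is a nontrivial CR code with $\rho=1$. Since $p(x)=0$ for every $x\in C$, the map $\tau_i$ simply zeroes out coordinate $i$, so $C_{[i]}=\{v\in\F_2^n \mid v_i=0\}$, which is also CR with $\rho=1$ (its two outer-distribution rows are $\binom{n-1}{k}$ and $\binom{n-1}{k-1}$). But $C^*=\{(x,0)\mid x\in C\}$ is not CR: a vector $(y,1)$ with $y$ of even weight and a vector $(y',0)$ with $y'$ of odd weight both lie in $C^*(1)$, yet the first has exactly one codeword at distance $1$ (namely $(y,0)$) while the second has $n$ of them, so $c_1$ is not well defined and the distance partition is not equitable. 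What fails is precisely the hypothesis you flagged: here $C$ and the $C_{[i]}$ are all CR but with \emph{different} outer distributions, so \Cref{EstBro} correctly refuses to apply. Consequently the lemma needs either the ``same outer distribution'' clause of \Cref{EstBro} restored as an explicit hypothesis (after which it is merely a restatement of that corollary in terms of $C$ and the $C_{[i]}$), or some additional assumption on $C$ (such as the odd minimum distance that the opening paragraph of this section has in mind). Your attempt stalls exactly at the step where the paper's own argument is unjustified.
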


\begin{demo}
For any code $D$, denote by $D^{(i)}$ the punctured code obtained by
deleting the $i$th coordinate. Also, denote by $\sigma_{i,j}$ the
transposition of the coordinates $i$ and $j$. Assume that the parity
check coordinate is at position $n+1$. Hence, it is clear that
$C_{[i]}=\left(\sigma_{i,n+1}(C^*)\right)^{(i)}$ and $C=(C^*)^{(n+1)}$.
The result then follows by \Cref{EstBro}.
\end{demo}

\begin{proposition}
If $C^*$ is a CR code, then for all $i=1,\ldots,n$
\begin{itemize}
\item[(i)] The weight distributions of $C$ and $C_{[i]}$ coincide.
\item[(ii)] The minimum distances of $C$ and $C_{[i]}$ coincide and are odd.
\item[(iii)] The external distances of $C$ and $C_{[i]}$ coincide.
\item[(iv)] The covering radii of $C$ and $C_{[i]}$ coincide.
\end{itemize}
\end{proposition}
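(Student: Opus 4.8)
The plan is to reduce all four items to a single structural input coming from \Cref{EstBro}: since $C^*$ is CR, every code obtained from $C^*$ by deleting one coordinate is itself CR, and all of these puncturings share the same outer distribution. As established in the previous lemma, $C$ is the puncturing of $C^*$ at the parity position, while $C_{[i]}$ is, up to a permutation of coordinates, the puncturing of $C^*$ at position $i$. A coordinate permutation only relabels the rows $B_x$ of the outer distribution matrix and hence leaves invariant its multiset of rows---together with the weight distribution, the minimum distance, the external distance and the covering radius. Consequently $C$ and $C_{[i]}$ are CR codes sharing one and the same outer distribution matrix $B$, and each item will be read off from this common $B$.

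First I recover the weight distribution from $B$. Being CR, both codes are distance invariant, so for any codeword $x$ the row $B_x$ is the weight distribution; moreover, since both codes contain $\zero$, the entry $B_{x,0}$ is nonzero exactly when $x$ is a codeword. Thus the weight distribution is the unique row type of $B$ with $B_{x,0}\neq 0$, and sharing $B$ forces $C$ and $C_{[i]}$ to have identical weight distributions. This is (i). For (iii), the distance distribution of a distance invariant code equals its weight distribution, so $C$ and $C_{[i]}$ have the same distance distribution; applying the MacWilliams transform yields the same dual distance distribution, and hence the same number $s+1$ of nonzero terms, i.e. the same external distance. For (iv), \Cref{equivs}(iii) gives $\rho=s$ for any CR code, so equality of external distances immediately forces equality of covering radii.

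It remains to prove (ii). Equality of the minimum distances is already contained in (i), because the minimum distance of a distance invariant code containing $\zero$ is simply its smallest nonzero weight. The only genuinely new point, and the step I expect to be the real obstacle, is the parity claim, since oddness cannot be seen from $B$ alone and must instead be traced back to the extension. Here I would use that $C^*$, being a parity-check extension, is an even code, so its minimum distance $d^*$ is even and (in the nontrivial case) at least $2$. Since puncturing lowers the minimum distance by at most one, every single-coordinate puncturing of $C^*$ has minimum distance at least $d^*-1$; on the other hand, deleting a coordinate in the support of a weight-$d^*$ codeword produces a nonzero word of weight $d^*-1$, so that puncturing has minimum distance at most $d^*-1$. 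As all puncturings share the common minimum distance $d$ by (i), we conclude $d=d^*-1$, which is odd. I would treat the trivial cases $|C^*|\leq 2$ separately, since the puncturing estimates presuppose $d^*\geq 2$.
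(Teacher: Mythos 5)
Your proof is correct, and for items (i)--(iii) it follows essentially the paper's own route: both arguments reduce everything to the fact (from the preceding lemma together with \Cref{EstBro}) that $C$ and the $C_{[i]}$ are CR codes sharing one and the same outer distribution, read off the weight distribution as the common row of $B$ with nonzero entry in column $0$, and obtain the external distance via MacWilliams' transform of that common distribution. Your oddness argument in (ii) (evenness of $C^*$, plus the observation that at least one single-coordinate puncturing must hit the support of a minimum-weight codeword of $C^*$, forcing the common minimum distance to be $d^*-1$) is precisely what the paper compresses into ``(ii) and (iii) are direct from (i)''. The one genuine divergence is (iv): the paper does not go through the external distance at all, but instead invokes the fact from \cite{Bro2} that extending a code increases the covering radius by exactly one, so that $C$ and each $C_{[i]}$, whose extensions are all equivalent to $C^*$, share the covering radius $\rho^*-1$; you instead chain item (iii) with \Cref{equivs}(iii), i.e.\ $\rho=s$ for CR codes. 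Both are sound. The paper's one-liner is independent of (iii) and does not even use complete regularity of the punctured codes, only the relation between a code and its extension; your version stays entirely inside the outer-distribution picture and avoids citing the $\rho^*=\rho+1$ fact, at the (harmless) cost of making (iv) logically dependent on (iii), which you prove first.
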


\begin{demo}
(i) follows since for each codeword $x$, $B_x$ must be the same for all codes $C$ and $C_{[i]}$.

(ii) and (iii) are direct from (i).

(iv) If the covering radius of a code $C$ is $\rho$, then the covering radius of $C^*$ is $\rho^*=\rho+1$ \cite{Bro2}. Hence the result follows.
\end{demo}

\bigskip

Now, we have the following necessary condition on $C$ (or $C_{[i]}$).

\begin{corollary}
If $C^*$ is CR code with minimum distance $d^*=2e+2\geq 4$, of length $n+1$, then
for all odd $w$
\begin{equation}\label{recursio}
(n-w)A_w = (w+1)A_{w+1},
\end{equation}
where $A_w$ is the number of codewords of weight $w\geq 2e+1$ in $C$ (or $C_{[i]}$).
\end{corollary}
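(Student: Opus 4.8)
The plan is to compare the weight distributions of $C$ and $C^*$ under the parity extension and to feed the even-weight sets of $C^*$ into \Cref{designs}. Since $d^*=2e+2$ is even and $C^*$ is a parity-check extension, every codeword of $C^*$ has even weight; thus the only even weight relevant to the odd weight $w$ of $C$ is $w+1$. The whole argument then reduces to the statement that the supports of the weight-$(w+1)$ codewords of $C^*$ form a design, hence in particular a $1$-design, so that every coordinate — and notably the parity-check coordinate — lies in the same number of such codewords.

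First I would record the elementary extension bookkeeping. Placing the parity coordinate in position $n+1$, a codeword $x\in C$ of even weight $w'$ extends to $(x,0)\in C^*$ of weight $w'$, while a codeword of odd weight $w'$ extends to $(x,1)\in C^*$ of weight $w'+1$. Consequently, for odd $w$ the weight-$(w+1)$ codewords of $C^*$ split into those carrying a $0$ in position $n+1$, which come from the $A_{w+1}$ even-weight codewords of $C$, and those carrying a $1$ there, which come from the $A_{w}$ odd-weight codewords of $C$. Writing $A^*_{w+1}$ for the number of weight-$(w+1)$ codewords of $C^*$, this gives $A^*_{w+1}=A_w+A_{w+1}$, and it shows that exactly $A_w$ of these codewords have a $1$ in the parity position.

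Next I would invoke \Cref{designs}(ii): for odd $w\geq 2e+1$ with $C^*_{w+1}$ nonempty, the supports of the weight-$(w+1)$ codewords of $C^*$ form an $(e+1)$-$(n+1,w+1,\lambda_{w+1})$-design, hence a $1$-design. The replication number $r=bk/v$ of this $1$-design, with $b=A^*_{w+1}$ blocks, block size $k=w+1$, and $v=n+1$ points, counts how many weight-$(w+1)$ codewords contain any fixed coordinate; applied to the parity coordinate it equals the count found above, so
\[
A_w \;=\; r \;=\; \frac{A^*_{w+1}\,(w+1)}{n+1} \;=\; \frac{(A_w+A_{w+1})(w+1)}{n+1}.
\]
Clearing denominators and cancelling the common term $(w+1)A_w$ yields $(n-w)A_w=(w+1)A_{w+1}$, which is \eqref{recursio}. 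When $C^*_{w+1}$ is empty the identity holds trivially, since then $A_w=A_{w+1}=0$. By the preceding proposition the weight distributions of $C$ and each $C_{[i]}$ agree, so the same identity holds verbatim for every $C_{[i]}$.

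The only point requiring any thought — the essential step — is the identification of the parity-check coordinate's replication number with the number of odd-weight codewords of $C$; once the even/odd correspondence under extension is tracked correctly, the $1$-design property supplied by \Cref{designs}(ii) places the parity coordinate on exactly the same footing as every other point, and the remainder is routine algebra.
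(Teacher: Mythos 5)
Your proposal is correct and follows essentially the same route as the paper's own proof: both identify $A^*_{w+1}=A_w+A_{w+1}$ via the extension bookkeeping, observe that $C^*_{w+1}$ forms a design (you via \Cref{designs}(ii), the paper via \Cref{disseny}, but both then only use the resulting $1$-design property), and equate the replication number at the parity coordinate with $A_w$ to get $A^*_{w+1}(w+1)=(n+1)A_w$, from which the identity follows by routine algebra. Your added remarks on the empty case of $C^*_{w+1}$ and on transferring the identity to the codes $C_{[i]}$ are harmless refinements of the same argument.
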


\begin{demo}
Denote by $A^*_{w+1}$ the number of codewords in $C^*$ of weight $w+1$, $w$ odd.
This set $C^*_{w+1}$ of codewords of weight $w+1$ form a $2$-$(n+1,w+1,\lambda^*_2)$-design,
by \Cref{disseny}. The number of codewords in $C^*_{w+1}$ with nonzero value at position
$n+1$ is $r^*$, the replication number, and clearly $r^*=A_w$. Therefore,
\begin{equation}\label{replication}
A^*_{w+1}(w+1)=(n+1)r^*.
\end{equation}
Combining (\ref{replication}) with $A^*_{w+1}=A_{w+1}+A_{w}$, the result follows.
\end{demo}

\bigskip

In particular, any perfect code must satisfy (\ref{recursio}). For the case
of binary perfect codes with $d=3$ (or 1-perfect), this recursion is well
known (see, for example, \cite{MacW}):
\begin{equation}\label{recperfectes}
(n-i+1)A_{i-1}+A_i+(i+1)A_{i+1}=\binom{n}{i}.
\end{equation}

By combining (\ref{recursio}) and (\ref{recperfectes}), we obtain

\begin{corollary}
For any binary 1-perfect code containing the zero codeword, the number of codewords of weight $i$ is:
$$
A_i= \left\{
\begin{array}{cl}
  \binom{n}{i} \frac{1}{n-i+1} - A_{i-1} & \mbox{ if } i \mbox{ is odd;} \\
  & \\
  \frac{n-i+1}{i} A_{i-1} & \mbox{ if } i \mbox{ is even.}
\end{array}\right.
$$
\end{corollary}

By \cite{Bor3}, we know that the even half of a Golay code, say $C^*$, is
CR. Puncturing at any coordinate gives a code $C$ with
weight distribution:
$$
\begin{array}{|c|c|c|c|c|c|c|}
  A_0 & A_7 & A_8 & A_{11} & A_{12} & A_{15} & A_{16} \\
  \hline
  1 & 176 & 330 & 672 & 616 & 176 & 77
\end{array}
$$
Of course, $C$ is CR \cite{Bor3} and hence it verifies (\ref{recursio})
as can be readily seen.

\section{Constructions and existence of CR codes}\label{Cons}

In this section we give infinite families (numbered by (F.i)) and
sporadic cases (numbered by (S.i)) of known to us CR codes. For all codes
we give the intersection arrays (IA).

\subsection{CR codes from perfect codes}

Recall that for a code $C$ of length $n$, the punctured code at coordinate $i$
is obtained by deleting the coordinate $i$ in all the codewords. When the coordinate
$i$ is not specified, it is assumed that the resulting code is equivalent puncturing
at any coordinate. The shortened code of $C$ is obtained by taking all the codewords
that have a zero in a fixed coordinate and then, removing such coordinate. More
generally, for vectors $x_1,\ldots,x_r$ of length $j<n$, the
$\{x_1,\ldots,x_r\}$-shortened code of $C$ is obtained by taking all the codewords
that have $x_\ell$ (for any $\ell=1,\ldots, r$) in some fixed $j$ coordinates and
then, removing such coordinates.

\bigskip

From \cite{SZZ,Dels} we have the following well-known families of CR codes:

\begin{enumerate}[label=(F.\arabic*),series=family]
\item Any $q$-ary perfect $(n,q^n/(1+n(q-1)),3;1)_q$ code is a CR code with
$$
\IA = \{(q-1)n; 1\},\;\mbox{where}\;
q \geq 2, n \geq 3.
$$
\item Any $q$-ary extended perfect $(n+1,q^n/(1+n(q-1)),4;2)_q$ code is a CR code with
$$
\IA = \{(q-1)(n+1), (q-1)n; 1, 4\},\;\mbox{where}\;
q \geq 2, n \geq 4.
$$
\item Any $q$-ary $(n-1,q^n/(1+n(q-1)),2;1)$ code obtained by puncturing
any perfect $(n,q^n/(1+n(q-1)),3;1)$ code is a CR code with
$$
\IA = \{(q-1)(n-1); q)\},\;\mbox{where}\;
q, n \geq 3.
$$
\end{enumerate}

Let $D$ be any $q$-ary perfect $(n,N,3)_q$-code and let $C$ be any
$(n,N/q,4)$ subcode  of $D$ with the following property: for any
choice of the zero codeword in $D$, the set $C_4$ is a $q$-ary
$2$-$(n,4,\lambda)_q$ design where $\lambda = (n-3)/2$. Then, from \cite{BRZ15, Rif1}, we have:

\begin{enumerate}[label=(F.\arabic*),resume*=family]
\item\label{halfs} $C$ is a CR code with
$$
\IA = \{n(q-1),(n-1)(q-1),1; 1,(n-1),n(q-1)\},
$$
where $q \geq 2,\;n \geq 4$.
\item In particular, for any prime power $q=p^s \geq 3$ where $s=1,2,\ldots$,
there exists a CR $[q +1,q-2,4;3]_q$ code, a subcode of a perfect $[q+1, q-1,3]_q$-code, with
$$
\IA = \{q^2-1, q(q-1), 1; 1, q, q^2-1\}.
$$
\item \label{shortcode} Any $q$-ary $(n,N,3;2)_q$ code, obtained by shortening
a perfect $(n+1, qN, 3;1)_q$ code $C$, is a CR code with
$$
\IA = \{(q-1)n, q-1; 1, (q-1)n\}.
$$
\end{enumerate}

Now, we see several different halves of a
binary perfect code giving CR codes with different intersection arrays.

Let $C$ be a binary perfect $(n,N,3)$ code.
From \cite{Bor3}, we have that the even or odd half of $C$ is a CR code (included in the family \ref{halfs}). Also from \cite{Bor3}, we have that the punctured code of the even half of $C$ is a CR code (included in family \ref{shortcode}).

Other halves of binary Hamming codes can be obtained with the following procedure. Let $H_m$
denote the parity check matrix of a binary Hamming code
of length $n=2^m-1$. For a given even $m \geq 4$ and any  $i_1,
i_2 \in \{0,1,2,3\}$, where $i_1 \neq i_2$, denote by
$\bv_{i_1,i_2} = (v_0,v_i,\ldots, v_{n-1})$ the binary vector whose
$i$-th position $v_i$ is a function of the value of the weight of
the column $\bh_i$:
\[
v_i~=~\left\{
\begin{array}{ccc}
1, ~~&\mbox{if}&~~\wt(\bh_i) \equiv i_1,~\mbox{or}~ i_2 \pmod{4},\\
0, ~~&  &  ~~\mbox{otherwise}.
\end{array}
\right.
\]

Let $C$ be the binary perfect linear $[n=2^m-1,k=n-m,3]$ code with $m$ even and parity check matrix $H_m(\bv_{i_1,i_2})$, obtained from $H_m$ by adding one more row
$\bv_{i_1,i_2}$. Then, from this code we have \cite{BRZ14a}:
\begin{enumerate}[label=(F.\arabic*),resume*=family]
\item\label{antipodalhalf} If $\{i_1,i_2\}=\{1,2\}$ or $\{2,3\}$, then $C$ is a self-complementary half of a Hamming code, which is a CR $[n, n-m-1,3;3]$-code with
\[
\IA = \{n, (n+1)/2,1;1, (n+1)/2, n\}.
\]
\item If $\{i_1,i_2\}=\{0,1\}$ or $\{0,3\}$, then $C$ is a non-self-complementary half of a Hamming code, which is a CR $[n, n-m-1,3;3]$-code with
\[
\IA = \{n, (n-3)/2,1;1, (n-3)/2, n\}.
\]
\end{enumerate}

As can be seen in \cite{BRZ14a}, if $\{i_1,i_2\}=\{0,2\}$, then $C$ is the even half of a Hamming code, hence included in Family \ref{halfs}. If $\{i_1,i_2\}=\{1,3\}$, then $C$ is the Hamming code $[n,n-m,3]$.

Extensions of  the codes \ref{antipodalhalf} are also CR \cite{BRZ14a}:

\begin{enumerate}[label=(F.\arabic*),resume*=family]
\item The extension of the code \ref{antipodalhalf} is a
self-complementary CR $[n+1,n-m,4;4]$ code with
\[
\IA = \{n+1, n, (n+1)/2,1;1, (n+1)/2, n, n+1\}.
\]
\end{enumerate}

From \cite{Zve, Rif1}, we have several families obtained by shortening binary perfect or extended perfect codes.

Let $C^*$ be any binary extended perfect $(n^*,N^*,4)$ code, $n^*=2^m \geq 8$.
\begin{enumerate}[label=(F.\arabic*),resume*=family]
\item Let $C$ be the $(n = n^*-2,N=N^*/2,2;3)$ code obtained by
$\{(00),(11)\}$-shortening $C^*$. Then $C$ is a CR code with
$$
\IA = \{n, n-2, 2; 2, n-2, n\},\;\mbox{where}\;n=2^m-2 \geq 6.
$$
\item Let $C$ be the $(n = n^*-3, N^*/4, 1;2)$ code obtained by $\{(000),(111)\}$-shor\-te\-ning
$C^*$. Then $C$ is a CR code with
$$
\IA = \{n-1, 3; 1, n-1\},\;\mbox{ where }\;n=2^m-3 \geq 5.
$$
\item Let $D$ be any binary perfect $(n,N,3)$ code, $n=2^m-1 \geq 7$.
Let $C$ be the $(n -2,N/2,1;2)$ code obtained by
$\{(00),(11)\}$-shortening $D$. Then $C$ is a CR code with
$$\IA = \{n-3, 2; 2, n-3\}.
$$
\end{enumerate}

Also from \cite{Zve, Rif1}, we have a family obtained by shortening $q$-ary extended perfect codes:

\begin{enumerate}[label=(F.\arabic*),resume*=family]
\item Let $C^*$ be any $q$-ary extended perfect $[n^*,k^*,4;2]_q$ code where
$q=2^m \geq 4$, $n^*=q+2$ and $k^*=q-1$. Let $C$ be the
$[n = q, k=q-2,2;2]_q$ code obtained by $S$-shortening $C^*$, where
$S =\{(\alpha,\alpha):\;\alpha \in \F_q\}$. Then
$C$ is a CR code with
$$\IA = \{q(q-1), (q-1)(q-2); 2, q\}\;\mbox{where}\;q=2^m \geq 4.$$
\end{enumerate}

Now we give sporadic
CR codes, which come
from Golay codes \cite{Rif1}. The complete regularity of the codes \ref{halfGolay} and \ref{puncturedhalfGolay} were stated in \cite{Bor3}.

\begin{enumerate}[label=(S.\arabic*),series=sporadic]
\item {\em The binary Golay code.} This perfect $[23,12,7;3]$
code is CR with
$$ \IA = \{23, 22, 21; 1, 2, 3\}.$$
\item {\em The binary punctured Golay code.} This $[22, 12, 6; 3]$
code is CR with
$$ \IA = \{22, 21, 20; 1, 2, 6\}.$$
\item {\em The binary extended Golay code.} This $[24, 12, 8; 4]$
code is CR with
$$ \IA = \{24, 23, 22, 21; 1, 2, 3, 24\}.$$
\item {\em The binary double punctured Golay code.} This $[21, 12, 5; 3]$
code is CR with
$$ \IA = \{21, 20, 16; 1, 2, 12\}.$$

\item\label{halfGolay} {\em The half of the binary Golay code.} This $[23,11,8;7]$ code is
CR with
$$
\IA = \{23, 22, 21, 20, 3, 2, 1; 1, 2, 3, 20, 21, 22, 23\}.
$$
\item\label{puncturedhalfGolay} {\em The punctured of the half of the binary  Golay code.} This
$[22, 11, 7; 6]$ code is CR with
$$
\IA = \{22, 21, 20, 3, 2, 1; 1, 2, 3, 20, 21, 22\}.
$$
\item
{\em The $\{(00,11)\}$-shortened binary extended Golay
code.} This $[22, 11, 6; 7]$ code is CR with
$$
\IA = \{22, 21, 20, 16, 6, 2, 1; 1, 2, 6, 16, 20, 21, 22\}.
$$
\item
{\em The $\{(000,111)\}$-shortened binary extended Golay
code.} This is a CR $[21, 10, 5; 6]$ code with
$$
\IA = \{21, 20, 16, 9, 2, 1; 1, 2, 3, 16, 20, 21\}.
$$
\item
{\em The $\{(00,11)\}$-shortened binary Golay code.} This
$[21, 11, 5; 6]$ code is CR with
$$
\IA = \{21, 20, 16, 6, 2, 1; 1, 2, 6, 16, 20, 21\}.
$$
\end{enumerate}

Let $G$ denote the ternary perfect Golay $[11,6,5]_3$-code and
denote by $G^{(0)}$ the subcode of $G$, formed by all codewords of
$G$ with parity $0$. It is easy to see that $G^{(0)}$ is the
$[11,5,6]_3$ code, formed by all codewords of weights $0, 6$ and
$9$. Call this code the third part of the ternary Golay.

\begin{enumerate}[label=(S.\arabic*),resume*=sporadic]

\item
{\em The ternary Golay code.} This perfect $[11,6,5; 2]_3$
code is CR with
$$\IA = \{22, 20; 1, 2\}.$$
\item
{\em The ternary punctured Golay code.} This $[10,6,4; 2]_3$
code is CR with
$$\IA = (20, 18; 1, 6).$$
\item
{\em The ternary extended Golay code.} This $[12,6,6; 3]_3$
code is CR with
$$\IA = \{24, 22, 20; 1, 2, 12\}.$$
\item
 {\em The third part of the ternary Golay code.} The ternary
$[11, 5, 6; 5]_3$-code $G^{(0)}$ is CR with
$$ \IA = \{22, 20, 18, 2, 1; 1, 2, 9, 20, 22\}.$$
\item
{\em The punctured code of the third part of the ternary Golay.} The
ternary $[10, 5, 5; 4]_3$ code is CR with
$$ \IA = \{20, 18, 4, 1; 1, 2, 18, 20\}$$.
\end{enumerate}

There are many codes considered in later sections, based in some way on perfect codes (e.g. \Cref{sec:lifting,sec:nested,sec:concat,sec:rho1,sec:rho2}).

\subsection{Nested families}\label{sec:nested}

Recall that $\mathcal{H}_m$ is a binary Hamming code of length
$n=2^m-1$. Let $m=2u$,  $q=2^u,$
$r=2^u+1$ and $\rr=2^u-1$. We can think of the parity check matrix
$H_m$ of $\mathcal{H}_m$ as the binary representation of
$[\alpha^0, \alpha^1, \ldots, \alpha^{n-1}],$ where $\alpha \in
\F_{2^m}$ is a primitive element. Present the elements of
$\F_{2^m}$ as elements in a quadratic extension of $\F_{2^u}$. Let
$\beta=\alpha^r$  be a primitive element of $\F_{2^u}$ and let
$\F_{2^m}=\F_{2^u}[\alpha]$.
Let $E_m$ be the binary representation of the matrix
$[\alpha^{0r},\alpha^{r},\ldots,\alpha^{(n-1)r}]$. Take the matrix
$P_m$ as the vertical join of $H_m$ and $E_m$.

It is well known~\cite{CG1} that the code $C^{(u)}$ with parity
check matrix $P_m$ is a cyclic binary CR code with
covering radius $\rho=3,$ minimum distance $d=3$ and dimension
$n-(m+u)$.

It can be seen \cite{BRZ15} that the number of cosets $C^{(u)}+
v,$ of weight three, is $\rr$. Indeed, their syndromes $S(v)$ are
the nonzero elements of $\F_{2^u}$. For $i\in \{0,\ldots,u\},$
taking $u-i$ cosets $C^{(u)}+v^{(1)}, \ldots, C^{(u)}+v^{(u-i)}$
with independent syndromes $S(v^{(1)}), \ldots, S(v^{(u-i)})$
(independent, means that they are independent binary vectors in
$\F_2^u$) we can generate a linear binary code $C^{(i)}=\langle
C^{(u)}, v^{(1)}, \ldots v^{(u-i)}\rangle$. Let $A_{u-i}$ be the
linear subspace of $\F_{2}^u$ generated by the syndromes
$S(v^{(1)}), \ldots, S(v^{(u-i)})$.

The dimension of code $C^{(i)}$ is $\dim(C^{(i)})=
u-i+\dim(C^{(u)}),$ where $\dim(C^{(u)})=n-m-u$. Note that the
maximum number of independent syndromes we can take is $u,$ so the
biggest code we can obtain is of dimension $u+\dim(C^{(u)})=n-m,$
which is the Hamming code $C^{(0)}=\mathcal{H}_m$. All the
constructed codes contain  $C^{(u)}$ and, at the same time, they
are contained in the Hamming code $C^{(0)}$.

The number of codes $C^{(u-i)}$ equals the number of subspaces of
dimension $i$ we can take in $\F_2^u,$ so the Gaussian binomial
coefficient

$$
|\{C^{(u-i)}\}|=\binom{u}{i}_{\!\!2}=\frac{(2^u-1)(2^u-2)\cdots(2^u-2^{i-1})}{(2^i-1)(2^i-2)\cdots(2^i-2^{i-1})}.
$$
The number of different nested families
of codes between $C^{(u)}$ and $C^{(0)}=\mathcal{H}_m$, we can construct,
equals
$$
\prod_{i=0}^{u-1} (2^{u-i}-1)\,.
$$

The following property was stated in \cite{CG1} for the code
$C^{(u)},$ but it can be extended to all codes $C^{(i)},$ for
$i\in \{1,\ldots, u\}$.

\begin{proposition}\label{calder}
For $i\in \{1,\ldots, u\}$ the cosets of weight three of $C^{(i)}$
are at distance three from each other and $C^{(i)}\cup C^{(i)}(3)$
is the Hamming code.
\end{proposition}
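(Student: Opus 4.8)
The plan is to describe $C^{(i)}$ explicitly in terms of the cosets of $C^{(u)}$ and then transport the known property of $C^{(u)}$ (that $C^{(u)}\cup C^{(u)}(3)=\mathcal{H}_m$, stated in \cite{CG1}) through the chain $C^{(u)}\subseteq C^{(i)}\subseteq\mathcal{H}_m$. First I would record that the $E_m$-part $S$ of the syndrome, restricted to $\mathcal{H}_m$, is a linear surjection $S\colon\mathcal{H}_m\to\F_2^u$ with kernel $C^{(u)}$, since $C^{(u)}=\mathcal{H}_m\cap\ker S$ and $\dim\mathcal{H}_m-\dim C^{(u)}=u$. As the generators $v^{(1)},\ldots,v^{(u-i)}$ are weight-three cosets of $C^{(u)}$ lying in $\mathcal{H}_m$, the code $C^{(i)}=\langle C^{(u)},v^{(1)},\ldots,v^{(u-i)}\rangle$ is precisely the preimage under $S$ of $A_{u-i}$; that is, $C^{(i)}$ is the union of the $2^{u-i}$ cosets of $C^{(u)}$ whose syndrome lies in the subspace $A_{u-i}$, namely $C^{(u)}$ itself together with the weight-three cosets of $C^{(u)}$ with syndrome in $A_{u-i}\setminus\{\zero\}$.

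Next I would unpack the $C^{(u)}$ structure. Since $\rho(C^{(u)})=3$ and $C^{(u)}\cup C^{(u)}(3)=\mathcal{H}_m$, the distance partition forces $d(w,C^{(u)})\in\{0,3\}$ for $w\in\mathcal{H}_m$ and $d(w,C^{(u)})\in\{1,2\}$ for $w\notin\mathcal{H}_m$; in particular every nonzero coset of $C^{(u)}$ contained in $\mathcal{H}_m$ has minimum weight exactly $3$, and all $\rr=2^u-1$ weight-three cosets of $C^{(u)}$ lie inside $\mathcal{H}_m$. Because $C^{(u)}\subseteq C^{(i)}$, we have $d(w,C^{(i)})\le d(w,C^{(u)})$ for every $w$, so any $w\notin\mathcal{H}_m$ satisfies $d(w,C^{(i)})\le 2$.

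This already yields the second assertion. For $w\in\mathcal{H}_m\setminus C^{(i)}$ the syndrome $S(w)$ lies outside $A_{u-i}$, hence every element of the coset $C^{(i)}+w$ has nonzero $S$-value and therefore lies in a weight-three coset of $C^{(u)}$; consequently every vector of $C^{(i)}+w$ has weight $\ge 3$, while weight $3$ is attained, so $d(w,C^{(i)})=3$. Combined with the bound $d(w,C^{(i)})\le 2$ for $w\notin\mathcal{H}_m$, this shows $C^{(i)}(3)=\mathcal{H}_m\setminus C^{(i)}$, i.e. $C^{(i)}\cup C^{(i)}(3)=\mathcal{H}_m$, and incidentally $\rho(C^{(i)})=3$. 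For the first assertion, the same computation identifies the weight-three cosets of $C^{(i)}$ as exactly the $2^i-1$ nonzero cosets contained in $\mathcal{H}_m$ (those with $S\notin A_{u-i}$). Given two distinct such cosets $C^{(i)}+w$ and $C^{(i)}+w'$, distinctness forces $S(w)+S(w')\notin A_{u-i}$, so $C^{(i)}+(w+w')$ again lies in $\mathcal{H}_m$ and is nonzero, hence is itself a weight-three coset; thus the two cosets are at distance $3$.

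The routine parts are the dimension count and the syndrome bookkeeping. The real content, which I expect to be the main point to state cleanly, is the structural identity describing $C^{(i)}$ as the preimage of $A_{u-i}$, together with the observation that the minimum weight of a coset of $C^{(i)}$ is governed by the $C^{(u)}$-cosets it contains; once one sees that every $C^{(u)}$-coset inside $\mathcal{H}_m$ with nonzero syndrome has weight exactly $3$ (the transported \cite{CG1} fact), both statements follow from the monotonicity $d(\cdot,C^{(i)})\le d(\cdot,C^{(u)})$.
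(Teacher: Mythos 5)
Your proof is correct. Note that the paper itself contains no proof of this proposition: it only remarks that the property ``was stated in \cite{CG1} for the code $C^{(u)}$, but it can be extended to all codes $C^{(i)}$,'' so your argument supplies exactly the justification the paper leaves implicit. The route you take is the natural one for that extension, and every step checks out: the identification $C^{(i)}=\{x\in\mathcal{H}_m : S(x)\in A_{u-i}\}$ (by the kernel/dimension count), the consequence that each nonzero coset of $C^{(i)}$ inside $\mathcal{H}_m$ is a union of weight-three cosets of $C^{(u)}$ and hence has minimum weight exactly $3$, the bound $d(w,C^{(i)})\leq d(w,C^{(u)})\leq 2$ for $w\notin\mathcal{H}_m$ giving $C^{(i)}(3)=\mathcal{H}_m\setminus C^{(i)}$, and the closure of $\mathcal{H}_m$ under addition of coset representatives, which turns the sum of two distinct weight-three cosets of $C^{(i)}$ into another weight-three coset and so yields the mutual distance $3$.
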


\begin{theorem} [\protect{\cite{BRZ15}}]
$\mbox{ }$
\begin{enumerate}
\item[(i)]  $C^{(1)}$ is a CT code with covering radius 3.
\item[(ii)] $C^{(u)}$ is a CT
code with covering radius 3.
\item[(iii)]   For
$i\in \{0,\ldots,u\},$ the code $C^{(i)*}$ is CT when $C^{(i)}$ is CT.
\item[(iv)]   For
$i\in \{1,\ldots,u\},$ the code $C^{(i)}$ is a subcode of $C^{(i-1)}$,
and $C^{(i)*}$ is a subcode of $C^{(i-1)*}$.
\end{enumerate}
\begin{enumerate}[label=(F.\arabic*),resume*=family]
\item   For $i\in
\{0,\ldots,u\},$ the code $C^{(i)}$ is CR with
$$ \IA = \{2^m-1,2^{m}-2^{m-i},1;1,2^{m-i},2^m-1\}.$$
\item   For $i\in \{0,\ldots,u\},$ the extended code  $C^{(i)*}$
is CR with
$$ \IA = \{2^m,2^m-1,2^{m}-2^{m-i},1;1,2^{m-i},2^m-1,2^m\}.$$
\end{enumerate}

\end{theorem}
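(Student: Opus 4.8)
The plan is to translate everything into the explicit syndrome description of the nested codes and then reduce complete regularity to counting in $\F_{2^m}$. Since $\beta=\alpha^r$ with $r=2^u+1$, the $E_m$-syndrome of a weight-one vector $e_s$ is the norm $\operatorname{N}(\alpha^s)=(\alpha^s)^{2^u+1}$ of its Hamming syndrome, where $\operatorname{N}\colon\F_{2^m}\to\F_{2^u}$ is the norm map. Writing $a(x)=\sum_{j\in\Supp(x)}\alpha^j$ and $b(x)=\sum_{j\in\Supp(x)}\operatorname{N}(\alpha^j)$, each code is cut out of the Hamming code by $i$ further linear conditions: $C^{(i)}=\{x:a(x)=0,\ b(x)\in A_{u-i}\}$. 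Since $C^{(u)}\subseteq C^{(i)}\subseteq\mathcal{H}_m$ and $C^{(u)}$ has $d=3$, $\rho=3$, monotonicity of the covering radius under inclusion together with \Cref{calder} give $d=3$ and $\rho=3$ for every $i\ge 1$, and identify the cells by syndrome: $C^{(i)}(0)=C^{(i)}$ and $C^{(i)}(3)=\{x:a(x)=0,\ b(x)\notin A_{u-i}\}$, while $C^{(i)}(1)$ and $C^{(i)}(2)$ are the weight-one and weight-two cosets. (For $i=0$ one recovers $\mathcal{H}_m$ with $\rho=1$, and the array degenerates.)

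For the first family I would verify directly, via \Cref{Def2}, that this distance partition is equitable. The cell $C^{(i)}(3)\subseteq\mathcal{H}_m$ is handled at once: every neighbour of such a vector leaves $\mathcal{H}_m$ and in fact lands in $C^{(i)}(2)$, so $a_3=b_3=0$ and $c_3=n$. The rest of the computation rests on a single characteristic-two identity, $\operatorname{N}(a)+\operatorname{N}(c)+\operatorname{N}(a+c)=\operatorname{Tr}_{m/u}(a c^{2^u})$, where $\operatorname{Tr}_{m/u}(y)=y+y^{2^u}$ is the relative trace. For $x\in C^{(i)}(1)$ with $a(x)=a_0$, the neighbour $x+e_s$ (with $\alpha^s=c\neq a_0$) stays in $C^{(i)}(1)$ exactly when $\operatorname{Tr}_{m/u}(a_0 c^{2^u})\in A_{u-i}$; substituting $d=a_0 c^{2^u}$ and using that $\operatorname{Tr}_{m/u}$ is balanced with fibres of size $2^u$ over $\F_{2^u}$, the number of admissible $c$ equals $2^{m-i}-2$, so $b_1=(n-1)-(2^{m-i}-2)=2^m-2^{m-i}$, independently of the chosen vector. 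An entirely parallel count for $x\in C^{(i)}(2)$ produces a unique neighbour in $C^{(i)}(3)$, giving $b_2=1$, and $c_2=2^{m-i}$. Constancy of all these numbers is exactly equitability, so $C^{(i)}$ is CR with the stated $\IA$.

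For the extended family I would invoke the criterion already quoted in \Cref{ECR} for extensions of CR codes of length $2^m-1$ with array $\{n,b_1,1;1,c_2,n\}$: the extension is CR of covering radius $4$, with array $\{n+1,n,b_1,1;1,c_2,n,n+1\}$, precisely when the three nonzero dual weights satisfy $w_1+w_3=2w_2=n+1$. It therefore suffices to compute the weight distribution of $C^{(i)\perp}$, whose codewords are $(\operatorname{Tr}_{m/1}(\gamma\alpha^j)+\operatorname{Tr}_{u/1}(\delta\alpha^{jr}))_{j}$. Because $r=2^u+1$ is a Gold exponent, the attached character sum takes only the values $0,\pm 2^u$, so the nonzero weights are $2^{m-1}$ and $2^{m-1}\pm 2^{u-1}$, symmetric about $(n+1)/2=2^{m-1}$; they satisfy $w_1+w_3=2^m=2w_2=n+1$, and the criterion yields the asserted array for $C^{(i)*}$.

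Finally, part (iv) is immediate from $C^{(i)}=\{x:a(x)=0,\ b(x)\in A_{u-i}\}$ and the nesting $A_{u-i}\subseteq A_{u-i+1}$ of the syndrome subspaces, inclusion being preserved by the parity extension. For (i) and (ii) I would show that $\Aut(C^{(1)})$ and $\Aut(C^{(u)})$ act transitively on each of the four cells, so that the codes are completely transitive in the sense of \Cref{CTSole}; here the cyclic shift and the Frobenius map of these cyclic codes, acting on the syndromes in $\F_{2^m}\times(\F_{2^u}/A_{u-i})$, must be shown transitive on the weight-one, weight-two and weight-three cosets. For (iii) I would combine the complete transitivity of $C^{(i)}$ with the transitivity of $\Aut(C^{(i)*})$ on coordinates—these extended codes are invariant under the affine group—along the lines of \Cref{ExtLin}. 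I expect the genuine obstacle to lie exactly in this group-theoretic input for (i)--(iii): the counting behind the two families is mechanical once the norm--trace identity is in place, whereas proving that the relevant automorphism group realises all $\rho+1$ orbits on cosets requires a careful analysis of the action of the cyclic and field automorphisms on the coset syndromes, which does not reduce to the clean field-counting used for complete regularity.
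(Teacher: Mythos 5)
The survey itself contains no proof of this theorem: it is imported verbatim from \cite{BRZ15}, so your proposal can only be judged against the statement and against the auxiliary results the survey provides. On that basis, the completely regular half of your argument is correct and essentially self-contained. The syndrome description $C^{(i)}=\{x:\,a(x)=0,\ b(x)\in A_{u-i}\}$, the use of \Cref{calder} to identify $C^{(i)}(3)=\mathcal{H}_m\setminus C^{(i)}$ (whence $c_3=n$, $a_3=b_3=0$), and the identity $\operatorname{N}(a)+\operatorname{N}(c)+\operatorname{N}(a+c)=\operatorname{Tr}_{m/u}(ac^{2^u})$ combined with the fibre count $|\{d:\operatorname{Tr}_{m/u}(d)\in A_{u-i}\}|=2^{m-i}$ do yield $b_1=2^m-2^{m-i}$ and $c_2=2^{m-i}$, uniformly over each cell. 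For the extended family, the proposition of \cite{BRZ15} quoted in \Cref{ECR} applies because for $i\geq 1$ the dual code $C^{(i)\perp}$ has exactly the three nonzero weights $2^{m-1}$, $2^{m-1}\pm 2^{u-1}$, symmetric about $(n+1)/2$; your quadratic-form justification is sound, since the bilinear form attached to $x\mapsto\operatorname{Tr}_{u/1}(\delta\operatorname{N}(x))$ is $(x,y)\mapsto\operatorname{Tr}_{m/1}(\delta xy^{2^u})$, nondegenerate for $\delta\neq 0$. Part (iv) is likewise fine once the subspaces $A_{u-i}$ are chosen as a flag, which is the nested-family reading intended in \Cref{sec:nested}.

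The genuine gap is in (i)--(iii), and it is not merely that you defer the work: the plan you sketch would fail. For (ii) you propose to show that the group generated by the cyclic shift and the Frobenius map is transitive on each class of cosets. That group is contained in $\SL(1,2^m)$ and has order at most $m(2^m-1)=mn$, while $C^{(u)}$ has $n(2^u-1)$ cosets of minimum weight two; since $2^u-1>2u=m$ for every $u\geq 3$, no group of order $mn$ can be transitive on them for any $m\geq 6$. This is exactly the counting obstruction the survey itself uses in \Cref{CTC} to show that the double-error-correcting BCH codes are \emph{not} CT, so your strategy proves the opposite of what is needed. A proof of (ii) must exhibit a substantially larger automorphism group, and the natural one comes from viewing $\F_{2^m}$ as a two-dimensional $\F_{2^u}$-space: every invertible map $\pi(x)=px+qx^{2^u}$, $p,q\in\F_{2^m}$ with $\operatorname{N}(p)\neq\operatorname{N}(q)$, permutes the coordinate positions, satisfies $a(\pi x)=\pi(a(x))$ and, on $\mathcal{H}_m$, $b(\pi x)=(\operatorname{N}(p)+\operatorname{N}(q))\,b(x)$, hence lies in $\Aut(C^{(u)})$; this $\GL(2,2^u)$-type group of order $n\,2^u(2^u-1)$ is what makes the orbit counts feasible, and nothing in your sketch supplies it. Similarly, (iii) cannot be obtained ``along the lines of \Cref{ExtLin}'': that corollary (and \Cref{ExtNLin}) transfers complete \emph{regularity}, not complete transitivity, to the extension, so passing from CT of $C^{(i)}$ plus coordinate-transitivity of $\Aut(C^{(i)*})$ to the required $\rho^*+1$ orbits on cosets of $C^{(i)*}$ needs its own argument, which is absent.
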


Note that,  for $i\in\{0,1,u\}$ the codes $C^{(i)}$ and $C^{(i)*}$
are CT. Also, for $m=6,$ all codes $C^{(i)}$ and $C^{(i)*}$ are CT.
In \cite{iva} for the graphs (to be distance transitive) coming from
such codes (to be CT ) were obtained the following
divisibility conditions:  $2^m$ is a power of $2^i$, or $2^m=2^i$, or $2^i-1$ divides $2m$.
Therefore, we conjecture that when one of such divisibility conditions is
satisfied, then  $C^{(i)*}$ is a CT code. Moreover,
in such cases, we conjecture that $C^{(i)}$ is also CT.
However, the question about complete transitivity of codes
$C^{(i)}$ and $C^{(i)*}$ for $i\neq 0,1,u$ is open and needs more
attention.

\subsection{ Preparata-like and BCH codes}\label{subsec:preparata}

From \cite{SZZ} and \cite{BZZ} we have

\begin{enumerate}[label=(F.\arabic*),resume*=family]

\item Any (binary) Preparata-like $(n=2^{2m}-1,N=2^{n+1-4m},5;3)$ code ($m \geq 2$)
is CR with
\[
\IA = \{n,n-1,1;1,2,3\}.
\]
\item An extended Preparata-like $(n+1=2^{2m},N=2^{n+1-4m},6;4)$ code is CR with
\[
\IA = \{n+1,n,n-1,1;1,2,3,n+1\}.
\]
\item Primitive binary BCH $(n=2^{2m+1}-1,N=2^{n-4m},5;3)$ codes ($m \geq 2$)
are CR with
\[
\IA = \{n,n-1,(n+3)/2;1,2,(n-1)/2\}.
\]
\item Extended primitive BCH $(n+1=2^{2m+1},N=2^{n-4m},6;4)$ codes are
CR with
\[
\IA = \{n+1,n,n-1,(n+3)/2;1,2,(n-1)/2,n+1\}.
\]

\end{enumerate}

\subsection{Lifting Hamming codes}\label{sec:lifting}

CR codes can be obtained by lifting Hamming codes.
Denote by $H^q_m$ the parity check matrix of the Hamming code
$C=C(H^q_m)$ of length $n=(q^m-1)/(q-1)$ over $\F_q$. Define a new
linear code, denoted $C_r(H^q_m)$, of length $n$ over $\F_{q^r}$,~$r \geq 2$,
with this parity check matrix $H^q_m$.

\begin{theorem}[\protect{\cite{Lift}}]\label{theo:2.2}
$\mbox{ }$
\begin{enumerate}[label=(F.\arabic*),resume*=family]
\item The code $C_r(H^q_m)$ is a CR
$[n,n-m,3;\rho]_{q^r}$ code with $\rho = \min\{r,m\}$ and intersection numbers:
$$
b_i=\frac{(q^r-q^{i})(q^m-q^{i})}{(q-1)},\;i=0, \ldots,\rho-1; \; c_i=q^{i-1}\frac{q^i-1}{q-1},\;i=1, \ldots,\rho.
$$
When $r\neq m$, codes $C_r(H^q_m)$ and $C_m(H^q_r)$ are not
equivalent, but they have the same intersection array.
\end{enumerate}
\end{theorem}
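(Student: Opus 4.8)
The plan is to recast everything in terms of the syndrome (coset) structure and then recognise it as the bilinear forms scheme. First I would fix an $\F_q$-basis $\omega_1,\dots,\omega_r$ of $\F_{q^r}$ and write each syndrome $s=H^q_m x^{\mathsf T}\in\F_{q^r}^m$ as $s=\sum_t\omega_t s^{(t)}$ with $s^{(t)}\in\F_q^m$; arranging the $s^{(t)}$ as the columns of an $m\times r$ matrix $M_s$ over $\F_q$ identifies the $q^{rm}$ cosets of $C_r(H^q_m)$ with $\mathrm{Mat}_{m\times r}(\F_q)$. The key elementary lemma I would prove is that, because the columns of $H^q_m$ are representatives of all $1$-dimensional $\F_q$-subspaces of $\F_q^m$, the minimum weight in the coset of $s$ equals $\dim_{\F_q}\langle s^{(1)},\dots,s^{(r)}\rangle=\Rank(M_s)$: a syndrome lies in the $\F_{q^r}$-span of $w$ columns iff all the $s^{(t)}$ lie in their $\F_q$-span, and the fewest columns whose span contains $\langle s^{(1)},\dots,s^{(r)}\rangle$ is exactly its dimension. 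From this I read off $d=3$ and $\dim=n-m$ (distinct columns stay pairwise $\F_{q^r}$-independent since a proportionality factor between two $\F_q$-vectors must lie in $\F_q$, and any $2$-dimensional $\F_q$-subspace yields three dependent columns), and that the covering radius is $\rho=\min\{r,m\}$, the maximal rank of an $m\times r$ matrix.

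Next I would analyse the adjacency. A Hamming neighbour $x+\gamma e_i$ shifts the syndrome to $s+\gamma h_i$, i.e. adds the rank-one matrix $h_i(\gamma^{(1)},\dots,\gamma^{(r)})$; as $h_i$ runs over the $n$ column representatives and $\gamma$ over $\F_{q^r}^{*}$, these products run exactly once over all $\tfrac{(q^m-1)(q^r-1)}{q-1}=n(q^r-1)$ rank-one matrices of $\mathrm{Mat}_{m\times r}(\F_q)$. Hence two cosets are adjacent iff the difference of their matrices has rank $1$, so the distance partition is precisely the partition by rank. By the equitable-partition criterion of \Cref{Def2} it then suffices to show that, for a fixed matrix $M$ of rank $\ell$, the number of rank-one $R$ with $\Rank(M+R)=\ell\pm1$ depends only on $\ell$, and to evaluate these as the claimed $b_\ell,c_\ell$.

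The technical heart is the rank-one update count, which I would carry out through the kernel. Writing $R=uv^{\mathsf T}$ and studying $\ker(M+uv^{\mathsf T})$ via $Mx=-(v^{\mathsf T}x)\,u$, one gets the dichotomy: $\Rank(M+uv^{\mathsf T})=\ell+1$ iff $u\notin\mathrm{Col}(M)$ and $v\notin\mathrm{Row}(M)$, while the rank drops to $\ell-1$ iff $u\in\mathrm{Col}(M)$, $v\in\mathrm{Row}(M)$ and $\phi_v(u)=-1$, where $\phi_v$ is the functional on $\mathrm{Col}(M)$ induced by $v$ under the nondegenerate pairing coming from $M$ (one checks $\phi_v(u)=v^{\mathsf T}x_0$ for $u=Mx_0$ is well defined and scaling-invariant). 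Counting the first case gives $q^m-q^\ell$ choices for $u$ and $q^r-q^\ell$ for $v$, and after dividing by the $(q-1)$-fold redundancy $(u,v)\sim(\lambda u,\lambda^{-1}v)$ yields $b_\ell=\tfrac{(q^m-q^\ell)(q^r-q^\ell)}{q-1}$; in the second case, for each of the $q^\ell-1$ nonzero $v\in\mathrm{Row}(M)$ the equation $\phi_v(u)=-1$ cuts out an affine hyperplane of $q^{\ell-1}$ vectors $u$, giving $c_\ell=\tfrac{(q^\ell-1)q^{\ell-1}}{q-1}=q^{\ell-1}\tfrac{q^\ell-1}{q-1}$. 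I expect the main obstacle to be pinning down the rank-lowering condition exactly (the role of the value $-1$ and the scaling-invariance of $\phi_v(u)$), since a careless treatment there is where the count would go wrong; the rest is bookkeeping.

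Finally, the closing assertion is immediate: the formulas for $b_\ell,c_\ell$ and the value $\rho=\min\{r,m\}$ are symmetric in $r$ and $m$, so $C_r(H^q_m)$ and $C_m(H^q_r)$ share the same intersection array; yet for $r\neq m$ they live over different fields $\F_{q^r}\neq\F_{q^m}$ and have different lengths $\tfrac{q^m-1}{q-1}\neq\tfrac{q^r-1}{q-1}$, so they cannot be equivalent.
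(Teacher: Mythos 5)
Your proof is correct. Bear in mind that this survey states \Cref{theo:2.2} without proof, citing \cite{Lift}, so there is no in-paper argument to compare yours against; judged on its own merits, your reconstruction is sound and is essentially the canonical route: identify the cosets of $C_r(H^q_m)$ with $\mathrm{Mat}_{m\times r}(\F_q)$ via syndromes, prove that the coset-leader weight equals $\Rank(M_s)$ (this uses exactly the fact that the columns of $H^q_m$ represent all points of $PG(m-1,q)$, and that an $\F_{q^r}$-dependence among columns with entries in $\F_q$ is equivalent to $r$ simultaneous $\F_q$-dependences), and then observe that the Hamming moves $x\mapsto x+\gamma e_i$ biject with the rank-one matrices, so the distance partition of $\F_{q^r}^n$ pushes forward to the rank partition of $\mathrm{Mat}_{m\times r}(\F_q)$ --- that is, the coset graph is the bilinear forms graph. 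The technical heart, your rank-one update dichotomy, is stated correctly: $\Rank(M+uv^{\mathsf T})=\ell+1$ iff $u\notin\mathrm{Col}(M)$ and $v\notin\mathrm{Row}(M)$; the rank drops to $\ell-1$ iff $u\in\mathrm{Col}(M)$, $v\in\mathrm{Row}(M)$ and $v^{\mathsf T}x_0=-1$ for any $x_0$ with $Mx_0=u$ (well defined because $v^{\mathsf T}$ annihilates $\ker M$, and invariant under $(u,v)\mapsto(\lambda u,\lambda^{-1}v)$); in the two mixed cases the rank is unchanged, which is what makes both ``iff''s true. Your counts then give exactly $b_\ell=(q^m-q^\ell)(q^r-q^\ell)/(q-1)$ and $c_\ell=q^{\ell-1}(q^\ell-1)/(q-1)$, equitability of the rank partition (\Cref{Def2}) yields complete regularity, and the sanity check $b_\rho=0$ at $\rho=\min\{m,r\}$ falls out automatically. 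The closing non-equivalence assertion is indeed immediate (for $r\neq m$ the two codes have different lengths and different alphabets), and your argument moreover explains \emph{why} they share an intersection array: their coset graphs are the same bilinear forms graph up to transposition of matrices.
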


Note that Hamming codes are the only codes whose lifting give
CR codes.
\begin{theorem}[\protect{\cite{Lift}}]\label{theo:2.3}
Let $C(H^q)$ be the nontrivial code of length $n$ over the field $\F_q$
with minimum distance $d \geq 3$, with covering radius $\rho \geq
1$ and $C_r(H^q)$ is its lifting over
$\F_{q^r}$. Then $C_r(H^q)$ is CR, if and only if $C(H^q)$ is a Hamming code.
\end{theorem}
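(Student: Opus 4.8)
The forward implication is exactly \Cref{theo:2.2}, so the content is the converse: assuming $C_r(H^q)$ is CR, I would show $C(H^q)$ is a Hamming code. After deleting redundant rows I may assume $H^q$ has rank $m$; since $d\geq 3$, no column is zero and no two columns are $\F_q$-proportional, so the columns of $H^q$ determine a set $\mathcal{K}$ of points of $PG(m-1,q)$, and $C(H^q)$ is Hamming precisely when $\mathcal{K}$ is all of $PG(m-1,q)$. A preliminary observation is that $\F_q$-rank, and hence minimum distance, is unchanged under lifting (columns with entries in $\F_q$ are $\F_q$-dependent if and only if they are $\F_{q^r}$-dependent), so the projective structure of the columns stays over $\F_q$ while coefficients live in $\F_{q^r}$.

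The first key step is a syndrome-weight lemma. Fixing an $\F_q$-basis $\omega_1,\dots,\omega_r$ of $\F_{q^r}$ and writing a syndrome as $s=\sum_j \omega_j s^{(j)}$ with $s^{(j)}\in\F_q^m$, I would prove that the distance from the coset with syndrome $s$ to $C_r(H^q)$ equals the least number of columns of $H^q$ whose $\F_q$-span contains $V_s:=\langle s^{(1)},\dots,s^{(r)}\rangle$; one direction takes $\F_q$-components of a representation $s=\sum_k\lambda_k h_{i_k}$, the other reassembles a covering of $V_s$ into such a representation. Dually, $C_r(H^q)^{\perp}$ is the $\F_{q^r}$-span of the rows of $H^q$, and for $a\in\F_{q^r}^m$ one gets $\wt(aH^q)=n-|\mathcal{K}\cap W_a^{\perp}|$, where $W_a=\langle a^{(1)},\dots,a^{(r)}\rangle$ has some dimension $u\le\min\{r,m\}$ and $W_a^{\perp}$ has dimension $m-u$.

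The engine of the argument is the equality $\rho=s$ valid for every CR code (\Cref{equivs}(iii)). Using the lemma, $\rho$ is the maximum, over subspaces $V$ with $\dim V\le\min\{r,m\}$, of the least number of columns spanning a space containing $V$, while the external distance $s$ is the number of distinct values of $|\mathcal{K}\cap U|$ (each producing a dual weight) as $U$ ranges over subspaces of $\F_q^m$ of codimension between $1$ and $\min\{r,m\}$. For $\mathcal{K}=PG(m-1,q)$ both quantities equal $\min\{r,m\}$, as in \Cref{theo:2.2}. I would then argue that $\rho=s$ forces $|\mathcal{K}\cap U|$ to depend only on $\dim U$ throughout this range of codimensions, and that such intersection-regularity with subspaces forces $\mathcal{K}$ to be the whole of $PG(m-1,q)$; equivalently, a missing point yields a subspace whose column-count is off the regular value, creating an extra dual weight and hence $s>\rho$.

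The hard part is this last implication, because low-order data are insufficient. The first genuinely informative intersection number $c_2$ only says that every projective line meeting $\mathcal{K}$ in at least two points meets it in a constant number $t$ of points, and an $\F_{q_0}$-subgeometry $PG(m-1,q_0)\subset PG(m-1,q)$ satisfies this with $t=q_0+1$ while spanning $\F_q^m$ and having $d\geq 3$. Thus one cannot conclude at the level of lines, and the subgeometry must be excluded by the full strength of $\rho=s$: it meets $\F_q$-hyperplanes in several different cardinalities, producing more dual weights than its covering radius. I would therefore spend the main effort proving ``$|\mathcal{K}\cap U|$ a function of $\dim U$ for all codimensions $\le\min\{r,m\}$ $\Rightarrow$ $\mathcal{K}=PG(m-1,q)$'', handling the restricted-codimension case $r<m$ by the explicit value $\rho=\min\{r,m\}$ together with a counting argument on hyperplane sections, and, if needed, supplementing the line-level constraint with the analogous computation on three-dimensional configurations.
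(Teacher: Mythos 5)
Your geometric infrastructure is sound: the syndrome--weight lemma, the dual-weight formula $\wt(aH^q)=n-|\mathcal{K}\cap W_a^{\perp}|$, and the reformulation ``$C(H^q)$ is Hamming $\iff \mathcal{K}=PG(m-1,q)$'' are all correct, and the forward direction is indeed \Cref{theo:2.2}. (Note the survey gives no proof of \Cref{theo:2.3} at all --- it only cites its source --- so your converse has to stand on its own.)

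The gap is in your engine. You propose to derive everything from $\rho=s$ (\Cref{equivs}(iii)). But by \Cref{equivs}(iv), $\rho=s$ is \emph{equivalent} to being uniformly packed in the wide sense, which is strictly weaker than complete regularity, and that distinction is precisely the content of this theorem. Concretely, take $C(H^q)$ to be the binary extended Hamming code of length $2^m$, so that $\mathcal{K}$ is the complement of a hyperplane in $PG(m,2)$ ($2^m$ affine points). Your own formulas then give: a subspace $U$ of codimension $j$ meets $\mathcal{K}$ in $0$ or $2^{m-j}$ points, so $s=\min\{r,m\}+1$; and a $k$-dimensional $V$ inside the special hyperplane needs exactly $k+1$ columns to cover it while any other $V$ needs $\dim V$, so the covering radius of the lifting is also $\min\{r,m\}+1$. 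Thus the lifted extended Hamming code satisfies $\rho=s$ (it is UP in the wide sense, exactly as the survey remarks immediately after \Cref{theo:2.3}), yet hyperplane sections take two distinct values ($0$ and $2^{m-1}$) and the base code is not Hamming. This refutes your central claim that ``$\rho=s$ forces $|\mathcal{K}\cap U|$ to depend only on $\dim U$.'' Worse, this $\mathcal{K}$ also passes your line-level test: in $PG(m,2)$ every line meets it in $0$ or exactly $t=2$ points, so the $c_2$-type regularity holds too. Hence nothing in your toolkit ($\rho=s$ plus secant-line regularity) can separate the extended Hamming code from the Hamming code, and the outline cannot be completed; your closing step also quietly assumes $\rho=\min\{r,m\}$, which is the value in the Hamming case and not something you may presuppose. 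A correct argument must use complete regularity beyond the single identity $\rho=s$: for instance, the requirement that the intersection numbers $c_i$ (equivalently, the rows of the outer distribution matrix) are well defined for \emph{every} coset at \emph{every} distance $2\le i\le\rho$, which imposes per-coset, not merely global, constraints on the column configuration $\mathcal{K}$.
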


Using \Cref{theo:2.3} the codes obtained by lifting extended
perfect codes never give CR codes. However,
these codes are UP in the wide sense
\cite{Lift}.

The next statement generalizes results of
\cite{BZ, BZZ} to the nonbinary case.

\begin{proposition}[\protect{\cite{Lift}}]\label{prop:3.3}
Let $C$ be the $q$-ary Hamming $[n, n-m, 3]$ code,
$n=(q^m-1)/(q-1)$ and $C^*$ be its extended code. The
code $C^*$ is UP if and only if the minimum distance
is $4$. In other words, the code $C^*$ is UP if and only
if $q=2$ with $m \geq 2$ or $q=2^u \geq 4$ with $m=2$.
\end{proposition}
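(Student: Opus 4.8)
The plan is to reduce the uniform-packing question to a purely geometric one about the minimum distance $d^*$ of $C^*$, and then resolve that geometric question with classical bounds on arcs in projective spaces. First I would record the parameters: the Hamming code $C$ is perfect with $e=1$, $d=3$, $\rho=1$, and by \cite{Bro2} its extension has covering radius $\rho^*=2$. Since $d^*\in\{3,4\}$ we have $e^*=1$ in either case, so $C^*$ is quasi-perfect and \Cref{def:2.5} applies. I would then prove the equivalence \emph{$C^*$ is UP $\iff d^*=4$} through the external distance: by \Cref{equivs}(ii), $C^*$ is UP exactly when $s^*=e^*+1=2$. If $d^*=4$ then $C^*$ is the extension of a perfect code whose minimum distance has genuinely increased, i.e.\ an extended perfect code, which is UP with $\lambda=0$ (as noted after \Cref{def:2.5}); concretely the parity symbol forces $B_{x,2}=0$ on $C^*(1)$, and the dual of $C^*$ carries only two nonzero weights, so $s^*=2$. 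Conversely, if $d^*=3$ there is a weight-$3$ codeword whose parity symbol vanishes, producing a weight-$2$ word in $(C^*)^\perp$ alongside the simplex weight $q^{m-1}$ and further weights, so $s^*\ge 3$ and $C^*$ is not UP.

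Next I would translate $d^*=4$ into projective geometry. Writing the extended parity-check matrix as the $(m+1)\times(n+1)$ matrix with columns $(\mathbf{h}_i,1)$, $i=1,\dots,n$, and $(\mathbf{0},1)$, the condition $d^*\ge 4$ says exactly that every three columns are linearly independent, i.e.\ that these $n+1$ points form an arc (no three collinear) in $\mathrm{PG}(m,q)$. The key is a lifting lemma: three points $(\mathbf{h}_i,1),(\mathbf{h}_j,1),(\mathbf{h}_k,1)$ are collinear in $\mathrm{PG}(m,q)$ if and only if $\mathbf{h}_i,\mathbf{h}_j,\mathbf{h}_k$ are collinear in $\mathrm{PG}(m-1,q)$ \emph{and} the (unique up to scale) dependence among them has coefficient sum $0$; moreover $(\mathbf{0},1)$ never lies on a line through two lifted base points. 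Hence $d^*=4$ holds if and only if the representatives $\mathbf{h}_i$ of the $n=(q^m-1)/(q-1)$ points of $\mathrm{PG}(m-1,q)$ can be chosen so that no collinear triple has a coefficient-sum-zero dependence, equivalently so that the $n+1$ lifted points form an arc.

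I would then split into the three regimes producing the stated dichotomy. For $q=2$ the representative of each point is unique and every line carries the dependence $\mathbf{h}_i+\mathbf{h}_j+\mathbf{h}_k=0$ of coefficient sum $1\neq 0$; the lifted points are precisely the affine points of $\mathrm{PG}(m,2)$, which always form an arc, so $d^*=4$ for all $m\ge 2$. For $q\ge 3$ and $m\ge 3$ I would pass to a plane $\pi\cong\mathrm{PG}(2,q)$ inside $\mathrm{PG}(m-1,q)$: its lift together with $(\mathbf{0},1)$ lies in a $\mathrm{PG}(3,q)$, where an arc would be a cap of size $q^2+q+2$, exceeding the ovoid bound $q^2+1$ valid for $q>2$; thus no arc exists and $d^*=3$. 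Finally, for $m=2$ and $q\ge 3$ the lifted set has size $q+2$, so an arc would be a hyperoval of $\mathrm{PG}(2,q)$; these exist precisely when $q$ is even, and for even $q$ one places a hyperoval through $(\mathbf{0},1)$ with the line at infinity external (via a collineation) to realise it as a genuine extended Hamming code, giving $d^*=4$, while for odd $q$ the maximum arc has size $q+1<q+2$ and $d^*=3$. Collecting the cases yields $d^*=4$ exactly when $q=2$ with $m\ge 2$ or $q=2^u\ge 4$ with $m=2$, which together with the first paragraph proves the proposition.

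The main obstacle I anticipate lies in the geometric classification of the third paragraph rather than in the coding-theoretic reduction. The two quantitative inputs—the ovoid bound $q^2+1$ for caps in $\mathrm{PG}(3,q)$ with $q>2$, and the hyperoval existence together with the maximum arc size $q+1$ for odd $q$ in $\mathrm{PG}(2,q)$—do the real work, and one must also verify that the hyperoval furnished for even $q$ truly arises as a Hamming parity-check extension, i.e.\ that its points other than $(\mathbf{0},1)$ project to the full point set of $\mathrm{PG}(1,q)$ with distinct representatives. The reduction of UP to $d^*=4$ via \Cref{equivs}(ii) and the lifting lemma are comparatively routine once these geometric facts are in hand.
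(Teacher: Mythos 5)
Since the survey only cites \cite{Lift} for this proposition and reproduces no proof, your proposal has to stand on its own, and its geometric half does: the translation of $d^*=4$ into the condition that the $n+1$ columns of the extended parity-check matrix form a cap, the cap bound $q^2+1$ in $\mathrm{PG}(3,q)$ for $q>2$, the odd-$q$ arc bound $q+1$, and the even-$q$ hyperoval realization (including the check that the hyperoval can be positioned so that it really is a Hamming parity-check extension) are all correct and correctly deployed. The genuine gap is in the coding-theoretic equivalence, in the direction ``$d^*=3\Rightarrow C^*$ is not UP''. You assert that a weight-$3$ codeword of $C^*$ with vanishing parity symbol ``produces a weight-$2$ word in $(C^*)^\perp$''. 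This is a non sequitur: a weight-$3$ codeword lies in $C^*$, not in its dual, and says nothing about low-weight dual words. What a weight-$3$ codeword does give you is a trisecant line of the column set, hence a dual word of weight $(n+1)-3$; but a line is a hyperplane of $\mathrm{PG}(m,q)$ only when $m=2$, so even this repaired version of your sentence produces a third dual weight only for $m=2$ (where one gets the three weights $q-1$, $q$, $q+2$). For $m\geq 3$ the hyperplanes through a trisecant can a priori have admissible intersection numbers, so your argument establishes nothing there --- and $q\geq 3$, $m\geq 3$ is precisely a regime where you still owe ``not UP''.

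The gap is fixable by a direct appeal to \Cref{def:2.5} rather than to external distance. Every weight-$3$ codeword of $C^*$ has a $0$ in the parity position: its projection to the first $n$ coordinates is a nonzero codeword of $C$, hence of weight at least $3$, hence of weight exactly $3$, and the appended digit (minus the coefficient sum) must vanish. So if $d^*=3$, let $c$ be a weight-$3$ codeword, let $x$ be the weight-$1$ vector supported on the parity position, and let $y$ be the weight-$1$ vector agreeing with $c$ in one nonzero coordinate. Both $x$ and $y$ are at distance $1$ from $C^*$, yet $B_{x,2}=0$ (every weight-$3$ codeword differs from $x$ in four positions, and by $d^*=3$ no other codeword can be at distance $2$ from $x$), while $B_{y,2}\geq 1$ (witness $c$). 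This contradicts the constancy of $\lambda$ in \Cref{def:2.5}, uniformly in $q$ and $m$. Finally, note that your forward direction also leans on an unproved assertion (that $(C^*)^\perp$ has only two nonzero weights when $d^*=4$); it is cleaner to argue that $d^*=4$ makes $C^*$ a $q$-ary extended perfect code, hence CR by the paper's family (F.2), hence $s^*=\rho^*=2=e^*+1$ by \Cref{equivs}(iii), hence UP by \Cref{equivs}(ii).
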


\begin{theorem}[\protect{\cite{Lift}}]
Let $C$ be the Hamming $[n, n - m, 3]_q$-code of length $n =
(q^m-1)/(q-1)$ and let $C^*$ be the extended code. The lifted
code $C^*_r$ is a UP code if and only if $C^*$
is it. Hence, the lifted code $C^*_r$ is a UP
code if and only if $q=2$ with $m \geq 2$ or $q = 2^u \geq 4$ with
$m=2$.
\end{theorem}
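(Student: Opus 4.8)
The plan is to reduce the stated equivalence to the minimum-distance criterion of \Cref{prop:3.3}, exploiting that lifting and extension commute. First I would observe that $C^*_r=(C_r)^*$, since both lifting and appending an overall parity check are defined through the parity-check matrix $H^q_m$; thus $C^*_r$ is the extension of the lifted Hamming code $C_r=C_r(H^q_m)$, which by \Cref{theo:2.2} is a CR $[n,n-m,3;\min\{r,m\}]_{q^r}$ code. By \Cref{equivs} a code is uniformly packed in the wide sense exactly when $\rho=s$, and CR forces $\rho(C_r)=s(C_r)=\min\{r,m\}$. Since $\rho(C^*_r)=\rho(C_r)+1=\min\{r,m\}+1$ \cite{Bro2} while $\rho\leq s$ always, the code $C^*_r$ is UP precisely when its external distance attains the minimum value $s(C^*_r)=\min\{r,m\}+1$. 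The final ``Hence'' clause is then immediate by substituting \Cref{prop:3.3}, so the whole content is the equivalence ``$C^*_r$ is UP iff $C^*$ is UP''.

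Next I would prove the lifted analogue of \Cref{prop:3.3}: $C^*_r$ is UP if and only if its extended minimum distance equals $4$. To this end I would describe the dual of the extension explicitly as $(C^*_r)^\perp=\{(v+\lambda\mathbf{1},\lambda):v\in C_r^\perp,\ \lambda\in\F_{q^r}\}$, so that a nonzero dual weight is either $\wt(v)$ (when $\lambda=0$) or $\wt(v+\lambda\mathbf{1})+1$ (when $\lambda\neq0$). The $\lambda=0$ part supplies exactly the $\min\{r,m\}$ nonzero weights of $C_r^\perp$; the task is to show that the $\lambda\neq0$ part contributes exactly one further weight, forcing $s(C^*_r)=\min\{r,m\}+1$ and hence UP, precisely when no minimum-weight codeword is annihilated by the parity functional, that is when $d(C^*_r)=4$, and contributes strictly more than one new weight otherwise.

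The bridge to the base code is a minimum-distance computation showing $d(C^*_r)=d(C^*)$. A weight-$3$ codeword of $C_r$ is a relation $c_ih_i+c_jh_j+c_kh_k=0$ among three columns of $H^q_m$ with $c_i,c_j,c_k\in\F_{q^r}^{\times}$; since the columns are pairwise independent vectors of $\F_q^m$, such a triple has rank $2$, and its space of relations is one-dimensional over any field containing $\F_q$, rank being field-invariant. Hence every lifted weight-$3$ codeword is a scalar multiple of the unique underlying $\F_q$-relation, so its coordinate sum $c_i+c_j+c_k$ is a nonzero multiple of the corresponding $\F_q$-sum, and the parity functional annihilates a weight-$3$ codeword over $\F_{q^r}$ exactly when it does over $\F_q$. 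Thus $d(C^*_r)=4$ iff $d(C^*)=4$, and combining this with the preceding paragraph and \Cref{prop:3.3} (the case $r=1$) yields ``$C^*_r$ is UP iff $C^*$ is UP''. The conclusion is then \Cref{prop:3.3} verbatim: UP holds iff $q=2$ with $m\geq2$, or $q=2^u\geq4$ with $m=2$.

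The main obstacle is the middle step: controlling the external distance of the extension after lifting, namely proving that the shift $v\mapsto v+\lambda\mathbf{1}$ over the enlarged field $\F_{q^r}$ creates exactly one new dual weight in the $d^*=4$ case and at least two otherwise. The delicate point is ruling out accidental coincidences or a proliferation of weights among the vectors $v+\lambda\mathbf{1}$ as $\lambda$ ranges over all of $\F_{q^r}$; this requires the explicit weight distribution of the lifted dual $C_r^\perp$ underlying \Cref{theo:2.2}, not merely its number of weights, and it is exactly where passing to the field extension could \emph{a priori} destroy the tight equality $s=\rho$.
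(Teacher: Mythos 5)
What you have written is a plan, not a proof, and the plan's core step is missing by your own admission. The parts you do carry out are sound and would belong to any proof: $C^*_r=(C_r)^*$, since both codes are cut out over $\F_{q^r}$ by the same extended parity-check matrix; the reformulation of ``UP in the wide sense'' as $\rho=s$ via \Cref{equivs}; and the rank argument showing that every weight-$3$ codeword of $C_r$ is an $\F_{q^r}$-scalar multiple of a weight-$3$ codeword of $C$, hence $d(C^*_r)=d(C^*)$. But the statement you label ``the task'' --- that the vectors $(v+\lambda\mathbf{1},\lambda)$, $\lambda\neq 0$, add exactly one new weight to the $\min\{r,m\}$ weights of $C_r^\perp$ when $d(C^*_r)=4$, and at least two new weights otherwise --- is precisely the substance of the theorem, and nothing in your proposal proves it, nor even links the primal condition $d^*=4$ to the dual weight count beyond the hope that \Cref{prop:3.3} survives lifting. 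Carrying it out requires the actual weight distribution of the lifted dual, not just its number of weights: for instance, for $q=2$ one must check that $\wt(x^{T}H+\lambda\mathbf{1})+1$ equals either $2^m$ (when $\lambda$ lies outside the $\F_2$-span of the coordinates of $x$) or one of the old weights $2^m-2^{m-\kappa}$ (when it lies inside), so that exactly one new weight appears; for $q=2^u$, $m=2$ an analogous projective-line count is needed; and for all remaining $(q,m)$ one must exhibit at least two genuinely new weights. You concede this is ``the main obstacle,'' which is another way of saying the proof is not there. (For what it is worth, the survey itself contains no proof to compare against --- the theorem is quoted from \cite{Lift} --- so the proposal must stand on its own, and it does not.)

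There is a second, independent gap: you take $\rho(C^*_r)=\rho(C_r)+1$ from \cite{Bro2}, but in this paper that fact is stated and used only in the binary setting, whereas $C_r$ is a code over $\F_{q^r}$ with $q^r\geq 4$ as soon as $r\geq 2$. For a nonbinary code, the extension has covering radius $\rho+1$ rather than $\rho$ precisely when there is a weight-$\rho$ coset of $C_r$ and a value $t\in\F_{q^r}$ such that every minimum-weight vector of that coset has coordinate sum different from $t$; over $\F_2$ this is automatic (all weight-$\rho$ vectors have the same coordinate sum $\rho \bmod 2$), but over $\F_{q^r}$ it requires an argument specific to these codes. Your chain of equivalences collapses without it: if $\rho(C^*_r)$ were only $\min\{r,m\}$, then even a successful count $s(C^*_r)=\min\{r,m\}+1$ would give $s>\rho$, i.e.\ \emph{not} UP. So both links in ``$C^*_r$ UP $\Leftrightarrow s(C^*_r)=\min\{r,m\}+1 \Leftrightarrow d(C^*_r)=4$'' rest on claims that are asserted rather than established.
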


\subsection{Kronecker product construction}

In \cite{Rif2} a Kronecker construction of CR
codes has been investigated and, later, in \cite{kro2} the
construction has been extended taking different alphabets in the
component codes. This approach is also connected with lifting
constructions of CR codes.
One interesting thing is that several classes of
CR codes with different parameters, but identical
intersection array, are obtained.

\begin{theorem}[\protect{\cite{Rif2,kro2}}]\label{theo:4.1}
Let $C(H^{q^u}_{m_a})$ and $C(H^q_{m_b})$ be two Hamming codes
with parameters $[n_a,n_a-m_a,3]_{q^u}$ and $[n_b,n_b-m_b,3]_q$,
respectively, where $n_a = (q^{u\,m_a}-1)/(q^u-1)$,~
$n_b=(q^{m_b}-1)/(q-1)$,~ $q$ is a prime power,~ $m_a, m_b \geq
2$, and $u \geq 1$.

\begin{enumerate}[label=(F.\arabic*),resume*=family]
\item The code $C=C(H)$ with parity check matrix
$H = H^{q^u}_{m_a} \otimes H^q_{m_b}$, the Kronecker product of
$H^{q^u}_{m_a}$ and $H^q_{m_b}$, is  CT, and
so CR, $[n, k, d;\rho]_{q^u}$ code with parameters
\begin{equation}\label{eq:4.11}
n=n_a\,n_b,~~k=n-m_a\,m_b,~~d=3,~~\rho=\min\{u\,m_a,~m_b\}.
\end{equation}
and with intersection numbers:
\[
b_\ell ~=~
\frac{(q^{u\,m_a}-q^{\ell})(q^{m_b}-q^{\ell})}{(q-1)},\;\;\;\ell =
0, 1, \ldots, \rho-1,
\]
and
\[
c_{\ell} ~=~ q^{\ell-1}\frac{q^\ell-1}{q-1},\;\;\;\ell = 1, 2,
\ldots, \rho.
\]
The lifted code $C_{m_b}(H^{q}_{u m_a})$ is  CR
with the same IA as $C$.
\end{enumerate}
\end{theorem}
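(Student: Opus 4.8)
The plan is to reduce the entire statement to the combinatorics of the \emph{bilinear forms scheme}, via an explicit description of the cosets of $C$.

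First I would pin down the parameters. The length $n=n_an_b$ and the check-matrix size $(m_am_b)\times(n_an_b)$ are immediate from the Kronecker product. Using $\Rank(A\otimes B)=\Rank(A)\Rank(B)$ together with the fact that both Hamming check matrices have full row rank, $H$ has rank $m_am_b$, giving $k=n-m_am_b$. For $d=3$: the columns of $H$ are the products $h_i\otimes h'_j$ of columns $h_i$ of $H^{q^u}_{m_a}$ and $h'_j$ of $H^q_{m_b}$; by uniqueness of rank-one tensor decompositions and the projective distinctness of the $h_i$ (over $\F_{q^u}$) and of the $h'_j$ (over $\F_q$), no two columns of $H$ are proportional, so $d\ge 3$, while three dependent columns $h_i\otimes h'_{j_1},\,h_i\otimes h'_{j_2},\,h_i\otimes h'_{j_3}$ arising from a weight-three word of $C(H^q_{m_b})$ force $d=3$.

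The heart of the argument is the coset description. Writing a vector of $\F_{q^u}^n$ as a matrix $X\in\F_{q^u}^{n_a\times n_b}$, its syndrome is $S=H^{q^u}_{m_a}X(H^q_{m_b})^{\mathsf T}\in\F_{q^u}^{m_a\times m_b}$, so cosets correspond bijectively to such matrices $S$. Expanding $S=\sum_{i,j}X_{ij}\,h_i(h'_j)^{\mathsf T}$ shows that the coset weight (the minimum Hamming weight in the coset) equals the least number of rank-one terms $a_t(b_t)^{\mathsf T}$, with $a_t\in\F_{q^u}^{m_a}$ and $b_t\in\F_q^{m_b}$, summing to $S$. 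Fixing an $\F_q$-basis of $\F_{q^u}$ and flattening the first factor, I would show this minimum equals the ordinary $\F_q$-rank of the associated $(um_a)\times m_b$ matrix $\tilde S$ over $\F_q$: the requirement that every row of $S$ lie in the $\F_{q^u}$-span of $\{b_t\}$ translates exactly into the requirement that all $\F_q$-components of those rows lie in $\mathrm{span}_{\F_q}\{b_t\}$. Consequently the distance partition of $\F_{q^u}^n$ by $C$ is carried, through the $\F_q$-linear syndrome map, onto the rank partition of $\F_q^{(um_a)\times m_b}$, i.e.\ onto the bilinear forms scheme $\mathrm{Bil}_q(um_a,m_b)$. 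With this identification the regularity statements become bookkeeping: the rank partition is equitable, its graph is distance-regular of diameter $\min\{um_a,m_b\}$ with the intersection numbers recorded in \cite{BCN}, which are exactly $b_\ell=(q^{um_a}-q^{\ell})(q^{m_b}-q^{\ell})/(q-1)$ and $c_\ell=q^{\ell-1}(q^{\ell}-1)/(q-1)$. Hence $C$ is CR with $\rho=\min\{um_a,m_b\}$ and the stated $\IA$. The claim on the lifted code is then free: applying \Cref{theo:2.2} with $r=m_b$ and $m=um_a$ gives $C_{m_b}(H^q_{um_a})$ covering radius $\min\{m_b,um_a\}$ and precisely these same $b_\ell,c_\ell$, so its intersection array coincides with that of $C$.

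The delicate point is complete transitivity, which I would establish in the sense of \Cref{GenCT} rather than through the monomial group $\Aut(C)$. The natural monomial automorphisms induce only $\SL_{m_a}(\F_{q^u})\times\SL_{m_b}(\F_q)$ on syndromes, and this is \emph{not} transitive on the rank-$\ell$ classes once $\ell\ge 2$ (two matrices of equal $\F_q$-rank may have $\F_{q^u}$-spans of different dimension). The remedy is to enlarge the acting group to the $\F_q$-linear Hamming isometries, namely a subgroup of $\GL_u(\F_q)\wr S_n$, which is strictly larger than the monomial group because each coordinate symbol of $\F_{q^u}\cong\F_q^u$ may be transformed by an arbitrary element of $\GL_u(\F_q)$. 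I would then exhibit inside this group a subgroup $G$ preserving $C$ whose induced action on $\F_q^{(um_a)\times m_b}$ realizes enough of $\GL_{um_a}(\F_q)\times\GL_{m_b}(\F_q)$ to be transitive on every rank class; combined with translations, this makes each cell $C(\ell)$ a single $G$-orbit, which is exactly $G$-complete transitivity. The main obstacle is precisely this last step: reconciling the $\F_q$-rank structure of the cosets (which demands the full linear group on the $um_a$-dimensional side) with the $\F_{q^u}$-projective geometry indexing the coordinates, i.e.\ producing the extra, non-$\F_{q^u}$-linear automorphisms of $C$ needed to move between rank-$\ell$ cosets whose $\F_{q^u}$-spans differ.
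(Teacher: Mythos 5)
The survey states this theorem without proof (it is imported from \cite{Rif2,kro2}), so the benchmark is the argument in those sources, and for everything except complete transitivity your reduction is sound and is essentially that standard argument: coset weight equals the $\F_q$-rank of the flattened syndrome $\tilde S\in\F_q^{(um_a)\times m_b}$, whence $d=3$, $\rho=\min\{um_a,m_b\}$, and the stated intersection numbers; the claim for $C_{m_b}(H^q_{um_a})$ is then indeed immediate from \Cref{theo:2.2}. One point you dismiss as ``bookkeeping'' deserves a sentence: equitability of the Hamming distance partition does not follow from equitability of the rank partition alone. You need that the syndrome map restricts to a bijection from the $n(q^u-1)$ weight-one error patterns onto the $(q^{um_a}-1)(q^{m_b}-1)/(q-1)$ rank-one matrices (injective because $d\geq 3$, surjective by the count), so that the Hamming neighbourhood of any $x$ is carried bijectively onto $\{\tilde S(x)+R:\ \Rank_{\F_q}(R)=1\}$; only then do the intersection numbers of the bilinear forms graph transfer to $C$.

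The genuine gap is complete transitivity, the one assertion that distinguishes this theorem, and you do not prove it: your closing sentence explicitly leaves ``producing the extra, non-$\F_{q^u}$-linear automorphisms'' as an unresolved obstacle. That obstacle is real, not merely apparent, so it cannot be waved through. Any $\F_{q^u}$-semilinear monomial automorphism of $C$ permutes the syndromes of weight-one vectors, i.e.\ preserves the set $M$ of mixed pure tensors $ab^{\mathsf T}$ with $a\in\F_{q^u}^{m_a}$, $b\in\F_q^{m_b}$; and any semilinear map preserving $M$ also preserves the set of $\F_{q^u}$-rank-one matrices (these are exactly the points of the projective lines meeting $M$ in at least three points), hence preserves $\F_{q^u}$-rank. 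For $u\geq 2$ there are weight-$2$ cosets of $\F_{q^u}$-rank $1$ (e.g.\ $S=e_1(b_1+\omega b_2)^{\mathsf T}$ with $\omega\notin\F_q$) and of $\F_{q^u}$-rank $2$ (e.g.\ $S=e_1b_1^{\mathsf T}+e_2b_2^{\mathsf T}$), so no group built from translations and monomial (even semilinear) automorphisms can fuse $C(2)$ into a single orbit: indeed, any group having $C$ as an orbit and consisting of such maps acts on cosets exactly as its monomial parts do. Consequently CT can only hold in the sense of \Cref{GenCT}, via isometries of the Hamming graph that are not $\F_{q^u}$-semilinear, and exhibiting a concrete such group that preserves $C$ and is transitive on every cell is the heart of the matter for $u\geq 2$ --- precisely the step your proposal omits. (For $u=1$ your machinery does close the argument, since $\GL_{m_a}(\F_q)\times\GL_{m_b}(\F_q)$ acting by $S\mapsto ASB^{\mathsf T}$ is transitive on each rank class; what is missing is exactly the mixed-alphabet case that \cite{kro2} adds to \cite{Rif2}.)
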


Remark that in the above \Cref{theo:4.1}  we can not
choose the code $C_{m_b}(H^{q^u}_{m_a})$ (instead of
$C_{m_b}(H^{q}_{um_a})$), which seems to be natural. We emphasize
that the codes $C_{m_b}(H^{q}_{um_a})$ and
$C_{m_b}(H^{q^u}_{m_a})$ are not only different CR
codes, but they induce different distance-regular graphs with
different intersection arrays. So, the code
$C_{m_b}(H^{q}_{um_a})$ suits to the codes (F.22) in the sense
that it has the same intersection array. For example, the code
$C_2(H_3^{2^2})$ induces a distance-regular graph with
intersection array $\{315,240;1,20\}$ and the code $C_2(H_6^2)$
gives a distance-regular graph with intersection array
$\{189,124;1,6\}$. To obtain these results in both cases we use the
same \Cref{theo:2.2}.

The above theorem (\Cref{theo:4.1}) can not be extended to the
more general case when the alphabets $\F_{q^a}$ and $\F_{q^b}$ of
component codes $C_A$ and $C_B$, respectively, neither $\F_{q^a}$
is a subfield of $\F_{q^b}$ or vice versa $\F_{q^b}$ is a subfield
of $\F_{q^a}$. We illustrate it by considering the smallest
nontrivial example. Take two Hamming codes, the $[5,3,3]$ code
$C_A$ over $\F_{2^2}$ with parity check matrix $H_2^{2^2}$, and
the $[9,7,3]$ code $C_B$ over $\F_{2^3}$ with parity check matrix
$H_2^{2^3}$. Then the resulting $[45,41,3]$ code $C = C(H_2^{2^2}
\otimes H_2^{2^3})$ over $\F_{2^6}$ is not even UP
in the wide sense, since it has covering radius $\rho=3$ and
external distance $s=7$, which can be checked by considering the
parity check matrix of $C$.

\begin{theorem} [\protect{\cite{kro2}}]\label{theo:main}
Let $q$ be any prime number and let $a,b,u$ be any natural
numbers. Then there exist the following CR codes with
different parameters $[n,k,d;\rho]_{q'}$, where $q'$ is a power of
$q$, $d=3$, and
$\rho=\min\{ua,b\}$:

\begin{enumerate}[label=(F.\arabic*),resume*=family]
\item $C_{ua}(H_b^{q})\;\mbox{over
$\F_q^{ua}$ with}\;n = \frac{q^{b}-1}{q-1},\;k=n - b;$
\item
$C_b(H_{ua}^{q})\;\mbox{over $\F_q^{b}$ with}\;n =
\frac{q^{ua}-1}{q-1},\;k=n - ua;$
\item $C(H^q_b \otimes H^q_{ua}
)\;\mbox{over $\F_q$ with}\;n = \frac{q^b-1}{q-1} \cdot \frac {q^{ua}-1}{q-1}\,
 ,\;k=n - b ua;$
\item $C(H^q_b \otimes
H^{q^a}_{u})\;\mbox{over $\F_q^{a}$ with}\;n = \frac{q^b-1}{q-1}
\cdot \frac{q^{ua}-1}{q^a-1},\;k=n - bu;$
\item $C(H^q_b \otimes
H^{q^u}_a)\;\mbox{over $\F_q^{u}$ with}\;n = \frac{q^b-1}{q-1}
\cdot \frac{q^{ua}-1}{q^u-1},\;k=n - b a;$
\end{enumerate}
All the above codes have the same intersection numbers
\[
b_\ell=\frac{(q^b-q^{\ell})(q^{ua}-q^{\ell})}{(q-1)},\;\ell=0,
\ldots, \rho-1,\;\;
c_{\ell}=q^{\ell-1}\frac{q^\ell-1}{q-1},\;\ell=1, \ldots, \rho.
\]
All codes above coming from Kronecker constructions are CT.
\end{theorem}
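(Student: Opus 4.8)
The plan is to realize each of the five listed codes as a specialization of one of the two construction theorems already established --- namely \Cref{theo:2.2} for the lifted codes $C_{ua}(H_b^q)$ and $C_b(H_{ua}^q)$, and \Cref{theo:4.1} for the three Kronecker products --- and then to read off that the asserted parameters and intersection numbers agree in every case.

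First I would handle the two lifted codes. For $C_{ua}(H_b^q)$, take the Hamming code $H_b^q$ of length $(q^b-1)/(q-1)$ over $\F_q$ and lift it to $\F_{q^{ua}}$, i.e.\ apply \Cref{theo:2.2} with $m=b$ and lifting degree $r=ua$; this yields a CR code of the stated length and dimension $n-b$ with $\rho=\min\{ua,b\}$ and intersection numbers $b_\ell=(q^{ua}-q^\ell)(q^b-q^\ell)/(q-1)$ and $c_\ell=q^{\ell-1}(q^\ell-1)/(q-1)$. For $C_b(H_{ua}^q)$ I would instead take $m=ua$ and $r=b$ in \Cref{theo:2.2}. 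Since $b_\ell$ is symmetric in $q^{ua}$ and $q^b$, the intersection numbers coincide with those of the previous code, while the closing assertion of \Cref{theo:2.2} (that $C_r(H^q_m)$ and $C_m(H^q_r)$ are inequivalent but share an intersection array when $r\neq m$) shows these two codes are genuinely different when $ua\neq b$.

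Next I would treat the three Kronecker codes via \Cref{theo:4.1}, whose delicate point is keeping straight which exponent plays the role of the lifting degree in \Cref{theo:4.1} and which plays the role of the Hamming parameter $m_a$ (the order of the two Kronecker factors only permutes coordinates, hence affects neither complete regularity, nor transitivity, nor the intersection array). For $C(H^q_b\otimes H^q_{ua})$ I set the lifting degree to $1$ with $m_a=b$, $m_b=ua$, so the product lives over $\F_q$. For $C(H^q_b\otimes H^{q^a}_u)$ I set the lifting degree to $a$ with $m_a=u$, $m_b=b$, producing a code over $\F_{q^a}$ of length $\frac{q^b-1}{q-1}\cdot\frac{q^{ua}-1}{q^a-1}$. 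For $C(H^q_b\otimes H^{q^u}_a)$ I set the lifting degree to $u$ with $m_a=a$, $m_b=b$, producing a code over $\F_{q^u}$ of length $\frac{q^b-1}{q-1}\cdot\frac{q^{ua}-1}{q^u-1}$. In each case \Cref{theo:4.1} returns $d=3$, $\rho=\min\{ua,b\}$, the same $b_\ell$ and $c_\ell$ as above, and complete transitivity of the code, which is precisely the final sentence of the statement. The distinction between the last two codes is exactly the denominator $q^a-1$ versus $q^u-1$ in the length, so this is where the parameter matching must be done most carefully.

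Finally I would collect the bookkeeping: all five families give the identical intersection numbers by direct substitution, the base fields $\F_{q^{ua}},\F_{q^b},\F_q,\F_{q^a},\F_{q^u}$ and the lengths are in general pairwise distinct so that the codes really do have different parameters, and complete transitivity holds for the three Kronecker codes by \Cref{theo:4.1}. The main obstacle I anticipate is not a single deep step but the careful handling of the nondegeneracy hypotheses $m_a,m_b\geq 2$ built into \Cref{theo:4.1}: for small values of $a$, $b$, or $u$ some component Hamming codes degenerate, and one must verify that the specializations above remain admissible, or dispose of the degenerate cases separately, so that the existence claim holds for \emph{all} natural numbers $a,b,u$.
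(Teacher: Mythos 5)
Your proposal is correct and is essentially the intended justification: the survey itself gives no proof of this theorem (it is quoted from the same reference as \Cref{theo:4.1}), and the natural route is exactly yours, namely reading off the five codes as specializations of \Cref{theo:2.2} (for the two lifted codes, with $(m,r)=(b,ua)$ and $(ua,b)$) and of \Cref{theo:4.1} (for the three Kronecker codes, with lifting degree $1$, $a$, $u$ respectively), and your parameter matching, including the observation that swapping Kronecker factors is only a coordinate permutation, is accurate. The one caveat you flag --- that for degenerate values such as $b=1$ or $ua=1$ the hypotheses $m_a,m_b\geq 2$ of \Cref{theo:4.1} fail and some claims (e.g.\ $d=3$, or complete transitivity of the degenerate Kronecker codes) break down --- is a defect of the statement as recorded, which implicitly assumes nondegenerate parameters, rather than a gap in your argument.
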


 Denote by $\tau(n)$ the number
of divisors of $n$.

\begin{corollary}[\protect{\cite{kro2}}]
Given a prime power $q$ choose any two natural numbers $a,b >1$.
For each divisor $r$ of $a$ or $b$ we build the following $\tau(a)+\tau(b)$
different CR codes with identical intersection array and covering radius
$\rho=\min\{a,b\}$:
\begin{enumerate}[label=(\roman*)]
\item CT codes $C(H^{q^{r^*}}_r
\otimes H^q_b)$ over $\F_{q^{r^*}}$, for any proper divisors of $a$ with $rr^*=a$.
\item CT
codes $C(H^q_a \otimes H^{q^{r^*}}_r)$ over
$\F_{q^{r^*}}$, for any proper divisors of $b$ with $rr^*=b$.
\item CR codes $C_{a}(H^{q}_{b})$ over
$\F_{q^{a}}$ and $C_{b}(H^{q}_{a})$ over $\F_{q^{b}}$.
\end{enumerate}
\end{corollary}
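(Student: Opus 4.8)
The plan is to read every code in the list as a specialization of \Cref{theo:main} (for the families (i) and (ii)) or of \Cref{theo:2.2} (for the two codes of (iii)), and then to observe that all of these specializations carry one and the same intersection array. First I would match the constructions. Since the Kronecker product of two parity check matrices is symmetric up to a permutation of coordinates, the code $C(H^{q^{r^*}}_r\otimes H^q_b)$ of (i) is, for $rr^*=a$, exactly a code of type $C(H^q_b\otimes H^{q^{u}}_{a'})$ in \Cref{theo:main} with $u=r^*$ and $a'=r$, so that $ua'=r^*r=a$ and $\rho=\min\{ua',b\}=\min\{a,b\}$. Symmetrically, the code $C(H^q_a\otimes H^{q^{r^*}}_r)$ of (ii) is obtained from the same theorem by interchanging the roles of the two integers, taking the theorem's base parameter to be $a$, its second factor to be $r$, and its multiplier to be $r^*$ with $r^*r=b$, again yielding $\rho=\min\{a,b\}$. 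Finally, the two codes of (iii) are liftings $C_r(H^q_m)$ of \Cref{theo:2.2} with $(r,m)=(a,b)$ and $(r,m)=(b,a)$. In every case the cited theorem already delivers complete regularity, and, for the Kronecker codes of (i) and (ii), complete transitivity as well.

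Next I would check that the intersection arrays coincide. Substituting the matched parameters into the intersection numbers of \Cref{theo:main} (and, for (iii), of \Cref{theo:2.2}) gives in every case
\[
b_\ell=\frac{(q^a-q^\ell)(q^b-q^\ell)}{q-1},\qquad c_\ell=q^{\ell-1}\frac{q^\ell-1}{q-1},
\]
for $\ell$ in the appropriate range, together with $\rho=\min\{a,b\}$. The point is that these numbers depend only on the products $a$ and $b$ (through $q^a$ and $q^b$) and not on the individual factorizations $a=rr^*$ or $b=rr^*$; moreover the expression for $b_\ell$ is symmetric under $a\leftrightarrow b$, which is precisely why the $a$-indexed family (i), the $b$-indexed family (ii), and the two liftings (iii) all share the same array. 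Hence all listed codes are CR with the identical intersection array claimed.

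It remains to count and to argue distinctness, and this is where the real care is needed. The divisors of $a$ index family (i) and the divisors of $b$ index family (ii), while the two degenerate factorizations must be handled separately. The factorization with trivial cofactor, $r=a$ (resp.\ $r=b$), returns the base-field tensor code $C(H^q_a\otimes H^q_b)$ over $\F_q$, which is common to both families and must not be counted twice; the opposite degenerate factorization collapses $H^{q^{r^*}}_1$ to a single nonzero scalar and reproduces the lifted codes of (iii). Excluding the improper divisors in (i) and (ii), and listing the two liftings separately in (iii), is what is meant to produce the total $\tau(a)+\tau(b)$. I expect the main obstacle to be proving that these codes are genuinely different. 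For members living over distinct fields $\F_{q^{r^*}}$ distinctness is automatic, but when $r^*>1$ is a common divisor of $a$ and $b$ the corresponding members of (i) and (ii) lie over the same field $\F_{q^{r^*}}$, and a short computation shows that they even share the same length $\frac{(q^a-1)(q^b-1)}{(q^{r^*}-1)(q-1)}$, the same dimension $n-ab/r^*$, the same distance $3$, and the same intersection array; one must then separate them by a finer invariant of the two Kronecker structures $H^{q^{r^*}}_{a/r^*}\otimes H^q_b$ and $H^q_a\otimes H^{q^{r^*}}_{b/r^*}$. Pinning down this distinctness, together with making the divisor bookkeeping yield exactly $\tau(a)+\tau(b)$, is the crux; the complete regularity and the intersection array themselves come for free from \Cref{theo:main,theo:2.2}.
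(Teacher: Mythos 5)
Your route is precisely the one the paper intends: the survey states this corollary with no proof of its own, immediately after \Cref{theo:main}, and it is meant as the direct specialization you describe --- families (i) and (ii) come from \Cref{theo:main} with the theorem's $u$ taken to be $r^*$ and its second Hamming parameter to be $r$ (using that the Kronecker product of parity check matrices is symmetric up to a coordinate permutation), and family (iii) is \Cref{theo:2.2} with $(r,m)=(a,b)$ and $(b,a)$. Your parameter matching is correct, and your observation that the intersection numbers and $\rho=\min\{a,b\}$ depend only on $q^a$ and $q^b$, not on the factorizations $a=rr^*$ or $b=rr^*$, is exactly the content of the identical-intersection-array claim. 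The counting and distinctness issues you flag are genuine: under the natural enumeration, $r=a$ in (i) and $r=b$ in (ii) produce the literally identical code $C(H^q_a\otimes H^q_b)$ over $\F_q$, and when $a$ and $b$ share a divisor $r^*>1$ the corresponding same-field members of (i) and (ii) agree in length, dimension, minimum distance and intersection array, so that ``different'' needs a finer invariant (compare the explicit non-equivalence assertion for $C_r(H^q_m)$ versus $C_m(H^q_r)$ in \Cref{theo:2.2}). The survey resolves none of this --- it simply cites \cite{kro2} --- so your proposal is as complete as the paper's own treatment of the statement, and more candid about where the only nontrivial work beyond \Cref{theo:main} and \Cref{theo:2.2} actually lies.
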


This construction gives also UP in the wide sense codes which are
not CR.
\begin{theorem}[\protect{\cite{kro2}}]\label{theo:4.3}
Let $C(H_m^{q^u})$ be the $q^u$-ary Hamming $[n,k,3]_{q^u}$-code
of length $n_a=(q^{um}-1)/(q^u-1)$ and $C(R^q_{n_b})$ be the
repetition $[n_b,1,n_b]_q$-code, where $q$ is a prime power,~$u
\geq 1$,\, $m \geq 2$,\, $4 \leq n_b \leq (q^u-1)n_a + 1$.
\begin{itemize}
\item[(i)] The code $C = C(H_m^{q^u}\otimes R^q_{n_b})$ is a $q^u$-ary
UP (in the wide sense) $[n, k, d]_{q^u}$-code with
covering radius $\rho=n_b-1$ and parameters
\begin{equation}\label{eq:4.1}
n = n_a\,n_b,~~k = n - m\,(n_b-1),~~d = 3.
\end{equation}
\item[(ii)] The code $C$ is not CR.
\end{itemize}
\end{theorem}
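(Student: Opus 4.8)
The plan is to work throughout with the parity-check matrix $H=H_m^{q^u}\otimes R^q_{n_b}$ over $\F_{q^u}$ (viewing the $\F_q$-matrix $R^q_{n_b}$ inside $\F_{q^u}$, so the Kronecker product is taken over $\F_{q^u}$), whose columns are the simple tensors $\bh_i\otimes\br_j$, where $\bh_i$ runs over the projective points of $\F_{q^u}^m$ and $\br_1,\dots,\br_{n_b}$ are the columns of the repetition parity-check matrix. Since $\Rank(H)=\Rank(H_m^{q^u})\,\Rank(R^q_{n_b})=m(n_b-1)$, the length and dimension in \eqref{eq:4.1} are immediate. For the distance I would record the two structural facts used repeatedly: the $\bh_i$ are pairwise projectively independent (Hamming), while the $\br_j$ satisfy $\br_1+\cdots+\br_{n_b}=\zero$ with any $n_b-1$ of them independent (the repetition code is MDS). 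Writing a putative codeword as $\sum_{i,j}c_{ij}\,\bh_i\otimes\br_j=\sum_j\bw_j\otimes\br_j$ with $\bw_j=\sum_i c_{ij}\bh_i$, the defining relation forces $\bw_j$ to be independent of $j$; hence the support of minimum weight is obtained by taking $\bw_j\equiv\zero$ together with a single weight-$3$ Hamming word inside one block $j$, giving $d=3$.

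For the covering radius I would translate everything into the syndrome $S=\sum_j\bw_j\otimes\br_j$, identified with the matrix $W(R^q_{n_b})^{T}$ for $W=[\bw_1\,\cdots\,\bw_{n_b}]$; the solutions $W$ of a fixed syndrome are exactly $W_0+\bg\,\mathbf 1^{T}$ with $\bg\in\F_{q^u}^m$ arbitrary, because the left null space of $(R^q_{n_b})^{T}$ is spanned by $\mathbf 1$. Since the Hamming code has covering radius $1$, each nonzero $\bw_j$ costs exactly one column of $H$, so the coset distance equals the minimum number of nonzero columns of $W$, namely $n_b-\mu(S)$, where $\mu(S)$ is the largest multiplicity of a single value among the columns of $W_0$ (a shift-invariant quantity). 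As $\mu(S)\ge1$ always and $\mu(S)=1$ is attainable exactly when the $n_b$ columns can be chosen pairwise distinct in $\F_{q^u}^m$, the hypothesis $n_b\le(q^u-1)n_a+1=q^{um}$ yields $\rho=n_b-1$.

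To obtain the external distance I would use $C^{\perp}=\operatorname{RowSpace}(H)=S_m\otimes P_{n_b}$, the tensor product of the $q^u$-ary simplex code $S_m$ (all nonzero weights equal to $q^{u(m-1)}$) and the $q^u$-ary sum-zero code $P_{n_b}$. A codeword is then an $n_b$-tuple $(\bc_1,\dots,\bc_{n_b})$ of simplex codewords with $\sum_j\bc_j=\zero$, and since the weight of a matrix is the sum of its column weights, its weight equals $t\,q^{u(m-1)}$ where $t$ is the number of nonzero $\bc_j$. The sum-zero constraint rules out $t=1$ but permits every $t\in\{0,2,3,\dots,n_b\}$ (here $\dim S_m\ge2$ supplies the needed freedom), so the nonzero weights are precisely $\{t\,q^{u(m-1)}:2\le t\le n_b\}$ and $s=n_b-1$. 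Thus $\rho=s$, and $C$ is uniformly packed in the wide sense by \Cref{equivs}(iv); together with \eqref{eq:4.1} this proves (i).

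For (ii) the key computation is the weight enumerator of an arbitrary coset. Fixing $S$ and a representative $W_0$, a coset element is produced by choosing the shift $\bg$ and then, independently in each block $j$, any vector with Hamming syndrome $(W_0)_{\cdot j}+\bg$; since the Hamming code is completely regular with covering radius $1$, block $j$ contributes the weight enumerator $f_0$ of the code when this target syndrome is $\zero$ and a fixed enumerator $f_1\neq f_0$ otherwise. Hence the coset enumerator is $\sum_{\bg}f_0^{\,m_{\bg}}f_1^{\,n_b-m_{\bg}}$, where $m_{\bg}$ is the multiplicity of $-\bg$ among the columns of $W_0$; it therefore depends only on the partition $\lambda$ of $n_b$ formed by these multiplicities, while the coset distance is $n_b-\lambda_1$. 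Comparing the two cosets with $\lambda=(n_b-2,2)$ and $\lambda=(n_b-2,1,1)$ --- both at distance $2$, and both realizable since $n_b\ge4$ and $q^{um}\ge4$ --- the enumerators differ by $f_1^{\,n_b-2}(f_0-f_1)^2\neq0$. Two equidistant cosets with distinct weight distributions force $b>\rho$, so $C$ is not completely regular by the characterization $b=\rho$ accompanying \Cref{Def1}. The main obstacle is the rigorous derivation of this coset-enumerator formula: one must justify that, after fixing the shift $\bg$, the blocks decouple into independent Hamming cosets, which is exactly where the full force of the Kronecker structure and of the complete regularity of the Hamming code is used.
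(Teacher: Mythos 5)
Your proof is correct. Bear in mind that the survey states \Cref{theo:4.3} without proof, quoting it from \cite{kro2}, so there is no in-paper argument to compare against; judged on its own, your proposal is a complete, self-contained derivation. The chain of reductions is sound: the block-syndrome description (codewords are exactly the tuples of length-$n_a$ blocks whose Hamming syndromes are all equal to one common $\bg\in\F_{q^u}^m$) gives $n$, $k$ and $d$; the identification of the coset distance with $n_b-\mu(S)$, where $\mu(S)$ is the largest column multiplicity in a solution matrix modulo the shift $\bg\,\mathbf{1}^T$, gives $\rho=n_b-1$ exactly under the hypothesis $n_b\le (q^u-1)n_a+1=q^{um}$; the computation $C^\perp=S_m\otimes P_{n_b}$ (all columns in the simplex code, all rows summing to zero) gives nonzero dual weights $\{t\,q^{u(m-1)}:\,2\le t\le n_b\}$, hence $s=n_b-1=\rho$ and uniform packing in the wide sense by \Cref{equivs}(iv); and the coset-enumerator formula $\sum_{\bg}f_0^{m_{\bg}}f_1^{\,n_b-m_{\bg}}$ --- legitimate precisely because the Hamming code is CR with covering radius $1$, so only the two block enumerators $f_0\neq f_1$ can occur --- separates the two distance-$2$ cosets with multiplicity partitions $(n_b-2,2)$ and $(n_b-2,1,1)$ by the nonzero polynomial $f_1^{\,n_b-2}(f_0-f_1)^2$, which contradicts complete regularity.

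Two small points to tighten. First, in the minimum-distance step you only treat codewords with $\bw_j\equiv\zero$; you must also dispose of those whose common syndrome $\bg$ is nonzero, but these have every block nonzero and hence weight at least $n_b\ge 4>3$, so $d=3$ stands. Second, your coset analysis in fact explains the hypothesis $n_b\ge 4$: for $n_b=3$ the only multiplicity partitions are $(3)$, $(2,1)$ and $(1,1,1)$, one for each distance $0,1,2$, so the same computation would show that code \emph{is} completely regular; the non-CR conclusion genuinely needs $n_b\ge 4$, exactly as your choice of the two competing partitions requires.
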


\subsection{Binomial CR codes}

Denote by $H^{(m, \ell)}$ the binary matrix of size $m \times
\binom{m}{\ell}$, whose columns are all different vectors
of length $m$ and weight $\ell$. Define the binary linear code
$C^{(m, \ell)}$ with parity check matrix
$H^{(m,\ell)}$.

\begin{theorem}[\protect{\cite{comb}}]\label{theo}
Let $m$ and $\ell$ be two natural numbers such that $2\leq \ell \leq m-2$.
Code $C^{(m, \ell)}$ is CT (and CR) exactly in the following four cases:
\begin{enumerate}[label=(F.\arabic*),resume*=family]
\item For any $m \geq 4$, the code $C^{(m, 2)}$ is a $[n, k, d; \rho]$-code with
parameters:
$$
n~=~\binom{m}{2},~~k~=~n- m + 1,~~d=3,~~\rho=\lfloor m/2 \rfloor~.
$$
and with intersection numbers, for $i = 0, \ldots, \rho$:
\begin{eqnarray*}
a_{i}=2i(m-2i);\,\,
b_{i}=\binom{m-2i}{2};\,\,
c_{i}=\binom{2i}{2}\,.
\end{eqnarray*}
\end{enumerate}

\begin{enumerate}[label=(S.\arabic*),resume*=sporadic]

\item  The code $C^{(5,3)}$ is the $[10,5,4;3]$-code with
$$ \IA = \{10, 9, 4; 1, 6, 10\}$$.
\item  The code $C^{(6,4)}$ is the $[15,10,3;3]$-code with
$$ \IA = \{15, 8, 1; 1, 8, 15\}$$.
\item  The code $C^{(7,4)}$ is the $[35, 29, 3; 2]$-code with
$$ \IA = \{35,16;1,20\}$$.
\end{enumerate}
\end{theorem}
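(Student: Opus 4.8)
The plan is to recast everything in terms of syndromes. Since $C=C^{(m,\ell)}$ is linear, its cosets are indexed by the elements of the subspace $V\subseteq\F_2^m$ spanned by the columns of $H^{(m,\ell)}$ (the weight-$\ell$ vectors): this is all of $\F_2^m$ when $\ell$ is odd and the even-weight subspace when $\ell$ is even, which already explains the two possible dimensions $k=n-m$ and $k=n-m+1$. For a syndrome $s$ the coset-leader weight $w_L(s)$ is the least number of weight-$\ell$ vectors summing to $s$, and $\rho=\max_s w_L(s)$. The group $S_m$ acts on $[m]$, hence permutes the columns (the $\ell$-subsets) and embeds into $\Aut(C)$; on $V$ it acts as the natural coordinate permutation, whose orbits are exactly the Hamming-weight classes. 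As $w_L$ is $S_m$-invariant, $w_L(s)=f(\wt(s))$ for a function $f$ of the syndrome weight alone. In the coset graph two syndromes are adjacent iff they differ by a column $\mathbf 1_A$, and adding $\mathbf 1_A$ sends $\wt(s)$ to $\wt(s)+\ell-2\,|A\cap\Supp(s)|$. Because the number of columns carrying a weight-$w$ syndrome to a fixed target weight is a pure overlap count depending only on $w$, the partition of $V$ by Hamming weight is \emph{always} equitable for the coset graph. The distance partition into cells $C(i)$ is its coarsening obtained by grouping weight classes sharing an $f$-value; hence $C$ is CR iff weight classes with a common $f$-value have equal numbers of neighbors in the neighboring cells, and, once CR holds, $C$ is CT iff $\Aut(C)$ fuses each such family of weight classes into a single orbit on cosets. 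This reduces the theorem to (a) computing $f$, (b) checking the matching (CR) conditions, and (c) producing the fusing automorphisms.

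For $\ell=2$ the columns are the edges of $K_m$, a weight-$2j$ syndrome is the odd-degree set of a graph, and a shortest representation is a matching on its support, so $f(2j)=j$. This is injective, the cells coincide with the weight classes, and $S_m$ alone already produces $\rho+1=\lfloor m/2\rfloor+1$ orbits, giving CT. The intersection numbers then follow from the overlap count: adding an edge to a weight-$2i$ syndrome lowers, preserves, or raises the leader weight according as the edge lies inside, across, or outside the support, yielding $c_i=\binom{2i}{2}$, $a_i=2i(m-2i)$, $b_i=\binom{m-2i}{2}$. This settles the infinite family.

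The substantive case is $\ell\ge3$. Here I would compute $f(w)$ from the representation problem: the weights realizable by $t$ distinct $\ell$-subsets form an interval of parity $t\ell\bmod2$, so $f(w)$ is essentially $\lceil w/\ell\rceil$, corrected for parity and for boundary effects near $w=0$ and $w=m$ (e.g.\ $f(2)=2$ as soon as $\ell\ge3$, since a single column has weight $\ell\neq2$). Since $\rho\approx m/\ell$ stays small while the number of occurring syndrome weights grows with $m$, distinct weight classes are forced onto the same $f$-value, and the CR matching conditions turn into explicit binomial identities in $m$ for each fixed $\ell$. The main obstacle is precisely this Diophantine analysis: I expect to show the matching fails for all $(m,\ell)$ with $\ell\ge3$ except $(5,3)$, $(6,4)$, $(7,4)$, which requires a closed form for $f$ together with uniform bounds leaving only finitely many pairs to inspect by hand. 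For the three survivors I would verify CR and read off the arrays by the same overlap counting as for $\ell=2$; for instance, in $(7,4)$ both the weight-$2$ and weight-$6$ syndromes send exactly $20$ columns down to the weight-$4$ cell, giving $\IA=\{35,16;1,20\}$.

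Finally, CT in the sporadic cases needs care beyond $S_m$. For $C^{(6,4)}$ the function $f$ is injective on $\{0,2,4,6\}$, so the four weight classes are already the four cells and $S_6$ yields CT immediately. For $C^{(5,3)}$ and $C^{(7,4)}$, however, $f$ is not injective (weights $2,4$ and weights $1,5$ share levels for $(5,3)$; weights $2,6$ share the top level for $(7,4)$), so although CR holds, CT requires additional coordinate permutations of $C$ — inducing non-permutation linear maps on $V$ — that interchange the colliding weight classes. I would supply these by determining $\Aut(C)$ explicitly in these two small codes (equivalently, by recognizing their distance-regular coset graphs as distance-transitive) and checking that the resulting group has exactly $\rho+1$ orbits on cosets. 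This final step, rather than any of the counting, is where the real difficulty of the CT assertion lies.
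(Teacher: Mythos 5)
Your framework is the standard and correct one (the survey itself states this theorem without proof, citing \cite{comb}, so your argument has to stand on its own), and the parts you actually execute check out: the rank computation giving $k=n-m$ or $n-m+1$ according to the parity of $\ell$, the fact that the partition of the syndrome space by Hamming weight is always equitable for the coset graph, the reduction of CR to a fusion condition on the coset-leader function $f$, the complete treatment of $\ell=2$ (CR, CT via $S_m$, and the intersection numbers), and the numerical claims for the three sporadic codes (e.g., for $(7,4)$ one indeed gets $b_1=\binom{4}{3}\binom{3}{1}+\binom{4}{1}\binom{3}{3}=16$ and $c_2=\binom{2}{1}\binom{5}{3}=\binom{6}{3}\binom{1}{1}=20$). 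The genuine gap is that the two components carrying the real content of the theorem are only announced, not proven. First, the word ``exactly'' requires showing that for \emph{every} pair $(m,\ell)$ with $3\le\ell\le m-2$ other than $(5,3)$, $(6,4)$, $(7,4)$, the code fails to be CR. You reduce this to ``explicit binomial identities'' and then write that you ``expect to show'' the matching fails, via a closed form for $f$ and uniform bounds leaving finitely many pairs to inspect; none of this is carried out. This is not routine: one needs a usable description of $f$ for general $(m,\ell)$ (a covering problem with the parity and boundary corrections you allude to), and then a uniform discrepancy argument. For instance, for $(7,3)$ the weight-$2$ and weight-$4$ syndromes share the cell $f=2$, but they send $\binom{2}{1}\binom{5}{2}=20$ versus $\binom{4}{2}\binom{3}{1}=18$ columns to the weight-$3$ cell, so CR fails; your proof must manufacture such a discrepancy for all infinitely many remaining pairs, not exhibit it in examples.

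Second, the CT assertion for $C^{(5,3)}$ and $C^{(7,4)}$ is left entirely open. You correctly observe that $S_m$ does not suffice: its orbits are the weight classes, giving $6$ orbits for $(5,3)$ and $4$ for $(7,4)$ where $\rho+1=4$ and $3$ are required. Hence complete transitivity needs elements of $\GL(m,2)$ that stabilize the set of weight-$\ell$ columns but are not permutation matrices, i.e., code automorphisms fusing the weight classes $\{2,4\}$ and $\{1,5\}$ (for $(5,3)$) and $\{2,6\}$ (for $(7,4)$). You only promise to ``supply these by determining $\Aut(C)$ explicitly'' or by recognizing the coset graphs as distance-transitive, and you yourself flag this as where the difficulty of the CT claim lies. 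Without exhibiting these automorphisms (or a distance-transitivity proof for the two coset graphs), and without the nonexistence argument above, the proposal is a sound plan with correct verifications attached, but not a proof of the classification.
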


In fact, from the codes with $\ell=2$ we have some more CR
codes. First, we divide
these codes into  two families \cite{comb}.

\begin{theorem} [\protect{\cite{comb}}]\label{theo:new1}
Let $m$ be a natural number, $m \geq 3$. Let $C^{(m)} = C^{(m,2)}$.
The code $C^{(m)}$ is self-complementary if $m$ is odd and non-self-complementary if
$m$ is even.
\end{theorem}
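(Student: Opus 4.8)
The plan is to reduce self-complementarity to a single membership question and then resolve it by an elementary counting argument. Since $C^{(m)}$ is linear, I would first observe that it is self-complementary exactly when the all-one vector $\u = (1,\ldots,1)$ of length $n=\binom{m}{2}$ lies in $C^{(m)}$: linearity gives $c+\u\in C^{(m)}$ for every codeword $c$ as soon as $\u\in C^{(m)}$, and taking $c=\zero$ yields the converse. Thus the whole statement will follow from deciding when $\u\in C^{(m)}$.

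Next I would translate $\u\in C^{(m)}$ into the parity-check condition $H^{(m,2)}\,\u^T=\zero$ over $\F_2$, using that $H^{(m,2)}$ is by definition a parity check matrix of $C^{(m)}$. The main step is then to compute this product. I would index the rows of $H^{(m,2)}$ by the coordinates $i\in\{1,\ldots,m\}$ and its columns by the weight-$2$ vectors of length $m$, i.e. by the pairs $\{j,k\}$ with $j\neq k$; the entry in row $i$ of the column associated to $\{j,k\}$ is $1$ if and only if $i\in\{j,k\}$. Consequently the $i$-th coordinate of $H^{(m,2)}\,\u^T$ equals, modulo $2$, the number of pairs $\{j,k\}$ containing the fixed index $i$.

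The final step is the counting: for each fixed $i$ there are exactly $m-1$ pairs $\{i,k\}$ with $k\neq i$, so every coordinate of $H^{(m,2)}\,\u^T$ is congruent to $m-1\pmod 2$. Hence $H^{(m,2)}\,\u^T=\zero$ if and only if $m-1$ is even, that is, if and only if $m$ is odd. This gives both halves of the statement at once: $\u\in C^{(m)}$, and $C^{(m)}$ is self-complementary, precisely for odd $m$, while $\u\notin C^{(m)}$, so $C^{(m)}$ is non-self-complementary, for even $m$. I do not expect any genuine obstacle here; the only points requiring a little care are the reduction to ``$\u\in C^{(m)}$'' (which uses linearity of $C^{(m)}$ together with $\zero\in C^{(m)}$) and the row-weight count, both of which are routine.
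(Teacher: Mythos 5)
Your proof is correct. Note that the paper itself gives no proof of this theorem: it is a survey, and the statement is quoted from reference \cite{comb}, so there is no internal argument to compare yours against. Your reduction is the natural one and is complete: for a linear code, self-complementarity is equivalent to $\u \in C^{(m)}$ (linearity gives one direction, and $\zero \in C^{(m)}$ gives the other), and membership is decided by the syndrome $H^{(m,2)}\,\u^T$. Since each row of $H^{(m,2)}$ has weight $m-1$ (the number of $2$-subsets of $\{1,\ldots,m\}$ containing a fixed index), the syndrome is the all-$(m-1)$ vector modulo $2$, which vanishes exactly when $m$ is odd. This settles both cases of the statement simultaneously, and the argument is almost certainly the same elementary computation used in the cited source.
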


Since for even $m$ the code $C^{(m)}$ is non-antipodal, its covering
set $C^{(m)}(\rho)$ is a translate of $C^{(m)}$ (Theorem \ref{non-self-com}).
Consider the new (linear) code
$C^{[m]} = C^{(m)} \cup C^{(m)}(\rho)$. The generating matrix
$G^{[m]}$ of this code has a very symmetric structure:
\[
G^{[m]}~=~\left[
\begin{array}{ccccc}
&I_{k-1}\,&|&\,H^t_{m-1}&\\\hline
&0 \ldots 0\,&|&\,1 \ldots 1&
\end{array}
\right]\,.
\]
Using  that $C^{(m)}(\rho) =
C^{(m)} + (1,1, \ldots, 1)$ (Theorem \ref{non-self-com}), we obtain

\begin{theorem} [\protect{\cite{comb1}}]\label{theo:new2}
Let $m$ be even, $m \geq 6$ and let $C^{[m]} = C^{(m)}
\cup C^{(m)}(\rho)$.
\begin{enumerate}[label=(F.\arabic*),resume*=family]
\item  Code $C^{[m]}$ is CT (and CR) $[n,k,d;\rho]$ code with parameters
\[
n~=~m(m-1)/2,~~k~=~n - m + 2,~~d~=~3,~~\rho~=~
\lfloor m/4 \rfloor.
\]
The intersection numbers of $C^{[m]}$ for
$m \equiv 0 \pmod{4}$ and $\rho = m/4$ are
\[
b_{i}~=~\binom{m-2i}{2},\;\; c_{i}~=~\binom{2i}{2},\;\; i = 0,1, \ldots, \rho-1,\;\;
c_{\rho}~=~2\,\binom{2\rho}{2},
\]
and for $m \equiv 2 \pmod{4}$ and $\rho = (m-2)/4$ are
\[
b_{i}~=~\binom{m-2i}{2},\;\; c_{i}~=~\binom{2i}{2},\;\;i = 0,1, \ldots, \rho\,.
\]
\end{enumerate}
\end{theorem}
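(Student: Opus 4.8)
The statement has two parts: that $C^{[m]}$ is CT (hence CR), and the explicit parameters and intersection numbers. The first part is essentially immediate from the machinery already assembled. By \Cref{theo} the code $C^{(m)}=C^{(m,2)}$ is CT, and by \Cref{theo:new1} it is non-self-complementary for even $m$; hence \Cref{non-self-com} gives $C^{(m)}(R)=C^{(m)}+\mathbf 1$, where $R=\lfloor m/2\rfloor=m/2$ is the covering radius of $C^{(m)}$. By \Cref{non-self-com}(ii) the union $C^{[m]}=C^{(m)}\cup C^{(m)}(R)$ is CR, and by the strengthening of \cite{comb1} (adjoining the far cell to a non-self-complementary CT code again yields a CT code) it is CT. It remains to pin down the numerical invariants. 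Throughout I identify $\F_2^{\binom{m}{2}}$ with the edge space of the complete graph $K_m$: the parity-check matrix $H^{(m,2)}$ is exactly the vertex-edge incidence matrix of $K_m$, so $C^{(m)}$ is the binary cycle space and $\mathbf 1$ is the full edge set of $K_m$.

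Length is unchanged, $n=\binom{m}{2}=m(m-1)/2$. Since $\mathbf 1\notin C^{(m)}$, the code $C^{[m]}=\langle C^{(m)},\mathbf 1\rangle$ has dimension one more than $C^{(m)}$, giving $k=(n-m+1)+1=n-m+2$. For the minimum distance, $C^{[m]}$ is the union of the cycle space (minimum weight $3$, a triangle) and the coset $C^{(m)}+\mathbf 1$; a minimum-weight vector of the latter is a minimum $T$-join with $T=V$, i.e.\ a perfect matching of $K_m$, of weight $m/2\ge 3$ for $m\ge 6$. Hence $d=\min\{3,m/2\}=3$.

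The key computation is the distance partition. Writing $\partial\colon\F_2^E\to\F_2^V$ for the boundary (degree-parity) map, every coset of $C^{(m)}$ is determined by its syndrome $\partial x$, an even subset $S\subseteq V$; because $K_m$ is complete, a minimum-weight coset representative is a matching on $S$, so $d(x,C^{(m)})=|\partial x|/2$. Since $\partial(x+\mathbf 1)=\partial x+V=\overline{\partial x}$, this yields the antipodal relation $d(x+\mathbf 1,C^{(m)})=R-d(x,C^{(m)})$, so translation by $\mathbf 1$ carries $C^{(m)}(i)$ onto $C^{(m)}(R-i)$. Consequently
\[
d(x,C^{[m]})=\min\{d(x,C^{(m)}),\,d(x+\mathbf 1,C^{(m)})\}=\min\{i,R-i\},\qquad i=d(x,C^{(m)}),
\]
so the cells fold at the middle: $C^{[m]}(j)=C^{(m)}(j)\cup C^{(m)}(R-j)$ for $j<R/2$. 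The covering radius is therefore $\lfloor R/2\rfloor=\lfloor m/4\rfloor$, equal to $m/4$ when $m\equiv0\pmod4$ (with a genuine middle cell $C^{[m]}(\rho)=C^{(m)}(m/4)$) and to $(m-2)/4$ when $m\equiv2\pmod4$ (with $C^{[m]}(\rho)=C^{(m)}(\rho)\cup C^{(m)}(\rho+1)$, since then $R-\rho=\rho+1$).

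Finally I read off the intersection numbers from those of $C^{(m)}$, which by \Cref{theo} are $a_i=2i(m-2i)$, $b_i=\binom{m-2i}{2}$, $c_i=\binom{2i}{2}$. For a representative $x\in C^{(m)}(j)$ with $0\le j<\rho$, the single-edge flips that lower $|\partial x|$ (both endpoints in $S$) move $x$ into $C^{(m)}(j-1)\subseteq C^{[m]}(j-1)$ and number $\binom{2j}{2}$, while those that raise it move $x$ into $C^{(m)}(j+1)\subseteq C^{[m]}(j+1)$ and number $\binom{m-2j}{2}$; this gives $c_j=\binom{2j}{2}$ and $b_j=\binom{m-2j}{2}$. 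The delicate point, which I expect to be the main obstacle, is the top cell $j=\rho$, where the fold identifies the two halves and alters the bookkeeping. When $m\equiv0\pmod4$ both the weight-lowering and the weight-raising flips from $C^{(m)}(m/4)$ land in $C^{[m]}(\rho-1)$ (the latter because points of $C^{(m)}(m/4+1)$ have fold-index $\min\{m/4+1,m/4-1\}=\rho-1$), so the two contributions add and $c_\rho=2\binom{2\rho}{2}$, $b_\rho=0$. When $m\equiv2\pmod4$ the weight-raising flips instead pass into the other half $C^{(m)}(\rho+1)$ of the \emph{same} cell $C^{[m]}(\rho)$, so they are absorbed into $a_\rho$ and leave $c_\rho=\binom{2\rho}{2}$, $b_\rho=0$. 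Verifying $a_\rho+b_\rho+c_\rho=\binom{m}{2}$ in each case confirms the accounting and completes the determination of the intersection array.
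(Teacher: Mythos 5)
Your proposal is correct, and it supplies considerably more than the paper itself does. Being a survey, the paper states this theorem with only a citation to \cite{comb1}; its entire setup consists of exactly the two structural observations you also start from, namely that $C^{(m)}$ is non-self-complementary for even $m$ (\Cref{theo:new1}), so that $C^{(m)}(\rho)=C^{(m)}+(1,\ldots,1)$ by \Cref{non-self-com}, and that adjoining the far cell of a non-self-complementary CR (resp.\ CT) code yields a CR (resp.\ CT) code. Where you go beyond the paper is in actually deriving the parameters and the intersection array: the identification of $C^{(m)}$ with the cycle space of $K_m$, the formula $d(x,C^{(m)})=|\partial x|/2$ via matchings on the syndrome set, the antipodal folding $d(x+\mathbf{1},C^{(m)})=R-d(x,C^{(m)})$ with $R=m/2$, and the case analysis at the top cell. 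I checked the folding arithmetic: for $m\equiv 0\pmod 4$ the weight-raising flips from the middle cell $C^{(m)}(m/4)$ do land back in $C^{[m]}(\rho-1)$, giving $c_\rho=2\binom{2\rho}{2}$, while for $m\equiv 2\pmod 4$ they stay inside $C^{[m]}(\rho)$, giving $c_\rho=\binom{2\rho}{2}$; the consistency check $a_\rho+b_\rho+c_\rho=\binom{m}{2}$ holds in both cases, and the translation symmetry by $\mathbf{1}$ (which fixes $C^{[m]}$ and swaps $C^{(m)}(j)$ with $C^{(m)}(R-j)$) legitimately reduces the count to one representative per folded cell. One point your computation makes explicit which the paper's statement obscures: for $m\equiv 2\pmod 4$ the theorem lists $b_i=\binom{m-2i}{2}$ for $i$ up to $\rho$, but necessarily $b_\rho=0$ since there is no cell beyond $C^{[m]}(\rho)$; your analysis shows the corresponding $\binom{m-2\rho}{2}$ edge flips are absorbed into $a_\rho$, so the stated range for the $b_i$ formula should be read as $i\le\rho-1$.
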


\subsection{CR codes by direct sum construction}\label{sec:sum}

Let $C_1$ and $C_2$ be two codes, not necessarily linear, of the
same length $n$. The direct sum of $C_1$ and $C_2$ is the code
defined by:
$$
C_1 \oplus C_2=\{(c_1,c_2)\mid c_1\in C_1, c_2\in C_2\}.
$$

\begin{theorem}[\protect{\cite{BZZ,Sole}}]\label{directsum}
Let $u$ be any positive integer and let $C_i$, $i=1,2,...,u$ be
$q$-ary CR $(n,N,d;1)_q$ codes with the same intersection array $(b_0; c_1)$.
\begin{enumerate}[label=(F.\arabic*),resume*=family]
\item Then, for any $u \geq 1$, their direct
sum $C = C_1 \oplus \cdots \oplus C_u$ is a CR $(n u, N^u,d;u)_q$ code
with intersection numbers
for $i = 0,1,...,u$
\[
a(u)_i = n(q-1) - (u-i)b_0 - i c_1,~ b(u)_i = (u-i)b_0,~ c(u)_i =
i c_1.
\]
\end{enumerate}
\end{theorem}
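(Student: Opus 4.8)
The plan is to verify Neumaier's characterization (\Cref{Def2}): it suffices to describe the distance partition of $\F_q^{nu}$ induced by $C$, show that it has $u+1$ cells, and check that it is equitable with the claimed intersection numbers. Write a vector of $\F_q^{nu}$ as $x=(x_1,\ldots,x_u)$ with each $x_j\in\F_q^n$. Since $C=C_1\oplus\cdots\oplus C_u$, the nearest codeword to $x$ is obtained blockwise, so
\[
d(x,C)=\sum_{j=1}^u d(x_j,C_j).
\]
Each $C_j$ has covering radius $1$, hence $d(x_j,C_j)\in\{0,1\}$, and therefore $d(x,C)$ equals the number of indices $j$ with $x_j\notin C_j$. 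In particular $d(x,C)$ ranges over $\{0,1,\ldots,u\}$, so $\rho=u$, and $x\in C(i)$ if and only if exactly $i$ of its blocks lie in $C_j(1)$. The remaining parameters $(nu,N^u,d)$ follow at once: the size is $\prod_j|C_j|=N^u$, and since distinct codewords differ in at least one block and each $C_j$ has minimum distance $d$, the minimum distance of the direct sum is $\min_j d(C_j)=d$.

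The key step is the neighbor analysis. A neighbor $y$ of $x$ in the Hamming graph differs from $x$ in exactly one coordinate, hence inside exactly one block $j$, where $y_j$ is a Hamming-neighbor of $x_j$ in $\F_q^n$ and $y_k=x_k$ for $k\neq j$. Consequently
\[
d(y,C)=d(x,C)-d(x_j,C_j)+d(y_j,C_j),
\]
so passing to a neighbor changes $d(\,\cdot\,,C)$ by at most $1$, and the change is governed entirely by what happens in the single altered block. Fix $x\in C(i)$, so that $i$ blocks satisfy $x_j\in C_j(1)$ and $u-i$ blocks satisfy $x_j\in C_j$.

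To count neighbors in $C(i+1)$ we need $d(y_j,C_j)=d(x_j,C_j)+1$, which forces $x_j\in C_j$ and $y_j\in C_j(1)$; the altered block $j$ may be any of the $u-i$ blocks with $x_j\in C_j$, and for each such block the number of Hamming-neighbors $y_j$ of $x_j$ lying in $C_j(1)$ is exactly the intersection number $b_0$ of $C_j$. Hence $b(u)_i=(u-i)b_0$. Symmetrically, neighbors in $C(i-1)$ require $d(y_j,C_j)=d(x_j,C_j)-1$, forcing $x_j\in C_j(1)$ and $y_j\in C_j$; now $j$ ranges over the $i$ blocks with $x_j\in C_j(1)$, each contributing $c_1$ neighbors in $C_j$, so $c(u)_i=i\,c_1$. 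The crucial point, and the only place where complete regularity of the summands is genuinely used, is that because every $C_j$ is CR with the \emph{same} array $(b_0;c_1)$, these counts depend only on $i$ and not on the particular $x\in C(i)$ chosen; this is exactly what makes the partition equitable. We therefore conclude that $C$ is CR, and the number $a(u)_i$ is then forced by the valency relation $a(u)_i=nu(q-1)-b(u)_i-c(u)_i=nu(q-1)-(u-i)b_0-i\,c_1$, completing the determination of the intersection array.
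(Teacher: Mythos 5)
Your proof is correct and complete. Note that the paper itself offers no proof of \Cref{directsum}: it is a survey statement attributed to \cite{BZZ,Sole}, with only the remark that Sol\'{e} used the construction for binary perfect codes. So there is no internal argument to compare against; your route --- the blockwise decomposition $d(x,C)=\sum_{j}d(x_j,C_j)$, the identification of $C(i)$ as the vectors with exactly $i$ blocks lying in $C_j(1)$, and the neighbor count showing the distance partition is equitable in the sense of \Cref{Def2} --- is the natural one, and your observation that equality of the intersection arrays $(b_0;c_1)$ across the summands is exactly what makes the counts depend only on $i$ (and not on which blocks are ``bad'') is the heart of the matter.

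One point deserves emphasis: your valency relation gives
\[
a(u)_i \;=\; nu(q-1)-(u-i)b_0-ic_1,
\]
whereas the theorem as printed reads $a(u)_i=n(q-1)-(u-i)b_0-ic_1$. Your value is the correct one, since the intersection numbers of a code of length $nu$ must sum to the valency $nu(q-1)$ of the ambient Hamming graph; the printed formula can even go negative (for two copies of the binary repetition code of length $3$, with $b_0=3$ and $c_1=1$, it would give $a(2)_1=-1$). So in effect you have proved the theorem with a corrected misprint in the formula for $a(u)_i$; everything else in your write-up matches the intended statement.
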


We remark that the construction of \Cref{directsum} was used in \cite{Sole} for the particular case where the codes $C_i$ are binary perfect codes.

\subsection{ CR codes from combinatorial configurations}

\begin{enumerate}[label=(F.\arabic*),resume*=family]
\item  {\em One Latin square codes.} For any $q \geq 2$ a $q$-ary
MDS $(3,2,q^2;1)_q$ code is CR with
$$ \IA = \{3(q-1); 3\}.$$
In this case the set
$C(\rho)$ is the rest of $F^3$: $C(\rho) = F^3 \setminus C$.

\item \label{Latin2} {\em Two Latin square codes.} For any $q \geq 3$ and
$q \neq 6$ a $q$-ary MDS $(4,3,q^2;2)_q$ code is CR with
$$\IA = \{4(q-1), 3(q-3); 1, 12\}.$$
\end{enumerate}

\begin{enumerate}[label=(S.\arabic*),resume*=sporadic]
\item {\em Three Latin square code.} Three orthogonal Latin
squares of order $4$ form the equidistant $[5,4,16;3]_4$ code. This
code is CR with
$$ \IA = \{15, 12, 3; 1, 4, 15\}.$$

The punctured $[4,3,16]_4$ code, obtained from the code above
by deleting any one position is also a CR code
and it belongs to the family \ref{Latin2}.
\item {\em Four Latin squares code.} Four orthogonal Latin
squares of order $5$ form the equidistant $[6,5,25;3]_5$ code. This
code is CR with
$$ \IA = \{24, 20, 13; 1, 2, 6\}.$$

The $[5,4,25]_5$ code $C^{(p)}$ obtained by puncturing of this code
above is not CR. The subset $C_4$ is not a $2$-design.
But the $[4,3,25;2]_5$ code, obtained by double puncturing, is CR,
and belongs to the family \ref{Latin2}.

\item {\em The Hadamard code.} The unique Hadamard matrix of
order $12$ induces the CR binary $(11, 24, 5;3)$
code $H$ (see \cite{GvT}) with
$$ \IA = \{11,10,3; 1,2,9\}.$$

\item {\em The extended Hadamard code.} The $(12,24,6;4)$ code
$H^{*}$ obtained by extension of $H$ is also CR (see \cite{BZ}) with
$$ \IA = \{12, 11, 10, 3; 1, 2, 9, 12\}.$$

\end{enumerate}

\begin{enumerate}[label=(F.\arabic*),resume*=family]

\item {\em Constant weight codes.} For any natural $g$,
$g \geq 1$,  the trivial constant weight $(n,N,d;g)$ code
with
\[
n = 2g,\;N = \binom{2g}{g},\; d = 2,
\]
is CR with

$$\IA = \{2g, 2g-1,2g-2,\ldots,1; g+1,g+2,\ldots,2g\}.$$

Since the set of all
binary vectors of length $2g$ and weight $g$ is the Johnson scheme
$J(2g,g)$ this example shows that in this special case
the Johnson scheme $J(2g,g)$ is CR in the Hamming
scheme $H(2,2g)$.
\end{enumerate}

\subsection{ CR codes by concatenation constructions}\label{sec:concat}

In this subsection we collect some results from \cite{BRZ2017,BRZ2017a} dealing with CR constructed using concatenations of Hamming codes.

For any vector $x=(x_1,\ldots,x_n)\in \F_q^n$, denote by
$\sigma(x)$ the right cyclic shift of $x$, i.e.
$\sigma(x)=(x_n,x_1,\ldots,x_{n-1})$. Define recursively
$\sigma^i(x)=\sigma(\sigma^{i-1}(x))$, for $i=2,3,\ldots$ and
$\sigma^1(x)=\sigma(x)$. For $j<0$, we define
$\sigma^j(x)=\sigma^{\ell}(x)$, where $\ell = j$ modulo $n$.

Let $H$ be the parity check matrix of a $q$-ary cyclic Hamming
code of length $n=(q^k-1)/(q-1)$, (hence $\Gcd(n,q-1)=1$). Thus, the
simplex code generated by $H$ is also a cyclic code. Denote by
$\br_1,\ldots,\br_k$ the rows of $H$. For any $c\in
\{2,\ldots,n\}$, consider the code $C$ with parity check matrix
\begin{equation}\label{code:33}
\left[
\begin{array}{ccccc}
H\;& \;H \;&\ldots\,&\,H\,\\
H_1\;& \;H_2 \;&\ldots\,&\,H_c
\end{array}
\right],
\end{equation}
where $H_i$ is the matrix $H$ after cyclically shifting $i$ times its
columns to the right. In other words, the rows of $H_i$
are $\sigma^i(\br_1),\ldots,\sigma^i(\br_k)$.

\begin{enumerate}[label=(F.\arabic*),resume*=family]
\item \label{family:33}
The code $C$ with parity check matrix given in
(\ref{code:33}) is a CR code with parameters
$[nc,nc-2k,3;2]_q$ and intersection array
$$
IA=\{(q-1)nc,((q-1)n-c+2)(c-1);1,c(c-1)\}.
$$
\end{enumerate}

\begin{remark}
Almost all codes described in \ref{family:33} are not CT codes. However,
in the binary case and for any value of $k$ (so $n=2^k-1$), the CT
codes  are those with $c\in\{2,3,n-1,n\}$. In
general, in the $q$-ary case when $q$ is a power of two, the CT
codes are those with $c\in \{2,3\}$ and if $q=p^r$, for $p\not=2$,
then the CT codes are those with $c=2$.
\end{remark}

\begin{remark}
By extending the codes given in \ref{family:33} we
do not obtain CR codes, except for the binary case
when  $c$ equals $2^{k-1}+1$. In this case, the resulting
extended $[n(2^{k-1}+1)+1,n(2^{k-1}+1)-2k,4;3]$ codes coincide with the codes obtained in \ref{speccase}.
\end{remark}

Now, take the matrix
\begin{equation}\label{code:34}
H^{(k,c)} = \left[
\begin{array}{ccccc}
H\;& \;0 \;&\,H\,&\,H\,\ldots \,&\,H\\
0\;& \;H \;&\,H\,&\,H_1\,\ldots \,&\,H_c
\end{array}
\right]
\end{equation}
where $0$ denote the zero matrix (of the same size as $H$).

\begin{enumerate}[label=(F.\arabic*),resume*=family]
\item \label{family:34}
For $c \leq n-1$, the code $C^{(k,c)}$ with parity check matrix given in
(\ref{code:34}) is a CR code with parameters
 $[(c+3)n,(c+3)n-2k,3;2]_q$ and intersection array
$$ \IA = \{(c+3)(q-1)n, (c+2)((q-1)n-1-c); 1, (c+2)(c+3)\}.$$
In the binary case, when $c=n-1$ the code $C^{(k,n-1)}$ coincides with
the $[2^{2k}-1,2^{2k}-1-2k,3;1]$ Hamming code.
\end{enumerate}

 \begin{remark}
Almost all codes in the family  \ref{family:34} are not CT. However, in the binary case
and for any value of $k>2$, the codes $C^{(k,c)}$ are CT for $c\in \{2^k-5,2^k-4,2^k-3\}$.
\end{remark}

If we consider the extension of the codes $C^{(k,c)}$ given in \ref{family:34}
we obtain non CR codes in almost all cases. However, in the binary case and for each value
of $k$, there are exactly two values of $c$ such that the obtained extended code is CR.

Let $C$ be a binary Hamming $[n,k,3;1]$ code, where $n=2^k-1$ and let $H$ be its parity check
matrix. As in the family \ref{family:34}, take the parity check matrix $H^{(k,c)}$, where now $c\in \{2^{k-1}-2,2^{k}-2\}$.

\begin{enumerate}[label=(F.\arabic*),resume*=family]
\item \label{speccase}
Let $C^{(k,c)}$ be the code with parity check matrix $H^{(k,c)}$ and $(C^{(k,c)})^*$ its extended code.
 For $c=2^{k-1}-2$, the code $(C^{(k,c)})^*$ is a $[(c+3)n+1,(c+3)n-2k,4;3]$ CR code with
$$ \IA = \{(c+3)n+1, (c+3)n,2^{2k-2}; 1, (c+2)(c+3),(c+3)n+1\}.$$

For $c=2^{k}-2$, the code $(C^{(k,c)})^*$ is a $[(c+3)n+1,(c+3)n-2k,4;2]$ CR code which
coincides with the extended $[2^{2k},2^{2k}-2k,4;2]$ Hamming code.

For $c\notin \{2^{k-1}-2,2^{k}-2\}$ the code $(C^{(k,c)})^*$ is not CR.
\end{enumerate}

\begin{remark}
All extended codes $(C^{(k,c)})^*$ in the above family \ref{speccase} are not CT.
\end{remark}

We also know a few sporadic examples of CR codes constructed by using concatenation methods.

\begin{enumerate}[label=(S.\arabic*),resume*=sporadic]
\item \label{sporadic:s22} The binary $[15,9,3;3]$-code $C$ with parity check matrix
\begin{equation*}
H = \left[
\begin{array}{ccccc}
\;K\;&0\;&0\;&K\;&K\;\\
\;0\;&K\;&0\;&K\;&K_1\;\\
\;0\;&0\;&K\;&K\;&K_2\;
\end{array}
\right],\;\;\mbox{where}\;\;
K = \left[
\begin{array}{ccc}
\;1\;&0\;&1\;\\
\;0\;&1\;&1\;
\end{array}
\right]
\end{equation*}

and $K_1$ (respectively, $K_2$) is obtained by one cyclic shift of
the columns of $K$ (respectively, by two cyclic shifts) is
CR with
$$ \IA = \{15,12,1; 1,4,15\}.$$

\item The binary $[16,9,4;4]$-code, obtained by extension of the afore
mentioned $[15,9,3]$ code, is CR with
$$\IA = \{16,15,12,1; 1,4,15,16\}.$$
\end{enumerate}

Denote by $D(u,q)$ a difference matrix \cite{Beth}, i.e. a
square matrix of the order $qu$ over an additive group of order
$q$, such that the component-wise difference of any two rows
contains any element of the group exactly $u$ times.

Take the difference matrix
$D(2,3)$
\[
D=\left[
\begin{array}{cccccc}
0~&~0~&~0~&~0~&~0&~0\\
0~&0~&~1~&~2~&~2~&~1\\
0~&1~&~0~&~1~&~2~&~2\\
0~&2~&~1~&~0~&~1~&~2\\
0~&2~&~2~&~1~&~0~&~1\\
0~&1~&~2~&~2~&~1~&~0\\
\end{array}
\right]
\]

\begin{enumerate}[label=(S.\arabic*),resume*=sporadic]
\item \label{sporadic:S24}
Let $H$ be a binary $(12 \times 18)$ matrix obtained from $D$ by
changing any element $i$ by the matrix $K_i$ (which is the same as in \ref{sporadic:s22}). Then the
$[18,12,3;2]$ code with parity check matrix $H$, is a CR code
with
$\IA = \{18,15;1,6\}.$

\item Do the same construction as in \ref{sporadic:S24} for the matrix $D^*$, which is the difference matrix $D(2,3)$
without the trivial column.
The resulting $[15,9,3;3]$ code is CR with $$ \IA = \{15,12,1;1,4,15\}.$$ This code coincides with the code described in \ref{sporadic:s22}.
\end{enumerate}

\subsection{$q$-Ary CR linear codes with $\rho=1$}\label{sec:rho1}

Linear $q$-ary CR codes with $\rho=1$ are fully classified \cite{BRZ10}
by using the two simple following constructions.

{\em Construction $I(u)$.} Let $C$ be a $[n,k,d]_q$ code with a parity
check matrix $H$. Define a new code $C^{+u}$ with parameters
$[n+u,k+u,1]_q$ as the code with parity check matrix $H^{+u}$,
obtained by adding $u>0$ zero columns to $H$.

\begin{theorem}[\protect{\cite{BRZ10}}]
The codes $C$ and $C^{+u}$ have the same covering radius and,
moreover, $C$ is CR if and only if $C^{+u}$ is
CR. In this case, both codes have the same
intersection numbers $b_i=b_i'$ and $c_i=c_i'$, i.e.
\[
a'_i~=~a_i+(q-1)u,~~b'_i~=~b_i,~~c'_i~=~c_i,\;\;i = 0,1,\ldots,\rho
\]
(here $a_i, b_i, c_i$ are intersection numbers of $C$).
\end{theorem}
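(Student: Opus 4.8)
The codes $C$ and $C^{+u}$ have the same covering radius, and $C$ is CR iff $C^{+u}$ is CR, with identical intersection numbers $b_i,c_i$ and $a_i'=a_i+(q-1)u$.

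The plan is to exploit the simple product structure that this construction creates. Since the $u$ adjoined columns of $H^{+u}$ are identically zero, the defining system $H^{+u}(x,y)^{\top}=\zero$ for a word $(x,y)$ with $x\in\F_q^{n}$, $y\in\F_q^{u}$ reduces to $Hx^{\top}=\zero$ with the last $u$ coordinates $y$ entirely unconstrained. Hence $(x,y)\in C^{+u}$ if and only if $x\in C$ and $y$ is arbitrary; equivalently $C^{+u}=C\times\F_q^{u}$. This already records the stated parameters $[n+u,k+u,1]_q$ (the weight-$1$ words $(\zero,y)$ force $d=1$).

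First I would settle the covering radius. For any $(v,w)\in\F_q^{n+u}$ one has
\[
d\big((v,w),C^{+u}\big)=\min_{x\in C,\,y\in\F_q^{u}}\big(d(v,x)+d(w,y)\big)=\min_{x\in C}d(v,x)=d(v,C),
\]
because the free block lets us take $y=w$. Thus the distance to $C^{+u}$ depends only on the first $n$ coordinates and equals the distance to $C$; in particular the covering radii coincide and $(v,w)\in C^{+u}(\ell)$ if and only if $v\in C(\ell)$.

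Next I would relate the outer distributions. Splitting a codeword $(x,y)$ at distance $i$ from $(v,w)$ according to $j=d(v,x)$ yields the convolution
\[
B^{+u}_{(v,w),i}=\sum_{j=0}^{i} B_{v,j}\,\binom{u}{i-j}(q-1)^{i-j},
\]
where $B_{v,j}$ is the outer distribution of $C$ and $\binom{u}{\ell}(q-1)^{\ell}$ counts the words of $\F_q^{u}$ at distance $\ell$ from $w$ (independent of $w$). If $C$ is CR, each $B_{v,j}$ depends only on $d(v,C)=d((v,w),C^{+u})$, so the whole row $B^{+u}_{(v,w)}$ depends only on $d((v,w),C^{+u})$, i.e.\ $C^{+u}$ is CR. The main obstacle is the converse: one must recover the $B_{v,j}$ from the $B^{+u}_{(v,w),i}$, which is exactly inverting the triangular convolution above. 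Since its diagonal coefficient is $\binom{u}{0}(q-1)^{0}=1$, the transform is unit lower triangular and hence invertible, expressing every $B_{v,j}$ as a fixed linear combination of the $B^{+u}_{(v,w),i}$ with $i\le j$. Consequently, if the latter depend only on $d((v,w),C^{+u})=d(v,C)$, so do the former, and $C$ is CR.

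Finally I would read off the intersection numbers via \Cref{Def2}. Fix $(v,w)\in C^{+u}(\ell)$, so $v\in C(\ell)$. Altering one of the last $u$ coordinates leaves $d((v,w),C^{+u})=\ell$ unchanged, giving $u(q-1)$ Hamming neighbours all lying in $C^{+u}(\ell)$. Altering one of the first $n$ coordinates produces $(v',w)$ with $v'$ a neighbour of $v$, landing in $C^{+u}(\ell')$ for $\ell'=d(v',C)$; by complete regularity of $C$ there are exactly $c_\ell$ such neighbours in $C^{+u}(\ell-1)$ and $b_\ell$ in $C^{+u}(\ell+1)$. Therefore $c'_\ell=c_\ell$ and $b'_\ell=b_\ell$, while $a'_\ell=(n+u)(q-1)-b_\ell-c_\ell=a_\ell+(q-1)u$, as claimed.
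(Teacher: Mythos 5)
Your proof is correct. The survey states this theorem with a citation to the original paper and gives no proof of its own, so there is nothing in-paper to compare against; judged independently, your argument is complete. The key observations --- that $C^{+u}=C\times\F_q^u$ up to coordinate order, hence $d\bigl((v,w),C^{+u}\bigr)=d(v,C)$ for all $(v,w)$; that the outer distributions are related by a convolution with kernel $\binom{u}{\ell}(q-1)^{\ell}$, which is unit lower triangular and therefore invertible (settling the converse direction); and that neighbours of $(v,w)$ in the second block never change the distance to the code, while neighbours in the first block reproduce the intersection numbers of $C$ --- are each justified and together give exactly the statement. One small remark: your proof switches between the outer-distribution definition of complete regularity (\Cref{Def1}) for the ``if and only if'' and the equitable-partition definition (\Cref{Def2}) for the intersection numbers, which is legitimate since the survey records their equivalence; but note that your final neighbour-counting paragraph alone already proves the whole theorem in the language of \Cref{Def2}, because neighbours of $(v,w)$ lying in $C^{+u}(\ell\pm 1)$ can only arise by changing a coordinate of the first block, so the distance partition of $\F_q^{n+u}$ induced by $C^{+u}$ is equitable if and only if the one induced by $C$ is, with the stated parameters. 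This observation would let you dispense with the convolution inversion entirely.
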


{\em Construction $II(\ell)$.}~ Let $C$ be a $[n,k,d]_q$ code with
parity check matrix $H$. Let
$C^{\times \ell}$ be the code with parameters
$[n\,\ell,k+(\ell-1)n,2]_q$, whose parity check matrix, denoted
$H^{\times \ell}$, is $\ell$ times the repetition of $H$ (or
monomially equivalent matrices of $H$), i.e.
$$
H^{\times \ell}~=~[H^{(1)}\,|\,H^{(2)}\,|\,\cdots \,|\,H^{(\ell)}],
$$
where, for all $i=1,\ldots,\ell$, $H^{(i)}$ is the parity check
matrix of an equivalent code to $C$.

\begin{theorem}[\protect{\cite{BRZ10}}]
An $[n,k,d]_q$ code $C$ is CR with covering radius
$\rho=1$ if and only if $C^{\times \ell}$ is CR with
covering radius $\rho'=1$.
\end{theorem}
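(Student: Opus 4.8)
The plan is to reduce complete regularity with $\rho=1$ to a single numerical condition on the columns of the parity check matrix, and then to observe that this condition is insensitive to horizontal repetition of that matrix. For a linear $[n,k,d]_q$ code $D$ with parity check matrix $M$ and covering radius $1$, and for each nonzero syndrome $s\in\F_q^{n-k}$, let $\mu(s)$ denote the number of weight-one vectors in the coset with syndrome $s$. First I would show that $D$ is CR (with $\rho=1$) if and only if $\mu(s)$ is constant over all nonzero $s$. Indeed, by \Cref{Def2} the code is CR exactly when the distance partition $\{D,D(1)\}$ is equitable. Of the four cell-to-cell neighbour counts, the two starting from a codeword are constant automatically (the number of weight-one codewords is fixed by linearity, and the number of neighbours landing in $D(1)$ is its complement inside the $n(q-1)$ neighbours of a vertex), while the count from $D(1)$ back into $D(1)$ is $n(q-1)$ minus the count from $D(1)$ into $D$. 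Hence equitability reduces to the single requirement that every $x\in D(1)$ have the same number $c_1=B_{x,1}$ of codeword neighbours, and since $B_{x,1}$ equals the number of weight-one vectors in the coset $x+D$, this is precisely the constancy of $\mu$.

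Next I would rewrite $\mu(s)$ in column terms: writing $h_1,\dots,h_n$ for the columns of $M$, a weight-one vector with syndrome $s$ corresponds to a pair $(j,\beta)$ with $\beta\in\F_q^{*}$ and $\beta h_j=s$, so
\[
\mu(s)=\bigl|\{(j,\beta)\in\{1,\dots,n\}\times\F_q^{*}\;:\;\beta h_j=s\}\bigr|.
\]
In particular the syndromes actually covered by weight-one errors (those with $\mu(s)\geq 1$) are exactly the nonzero scalar multiples of the columns of $M$.

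Now I would apply this to $C$ (matrix $H$, columns $h_j$) and to $C^{\times\ell}$ (matrix $H^{\times\ell}=[H^{(1)}\mid\cdots\mid H^{(\ell)}]$). Both matrices have $n-k$ rows and rank $n-k$, hence the same syndrome space $\F_q^{n-k}$. Each block $H^{(i)}$ is monomially equivalent to $H$, so its $j$-th column has the form $\gamma_{i,j}h_{\pi_i(j)}$ for a permutation $\pi_i$ and nonzero scalars $\gamma_{i,j}$; substituting $j'=\pi_i(j)$ and $\beta'=\beta\gamma_{i,j}$ (both bijections) shows that block $i$ contributes exactly $\mu(s)$ to the weight-one count of $C^{\times\ell}$. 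Summing over the $\ell$ blocks gives, for the syndrome count $\mu'$ of $C^{\times\ell}$, the identity $\mu'(s)=\ell\,\mu(s)$ for every nonzero $s$. Consequently the covered syndromes (where $\mu'\geq 1$) coincide with those of $C$, so $\rho=1$ if and only if $\rho'=1$ (the condition $k<n$ needed for $\rho,\rho'\geq 1$ is identical for both codes), and $\mu'$ is constant over nonzero syndromes if and only if $\mu$ is, which by the first step is the CR equivalence claimed. The step I expect to be the main obstacle is the monomial-equivalence bookkeeping in this last paragraph: one must check that the per-block count is genuinely invariant under the permutation and the rescaling, so that the repetition multiplies $\mu$ by $\ell$ without distorting which syndromes are hit. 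The reindexing above is exactly what makes this transparent, and it simultaneously yields the transformed intersection numbers $c_1'=\ell c_1$ and $b_0'=n\ell(q-1)$.
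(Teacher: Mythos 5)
Your argument is correct. Note that the survey states this theorem only by citation to \cite{BRZ10} and provides no proof of it, so there is no in-paper argument to compare yours against; judged on its own, your syndrome-counting proof is sound and self-contained. The pivot — reducing complete regularity with $\rho=1$, via the equitable-partition formulation of \Cref{Def2}, to constancy of $\mu(s)$ over nonzero syndromes — is exactly right, since with covering radius $1$ the two cell-to-cell counts out of $C$ are automatic and $C(2)=\emptyset$ kills the remaining one. The identity $\mu'(s)=\ell\,\mu(s)$, obtained by the reindexing $(j,\beta)\mapsto(\pi_i(j),\beta\gamma_{i,j})$ inside each monomially equivalent block, then delivers both directions at once: the covered syndromes of $C$ and $C^{\times\ell}$ coincide (and both codes have redundancy $n-k$), so $\rho=1$ holds for one code iff it holds for the other, and $\mu$ is constant iff $\mu'$ is. The only imprecision is peripheral to the theorem: your closing claim $b_0'=n\ell(q-1)$ presumes $H$ has no zero column (i.e.\ $d\geq 2$); if $d=1$ then $C^{\times\ell}$ has weight-one codewords and $b_0'$ is $n\ell(q-1)$ minus their number. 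This does not affect the equivalence you were asked to prove.
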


Now we have the following classification theorem.

\begin{theorem}[\protect{\cite{BRZ10}}]\label{class-th-rho1}
Let $C=C(H)$ be a nontrivial $[n,k,d]_q$ code with covering radius
$\rho=1$ and with parity check matrix $H$.
\begin{enumerate}[label=(F.\arabic*),resume*=family]
\item The code $C$ is a $[n,n-m,d;1]_q$ CT (and CR) code, where $n=n_m\,\ell+u$ and $n_m=(q^m-1)/(q-1)$, if and only if the matrix $H$ is of the form
\[
H~=~\left((H^q_m)^{\times \ell}\right)^{+u}
\]
(up to monomial equivalence), where $H^q_m$ is a parity check matrix of
a Hamming code of length $n_m$ over $\F_q$.
\end{enumerate}
Furthermore,
\begin{enumerate}[label=(F.\arabic*),resume*=family]
\item $d=3$, if and only if $u=0$, $\ell = 1$, $n = n_{m}$
and $C$ is a Hamming code.
\item $d=2$, if and only if $u=0$, $\ell \geq 2$, $n =
n_{m}\ell$.
\item $d=1$, if and only if $u>0$, $\ell \geq 1$.
\end{enumerate}
In all cases the code $C$ has
\[
IA=\{(q-1)\ell\, n_{n-k};\ell\}
\]
\end{theorem}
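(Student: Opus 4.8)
The plan is to translate complete regularity with $\rho=1$ into a combinatorial condition on the multiset of columns of $H$, and then to read off the four subcases and the intersection array. Write $m=n-k$ for the number of rows of $H$, and view each nonzero column as spanning a point of the projective space $PG(m-1,q)$. Since $C$ has covering radius $\rho=1$, every coset has a leader of weight at most $1$, so every nonzero syndrome $s\in\F_q^m$ equals $\alpha h_i$ for some nonzero scalar $\alpha$ and some column $h_i$; equivalently, the columns of $H$ meet every projective point of $PG(m-1,q)$. Let $u$ denote the number of zero columns and, for a projective point $P$, let $m_P$ be the number of columns spanning $P$.

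The core computation is the value of $c_1$. First I would check that for $x\in C(1)$ with syndrome $s\neq 0$ the neighbours of $x$ lying in $C$ are exactly the weight-one vectors $e=\alpha\delta_i$ with $\alpha h_i=s$; since for each column $h_i$ spanning $\langle s\rangle$ there is a unique such scalar, this count equals $m_{\langle s\rangle}$. Thus $c_1(x)=m_{\langle s\rangle}$ depends only on the projective point of the syndrome. Complete regularity forces $c_1$ to be constant over all of $C(1)$, hence $m_P=\ell$ for a single value $\ell\geq 1$ and every point $P$. Therefore $H$ consists of $u$ zero columns together with, up to nonzero scalars, exactly $\ell$ copies of each point of $PG(m-1,q)$; since $H^q_m$ lists each point once, this says precisely that $H$ is monomially equivalent to $((H^q_m)^{\times\ell})^{+u}$, giving the stated form and $n=n_m\ell+u$.

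For the converse and the complete transitivity claim, I would argue that the displayed form does produce such a code. Starting from the Hamming code, Construction $II(\ell)$ yields $(H^q_m)^{\times\ell}$ and Construction $I(u)$ yields $((H^q_m)^{\times\ell})^{+u}$; by the two construction theorems stated above, both operations preserve complete regularity and the covering radius, so $C$ is CR with $\rho=1$. For transitivity it suffices to show $\Aut(C)$ has the two orbits $C$ and $C(1)$ on cosets, i.e. acts transitively on nonzero syndromes. Every $M\in\GL(m,q)$ permutes the points of $PG(m-1,q)$ and hence induces a monomial automorphism of $C$, permuting the $\ell$-fold column blocks and fixing the zero columns setwise; since $\GL(m,q)$ is transitive on $\F_q^m\setminus\{0\}$, the induced action on syndromes is transitive, so $C(1)$ is a single orbit and $C$ is CT.

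Finally I would read off the remaining assertions from the structure of $H$. A zero column produces a weight-one codeword, so $d=1$ exactly when $u>0$; with $u=0$, two proportional columns (that is, $\ell\geq 2$) produce a weight-two codeword and none of weight one, giving $d=2$ and $n=n_m\ell$; and $u=0,\ell=1$ leaves $H=H^q_m$, the Hamming code, with $d=3$ and $n=n_m$. The intersection array is then immediate: $c_1=\ell$ by the computation above, while $b_0$ counts the weight-one vectors leaving $C$, namely $(q-1)$ times the number $\ell n_m$ of nonzero columns, so $b_0=(q-1)\ell\,n_{n-k}$. I expect the only real subtlety to be the transitivity step, namely making sure the $\GL(m,q)$-action genuinely realizes code automorphisms after the repetition and zero-padding, whereas the multiplicity computation that drives the classification is routine once $c_1$ is identified with $m_{\langle s\rangle}$.
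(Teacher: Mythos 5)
Your proof is correct and is essentially the intended argument: the survey states this theorem by citation to \cite{BRZ10} without reproducing a proof, but it introduces Constructions $I(u)$ and $II(\ell)$ immediately beforehand precisely so that sufficiency follows as you argue, while your necessity step---$\rho=1$ forces the columns of $H$ to cover every point of $PG(n-k-1,q)$, and complete regularity forces $c_1(x)=m_{\langle s\rangle}$ to be a constant $\ell$, so every point has multiplicity exactly $\ell$---is the standard classification computation, and your $\GL(n-k,q)$-induced monomial automorphisms correctly give transitivity on nonzero syndromes. The one point to keep explicit is that for $q$-ary codes complete transitivity is understood with respect to the monomial automorphism group (as in the coset-completely transitive definition of Giudici and Praeger), which is exactly what your construction produces.
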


Similar result was also obtained in \cite{koo2} in terms of arithmetic
CR codes.

\subsection{$q$-Ary CR nonlinear codes with $\rho=1$}

A coset of a linear CR code with $\rho=1$ obviously gives a
nonlinear such code with the same $\rho=1$.  Apart from these
trivial codes very little is known for this case. There are some
results for the binary case mostly due to Fon-Der-Flaas
\cite{FDF1,FDF2,FDF3}. The equivalent definition for the
construction of CR codes with given $\rho$ is the so called
perfect $(\rho+1)$-colorings of a hypercube. Especially simple
perfect colorings are defined for the case $\rho=1$, i.e., for
$2$-colorings. Let $H(2,n)$ be a binary hypercube of dimension
$n$. Its vertices are binary vectors of length $n$, and two
vertices are neighbors, if the corresponding vectors are at
distance $1$ from each other. Coloring of its vertices into white
and black colors is called a perfect $2$-coloring with
intersection matrix
\[
\left(
\begin{array}{cc}
a\,&\,b\\
c\,&\,d
\end{array}
\right)\,,
\]
if every black vertex has $a$ black and $b$ white neighbors, and
every white vertex has $c$ black and $d$ white neighbors. Clearly,
$a + b = c + d = n$. In our terminology, $a_0 = a$, $b_0= b$,
$c_1=c$, and $a_1 = d$ and, hence, it is a nonlinear CR code with
$\IA=\{b;c\}$. The following result gives the lower bound for the
value $a=n-b$ (this is the best known bound for {\em correlation
immunity}; see references in \cite{FDF1}).

\begin{theorem}[\protect{\cite{FDF1}}]
Let $C$  be a binary CR code of length $n$ and $\rho=1$ with $\IA=\{b; c\}$. If $b \neq c$, then
\EQ\label{correlimmun} c - a \leq \frac{n}{3}\,. \EN
\end{theorem}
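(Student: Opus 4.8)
The plan is to pass from the combinatorial description of the perfect $2$-coloring to a Fourier description of its indicator function and then exploit the Boolean identity $p^2=p$. First I would record that the coloring is an equitable partition of $H(2,n)$ into $C$ and $C(1)$ with quotient matrix $\left(\begin{smallmatrix} a & b \\ c & d\end{smallmatrix}\right)$, $a+b=c+d=n$, whose eigenvalues are the valency $n$ and $\theta=a-c$. The function $f=b\,\mathbf{1}_{C}-c\,\mathbf{1}_{C(1)}$ is then an eigenfunction of the adjacency operator with eigenvalue $\theta$ (a two-line check from the definitions of $a,b,c,d$), and it has mean zero since $b|C|=c|C(1)|$. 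Because the $\theta$-eigenspace of $H(2,n)$ is spanned by the Walsh characters $\chi_S(x)=(-1)^{\sum_{i\in S}x_i}$ with $|S|=j$, where $\theta=n-2j$, the function $f$ — and hence the indicator $p=\mathbf{1}_{C}=(f+c)/(b+c)$ — is Fourier-supported on level $0$ together with the single level $j$, with $\hat p(\varnothing)=|C|/2^n=c/(b+c)$ and $j\ge 1$ in the nontrivial case. Since $c-a=2j-n$, the claim $c-a\le n/3$ is exactly $j\le 2n/3$.

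Next I would feed this single-level localisation into the idempotent relation $p^2=p$. On the Fourier side this reads $\hat p(U)=\sum_{S}\hat p(S)\,\hat p(S\triangle U)$, and for a set $U$ with $|U|=j$ only three kinds of terms survive: the two terms $S=\varnothing$ and $S=U$, each equal to $\hat p(\varnothing)\hat p(U)$, and the quadratic terms with $|S|=j$ and $|S\triangle U|=j$, i.e. $|S\cap U|=j/2$. This yields, for every $U$ of size $j$,
\[
\bigl(1-2\hat p(\varnothing)\bigr)\,\hat p(U)=\sum_{\substack{|S|=j\\ |S\cap U|=j/2}}\hat p(S)\,\hat p(S\triangle U).
\]
The coefficient on the left equals $1-2c/(b+c)=(b-c)/(b+c)$, which is nonzero precisely because $b\neq c$; this is the only place the hypothesis $b\neq c$ enters.

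Finally I would observe that the right-hand sum ranges over $j$-sets $S$ meeting $U$ in exactly $j/2$ elements, so $|S\cup U|=3j/2$, and such $S$ exist only when $3j/2\le n$. Thus, if $c-a>n/3$, equivalently $j>2n/3$, the index set is empty and the right-hand side vanishes identically; combined with $(b-c)/(b+c)\neq 0$ this forces $\hat p(U)=0$ for all $|U|=j$, so $p$ is constant and $C$ is trivial, contradicting $1\le c\le n$ (both colours present and mutually adjacent). Hence $j\le 2n/3$, that is $c-a\le n/3$. I expect the genuine obstacle to be the first step: justifying cleanly that the indicator is supported on a single nonzero Walsh level — the homogeneity of the equitable-partition eigenfunction — and extracting the scalar identity at level $j$ without sign or indexing errors. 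Once that localisation is secured, the arithmetic obstruction $3j/2>n$ and the role of $b\neq c$ are immediate. A secondary point to handle carefully is the boundary bookkeeping: the case $c\le a$, where the bound is vacuous, and the tightness at $j=2n/3$, where $S\cup U$ may exhaust the coordinate set so no contradiction arises.
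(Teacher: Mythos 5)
Your argument is correct, but note that the survey itself contains no proof of this statement: it is quoted, as a known bound for correlation immunity, directly from Fon-Der-Flaass \cite{FDF1}, so there is no in-paper proof to compare yours against. What you have reconstructed is essentially Fon-Der-Flaass's original Fourier-analytic argument: the equitable partition makes $f=b\,\mathbf{1}_{C}-c\,\mathbf{1}_{C(1)}$ an adjacency eigenfunction with eigenvalue $a-c$ (your check uses $b-d=c-a$, which follows from $a+b=c+d=n$), hence $a-c=n-2j$ and the indicator $p=\mathbf{1}_C$ is Walsh-supported on levels $0$ and $j$ with $\hat{p}(\varnothing)=c/(b+c)$ by the edge count $b|C|=c|C(1)|$; the idempotency $p^2=p$ evaluated at a level-$j$ set $U$ then leaves only cross terms indexed by $j$-sets $S$ with $|S\cap U|=j/2$, which require $|S\cup U|=3j/2\le n$, while $b\neq c$ keeps the coefficient $(b-c)/(b+c)$ nonzero; if $j>2n/3$ all level-$j$ coefficients vanish, forcing $p$ constant and contradicting that both cells are nonempty. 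All the individual steps (the eigenfunction computation, the identification $c-a=2j-n$, the convolution identity, the emptiness of the index set, and the final nontriviality contradiction, including the vacuous case $c\le a$ and the boundary case $3j/2=n$) check out, so the proposal is a complete and correct proof of the cited theorem.
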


The other necessary condition is the following result from \cite{FDF2}.

\begin{theorem}[\protect{\cite{FDF2}}]
Let $C$  be a binary CR code of length $n$ with $\rho=1$ and intersection
array $(b; c)$. Then $b \neq 0 \neq c$, and
\EQ\label{divisibility}
\frac{b + c}{(b,c)}\;\;\mbox{is a power of $2$}\,,
\EN
where $(b,c)$ is the greatest common divisor.
\end{theorem}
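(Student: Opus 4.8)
The plan is to treat the two assertions separately, handling the nonvanishing of $b$ and $c$ first. Since $\rho=1$, the distance partition of $\F_2^n$ has exactly the two cells $C=C(0)$ and $C(1)$, and $C(1)\neq\emptyset$. Any $x\in C(1)$ satisfies $d(x,C)=1$, so $x$ has at least one neighbour in $C$; hence $c=c_1\geq 1$, giving $c\neq 0$. If instead $b=b_0=0$, then no codeword has a neighbour outside $C$, so $C$ would be a union of connected components of the hypercube $H(2,n)$. As $H(2,n)$ is connected this forces $C=\F_2^n$, which has covering radius $0$, contradicting $\rho=1$. Therefore $b\geq 1$ as well.

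For the divisibility statement the key step is a double count of the edges of $H(2,n)$ that join $C$ to $C(1)$. By the definition of the intersection array, every codeword has exactly $b$ neighbours in $C(1)$ and every vertex of $C(1)$ has exactly $c$ neighbours in $C$. Counting these boundary edges from each side gives $|C|\,b=|C(1)|\,c$, and combining this with $|C|+|C(1)|=2^n$ yields $|C|=\dfrac{c}{b+c}\,2^n$. Put differently, $c/(b+c)$ is the fraction of codewords; one could also extract this from a spectral (Lloyd-type) argument, since the quotient matrix $\begin{pmatrix} n-b & b\\ c & n-c\end{pmatrix}$ has row sums $n$ and trace $2n-b-c$, so its second eigenvalue is $n-(b+c)$, which must be one of the hypercube eigenvalues $n-2j$ and hence forces $b+c$ to be even. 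The edge count is more elementary, however, and it is what yields the stronger conclusion.

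The final step is purely arithmetical and uses that $|C|$ is an integer. From $|C|=\dfrac{c}{b+c}\,2^n\in\Z$ we get $(b+c)\mid c\,2^n$. Writing $d=\gcd(b,c)$, $b=db'$, $c=dc'$ with $\gcd(b',c')=1$, we have $b+c=d(b'+c')$, and the divisibility reduces to $(b'+c')\mid c'\,2^n$. Any common divisor of $b'+c'$ and $c'$ also divides $b'=(b'+c')-c'$, so $\gcd(b'+c',c')=1$; consequently $(b'+c')\mid 2^n$. Since a positive divisor of $2^n$ is a power of $2$, we conclude that $\dfrac{b+c}{\gcd(b,c)}=b'+c'$ is a power of $2$, as required.

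I expect no serious obstacle in this argument: everything rests on the equitable-partition edge count together with an elementary coprimality reduction. The one point deserving care is the coprimality claim $\gcd(b'+c',c')=1$, which is exactly what permits stripping the factor $2^n$ and thereby upgrades the weak spectral conclusion ``$b+c$ is even'' to the full power-of-$2$ statement.
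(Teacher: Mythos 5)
Your proof is correct. The paper itself states this result only with a citation to Fon-Der-Flaass and gives no proof, but your argument is the standard one behind that reference: the equitable-partition edge count $|C|\,b=|C(1)|\,c$ together with $|C|+|C(1)|=2^n$ gives $|C|=\frac{c}{b+c}\,2^n$, and integrality plus the coprimality of $c'$ and $b'+c'$ forces $(b+c)/\gcd(b,c)$ to divide $2^n$. Your handling of $b\neq 0\neq c$ (connectivity of the hypercube, nonemptiness of $C(1)$) and the remark that the purely spectral route only yields the weaker parity condition are both accurate.
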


Both constructions for linear codes with $\rho=1$ considered in the
previous section work for nonlinear codes also.
The following statement generalizes the corresponding results from \cite{BRZ10}
for nonlinear case and from \cite{FDF2} for nonbinary case.

\begin{proposition}\label{prop:5.10}
$\mbox{ }$
\begin{enumerate}[label=(\emph{\roman*})]
\item For every $n=(q^m-1)/(q-1)$,\;$m\geq 2$, and any $k$,\;$1 \leq k \leq (q-1)n$,
there exists a $q$-ary CR code $C^{\cup k}$ with $\rho=1$ and
$\IA = \{(q-1)n-k+1; k \}$, formed by $k$ arbitrary translates of $q$-ary
perfect code of length $n$ and minimum distance $d=3$.
\item The existence of $q$-ary CR code $C$ of length $n$ with
$\IA=\{ b; c\}$ implies the existence of CR code $C^{+k}$ with
$\IA = \{b+k(q-1); c+k(q-1)\}$ for any $k\geq 1$, formed by changing of the every codeword
$c \in C$ by $q^k$ codewords of the form: $(c\,|\,x)$ where $x$ runs over $\F_q^n$.
\item \label{prop:4.21} The existence of a CR code $C$ of length $n$ with
$\IA = \{ b; c\}$ implies the existence, for any $k\geq
1$, of a CR code $C^{\times k}$ with $\IA = \{(k b; k c \}$. To every codeword
$v = (v_1, \ldots, v_n)$ of $C$, the all
vectors\\ $(v_{1,1},v_{1,2}, \ldots, v_{1,k}, \ldots,
v_{1,1},v_{1,2}, \ldots, v_{1,k})$
from $\F_q^{k\, n}$ are associated, for which
\[
\sum_{j=1}^k v_{i,j} = v_i.
\]
\end{enumerate}
\end{proposition}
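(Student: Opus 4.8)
The unifying observation is that a code with $\rho=1$ is completely regular exactly when the two-part distance partition $\{C,\,C(1)\}$ (codewords versus everything else) is equitable: every codeword must have the same number $b_0$ of non-codeword neighbours and every non-codeword the same number $c_1$ of codeword neighbours, and then $\IA=\{b_0;c_1\}$. So in each of the three cases the plan is identical: describe the codeword set, then count, for an arbitrary codeword and an arbitrary non-codeword, how many of its $n(q-1)$ Hamming-neighbours lie in the code, and check that both counts are independent of the chosen representative. Since $\rho=1$ is forced as soon as the non-codeword count $c_1$ is positive and the code is proper, the whole content is the two neighbour counts.

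For part (i), take a $q$-ary perfect code $P$ of length $n=(q^m-1)/(q-1)$ and minimum distance $3$; its cosets partition $\F_q^n$ into $(q-1)n+1=q^m$ classes indexed by syndrome. The step I would isolate as a lemma is the sharp covering property of a perfect single-error-correcting code: from any vector the $n(q-1)$ single-symbol changes reach each of the $q^m-1$ other cosets exactly once and its own coset not at all (the nonzero syndromes $\alpha h_i$ are pairwise distinct). Writing $C^{\cup k}$ as the union of $k$ cosets, a vector in a chosen coset then has $k-1$ neighbours in the other chosen cosets, hence $(q-1)n+1-k$ neighbours outside $C^{\cup k}$, while a vector in a non-chosen coset has exactly one neighbour in each of the $k$ chosen cosets; this gives $\IA=\{(q-1)n-k+1;\,k\}$, and $k\le(q-1)n$ leaves a coset uncovered so $\rho=1$. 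Establishing this sharp covering property, and thereby the representative-independence of the counts, is the main obstacle of the whole proposition; parts (ii) and (iii) are then structural bookkeeping.

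For part (ii), I would realize $C^{+k}$ as the product $C\times\F_q^{k}$, appending $k$ free coordinates to every codeword. The neighbour count splits by whether a neighbour alters one of the original $n$ coordinates or one of the $k$ appended ones. Altering an appended coordinate never changes membership in $C^{+k}$, so it contributes only same-class neighbours (i.e. it feeds into the diagonal entries $a_0,a_1$ of the quotient matrix, raising the valency by $k(q-1)$), whereas altering an original coordinate reproduces verbatim the behaviour of the codeword or non-codeword inside $C$. The two off-diagonal counts are therefore governed entirely by the original block, and the intersection array follows at once from the completely regular property of $C$; no new combinatorics is needed.

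For part (iii), consider the block-sum map $\phi:\F_q^{kn}\to\F_q^n$ sending $\mathbf{u}=(u_{i,j})$ to $(\sum_{j=1}^k u_{1,j},\dots,\sum_{j=1}^k u_{n,j})$, so that $C^{\times k}=\phi^{-1}(C)$. The key point is that $\phi$ acts like a covering of $H(q,n)$ by $H(q,kn)$: every single-symbol change $u_{i,j}\mapsto u_{i,j}+\alpha$ with $\alpha\neq 0$ shifts $\phi(\mathbf{u})$ by $\alpha e_i$ and never fixes it, and conversely each base move $w\mapsto w+\alpha e_i$ is covered by exactly $k$ upstairs moves, one for each $j$. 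Hence for $\mathbf{u}\in C^{\times k}$ the non-codeword neighbours are precisely $k$ copies of each non-codeword neighbour of $w=\phi(\mathbf{u})$, giving $kb$, and dually a non-codeword has $kc$ codeword neighbours; surjectivity of $\phi$ with $\rho(C)=1$ forces $\rho(C^{\times k})=1$, so $\IA=\{kb;kc\}$. The only substantive ingredient here is the $k$-to-one behaviour of $\phi$ on edges, and everything else is counting.
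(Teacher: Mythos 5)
The paper itself offers no proof of this proposition: it is stated bare, as the nonlinear/nonbinary generalization of the linear Constructions $I(u)$ and $II(\ell)$ of \cite{BRZ10} and of results in \cite{FDF2}, so your attempt can only be judged against those quoted linear results and direct verification. On that basis, your parts (i) and (iii) are correct: the reduction to equitability of the two-cell partition, the sharp covering property of a perfect code, and the $k$-to-one edge-covering behaviour of the block-sum map $\phi$ are exactly the right ingredients, and they deliver the stated arrays $\{(q-1)n-k+1;\,k\}$ and $\{kb;\,kc\}$. One caveat on (i): you phrase the key lemma via syndromes and cosets, which silently assumes the perfect code is linear (hence Hamming), whereas the statement allows nonlinear perfect codes; for those, distinct translates need not be disjoint (one can have $x\in (P-P)\setminus P$, making $P$ and $P+x$ distinct but overlapping), so one must take pairwise disjoint translates, and the covering property should be derived directly from perfectness (a vector outside a translate $P+u$ has exactly one neighbour in it, a vector inside has none since $d=3$) rather than from syndromes.

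The genuine gap is in part (ii). Your counting there is correct — altering an appended coordinate never changes the cell, so those $k(q-1)$ edges feed only the diagonal entries $a_0,a_1$, while the off-diagonal counts are inherited from $C$ — but that conclusion is $b_0'=b$ and $c_1'=c$, i.e. $\IA(C^{+k})=\{b;\,c\}$, \emph{not} the array $\{b+k(q-1);\,c+k(q-1)\}$ asserted in the proposition. Your closing claim that ``the intersection array follows at once'' conceals this: what follows from your own analysis contradicts the statement you are proving. A two-line check confirms your count: for $q=2$ and $C=\{00,11\}$ with $\IA=\{2;2\}$, the code $C^{+1}=\{000,001,110,111\}$ again has $\IA=\{2;2\}$, whereas $\{3;3\}$ is impossible since the codeword $000$ has only two neighbours outside the code. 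Indeed, your conclusion (not the proposition's) agrees with the linear-case theorem quoted in the same section (Construction $I(u)$ from \cite{BRZ10}: $a_i'=a_i+(q-1)u$, $b_i'=b_i$, $c_i'=c_i$), so the printed array in item (ii) appears to be an error in the paper. But a blind proof must either derive the claimed array — impossible for this construction — or explicitly flag the contradiction; yours does neither, and that silent assertion is the gap.
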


The codes which meet the bound (\ref{correlimmun}) are the most
interesting. Such codes of length $n=3\,u$ should have
$\IA = \{ 3u-a; a+u \}$ where $a \geq 0$. Two infinite
families of codes with such intersection arrays are known (see
references in \cite{FDF1}): $\{3k;k\}$ and $\{5k;3k\}$.
These families come from codes with intersection arrays $\{3;1\}$
and $\{5; 3\}$ applying \Cref{prop:5.10}, \ref{prop:4.21}.
The first code is the trivial binary perfect code of length $3$, and the
second of length $6$ (constructed by Tarannikov \cite{tar}) can be
constructed from the first one using the two following lemmas due
to Fon-Der-Flaass \cite{FDF2}.

\begin{lemma}[\protect{\cite{FDF2}}]\label{flaass1}
Let $C$ be a CR code with $\rho=1$ and intersection matrix
\[
\left(
\begin{array}{cc}
a\,&\,b\\
c\,&\,d
\end{array}
\right)\,.
\]
Let $C$ can be partitioned into $k$-faces, $0 \leq k \leq a$. Then
there exists a CR code with $\rho=2$ and intersection matrix
\[
\left(
\begin{array}{ccc}
a-k\,&\,a+k\,&\,2b\\
a+k\,&\,a-k\,&\,2b\\
c\,&\,c\,&\,2d
\end{array}
\right)\,.
\]
\end{lemma}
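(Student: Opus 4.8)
The plan is to realise the required object as a perfect $3$-coloring (an equitable partition into three cells) of the hypercube $H(2,2n)$ whose quotient matrix is the displayed $3\times 3$ matrix; in the perfect-coloring language of this subsection this is exactly a CR code with $\rho=2$ and that intersection matrix. I would index the $2n$ coordinates of $H(2,2n)$ by pairs $(i,1),(i,2)$ with $i=1,\dots,n$ and use the folding map $\pi(z)=(z_{1,1}+z_{1,2},\dots,z_{n,1}+z_{n,2})\in\F_2^{n}$. Writing $B$ (respectively $W$) for the black (respectively white) class of the given $\rho=1$ code $C=B$, I put the colour of $z$ equal to $2$ whenever $\pi(z)\in W$, so that the third cell is $\pi^{-1}(W)$, and I refine $\pi^{-1}(B)$ into the two cells $0$ and $1$.

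The first observation, which disposes of the whole third column at once, is that every one of the $2n$ neighbours of a vertex $z$ is obtained by flipping a single bit, hence projects to a point at Hamming distance $1$ from $\pi(z)$, and for each of the $n$ directions there are exactly two such lifts (flip $z_{i,1}$ or flip $z_{i,2}$). Consequently a vertex with $\pi(z)\in B$ has exactly $2b$ neighbours in $\pi^{-1}(W)$ and one with $\pi(z)\in W$ has exactly $2d$, matching the entries $2b,2b,2d$; the remaining $2a$ (respectively $2c$) neighbours all lie in $\pi^{-1}(B)$, so everything reduces to choosing the $0/1$ refinement of $\pi^{-1}(B)$ correctly. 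Using the partition $B=\bigsqcup_\alpha F_\alpha$ into $k$-faces, with $S_\alpha\subset\{1,\dots,n\}$ the set of $k$ free coordinates of $F_\alpha$, I would define, for $\pi(z)=v\in F_\alpha$,
\[
\chi(z)=\sum_{i=1}^{n} z_{i,2}\;+\;\sum_{i\in S_\alpha} z_{i,1}\pmod 2 .
\]

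The verification splits the edges inside $\pi^{-1}(B)$, and those from $\pi^{-1}(W)$ into $\pi^{-1}(B)$, into two types. For a free direction $i\in S_\alpha$ the target $v+e_i$ stays in $F_\alpha$, and both lifts flip $\chi$ (flipping $z_{i,1}$ changes the second sum, flipping $z_{i,2}$ changes the first); hence each of the $k$ within-face black directions contributes two neighbours of the colour opposite to $\chi(z)$, and this is exactly what produces the asymmetry $a\pm k$. For every other edge the two lifts over a common target receive opposite colours, so each such direction contributes one neighbour of each colour; summing then yields the rows $(a-k,\,a+k,\,2b)$, $(a+k,\,a-k,\,2b)$ and $(c,\,c,\,2d)$. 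The engine of this second claim is the elementary inheritance fact: whenever an edge $\{u,u+e_i\}$ has $u+e_i\in F_\beta$ but $u\notin F_\beta$, the coordinate $i$ must be fixed in $F_\beta$, for otherwise $F_\beta$, being free in direction $i$, would also contain $(u+e_i)+e_i=u$. Both the cross-face black case (with $u=v\in F_\alpha\neq F_\beta$ and $i$ fixed in $\alpha$) and the white-to-black case (with $u\in W$) are instances, and in each the net effect on $\chi$ between the two lifts over $u+e_i$ comes only through the change in $z_{i,2}$, giving opposite colours.

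The main obstacle is therefore not computational but structural: it is precisely this disjoint-faces inheritance property, and this is where the hypothesis that $C$ be a union of $k$-faces is indispensable, since for a general $\rho=1$ code no such refinement need exist. The remaining points I expect to be routine: the bound $k\le a$ is automatic, because the $k$ free directions of $F_\alpha$ are themselves black neighbours of $v$, so $a\ge k$ and every entry of the matrix is non-negative; and one checks that all three cells are non-empty and that the partition is genuinely equitable with the stated quotient, i.e. a perfect $3$-coloring, which completes the construction.
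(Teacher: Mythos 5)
Your proof is correct, and since the paper states this lemma without proof (it is quoted from Fon-Der-Flaass \cite{FDF2}), there is nothing internal to compare against; your folding construction $\pi\colon H(2,2n)\to H(2,n)$ with the parity refinement $\chi(z)=\sum_i z_{i,2}+\sum_{i\in S_\alpha}z_{i,1}$ and the fixed-coordinate inheritance observation is essentially the original doubling construction from that reference, and the resulting counts $(a-k,a+k,2b)$, $(a+k,a-k,2b)$, $(c,c,2d)$ all check out. Your reading of ``CR code with $\rho=2$'' as a perfect $3$-coloring (equitable partition) is also the right one, and indeed the only sensible one, since the displayed quotient matrix is not tridiagonal and so the three cells cannot literally be the distance partition of a code; this is the convention the paper itself adopts throughout this subsection.
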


\begin{lemma}[\protect{\cite{FDF2}}]\label{flaass2}
Let $C$ be a CR code with $\rho=2$ and intersection matrix
\[
\left(
\begin{array}{ccc}
a-k\,&\,a+k\,&\,2b\\
a+k\,&\,a-k\,&\,2b\\
c\,&\,c\,&\,2d
\end{array}
\right)\,,
\]
where $c \geq a+k$. Then there exists
a CR code with $\rho=1$ and intersection matrix
\[
\left(
\begin{array}{cc}
a-k\,&\,2b+c\\
c\,&\,2d+2c-a-k
\end{array}
\right)\,.
\]
\end{lemma}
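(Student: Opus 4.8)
The plan is to argue entirely in the language of perfect colourings (equitable partitions) of the binary hypercube, as set up just before \Cref{flaass1}: a CR code with $\rho=1$ is a perfect $2$-colouring, and the object of the hypothesis is an equitable partition of the cube into three classes $V_1,V_2,V_3$ whose quotient matrix is the given $M$. Here $n:=a+b=c+d$ is forced, because all row sums of $M$ must equal the dimension of the cube; indeed each row of $M$ sums to $2a+2b=2c+2d=2n$, so the given colouring lives on $\F_2^{2n}$. I would first record the single place the hypothesis $c\ge a+k$ is used: it guarantees that
\[
t\;:=\;c-a-k\;\ge\;0,
\]
and the new code will live in $\F_2^{N}$ with $N=2n+t$; one checks at once that both target rows sum to $a-k+2b+c=N$, so this is the only consistent dimension.

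The construction I propose is to pass to the product cube $\F_2^{N}=\F_2^{2n}\times\F_2^{t}$, to let $E,O\subseteq\F_2^{t}$ be the even- and odd-weight vectors (the two classes of the bipartite perfect $2$-colouring of the cube $\F_2^{t}$, whose defining feature is that flipping any one of the $t$ new coordinates interchanges $E$ and $O$), and to declare
\[
B\;=\;(V_1\times E)\,\cup\,(V_2\times O),\qquad W\;=\;\F_2^{N}\setminus B .
\]
When $t=0$ this degenerates to $B=V_1$, $W=V_2\cup V_3$, i.e. to simply fusing colours $2$ and $3$ and keeping colour $1$; the role of the $t$ extra coordinates is precisely to repair the degree defect that this naive merge produces when $c>a+k$. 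This colouring is the natural partner of the splitting operation of \Cref{flaass1}: there two classes are created out of one, here two classes are fused into one.

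To prove $(B,W)$ is equitable with the asserted matrix I would verify the neighbour counts by a short case analysis over the product structure, splitting each neighbour of a vertex $(z,s)$ into a $z$-flip (governed by the row of $M$ indexed by the colour of $z$, with $s$ fixed) and an $s$-flip (which fixes $z$ but swaps the $E/O$ parity of $s$). For a black vertex in $V_1\times E$ the $z$-flips landing in $B$ are exactly the $M_{1,1}=a-k$ neighbours that stay in $V_1$ with the same even $s$, while every $s$-flip leaves $V_1\times E$; the symmetric count, using $M_{2,2}=a-k$, holds for $V_2\times O$. Hence every black vertex has exactly $a-k$ black neighbours, the $(1,1)$-entry of the target. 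For the white vertices I would treat the three types $V_1\times O$, $V_2\times E$ and $V_3\times\F_2^{t}$ separately and show each has exactly $c$ black neighbours.

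The step I expect to be the crux is this white computation, since it is where the matrix symmetry $M_{1,2}=M_{2,1}=a+k$, $M_{1,3}=M_{2,3}=2b$, $M_{3,1}=M_{3,2}=c$ and the parity gadget must conspire. A white vertex of type $V_1\times O$ sees $M_{1,2}=a+k$ black $z$-neighbours (those in $V_2$, whose $s$ is odd) and, from its $t$ $s$-flips, exactly the $t$ neighbours in $V_1\times E\subseteq B$, for a total $(a+k)+t=c$; type $V_2\times E$ is symmetric and again yields $c$; and a white vertex of type $V_3\times\F_2^{t}$ has no black $s$-neighbour (there is no $V_3$ part in $B$), while its black $z$-neighbours are the $M_{3,1}=c$ vertices in $V_1$ when $s$ is even and the $M_{3,2}=c$ vertices in $V_2$ when $s$ is odd, i.e. $c$ in either case. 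Thus all three white types have black-degree $c$, so $W$ is a single class, and the remaining entries $2b+c=N-(a-k)$ and $2d+2c-a-k=N-c$ follow from $a+b=c+d=n$ and $N=2n+t$. Finally, to justify calling the result a CR code rather than merely a perfect colouring, I would note $b_0=2b+c\neq0$ and $c_1=c\neq0$, so the two classes are genuinely the distance cells $C=B$ and $C(1)=W$ of a code with $\rho=1$.
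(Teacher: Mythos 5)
Your proof is correct, and every step checks out: the row-sum argument pinning down the dimensions ($2n$ for the hypothesis, $N=2n+t$ with $t=c-a-k$ for the conclusion), the parity-gadget construction $B=(V_1\times E)\cup(V_2\times O)$, and the five-way case analysis, including the identity $2d+2c-a-k=a+2b-k$ (equivalent to $a+b=c+d$) needed to reconcile your count $a-k+2b$ of white neighbours of white vertices with the stated $(2,2)$-entry. Note that the paper itself contains no proof of \Cref{flaass2} — it is quoted from \cite{FDF2} — so there is nothing to compare against line by line; your construction (append $t$ coordinates coloured by parity and use that parity to fuse the two symmetric classes, the inverse of the splitting operation of \Cref{flaass1}) is precisely the merging construction of Fon-Der-Flaass, and it is essentially forced, since the dimension increment $t$ is dictated by the row sums. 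The one assertion you make without justification is $c\neq 0$ and $2b+c\neq 0$ at the very end: this does not follow from the matrix alone, but it does follow in one line from nondegeneracy, because if $c=0$ then double-counting edges between $V_1$ and $V_3$ forces $b=0$ as well, leaving $V_3$ disconnected from $V_1\cup V_2$ and contradicting connectivity of the hypercube when all three classes are nonempty; adding that remark would make the argument completely airtight.
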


In connection with condition (\ref{divisibility})
a natural question arises \cite{FDF2}: {\em to find the value $a^*(b,c)$ for all pairs $b,c$,
satisfying the condition (\ref{divisibility})}. Here under $a^*(b,c)$ we mean the minimum
value, such that there exists a code with $\IA = \{b; c \}$, if and only
if $a\geq a^*(b,c)$. There are lower and upper bounds for the value $a^*(b,c)$ (see \cite{FDF2}
and references there). The next statements gives the best known such bounds.

\begin{theorem}[\protect{\cite{FDF2}}]\label{lowerbounds}
~
\begin{itemize}
\item[(i)] If $c<b<2c$, then
\[
a^*(b,c) \geq  \frac{1}{2} + \sqrt{c(b-c) +\frac{1}{4}} - (b-c).
\]
\item[(ii)] If $2c<b<2c + \sqrt{3c-2}$, then $a^*(b,c) \geq 1$.
\end{itemize}
\end{theorem}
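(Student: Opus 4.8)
The plan is to argue entirely in the language of perfect $2$-colorings of $H(2,n)$ introduced just before the statement, where the black class is $C$ and the intersection matrix is $\left(\begin{smallmatrix}a&b\\c&d\end{smallmatrix}\right)$ with $a+b=c+d=n$. Counting black–white edges from both sides gives color densities $c/(b+c)$ and $b/(b+c)$, and the centered indicator $\mathbf 1_C-\tfrac{c}{b+c}$ is an eigenvector of the adjacency operator with eigenvalue $\theta=a-c$, so $a-c=n-2j$. I will not need the spectrum for the inequalities, but it is what makes the distance-$2$ color counts rigid, and rigidity is the engine of both parts. A useful first move is to record that, writing $d=n-c=a+(b-c)$, part (i) is \emph{equivalent} to $d(d-1)\ge c(b-c)$: substituting $a=d-(b-c)$ into the claimed bound and completing the square turns it into $d\ge \tfrac12+\sqrt{c(b-c)+\tfrac14}$, i.e. $d(d-1)\ge c(b-c)$. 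The role of the hypothesis $b<2c$ is only to guarantee that this is the effective (positive) lower bound on $a$; the inequality $d(d-1)\ge c(b-c)$ itself will hold whenever $b>c$.

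For (i) I would fix a \emph{white} vertex $w$, with its $c$ black and $d=n-c$ white neighbors, and double count. Each vertex $u$ at Hamming distance $2$ from $w$ has exactly two common neighbors with $w$; let $\ell(u)$ and $m(u)=2-\ell(u)$ count the black and white ones. Summing $\binom{m(u)}{2}$ over all $\binom n2$ vertices at distance $2$ picks out exactly the $u$ with both common neighbors white, which are indexed by the pairs of white neighbors of $w$; hence $\#\{u:m(u)=2\}=\binom d2$, and in particular $\binom d2\ge \#\{u\text{ black}:m(u)=2\}$. Counting paths $w\!-\!z\!-\!u$ through white $z$ gives $\sum_{u\text{ black, }d(u,w)=2}m(u)=dc$, through black $z$ gives $\sum_{u\text{ black}}\ell(u)=ca$, so $2N_2^B(w)=c(a+d)$.

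With $x_k=\#\{u\text{ black},\,d(u,w)=2,\,m(u)=k\}$ this yields $x_1+2x_2=dc$ and $x_0+x_1+x_2=\tfrac{c(a+d)}2$, whence $x_2-x_0=\tfrac{c(d-a)}2=\tfrac{c(b-c)}2$ (using $d-a=b-c$). Therefore $x_2\ge \tfrac{c(b-c)}2$, and combined with $\binom d2\ge x_2$ this gives $d(d-1)\ge c(b-c)$, i.e. exactly part (i). This part I expect to go through cleanly, with no gaps.

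For (ii) I would argue by contradiction, assuming $a^*(b,c)=0$, i.e. a coloring with $a=0$; then $b=n$ and $C$ is an independent set in which every white vertex has precisely $c$ black neighbors. Fixing a white $w$ with black-neighbor directions $S$ ($|S|=c$) and white-neighbor directions $T$ ($|T|=n-c$), independence forces every black vertex at distance $2$ from $w$ to have the form $w+e_p+e_q$ with $p,q\in T$, and for each fixed $p\in T$ the number of such black $w+e_p+e_q$ equals $c$ (these are the black neighbors of the white vertex $w+e_p$). Thus the pairs $\{p,q\}\subset T$ with $w+e_p+e_q$ black form a $c$-regular graph on the $(n-c)$-set $T$, which already forces $n\ge 2c+1$, i.e. $b>2c$. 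The remaining refinement to $b<2c+\sqrt{3c-2}$ must come from a genuine second moment: counting triples of directions in $T$, equivalently common neighbors in these regular graphs, equivalently the black/white patterns on the $3$-faces through $w$, and using nonnegativity (and integrality) of the resulting local count to force $(n-2c)^2\ge 3c-2$, contradicting the hypothesis. I expect this distance-$3$ variance computation to be the main obstacle, precisely because the first-moment distance distributions from a black or a white vertex are completely determined (one computes $N_2^B=\tfrac{n(c-1)}2$ and $N_3^B=\tfrac{nc(n-c)}6$) and are never violated in this range; hence a strictly second-order quantity, controlling how the regular graphs attached to adjacent white vertices overlap (the covariance of two co-degrees), is what carries the $\sqrt{3c-2}$ and is the delicate step to pin down.
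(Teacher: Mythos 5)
The paper itself contains no proof of this theorem---it is quoted verbatim from Fon-Der-Flaass \cite{FDF2}---so your proposal can only be judged on its own terms. Part (i) of your proposal is complete and correct: the double count around a white vertex, the identity $x_2-x_0=\tfrac{c}{2}(b-c)$, the cap $x_2\le\binom{d}{2}$, and the algebraic equivalence of $d(d-1)\ge c(b-c)$ with the stated bound on $a$ are all sound. The genuine gap is part (ii): after correctly reducing to the nonexistence of a coloring with $a=0$ and deriving $b\ge 2c+1$, you only assert that some distance-$3$ second-moment count ``must'' force $(n-2c)^2\ge 3c-2$, and you yourself flag this as the delicate step you have not pinned down. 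As submitted, the inequality that carries the entire content of (ii)---the $\sqrt{3c-2}$---is never derived, so part (ii) is not proven.

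The gap is fillable, essentially along your lines, and with less variance machinery than you fear, provided you base the count at a \emph{black} vertex instead of your white $w$. Suppose $a=0$, fix $v\in C$, and define the graph $H_v$ on the $n$ coordinate directions by letting $\{i,j\}$ be an edge when $v+e_i+e_j$ is black. All neighbors of $v$ are white, and the black neighbors of the white vertex $v+e_i$ are $v$ together with the vertices $v+e_i+e_j$ with $\{i,j\}\in H_v$, so $H_v$ is $(c-1)$-regular. Every white vertex at distance $2$ from $v$ has all $c$ of its black neighbors at distance $3$, and every black vertex at distance $3$ has exactly $3$ neighbors at distance $2$, all white by independence; counting incidences gives $N_3^B(v)=\tfrac{c}{3}\bigl(\binom{n}{2}-\tfrac{n(c-1)}{2}\bigr)=\tfrac{cn(n-c)}{6}$, which is your first-moment value. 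Moreover, each such black vertex $v+e_i+e_j+e_k$ yields a \emph{distinct independent} $3$-set $\{i,j,k\}$ of $H_v$ (again by independence of $C$), while inclusion--exclusion in a $(c-1)$-regular graph shows that the number of independent $3$-sets equals $\binom{n}{3}-\tfrac{n(c-1)}{2}(n-2)+n\binom{c-1}{2}-T$, where $T\ge 0$ is the number of triangles of $H_v$. Comparing the two counts and multiplying by $6/n$ gives $(n-1)(n-2)-3(c-1)(n-2)+3(c-1)(c-2)-c(n-c)\ge 6T/n\ge 0$, and the left-hand side is identically $(n-2c)^2-(3c-2)$. Hence any $a=0$ coloring with $b=n>2c$ satisfies $n\ge 2c+\sqrt{3c-2}$, the contradiction you wanted; the nonnegativity of the triangle count $T$ is exactly the ``second-order slack'' you anticipated. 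Without this (or an equivalent) computation being carried out, your proposal establishes only half of the theorem.
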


For a given nonzero integers $x,y$, such that $x+y=2^k-1$, and one
of them odd with $\ell$ consecutive ones in its binary presentation
(where $1\leq \ell <k$), define $z(x,y) = k-1$.

\begin{theorem}[\protect{\cite{FDF2}}]\label{upperbounds}
~
\begin{itemize}
\item[(i)] $a^*(2b+c,c) \leq \max (0, a^*(b,c) - 1)$.
\item[(ii)] Let $b$ and $c$ satisfy the condition (\ref{divisibility}). Then
$$a^*(c,b) = a^*(b,c) + b - c.$$
\item[(iii)] Let $b$ and $c$ satisfy the condition (\ref{divisibility}) and $b \geq c$, \,$m = (b,c)$,\,$b=m(2b'+1)$, $c=m(2c'+1)$.

If $c'=0$, then $a^*(b,c) = 0$. Otherwise
\[
a^*(b,c) \leq \max(0,c-m(z(b',c')+1)),
\]
\end{itemize}
\end{theorem}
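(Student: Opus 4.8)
The plan is to handle the three parts with three different tools: (ii) is a pure symmetry, (i) is a single application of the two construction lemmas, and (iii) is an induction that feeds on (i) and (ii) but whose hardest cases need a separate ``run–peeling'' construction. Throughout I identify a binary CR code with $\rho=1$ with its intersection matrix, writing $a_0=a$, $b_0=b$, $c_1=c$, $a_1=d$, so $a+b=c+d=n$, and I use that $a^*(b,c)$ is by definition the least possible $a_0$ for a code with array $\{b;c\}$, so that producing one code realising a value bounds $a^*$ above. For \emph{part (ii)} I would pass to the complementary colouring: swapping the two colours sends the matrix $(a,b,c,d)$ to $(d,c,b,a)$, i.e.\ a code with array $\{c;b\}$ whose first intersection number is $d=n-c=a+b-c$. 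Applying this to an \emph{optimal} code for $\{b;c\}$ (with $a=a^*(b,c)$) gives a code for $\{c;b\}$ with first number $a^*(b,c)+b-c$, so $a^*(c,b)\le a^*(b,c)+b-c$; complementing an optimal code for $\{c;b\}$ yields the reverse inequality, and equality follows. The divisibility hypothesis~(\ref{divisibility}) is used only to ensure both $a^*$'s are finite, so that the equality is between genuine numbers.

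For \emph{part (i)}, the key observation is a matching lemma. If $a=a^*(b,c)\ge 1$, the subgraph that the code induces on the ``black'' vertices is $a$-regular, and, being a subgraph of the hypercube, it is bipartite with the two weight-parity classes as its parts; a regular bipartite graph of positive degree has equal parts and a perfect matching, so the code can be partitioned into $1$-faces. I would then apply \Cref{flaass1} with $k=1$ to obtain a $\rho=2$ code and \Cref{flaass2} with $k=1$ to project back to a $\rho=1$ code with array $\{2b+c;c\}$ and first number $a-1=a^*(b,c)-1$, giving $a^*(2b+c,c)\le a^*(b,c)-1$. When $a^*(b,c)=0$ the same chain with $k=0$ (legal since $c\ge 0$) produces a $\{2b+c;c\}$ code with first number $0$, matching the $\max(0,\cdot)$. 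The single point to check is the room condition $c\ge a^*(b,c)+1$ of \Cref{flaass2}; this is automatic once (iii) is proved (it gives $a^*(b,c)\le c-1$), and it is preserved as an invariant inside the induction for (iii), so it is safe to carry along.

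For \emph{part (iii)} I would first reduce to the coprime case. By the scaling construction $C^{\times m}$ of \Cref{prop:5.10} a code for $\{\hat b;\hat c\}$ with first number $\alpha$ yields one for $\{m\hat b;m\hat c\}=\{b;c\}$ with first number $m\alpha$, so $a^*(b,c)\le m\,a^*(\hat b,\hat c)$; since $z$ is unchanged by scaling and the target equals $m\max(0,\hat c-(z+1))$, it suffices to treat $m=1$, i.e.\ odd coprime $b>c\ge 3$ with $b+c=2^{k+1}$ (equivalently $b'+c'=2^k-1$, $z=k-1$), and prove $a^*(b,c)\le\max(0,c-k)$. The \emph{base case} $c'=0$ means $c\mid b$ with $b/c=2^{K}-1$; then $\{b;c\}$ is realised with first number $0$ by applying $C^{\times c}$ (\Cref{prop:5.10}) to the length-$(2^{K}-1)$ Hamming code, which has array $\{2^{K}-1;1\}$ and first number $0$. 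For the inductive step I would use part (i) in reverse: writing $b=2\beta+c$ with $\beta=(b-c)/2=2^{k}-c$ (odd, coprime to $c$, and $\beta+c=2^{k}$), part (i) gives $a^*(b,c)\le\max(0,a^*(\beta,c)-1)$. If $\beta\ge c$ the smaller coordinate is still $c$, so the hypothesis at level $k-1$ gives $a^*(\beta,c)\le\max(0,c-(k-1))$ and hence $a^*(b,c)\le\max(0,c-k)$; if $\beta<c$ one first swaps by part (ii), applies the hypothesis to $(c,\beta)$, and translates back, which again closes as long as $\beta\ge k-1$.

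The \textbf{main obstacle} is the residual range $\beta<k-1$, i.e.\ $c>2^{k}-k+1$, which is exactly the case in which $c'=\tfrac{c-1}{2}$ is a long run of consecutive ones (the extremal pairs $(2^{k}+1,2^{k}-1)$ and their neighbours). Here a single reverse step of part (i) credits only $1$ to the reduction while the needed reduction is $k-\beta$, and indeed iterating (i) and (ii) alone only reaches $\sim 2^{k}-3$ rather than the claimed $2^{k}-1-k$; moreover \Cref{flaass2} cannot help, because for these parameters its room condition $c\ge a+k$ forces the face dimension to be $1$. The crux is therefore to peel the \emph{entire} run of $\ell$ consecutive ones in one macro-step, lowering the first intersection number by $\ell$ at once — this is precisely where the hypothesis ``$\ell$ consecutive ones'' and the value $z=k-1$ enter, and it is the step I expect to require a dedicated construction (most naturally an explicit correlation-immune function built by a recursive doubling that appends a full run at once, whose intersection numbers are then checked directly). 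Once this extremal family realising $a=c-k$ is in hand, the remaining induction is routine bookkeeping over the binary expansion of $c'$, short runs being absorbed by the easy case above.
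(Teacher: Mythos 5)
The paper itself contains no proof of this theorem: it is quoted from Fon-Der-Flaass \cite{FDF2}, so your proposal has to stand on its own, and it does so only partially. Parts (ii) and (i) are essentially sound. Complementation indeed sends the matrix $(a,b;c,d)$ to $(d,c;b,a)$, and applying it to optimal codes in both directions gives the equality in (ii). Your matching observation for (i) is correct and clean: the subgraph induced on the code is an $a$-regular bipartite subgraph of the cube, hence has a perfect matching when $a\geq 1$, i.e.\ a partition into $1$-faces, so \Cref{flaass1} followed by \Cref{flaass2} with $k=1$ (and $k=0$ when $a^*(b,c)=0$) yields $a^*(2b+c,c)\leq\max(0,a^*(b,c)-1)$, subject to the room condition $c\geq a^*(b,c)+k$ of \Cref{flaass2}. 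Your plan for breaking the circularity between (i) and (iii) -- verify the room condition against the inductive bound rather than against the statement of (iii) -- is legitimate bookkeeping.

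The genuine gap is the one you flag yourself, and it is not a peripheral technicality but the core of part (iii). After reducing to coprime odd $b>c$ with $b+c=2^{k+1}$, your descent $b=2\beta+c$, $\beta=2^k-c$, through (i) and (ii) closes only when $\beta\geq k-1$. In the residual range $c>2^k-k+1$ the method provably falls short of the claimed bound: for $(b,c)=(9,7)$ one has $a^*(1,7)=a^*(7,1)+6=6$ by (ii), and the room condition $7\geq 6+1$ just barely permits (i), giving $a^*(9,7)\leq 5$, whereas the theorem asserts $a^*(9,7)\leq 7-(z+1)=4$; for $(b,c)=(2^k+1,2^k-1)$ the shortfall is $k-2$ and grows without bound. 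These extremal pairs are exactly where the theorem has content -- they are the parameters of interest for correlation immunity, compare \Cref{lowerbounds} and the length-$12$ code with $\IA=\{9;7\}$ discussed right after the theorem -- and they are the only place where the run-length hypothesis entering the definition of $z$ plays any role. Your proposal replaces this step by the expectation that ``a dedicated construction'' by recursive doubling exists; that construction is precisely the substance of part (iii) in \cite{FDF2} and is not supplied, so part (iii), and with it the unconditional form of (i) (whose room condition you derive from (iii)), remains unproved.
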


An optimal binary CR code of length $n=12$ with $\IA = \{9; 7 \}$ meeting the
bound (\ref{correlimmun}) was constructed in \cite{FDF3}.
In the same paper, it was proven also that the putative CR code
of length $n=12$ with $\IA = \{11; 5 \}$
does not exist.

\subsection{$q$-Ary linear CR codes with $\rho=2$}\label{sec:rho2}

The dual code of any linear two-weight code could be a CR
code with $\rho=2$. There are many different families of such codes,
and their classification is not finished (see \cite{cald} for a survey
of such codes).

The classification of linear CR codes with
covering radius $\rho=1$, enabled the classification of linear CR codes
with covering radius $\rho=2$, whose dual codes are
self-complementary.

\begin{theorem}[\protect{\cite{BRZ10}}]
Let $C=C(H)$ be a nontrivial $[n,k,d]_q$ code.
 Then, $C$ is CR with covering radius
$\rho=2$ and the dual code $C^{\perp}$ is self-complementary if and only if
its parity check matrix $H$ looks, up to equivalence, as follows:
\[
H=\left[
\begin{array}{cccc}
\,1 \,&\,\cdots \,&\,1\,\\
\, &\,\,M\,\,&\,\,\\
\end{array}
\right],
\]
where $M$ generates an equidistant code $E$ with the following
property: for any nonzero codeword $v\in E$, every symbol
$\alpha\in\F_q$, which occurs in a coordinate position of $v$,
occurs in this codeword exactly $n-\tilde{d}$ times, where
$\tilde{d}$ is the minimum distance of $E$. Moreover, up to
equivalence, $C$ is the extension of a CR code $C'$
with covering radius $\rho'=1$.
\end{theorem}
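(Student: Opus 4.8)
The plan is to route everything through two facts already available: that a CR code satisfies $\rho=s$ (\Cref{equivs}(iii)), where $s$ is the number of nonzero weights of $C^\perp$, and the classification of $q$-ary linear CR codes with $\rho=1$ (\Cref{class-th-rho1}). The first observation is that ``$C^\perp$ self-complementary'' is nothing but $\mathbf{1}\in C^\perp$: since $0\in C^\perp$, self-complementarity gives $\mathbf{1}=0+\mathbf{1}\in C^\perp$, and conversely $\mathbf{1}\in C^\perp$ makes $c+\mathbf{1}\in C^\perp$ for every $c$. This single condition does double duty. On one hand it lets me take $\mathbf{1}$ as one row of a parity-check matrix, so that (up to equivalence) $H$ has first row $\mathbf{1}$ and remaining rows forming a matrix $M$ with $E=\langle M\rangle$ a complement of $\langle\mathbf{1}\rangle$ in $C^\perp$. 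On the other hand, $\mathbf{1}\in C^\perp$ says every codeword of $C$ has zero coordinate-sum, so deleting any coordinate and reconstructing it as the overall parity exhibits $C$ as the extension $(C')^{*}$ of the punctured code $C'$. Thus both the stated shape of $H$ and the ``moreover'' clause are encoded in $\mathbf{1}\in C^\perp$ from the start; the content lies in pinning down $E$ and in the complete regularity of $C'$.

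For the forward implication I would start from $\rho=2$, which by \Cref{equivs}(iii) gives $s=2$, so $C^\perp$ has exactly two nonzero weights $w_1<w_2$; since $\mathbf{1}$ has weight $n$, necessarily $w_2=n$. Writing $\nu_\beta(v)$ for the number of coordinates of $v$ equal to $\beta$, the point is that the profile $\{\nu_\beta(v)\}_\beta$ is invariant under $v\mapsto v+\alpha\mathbf{1}$. Reading the two-weight condition along each coset $v+\langle\mathbf{1}\rangle$ then forces, for every nonzero $v\in C^\perp\setminus\langle\mathbf{1}\rangle$, that each symbol occurs $0$ or $n-w_1$ times; this is precisely the balanced property of the statement with $\tilde{d}=w_1$. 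To complete this direction I would show $C'$ is CR with $\rho'=1$ (covering radius $\rho(C')=\rho(C)-1=1$ together with an equitable distance partition, \Cref{Def2}), feed $C'$ into \Cref{class-th-rho1} to obtain $H'=((H^q_m)^{\times\ell})^{+u}$, and read off $M=[\,H'\mid\mathbf{0}\,]$; the equidistant, balanced structure of $E=\langle M\rangle$ is then an explicit computation from this form, the appended parity coordinate being exactly what balances the zero-symbol count.

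The reverse implication runs the same chain backwards. From $\mathbf{1}\in C^\perp$ I immediately get that $C^\perp$ is self-complementary and that $C=(C')^{*}$. Using the balanced equidistant hypothesis on $E$ directly, the weight of $v+\alpha\mathbf{1}$ for $v\in E$ equals $n-\nu_{-\alpha}(v)\in\{\tilde{d},n\}$, so $C^\perp=\langle\mathbf{1}\rangle+E$ is two-weight and $s=2$; by \Cref{params}(iii) this already gives $\rho\le 2$. To promote this to ``$C$ is CR with $\rho=2$'' I would recognise $C'$, from the form of $M$, as a member of the $\rho'=1$ family of \Cref{class-th-rho1}, and then invoke that the extension of such a code is again CR with covering radius larger by one.

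The main obstacle is the $q$-ary control of complete regularity under puncturing and extension. The clean binary tools---puncturing an even CR code stays CR, $\rho(C^{*})=\rho(C)+1$, and the outer-distribution criterion of \Cref{EstBro}---are available only for $q=2$, so the crux is to secure their analogues in the present situation: that under the balanced equidistant structure puncturing a $\rho=2$ CR code yields a $\rho=1$ CR code, and conversely that extending the resulting $\rho=1$ CR code restores an equitable distance partition of covering radius exactly $2$. Simultaneously controlling the covering radius and the equitability---rather than only the external-distance datum $s=2$, which by itself merely bounds $\rho$---is where the real work sits.
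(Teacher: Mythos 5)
The paper itself gives no proof of this theorem---it is quoted from \cite{BRZ10}---so your proposal can only be measured against what the statement requires, and there it falls short in both directions. Your forward direction does contain the right central idea: $s=\rho=2$ from \Cref{equivs}(iii), the weight $n$ supplied by $\mathbf{1}\in C^\perp$, and the symbol count along cosets $v+\langle\mathbf{1}\rangle$ giving $\nu_\beta(v)\in\{0,n-w_1\}$ for every $v\in C^\perp\setminus\langle\mathbf{1}\rangle$. But this is \emph{not} yet ``precisely the balanced property'': that property also asserts that $E$ is equidistant with $\tilde{d}=w_1$, and an arbitrary complement of $\langle\mathbf{1}\rangle$ can fail it. In the $[4,2,3;2]_q$ code of \Cref{prop:dual} with $q=7$, the word $\mathbf{1}+(0,1,\xi_i,\xi_j)$ has full weight, so the complement it generates is equidistant with $\tilde{d}=n$ and violates the balance condition; one must take for $E$ the words of $C^\perp$ vanishing at the extension coordinate, and only then do equidistance and balancedness follow from your symbol count. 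Moreover, your claim that balancedness is ``an explicit computation'' from the form $H'=((H^q_m)^{\times\ell})^{+u}$ of \Cref{class-th-rho1} is false: for the ternary Hamming code ($\ell=1$, $u=0$) the corresponding $E$ is equidistant but not balanced (a nonzero word of the ternary simplex-plus-zero-column code has the symbol $1$ three times and $0$ twice). Balancedness is a consequence of the complete regularity of $C$ itself, not of the $\rho'=1$ classification.

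The fatal gap is in the reverse direction, and it is the one you flag yourself. From the hypotheses you correctly get that $C^\perp$ is two-weight with weights $\tilde{d}$ and $n$, hence $s=2$ and $\rho\leq 2$; to finish you propose to invoke that the extension of the recognised $\rho'=1$ CR code ``is again CR with covering radius larger by one''. That statement is false in general: the ternary Hamming code is CR with $\rho=1$, yet its extension is not CR---its dual acquires a third nonzero weight, so $s^*\geq 3>\rho^*$, consistent with \Cref{prop:3.3} (the extended $q$-ary Hamming code is UP only when $d^*=4$) and with the guarded hypothesis ``if $C^*$ is UP'' in \Cref{exten-up}. So the balanced hypothesis must do essential work beyond yielding $s=2$. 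When $d\in\{3,4\}$ there is a cheap finish---$s=2=e+1$ gives UP in the Goethals--van~Tilborg sense by \Cref{equivs}(ii), hence CR---but $d=2$ genuinely occurs under the hypotheses (take $q=2$ and $M$ a doubled Hamming parity-check matrix together with two zero columns: $E$ is equidistant and balanced, $H$ has repeated columns, and the resulting $[2^{m+1},2^{m+1}-m-2,2;2]$ code is CR), and there the equitability of the distance partition requires a direct argument, e.g.\ on the syndrome quotient multigraph. Since you explicitly defer exactly this step (``where the real work sits''), the ``if'' direction, and with it the stated equivalence, is not established.
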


\begin{corollary}[\protect{\cite{BRZ10}}]\label{enum}
The known CR codes with covering radius $\rho=2$ whose dual codes are
self-complementary are the following ones.
\begin{enumerate}[label=(F.\arabic*),resume*=family]
\item The binary extended perfect $[n,k,4;2]_2$
code ${\mathcal H}^*_{m}$ of length $n = 2^m$, where $k=n-m-1$ and
$m \geq 2$ with
$$
\IA = \{n,n-1;1,n\}.
$$

\item The extended perfect $[n,k,4;2]_q$
code ${\mathcal H}^*_{m}$ of length $n=q+2$ with $k=q-1$, where
$q=2^r \geq 4$, and $m=2$ \cite{bush,del1} (the family $TF1$ in \cite{cald}) with
$$
\IA = \{(q+2)(q-1),q^2-1;1,q+2\}.
$$

\item The $[n,k,3;2]_q$ dual of a difference matrix code with
$$
\IA = \{n(q-1),n-1;1,n(q-1)\}.
$$
It has length
$n = q^m$, dimension $k = n - (m+1)$ and parity check matrix $D_m$,
where $m \geq 1$, and $q \geq 3$ is any prime power (the code
generated by the matrix $D_m$ has been given in \cite{Sem1}). The
complementary code of this code is the Hamming code ${\mathcal
H}_{m}$.

\item The $[n,n-2,3;2]_q$ dual of a latin-square code of length
$n$,  with parity check matrix $H$, obtained from $D_1$ by deleting
any $q-n$ columns, where $3 \leq n \leq q$ and $q \geq 3$ is any
prime power \cite{del1}. The intersection array is
$$
\IA = \{n(q-1),(q-n+1)(n-1);1,n(n-1)\}.
$$

\item A $[n=q(q-1)/2,k=n-3,4;2]_q$ code
for $q=2^r \geq 4$ \cite{del1} with
$$
\IA = \{(q-1)n,(q-2)(q+1)(q+2)/4; 1, q(q-1)(q-2)/4\}.
$$
The complementary of this code belongs to the family
$TF1^d$, i.e., it is the projective dual code of a code in the family $TF1$ in \cite{cald}).

\item A $[n = 1 + (q+1)(h-1),k=n-3,4]_q$ code, where
$1 < h < q$ and $h$ divides $q$, for $q=2^r \geq 4$ (the family
$TF2$ in \cite{cald}) with
$$
\IA = \{(q-1)n,(q+1)(h-1)(q-h+1); 1,(h-1)n\}.
$$

\item A $[n = q(q-h+1)/h,k=n-3,4]_q$ code, where
$1 < h < q$ and $h$ divides $q$, for $q=2^r \geq 4$ with
$$
\IA = \{(q-1)n,(q+1)(q-h)(q(h-1)+h)/h^2; 1, q(q-h)(q-h+1)/h^2 \}.
$$

The
complementary of this code belongs to the family $TF2^d$ \cite{cald}.
\end{enumerate}
\end{corollary}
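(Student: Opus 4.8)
The plan is to read this corollary off the structural theorem that immediately precedes it, turning the enumeration into a catalogue of equidistant codes. By that theorem, a nontrivial $[n,k,d]_q$ code $C$ is CR with $\rho=2$ and self-complementary dual exactly when, up to equivalence, its parity check matrix is $H=\left[\begin{smallmatrix}\mathbf{1}\\ M\end{smallmatrix}\right]$ with $M$ a generator matrix of an equidistant (hence one-weight) code $E$ all of whose nonzero codewords are \emph{equireplicate}, each occurring symbol appearing exactly $n-\tilde d$ times; moreover $C$ is then the extension $(C')^*$ of a CR code $C'$ with $\rho'=1$. Two facts make the rest almost automatic. The all-ones top row forces $\mathbf{1}\in C^\perp$, so $C^\perp$ is self-complementary by construction; and since $C$ is CR we have $s=\rho=2$ by \Cref{equivs}(iii), so $C^\perp=\langle\mathbf{1},E\rangle$ is a \emph{two-weight} code containing $\mathbf{1}$. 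Thus the whole problem reduces to listing the known self-complementary two-weight codes $C^\perp$, equivalently the one-weight equireplicate codes $E$.

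First I would organise the list by the combinatorial object generating $E$. Simplex codes, extended or replicated, give the binary and $q$-ary extended perfect families, whose complementary code is a Hamming code; the difference matrices $D_m$ of order $q$ give the dual-of-difference-matrix family, again complementary to a Hamming code; deleting $q-n$ columns of $D_1$ produces the latin-square dual family; and the projective two-weight families denoted $TF1$ and $TF2$ in the survey \cite{cald} account for the remaining four entries. For each I would record $\Rank M$, giving the stated dimension $k=n-1-\Rank M$, and read the minimum distance off $E$ and $\mathbf{1}$: it is $4$ except for the difference-matrix and latin-square duals, where it stays $3$.

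Next I would compute each intersection array. The cleanest route exploits $C=(C')^*$: \Cref{class-th-rho1}, together with the constructions $I(u)$ and $II(\ell)$, pins down the $\rho'=1$ code $C'$ and hands us its array $\{(q-1)\ell\,n_{n-k};\ell\}$, and the extension rule then fixes $b_0,b_1,c_1,c_2$ for $C$. Equivalently, one may compute the packing parameters $\beta_0,\beta_1,\beta_2$ of $C$ from the two dual weights and recover $|C|$ and the array through \Cref{packing} and the associated Lloyd polynomial; the two methods should agree and give a useful cross-check on each displayed $\IA$.

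The hard part will be the per-family work for the genuinely projective families (the $TF1$, $TF2$ entries): there the equireplicate verification and, above all, the computation of $c_2$, the number of distance-one neighbours of a vertex at distance two, are no longer uniform, since the two weights and their frequencies depend on the parameters $h$ and $q$, so each array must be produced from the specific weight enumerator of $E$ recorded in \cite{cald}. Finally, because the statement lists \emph{known} codes rather than giving a classification, I would make the completeness claim relative to the current census of self-complementary two-weight codes, keying it to \cite{cald}, rather than attempting an absolute proof of exhaustiveness.
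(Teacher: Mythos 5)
Your proposal is correct and follows essentially the same route the paper takes: the survey presents this corollary as a direct consequence of the immediately preceding classification theorem from \cite{BRZ10}, i.e., one reads off the structure $H=\left[\begin{smallmatrix}\mathbf{1}\\ M\end{smallmatrix}\right]$ with $E$ equidistant and equireplicate, observes that $C$ is the extension of a classified $\rho'=1$ code so that the enumeration reduces to the known census of such one-weight codes (equivalently, self-complementary two-weight duals, keyed to \cite{cald}), and then verifies parameters and intersection arrays family by family. Your reading of the completeness claim as relative to the \emph{known} two-weight codes, rather than an absolute classification, is also exactly how the statement is intended.
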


Some of these kinds of codes are self-dual \cite{BRZ10}.

\begin{theorem}[\protect{\cite{BRZ10}}]\label{selfduality}
Let $C'\subset\F_{q^r}^n$ be the lifted code from a Hamming
$[n,n-m,3]_q$ perfect code ${\mathcal H}_m$. Then,
\begin{enumerate}[label=(F.\arabic*),resume*=family]
\item  for any $r$, $C'$ is a CR code and, moreover, $C'$ is self-dual
if and only if ${\mathcal H}_m$ is a ternary $[4,2,3]_3$ Hamming
code.
\end{enumerate}
\end{theorem}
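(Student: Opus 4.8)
The plan is to treat the two assertions of the statement separately, since the complete regularity is already available and only the self-duality equivalence requires work. The complete regularity of $C'$ for every $r$ is exactly the content of \Cref{theo:2.2}: it says that the lifted code $C_r(H^q_m)$ is a CR $[n,n-m,3;\rho]_{q^r}$ code with $\rho=\min\{r,m\}$. So I would simply invoke that result and concentrate entirely on the claim that $C'$ is self-dual if and only if $\mathcal{H}_m$ is the ternary $[4,2,3]_3$ Hamming code.

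The first step for self-duality is purely dimensional. The code $C'$ has length $n=(q^m-1)/(q-1)$ and dimension $n-m$ over $\F_{q^r}$, because $H^q_m$ has full rank $m$ over $\F_q$ and the rank is unchanged under the extension $\F_q\subseteq\F_{q^r}$. If $C'=(C')^{\perp}$, then $\dim C'=\dim(C')^{\perp}=n-\dim C'$, forcing $\dim C'=n/2$, that is $n-m=n/2$, i.e.\ $n=2m$. Thus the entire \emph{only if} direction reduces to solving the Diophantine equation $(q^m-1)/(q-1)=2m$, equivalently $1+q+\cdots+q^{m-1}=2m$, over prime powers $q$ and integers $m\ge 2$. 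For $m=2$ this reads $q+1=4$, giving $q=3$; for $m\ge 3$ the left-hand side is at least $2^m-1$ (attained at $q=2$, and strictly larger for $q>2$), and $2^m-1>2m$ for all $m\ge 3$, so there are no further solutions. This pins $(q,m)$ to $(3,2)$ uniquely, i.e.\ $\mathcal{H}_m$ must be the ternary $[4,2,3]_3$ Hamming code (the tetracode).

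For the converse \emph{if} direction I would verify directly that the lift of the tetracode is self-dual over $\F_{3^r}$ for every $r$. The tetracode is self-dual over $\F_3$: taking the parity-check matrix
\[
H=\begin{pmatrix}1&0&1&1\\0&1&1&2\end{pmatrix},
\]
whose columns represent the four points of the projective line over $\F_3$, one checks $HH^{\top}=0$ over $\F_3$, so $H$ is simultaneously a generator and a parity-check matrix. Over $\F_{3^r}$ the lifted code $C'$ is the $\F_{3^r}$-null space of $H$, of dimension $4-2=2=n/2$, and the rows of $H$ still generate it, since $H$ has rank $2$ and $HH^{\top}=0$ already holds over the prime subfield. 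Because every entry of $H$ lies in $\F_3$, the Gram matrix $HH^{\top}$ computed over $\F_{3^r}$ coincides with the one computed over $\F_3$ and is therefore zero; hence $C'\subseteq(C')^{\perp}$, and equality of dimensions yields $C'=(C')^{\perp}$. This holds for every $r$, which closes the equivalence.

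The computations here are short, so the only genuinely delicate points are the two persistence-under-extension facts: that $\operatorname{rank}(H^q_m)$, and hence $\dim C'$, is unchanged by passing from $\F_q$ to $\F_{q^r}$, and that the vanishing of the Euclidean Gram matrix persists from $\F_3$ to $\F_{3^r}$ — both hold precisely because the relevant entries live in the prime field. The step I would scrutinize most is the Diophantine one: one must be certain there is no sporadic coincidence $(q^m-1)/(q-1)=2m$ at small parameters, and this is settled cleanly by the explicit $m=1,2$ cases together with the inequality $2^m-1>2m$ for $m\ge 3$.
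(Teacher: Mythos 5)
Your proof is correct. Note that the paper itself gives no proof of this theorem --- it is a survey statement quoted from \cite{BRZ10} --- so your argument is a self-contained reconstruction rather than a parallel of anything in the text; it must be judged on its own, and it holds up. The CR claim is indeed exactly Theorem~\ref{theo:2.2} (with the trivial remark that for $r=1$ the lift is ${\mathcal H}_m$ itself, a perfect and hence CR code, since the paper's lifting construction is stated for $r\geq 2$). Your ``only if'' direction is clean: rank is invariant under field extension, so $\dim C'=n-m$, self-duality forces $n=2m$, and the equation $1+q+\cdots+q^{m-1}=2m$ has $(q,m)=(3,2)$ as its unique solution because the left side is minimized at $q=2$ and $2^m-1>2m$ for $m\geq 3$. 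The ``if'' direction correctly exploits that the Euclidean Gram matrix of a matrix with entries in $\F_3$ is the same whether computed over $\F_3$ or $\F_{3^r}$, so $HH^{\top}=0$ persists, giving a containment between $C'$ and $(C')^{\perp}$ that dimensions upgrade to equality. One small point you could make explicit: you verify self-duality for one particular parity-check matrix of the tetracode, whereas the theorem asserts it for any ternary $[4,2,3]_3$ Hamming code; this is harmless, since any two such codes differ by a coordinate permutation and column scalings by elements of $\F_3^{*}=\{\pm1\}$, whose squares are $1$, so the Gram matrix --- and hence self-duality over every $\F_{3^r}$ --- is unaffected, but a sentence saying so would close the loop.
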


\begin{theorem}[\protect{\cite{BRZ10}}]\label{prop:dual}
Let $\{0,1,\xi_2,
\ldots, \xi_{q-1}\}$ be the field $F_q$, where $q \geq 4$ is any prime power. Let the matrix $D_1$,
\[
D_1~=~\left[
\begin{array}{ccll}
~1~&~1~&~1~&~1\\
~0~&~1~&~\xi_i~&~\xi_j\\
\end{array}
\right],
\]
be a parity check matrix for the code $C$ and a generator matrix for
the code $C^{\perp}$, where  $\xi_i, \xi_j \in \F^*_q$ are two different
elements such that $\xi_i~+~\xi_j~+~1~=~0$. Then
\begin{enumerate}[label=(F.\arabic*),resume*=family]
\item  $C$, as well as $C^{\perp}$, is a linear self-complementary CR $[4,2,3;2]_q$
code with
$$ \IA = \{4\,(q-1), 3\,(q-3); 1, 12\}.$$

\item For the case $q=2^r \geq 4$, these two equivalent codes coincide: $C =
C^{\perp}$, i.e. $C$ is self-dual.
\end{enumerate}
\end{theorem}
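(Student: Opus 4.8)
The plan is to recognise both $C$ and $C^{\perp}$ as $q$-ary MDS codes of length $4$, dimension $2$ and minimum distance $3$, and then to read off complete regularity and the intersection array from the two-Latin-square family~\ref{Latin2}. First I would fix the data: $C^{\perp}$ is the row space of $D_1$ and $C=\ker D_1$. The four columns of $D_1$ are $(1,0)^{t},(1,1)^{t},(1,\xi_i)^{t},(1,\xi_j)^{t}$, whose second coordinates $0,1,\xi_i,\xi_j$ are pairwise distinct: the subscripts $i,j\geq 2$ force $\xi_i,\xi_j\notin\{0,1\}$, and $\xi_i\neq\xi_j$ by hypothesis. Hence every pair of columns is linearly independent, so $C$ has minimum distance at least $3$; as $\dim C=2$, Singleton forces $d=3$ and $C$ is $[4,2,3]_q$ MDS. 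The same distinctness shows every nonzero word $(a,a+b,a+b\xi_i,a+b\xi_j)$ of $C^{\perp}$ has weight at least $3$, so $C^{\perp}$ is MDS as well.

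Then I would invoke family~\ref{Latin2}, which asserts that any $q$-ary MDS code with these parameters (a two-Latin-square code) is completely regular with $\IA=\{4(q-1),3(q-3);1,12\}$; this applies to both $C$ and $C^{\perp}$ because $q$ is a prime power with $q\geq 4$ (so $q\geq 3$ and $q\neq 6$). The same conclusion for $C$ also follows from the classification theorem of \cite{BRZ10} above (codes of covering radius $2$ with self-complementary dual), since $D_1=[\,\mathbf{1}\,;\,M\,]$ with $M=(0,1,\xi_i,\xi_j)$ generating the equidistant $[4,1,3]_q$ code whose nonzero words $(0,b,b\xi_i,b\xi_j)$ carry four distinct symbols, each occurring exactly $n-\tilde d=1$ time; that theorem delivers $\rho=2$, complete regularity, and the self-complementarity of $C^{\perp}$ in one stroke. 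Were I to want $b_1=3(q-3)$ and $c_2=12$ from scratch, I would take $b_0=4(q-1)$, $c_1=1$, compute the cell sizes $|C|=q^2$, $|C(1)|=4(q-1)q^2$, $|C(2)|=(q-1)(q-3)q^2$, use the edge balance $|C(1)|\,b_1=|C(2)|\,c_2$, and then pin down $c_2$ by counting, for one fixed weight-$2$ coset leader, how many of its $4(q-1)$ neighbours return to distance $1$. This direct count is the only genuinely laborious step, which is exactly why leaning on family~\ref{Latin2} is preferable.

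For self-complementarity I would adopt the convention implicit in the $\rho=2$ classification (stated up to monomial equivalence): a $q$-ary linear code is self-complementary when it possesses a full-support codeword, equivalently is equivalent to a code containing $\mathbf{1}$. Here $\mathbf{1}$ is literally the first row of $D_1$, so $\mathbf{1}\in C^{\perp}$; and $C$, being $[4,2,3]_q$ MDS, has $A_4=(q-1)(q-3)>0$ words of full weight for $q\geq 4$, so $C$ is self-complementary too. The relation $\xi_i+\xi_j+1=0$ is precisely what places $\mathbf{1}$ in $C^{\perp}$ with the right role here.

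Finally, for the second claim I would verify $D_1D_1^{t}=0$ when $q=2^{r}$. With $r_1=\mathbf{1}$ and $r_2=(0,1,\xi_i,\xi_j)$ the three entries are $\langle r_1,r_1\rangle=4$, $\langle r_1,r_2\rangle=1+\xi_i+\xi_j$, and $\langle r_2,r_2\rangle=1+\xi_i^{2}+\xi_j^{2}$. In characteristic $2$ the first vanishes, the second vanishes by hypothesis, and for the third the Frobenius identity gives $\xi_i^{2}+\xi_j^{2}=(\xi_i+\xi_j)^{2}=1$ since $\xi_i+\xi_j=-1=1$, whence $\langle r_2,r_2\rangle=0$. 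Thus $C^{\perp}=\operatorname{rowspace}(D_1)\subseteq\ker D_1=C$, and equality of dimensions ($2=2$) forces $C=C^{\perp}$, i.e.\ $C$ is self-dual. The one real subtlety in the whole argument is the self-complementarity of $C$ in odd characteristic, where $\mathbf{1}\notin C$ and the property must be read up to equivalence through a full-support word rather than as membership of the all-one vector.
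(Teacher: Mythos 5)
The survey gives no proof of this theorem at all---it is quoted from \cite{BRZ10}---so your argument can only be judged on its own terms, and it is correct. You reduce everything to results the survey itself records: the direct verification that $C$ and $C^{\perp}$ are both $[4,2,3]_q$ MDS codes is right (pairwise independent columns plus Singleton for $C$; the four distinct entries $a+bt$, $t\in\{0,1,\xi_i,\xi_j\}$, for $C^{\perp}$), and then the two-Latin-square family \ref{Latin2} (applicable since $q\geq 4\geq 3$ and a prime power is never $6$) yields complete regularity, $\rho=2$ and the intersection array for both codes at once. Your alternative route through the classification theorem stated just before this one---checking that $M=(0,1,\xi_i,\xi_j)$ generates an equidistant $[4,1,3]_q$ code in which every symbol occurring in a nonzero codeword appears exactly $n-\tilde{d}=1$ time---is equally valid and is presumably closer to how \cite{BRZ10} itself organizes the result, with this theorem as an instance of the classification. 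You also resolve the two genuine subtleties correctly: self-complementarity of $C$ cannot mean $\mathbf{1}\in C$, since $D_1\mathbf{1}^{t}=(4,0)^{t}\neq 0$ in odd characteristic, so it must be read as existence of a full-weight codeword (the monomial-invariant $q$-ary analogue of the paper's binary definition), and your count $A_4=(q-1)(q-3)>0$ for $q\geq 4$ settles it; and the self-duality argument for $q=2^r$ ($D_1D_1^{t}=0$ using $4=0$, the hypothesis $1+\xi_i+\xi_j=0$, and $\xi_i^{2}+\xi_j^{2}=(\xi_i+\xi_j)^{2}=1$, plus equality of dimensions) is exactly the standard one.

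Two caveats worth stating plainly. First, you never actually carry out the count $c_2=12$: your edge-balance relation only gives $4b_1=(q-3)c_2$, so the intersection array rests entirely on citing \ref{Latin2}; that is legitimate, but the claim is then inherited, not proved. Second, your reading $\xi_i,\xi_j\notin\{0,1\}$ is not merely a convention but is forced---for characteristic at least $5$ the pair $\xi_i=1$, $\xi_j=-2$ satisfies the literal hypothesis ``$\xi_i,\xi_j\in\F_q^{*}$ distinct'' yet produces two equal columns and hence $d=2$---and under this necessary reading the hypothesis is vacuous for $q=5$, where no two distinct elements of $\F_5\setminus\{0,1\}$ sum to $-1$. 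That is a quirk of the theorem's statement rather than a flaw in your proof, but it is worth flagging.
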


\subsection{Binary linear CR codes with $\rho=3$
and $\rho=4$ from bent and AB functions.}\label{ABFunctions}

Let $F$ be any function from $\F_2^m$ to $\F_2^m$. For any $(a,b) \in (\F_2^m)^2$, define the Fourier transform of
$F$ as \EQ\label{eq:7.1}
\mu_F(a,b)~=~\sum_{\bx \in
\F_2^m}\left(-1\right)^{\langle b\cdot F(x)\rangle+\langle a\cdot x\rangle},
\EN
where "$\langle \cdot \rangle$" is the usual inner product on $\F_2^m$.

For even $m$, a function $F$ over $\F_2^m$ is
\textit{bent} if its Fourier transform is $\mu_F(a,b) = \pm
2^{m/2}$, for all $a, b \in \F_2^m$ where $b \neq 0$. For odd $m$,
a function $F$ over $\F_2^m$ is \textit{almost bent} (shortly AB) if its
Fourier transform is $\mu_F(a,b) = \{0, \pm 2^{(m-1)/2}\}$, for
all $a, b \in \F_2^m$ where $b \neq 0$.

Let $F$ be any function from $\F_q$ to $\F_q$ where $q=2^m$, such
that $F(0)=0$. Define
\[
\Omega_m=\left\{
\begin{array}{ccc}
\{2^{m-1}, 2^{m-1} \pm 2^{m/2}\},\;\;& \mbox{if}\;\;m\;\;&\mbox{even},\\
\{2^{m-1}, 2^{m-1} \pm 2^{(m-1)/2}\},\;\;& \mbox{if}\;\;m\;\;&\mbox{odd}.
\end{array}
\right.
\]
For a linear binary code $C$ define the set $W_C$ as a set of all weights
of its nonzero codewords: $W_C = \{\wt(c): ~c \in C,\;c \neq \bf{0}\}$.
Define the matrix $H_F$:
\EQ\label{parity:AB}
H_F = \left[
\begin{array}{ccccccc}
&\,1\,   &\,\alpha\,   &\,\alpha^2\,   &\,\cdots\,&\alpha^{n-1}\,   &\\
&\,F(1)\,&\,F(\alpha)\,&\,F(\alpha^2)\,&\,\cdots\,&F(\alpha^{n-1})\,&
\end{array}
\right],
\EN

The statements of the next theorem can be found in \cite{Car1}

\begin{theorem} [\protect{\cite{Car1}}]
Let $C=C_F$ be the $[n=2^m-1,k,d]$ code defined by the parity check
matrix $H_F$ (\ref{parity:AB}).
\begin{itemize}
\item[(i)] If $m$ is odd, the function $F$ is AB if and only if the
weights of $W_{C^\perp}$ are those in $\Omega_m$.
\item[(ii)] If $m$ is
even, the function $F$ is bent if and only if the weights of
$W_{C^\perp}$ are those in $\Omega_m$.
\item[(iii)] $C_F$ is UP in the wide sense if $|W_{C^\perp}|=3$.
\end{itemize}
\begin{enumerate}[label=(F.\arabic*),resume*=family]
\item $C_F$ is CR, if $F$ is bent ($m$ even).
\item $C_F$ is CR, if $F$ is AB ($m$ odd).
\end{enumerate}
\end{theorem}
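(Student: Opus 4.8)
The plan is to feed the two invariants that the criteria above attach to complete regularity --- the external distance $s$ and the packing radius $e$ --- into the Goethals--van Tilborg theory, so that the even (bent) and odd (AB) cases are handled at one stroke. Since $C=C_F$ is linear, $s$ equals the number of nonzero weights of its dual $C^\perp$, and $C^\perp$ is precisely the code generated by $H_F$. First I would invoke parts (i) and (ii) above: in each parity of $m$ the hypothesis on $F$ is equivalent to $W_{C^\perp}=\Omega_m$, and since $\Omega_m$ is the three-element set $\{2^{m-1},\,2^{m-1}\pm 2^{\lfloor m/2\rfloor}\}$, this gives $|W_{C^\perp}|=3$, hence $s=3$ uniformly. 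By part (iii), $C_F$ is uniformly packed in the wide sense, so \Cref{equivs}(iv) yields $\rho=s=3$.

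The decisive step is to control $e$ through the minimum distance. The columns of $H_F$ are pairwise distinct and nonzero, so $d\geq 3$. For $d\geq 5$ I would use that $F$ is almost perfect nonlinear: a weight-$w$ codeword of $C_F$ is a family of $w$ distinct nonzero $x_1,\dots,x_w\in\F_{2^m}$ with $\sum_j x_j=0$ and $\sum_j F(x_j)=0$, and a short computation with the derivatives $D_aF(x)=F(x+a)+F(x)$ shows that for $w\in\{3,4\}$ any such relation forces two of the $x_j$ to coincide, once every nonzero derivative is at most two-to-one. Hence $C_F$ has no nonzero codeword of weight at most $4$, so $e\geq 2$; combined with $e\leq\rho=3$ this leaves only $e\in\{2,3\}$.

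Both surviving values close the argument, which is what makes it robust. If $e=2$ then $s=e+1$, so $C_F$ is uniformly packed by \Cref{equivs}(ii) --- automatically quasi-perfect, $\rho=e+1=3$ --- and a uniformly packed code is completely regular \cite{GvT}. If $e=3$ then $e=s$, so $C_F$ is perfect by \Cref{equivs}(i), and perfect codes are CR. This $e=3$ branch is not vacuous: for $m=3$ the code $C_F$ degenerates to the $[7,1,7]$ repetition code, whereas for larger $m$ one is in the quasi-perfect branch $e=2$. Either way $C_F$ is CR, simultaneously for the bent and the AB families. In the generic case $d=5$ one may alternatively finish through Delsarte's regularity criterion \cite{Dels}: here $d-s=2$ gives $2$-regularity, and $2=s-1$ together with $d=2s-1$ gives complete regularity.

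The genuine obstacle I anticipate is the bound $d\geq 5$, and within it the even-$m$ case. For AB functions ``AB $\Rightarrow$ APN'' is classical, but in even dimension a vectorial map $\F_2^m\to\F_2^m$ cannot have all of its components bent, so the APN input there must be argued more carefully. The cleanest self-contained remedy is to bypass APN: the three weights of $C^\perp$, together with its length $2^m-1$ and dimension $2m$, determine the complete weight distribution of $C^\perp$ via the first Pless power moments (equivalently, $\Rank(B)=s+1=4$ by \Cref{params}(i)), and the MacWilliams transform then returns the weights of $C_F$, from which $A_1=A_2=A_3=A_4=0$, and hence $d\geq 5$, can be read off directly.
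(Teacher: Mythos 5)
The survey itself contains no proof of this theorem; it is quoted from \cite{Car1} (``The statements of the next theorem can be found in \cite{Car1}''), so there is no internal argument to compare yours against, and your proposal must stand on its own. On its own, it splits in two. The odd-$m$ (AB) half is essentially correct: granting parts (i)--(iii), which you assume rather than prove, you get $s=|W_{C^\perp}|=3$ and $e\leq\rho\leq s=3$; AB implies APN (Chabaud--Vaudenay), your derivative argument correctly excludes codewords of weight $3$ and $4$, so $e\in\{2,3\}$; and then \Cref{equivs} forces $C_F$ to be uniformly packed ($s=e+1$) or perfect ($s=e$), hence CR in either branch. That is a legitimate proof of the AB claim.

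The bent ($m$ even) half fails, exactly at the step you flagged as the obstacle, and the repair you propose proves the opposite of what you need. Run your own moment computation for $m=4$: $n=15$, $W_{C^\perp}=\Omega_4=\{4,8,12\}$, $\dim C^\perp=2m=8$. The three moment equations (which use only $A_1=A_2=0$ of $C_F$, all you are entitled to at that point) give the unique solution $A_4^\perp=45$, $A_8^\perp=195$, $A_{12}^\perp=15$, and the MacWilliams transform then returns
\[
A_3=\tfrac{1}{256}\bigl(P_3(0)+45\,P_3(4)+195\,P_3(8)+15\,P_3(12)\bigr)
=\tfrac{1}{256}\bigl(455+315+1365-855\bigr)=5\neq 0 .
\]
In general the same computation yields $A_3=(2^m-1)/3>0$ for even $m$ and $A_3=0$ for odd $m$: the bound $d\geq 5$ is precisely the dividing line between the two parities, not a shared feature, and no APN-style repair exists either (the even-$m$ functions with spectrum matching $\Omega_m$, e.g.\ Gold functions $x^{2^i+1}$ with $\gcd(i,m)=2$, are not APN). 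This is consistent with the paper itself, which records that the even-$m$ codes of this type have $d=3$, $e=1$, $\rho=3$ (the Calderbank--Goethals codes discussed after \Cref{ExtLin}, and the codes of \Cref{sec:nested}). With $e=1$ and $s=3$ your dichotomy never starts: the code is neither perfect nor uniformly packed in the Goethals--van Tilborg sense, and the fallback ``UP in the wide sense with $\rho=s$'' does not imply CR --- the paper's own $[48,24,12]$ example, with $\rho=s=8$ but $b=14$, defeats exactly that implication. So the bent case requires a genuinely different argument (a direct verification that the coset weight distributions are constant on each subconstituent, as in \cite{Car1,CG1}), which your proposal does not supply.
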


Now using the corresponding results from \cite{BRZ14a,BRZ15} we have

\begin{proposition}\label{exten-crc}
Let $C = C_F$ of length $n=2^m-1$, where $m$ is odd, be defined by the parity check matrix
(\ref{parity:AB}) and let $C^\perp$ be its dual code with the set of weights
$W_{C^\perp}$. Then
\begin{itemize}
\item[(i)] $C^*$ is UP if and only if $C$ is UP and
$W_{C^\perp}=\Omega_m$.
\item[(ii)] The code $C^*$  is CR if and only if $C$ is CR
with minimum distance $d \in \{3,5\}$ and $C^*$ is UP.
\end{itemize}
\end{proposition}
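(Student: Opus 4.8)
The plan is to read everything off from the dual weight structure of the extension together with the chain $e\le\rho\le s\le b$ and the characterization of uniform packing in the wide sense, $\rho=s$, from \Cref{equivs}(iv); here ``UP'' is taken in the wide sense, so that a CR code is automatically UP by the corollary to \Cref{equivs}. First I would record the dual of the extension: writing $H$ for a parity-check matrix of $C$, the code $(C^*)^\perp$ is spanned by the rows of $H$ (each padded with a $0$) together with the all-one vector of length $n+1$, so its nonzero weights are exactly $W_{C^\perp}\cup\big((n+1)-W_{C^\perp}\big)\cup\{n+1\}$. Since $n+1=2^m$ exceeds every weight of $C^\perp$, the all-one vector always contributes a genuinely new weight, and one gets $s^{*}=s+1$ precisely when $W_{C^\perp}$ is symmetric about $(n+1)/2=2^{m-1}$, and $s^{*}>s+1$ otherwise.

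For part (i) I would combine this with $\rho^{*}=\rho+1$ for the covering radius of an extension (\cite{Bro2}) and with \Cref{equivs}(iv): the code $C^{*}$ is UP in the wide sense iff $\rho^{*}=s^{*}$, i.e. iff $\rho+1=s^{*}$. Using $s^{*}\ge s+1$ from the weight count and $s\ge\rho$ from \Cref{params}(iii), the identity $\rho+1=s^{*}\ge s+1\ge\rho+1$ forces equality throughout: $s=\rho$ (so $C$ is itself UP in the wide sense) and $s^{*}=s+1$ (so $W_{C^\perp}$ is symmetric about $2^{m-1}$). Conversely these two conditions give $\rho^{*}=s^{*}$ immediately. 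I would then invoke the preceding theorem of \cite{Car1}: for $m$ odd a three-weight spectrum of $C^\perp$ symmetric about $2^{m-1}$ is exactly $\Omega_m$ (equivalently, $F$ is AB), which identifies the symmetry condition with $W_{C^\perp}=\Omega_m$ and closes the equivalence.

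For part (ii) I would treat the two implications separately. If $C^{*}$ is CR then it is UP in the wide sense by the corollary to \Cref{equivs}, and puncturing the even code $C^{*}$ returns the CR code $C$ by \cite{Bro1}, with $d$ odd and $d=d^{*}-1$. The distance restriction then comes from $e\le\rho=s=3$ (using $s=3$ from $W_{C^\perp}=\Omega_m$, forced by part (i), and $\rho=s$ from \Cref{equivs}(iii)) together with the fact that $C=C_F$ has codimension $2m$ and so is not perfect, whence $e\le 2$ and $d\in\{3,5\}$. For the converse I would split on $d$: when $d=5$ the code $C$ is CR with $W_{C^\perp}=\Omega_m$ even and symmetric about $(n+1)/2$, so \Cref{ExtLin} applies once the transitivity of $\Aut(C^{*})$ is in hand (the extended AB-codes are left invariant by the affine group), yielding $C^{*}$ CR; when $d=3$ I would instead feed the symmetry $w_1+w_3=2w_2=n+1$ coming from $W_{C^\perp}=\Omega_m$ into the $d=3$, $\rho=3$ extension criterion of \cite{BRZ15} to conclude that $C^{*}$ is CR with $\rho^{*}=4$.

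The main obstacle is the sufficiency half of (ii): converting the numerical weight data into genuine complete regularity of $C^{*}$. For $d=3$ this is essentially the content of the \cite{BRZ15} criterion and comes for free, but for $d=5$ it rests on verifying the transitivity hypothesis of \Cref{ExtLin} — that $\Aut(C^{*})$ acts transitively on the coordinates — which I would obtain from affine invariance of the extended code. A secondary delicate point, already used in (i), is the clean identification of ``three dual weights symmetric about $2^{m-1}$'' with the specific set $\Omega_m$; here I rely on the AB-characterization of \cite{Car1} rather than re-deriving the Walsh spectrum.
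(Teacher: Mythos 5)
Your part (ii), $d=5$ sufficiency, is where the proof would fail. You route it through \Cref{ExtLin}, whose hypothesis that $\Aut(C^*)$ be transitive on coordinates you propose to obtain from ``affine invariance of the extended AB-codes''. Affine invariance is available only when $F$ is a power function, so that $C_F$ is cyclic and $C_F^*$ is extended cyclic; the proposition is stated for an arbitrary $F$ defining the parity check matrix (\ref{parity:AB}), and the paper explicitly invokes AB functions that are not power functions \cite{Ber,Bud1,Can1}, for which $C_F$ is not cyclic and no transitivity of $\Aut(C_F^*)$ is known. It is also telling that your $d=5$ argument never consumes the hypothesis ``$C^*$ is UP''. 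The paper's proof does exactly that and needs no group action: since $C^*$ is UP, part (i) gives $W_{C^\perp}=\Omega_m$, hence $s=3$ and (as $C$ is CR) $\rho=s=3$; with $d=5$ this makes $C$ quasi-perfect, hence a quasi-perfect UP code, and \Cref{exten-up} then yields that $C^*$ is CR. Your $d=3$ branch via \cite{BRZ15} coincides with the paper, and your necessity argument is essentially fine, modulo the small point that ``codimension $2m$, hence not perfect'' fails for $m=3$: the length-$7$ repetition code has codimension $6=2m$ and is perfect with $d=7$, and is excluded only because the paper declares codes with at most two codewords trivial.

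In part (i), your reduction of ``$C^*$ is UP in the wide sense'' to ``$\rho=s$ together with $W_{C^\perp}$ symmetric about $2^{m-1}$'' (via the dual weights of $C^*$, the relation $\rho^*=\rho+1$, and \Cref{equivs}(iv)) is correct, and it is a genuinely different, self-contained route compared with the paper's outright citation of \cite{BZ}. The gap is the last step. The claim that ``a three-weight spectrum of $C^\perp$ symmetric about $2^{m-1}$ is exactly $\Omega_m$'' is not what \cite{Car1} states (it states that $F$ is AB if and only if $W_{C^\perp}=\Omega_m$); moreover you never establish that $|W_{C^\perp}|=3$ (your chain only yields $s=\rho$), and symmetry alone does not force the weights into $\Omega_m$: for $F(x)=x^{2^i+1}$ with $\gcd(i,m)=t>1$ and $m/t$ odd, the Walsh spectrum is $\{0,\pm 2^{(m+t)/2}\}$, so $W_{C^\perp}=\bigl\{2^{m-1},\,2^{m-1}\pm 2^{(m+t)/2-1}\bigr\}$ is symmetric, has three elements, differs from $\Omega_m$, and for these codes one can check $\rho=s=3$. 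So this identification requires a real argument --- for instance that $d=5$ (i.e.\ $F$ APN) together with a plateaued spectrum forces the amplitude $2^{(m+1)/2}$ via a fourth-moment computation --- or one must fall back on the packing-parameter criterion of \cite{BZ}, which is what the paper does.
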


\begin{proof}
The first statement comes directly from \cite{BZ}. For the second statement,
the case $d=3$ is known \cite{BRZ15}. Consider the case
$d=5$. Since $C$ is CR, its covering radius is $\rho=3$. Hence $C$ is quasi-perfect.
Now the result follows from Proposition \ref{exten-up}.
\end{proof}

\begin{enumerate}[label=(F.\arabic*),resume*=family]
\item
Taking a power function $F$ in (\ref{parity:AB}) for $m$ odd, we obtain binary primitive cyclic $[n=2^m-1, n-2m, d;\rho]$ codes with generator polynomial $g(x)=m_1(x)m_{\ell}$. In the case $d=5$ they are CR \cite{Car1} with $\rho=3$ and
$$  \IA = \{n,n-1,(n+3)/2;1,2,(n-1)/2\}.$$
We found in the literature the following cases.

\begin{enumerate}[label=(\roman*)]
\item BCH $2$-codes (see \Cref{subsec:preparata}). $\ell = 3$.
\item Gold codes \cite{gold}. $\ell = 2^i + 1$, $\Gcd(i,m)=1$.
\item Kasami codes \cite{kas}. $\ell  = 2^{2i} - 2^i +1$, $\Gcd(i,m)=1$.
\item Welch codes \cite{niho}. $\ell=2^{\frac{m-1}{2}}+3$.
\item Niho codes \cite{niho}. $\ell=2^{2i}+2^i-1$, $4i\equiv -1 \bmod{m}$.
\item Inverse \cite{ding}. $\ell=2^{m-1}-1$.
\item Dobbertin codes \cite{dobb}. $\ell=2^{4i}+2^{3i}+2^{2i}-1$, $m=5i$.
\end{enumerate}
\end{enumerate}

All these codes come from  AB
functions.  More new AB functions, which are not power functions
and which provide CR codes can be found in
\cite{Ber,Bud1,Can1}.

\begin{enumerate}[label=(F.\arabic*),resume*=family]
\item The extended codes of all binary codes above are CR with
$\rho=4$ and
$$  \IA = \{n+1,n,n-1,(n+3)/2;1,2,(n-1)/2,n+1\}.$$
\end{enumerate}

\addcontentsline{toc}{section}{References}
\printbibliography
\end{document}